\documentclass[11pt]{article}
\usepackage{mainsty}

\title{Power properties of the two-sample test based on the\\ nearest neighbors graph}
\author{Rahul Raphael Kanekar\footnote{Correspondence : \href{mailto:rkanekar@stanford.edu}{rkanekar@stanford.edu}}\\Department of Statistics, Stanford University}

\newcommand{\appendixB}{\Cref{appendix::consistency_and_clts}}
\newcommand{\appendixC}{\Cref{appendix::detection_thresholds}}
\newcommand{\appendixD}{\Cref{appendix::additional_simulations}}

\begin{document}

\maketitle

\begin{abstract}
    In this paper, we study the problem of testing the equality of two multivariate distributions. One class of tests used for this purpose utilizes geometric graphs constructed using inter-point distances. So far, the asymptotic theory of these tests applies only to graphs which fall under the stabilizing graphs framework of \citet{penroseyukich2003weaklaws}. We study the case of the $K$-nearest neighbors graph where $K=k_N$ increases with sample size, which does not fall under the stabilizing graphs framework. Our main result gives detection thresholds for this test in parametrized families when $k_N = o(N^{1/4})$, thus extending the family of graphs where the theoretical behavior is known. We propose a 2-sided version of the test that removes an exponent gap that plagues the 1-sided test. Our result also shows that increasing the number of nearest neighbors boosts the power of the test. This provides theoretical justification for using denser graphs in testing the equality of two distributions.
\end{abstract}

\section{Introduction}\label{section::introduction}

Given data from two sources, a natural question to ask is whether the two sources can be considered different in some sense. This is a basic question of comparison that arises in a variety of scientific settings. From the viewpoint of a statistician, this is the two sample problem and can be stated as follows.

Let $\{X_1,\dots,X_{N_1}\}$ and $\{Y_1,\dots,Y_{N_2}\}$ be i.i.d samples from the distributions $F$ and $G$, respectively. The two sample problem is to test the hypotheses
$$H_0: F=G \quad \text{ v/s } \quad H_1: F\neq G.$$

In this paper, we focus on two sample tests that are nonparametric---they do not assume that $F,G$ belong to some parametrized family of distributions---and distribution free---under the null $F=G,$ the test is valid for any distribution $F$. While parametric approaches such as $t$-tests, likelihood ratio tests, score tests require the model to be well specified in order to be powerful (see \cite{lehmann2005testing}), nonparametric tests are often more reliable under weaker assumptions. For univariate distributions $F,G$, a host of nonparametric tests such as the two sample Kolmogorov-Smirnov test \cite{smirnov1939estimation}, Mann-Whitney test \cite{mann1947test} and the Wald--Wolfowitz runs test \cite{wald1940test}. Univariate tests are often rank-based; they rely on the fact that univariate data can be ranked (see \cite{lehmann1975statistical} for rank-based methods). Since there is no natural extension of ranks to multivariate data, it is difficult to generalize these tests to multivariate data.

This paper focuses on a class of multivariate two sample tests known as \emph{graph-based two sample tests}. The idea behind graph-based two sample tests is to utilize inter-point distances to construct a graph that can summarize the data, in place of ranks. \citet{weiss1960twosampletests} explored this approach first, but the resulting test was not distribution-free. Following this, \citet{friedmanrafskytest} introduced a two sample test using the Euclidean Minimal Spanning Tree (MST) constructed from the pooled data. In addition to being nonparametric and distribution free, this test extends the Wald--Wolfowitz runs test to higher dimensions, bolstering the idea of using graphs instead of ranks in higher dimensions. By changing the underlying graph, numerous alternate tests have been proposed. \citet{schilling1986multivariatetwosampletest} and \citet{henze1988multivariate} studied the test based on the $K$-nearest neighbors graph. Later, \citet{rosenbaum2005exact2sampletest} provided a test based on the minimal bipartite matching (the Cross-Match test) which is distribution free in finite samples. \citet{biswas2014distribution} proposed a test based on the Hamiltonian Cycle, while \citet{ruth2014new} considers the regular minimum-weight spanning subgraph. 

Key factors in the appeal of graph-based two sample tests are that they are versatile and easy to implement. Like many nonparametric tests, they can be implemented as a permutation test. When the sample size makes permutation tests infeasible, one can instead use the asymptotic normality under the null \cite{friedmanrafskytest,henze1988multivariate,henze1999multivariate} to obtain a distribution free test. As shown in \citet{bhattacharya2020detectionthresholds}, this holds for a general family of graphs. Going beyond Euclidean data, \citet{chen2017new} proposed a graph-based test for high-dimensional and object data which has good power in practice. Finally, graph-based two sample tests have found applications in a number of contexts where the problem at hand is one of comparing two populations \cite{chen2019sequential,chen2023graph,chenzhang2015graphbasedchangepoint,hsiao2016mapping,mukherjee2022distribution,ZHANG2020107285,zhao2006improved}. 

\subsection{Related work}\label{section::related_two_sample_tests}

Apart from graph-based tests, a large class of two sample tests target some metric or measure of dissimilarity between distributions. In particular, tests based on the energy distance \cite{aslan2005new,baringhaus2004new,szekely2004testing,SZEKELY20131249} and the kernel maximum mean discrepancy (MMD) \cite{chatterjee2025boosting,gretton2006kernel,gretton2012kernel,ramdas2017wasserstein,song2024generalized} have received considerable attention in recent times. The test statistics are typically computable in quadratic or sub-quadratic time, and have been shown to have good power in practice. However, as shown for instance in \citet{szekely2004testing} and \citet{gretton2012kernel}, the limiting distributions of these statistics under the null can be non-Gaussian, intractable, or dependent on the null distribution. These factors hinder the goal of obtaining a distribution free tests. Using a permutation test remains an option, but can be computationally expensive for larger sample sizes. Having said this, we refer the reader to more recent examples including \citet{chatterjee2025martingale} and \citet{shekhar2022permutation} that have proposed variations of the kernel MMD statistic that are asymptotically standard Gaussian under the null.

Other approaches include that of \citet{huang2024kernel}, where the authors propose another kernel based measure of dissimilarity and use geometric graphs to estimate it, with the resulting test being distribution free. \citet{ghosal2022multivariate} defines a notion of multivariate ranks and uses it to generalize many univariate, distribution-free tests to higher dimensions. In addition to these, other solutions draw on a host of techniques including projection-averaging \cite{kim2020robust}, ball-divergence measure \cite{pan2018ball}, binary classifiers \cite{kim2021classification,lopez2017revisiting}, kernel mean embeddings \cite{chwialkowski2015fast,jitkrittum2016interpretable} and neural networks \cite{pmlr-v119-liu20m}.

\subsection{Graph-based two sample tests}

We now define a general graph-based test statistic, provide some examples and show how the statistic is used in testing the null. To this end, we define some terminology.

A graph functional $\Gscr$ is a function that for any finite $S\subset \R^d$ defines a graph $\Gscr(S)$ with vertex set $S.$ The edge set of the graph $\Gscr(S)$ is denoted by $E(\Gscr(S))$. For simplicity, we will assume that $\Gscr(S)$ has no self loops or multi-edges. The graph $\Gscr(S)$ can be directed or undirected.

\begin{definition}\label{defn::graph_based_test} Let $\Xscr_{N_1} := \{X_1,\dots,X_{N_1}\}$ and $\Yscr_{N_2}:= \{Y_1,\dots,Y_{N_2}\}$ be i.i.d samples of size $N_1$ and $N_2$ from densities $f,g$, respectively. Let $\Gscr$ be a graph functional. The two sample test statistic based on $\Gscr$ is given by
$$T(\Gscr(\Xscr_{N_1}\cup\Yscr_{N_2})) = \sum_{i=1}^{N_1}\sum_{j=1}^{N_2}\indicator\{(X_i,Y_j)\in E(\Gscr(\Xscr_{N_1}\cup \Yscr_{N_2}))\}.$$
\end{definition}

The statistic $T(\Gscr)$ counts the number of edges in the graph that go across samples. To ease notation, we will often denote the statistic by $T(\Gscr)$, when the samples are clear.

\begin{figure}
\centering
\captionsetup{width=.9\linewidth}
\includegraphics[width = 5cm]{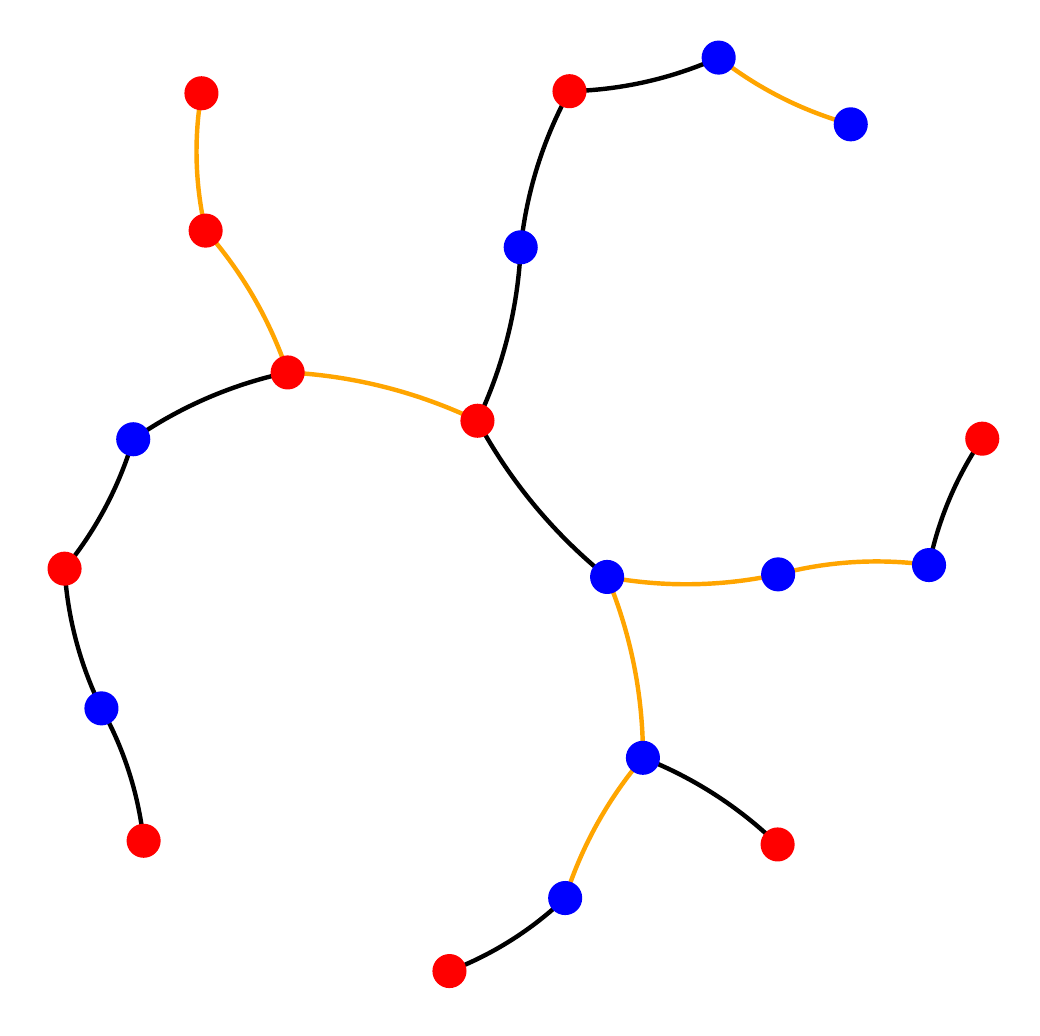}
\hspace{3cm}
\includegraphics[width = 5cm]{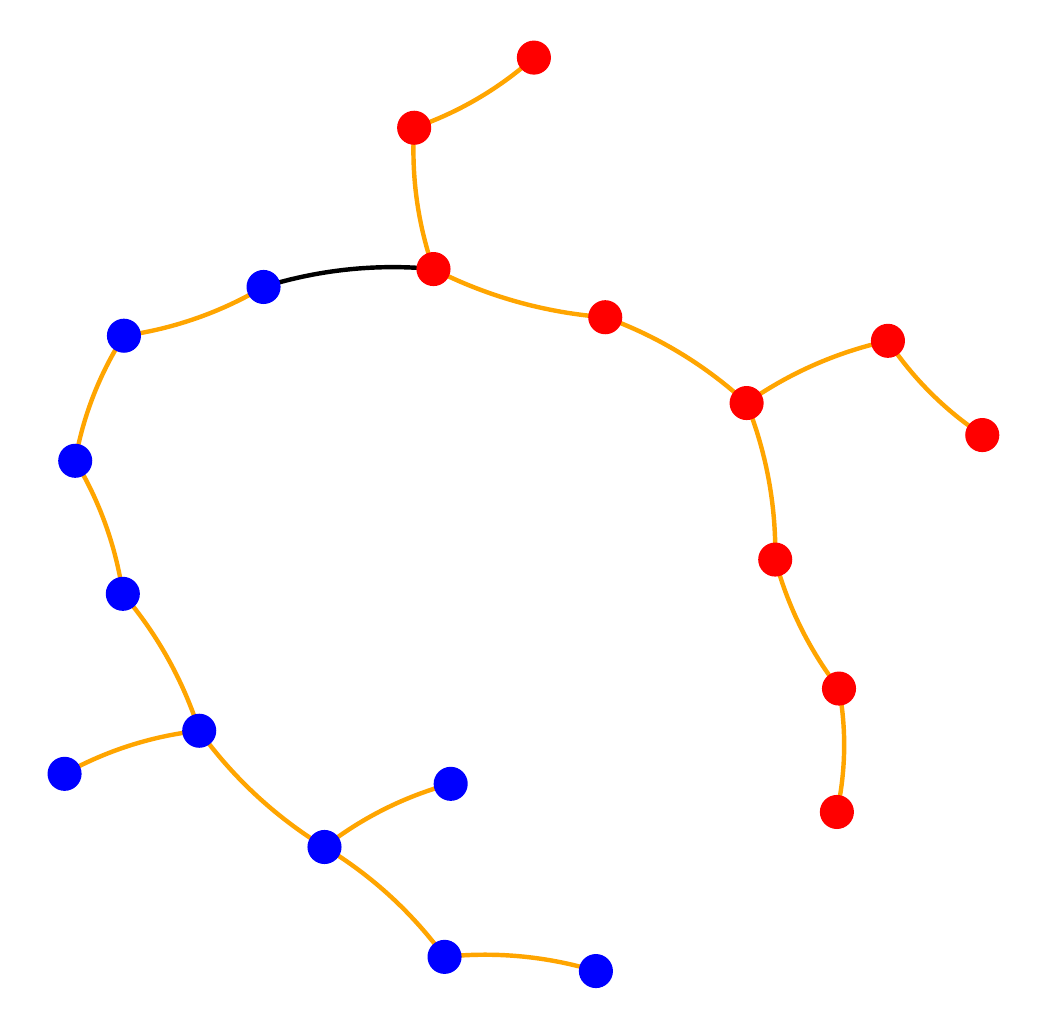} 
\caption{On the left is the undirected MST formed from 10 samples of $N(0,I_2)$(in red) and 10 samples of $N(0.2\cdot \indicator,I_2)$ (in blue). On the right is MST formed out of 10 samples each of $N(0,I_2)$ (in red) and $N(2\cdot \indicator,I_2)$ (in blue). The edges going across samples are colored black. Edges within samples are colored gold.}
\label{fig::mst_figures}
\end{figure}

\begin{example} (Friedman--Rafsky Test) The Friedman--Rafsky test results from taking $\Gscr$ to be the Euclidean minimal spanning tree. Given a finite set $S\subset \R^d,$ a spanning tree $\Tcal$ of $S$ is a connected, undirected graph with vertex set $S$ and no cycles. The length of a spanning tree is the sum of the lengths of all edges in the tree. A tree $\Tcal$ is called the Euclidean MST of $S$ if its length is at most the length of any other spanning tree $\Tcal'$ of $S.$ Thus, the Euclidean MST $\Tcal$ is a graph functional and yields a two sample test. Figure \ref{fig::mst_figures} gives examples of the Euclidean MST.

\end{example}

This test, introduced in \citet{friedmanrafskytest}, was one of the first graph-based tests proposed. One fact of particular interest is that when $d=1,$ the Friedman--Rafsky test gives exactly the Wald--Wolfowitz runs test. This is because, in dimension one the Euclidean MST is simply the line graph connecting adjacent points in the ranked data. While it was originally presented as a permutation test, the test statistic is also asymptotically normal (see \cite{bhattacharya2020detectionthresholds,friedmanrafskytest,henze1988multivariate}) which provides an alternate way of implementing the test.

\begin{example}\label{example::knn_test} ($K$-NN test) Given a finite set $S\subset \R^d$ and $K\in \N,$ the $K$-nearest neighbors graph $\Gscr_K(S)$ on $S$ is obtained by inserting the directed edge $(a,b)$ in $E(\Gscr_K(S))$ for $a,b \in S$, if and only if $b$ is one of the $K$ closest points to $a$ in Euclidean distance among the points in $S$. In this case, $\Gscr_K$ is a directed graph functional since each edge is from a point to one of its $K$-nearest neighbors. Figure \ref{fig::knn_figures} gives examples of the $K$-NN graph.

\begin{figure}[ht]
\centering
\captionsetup{width=.9\linewidth}
\includegraphics[width = 5cm]{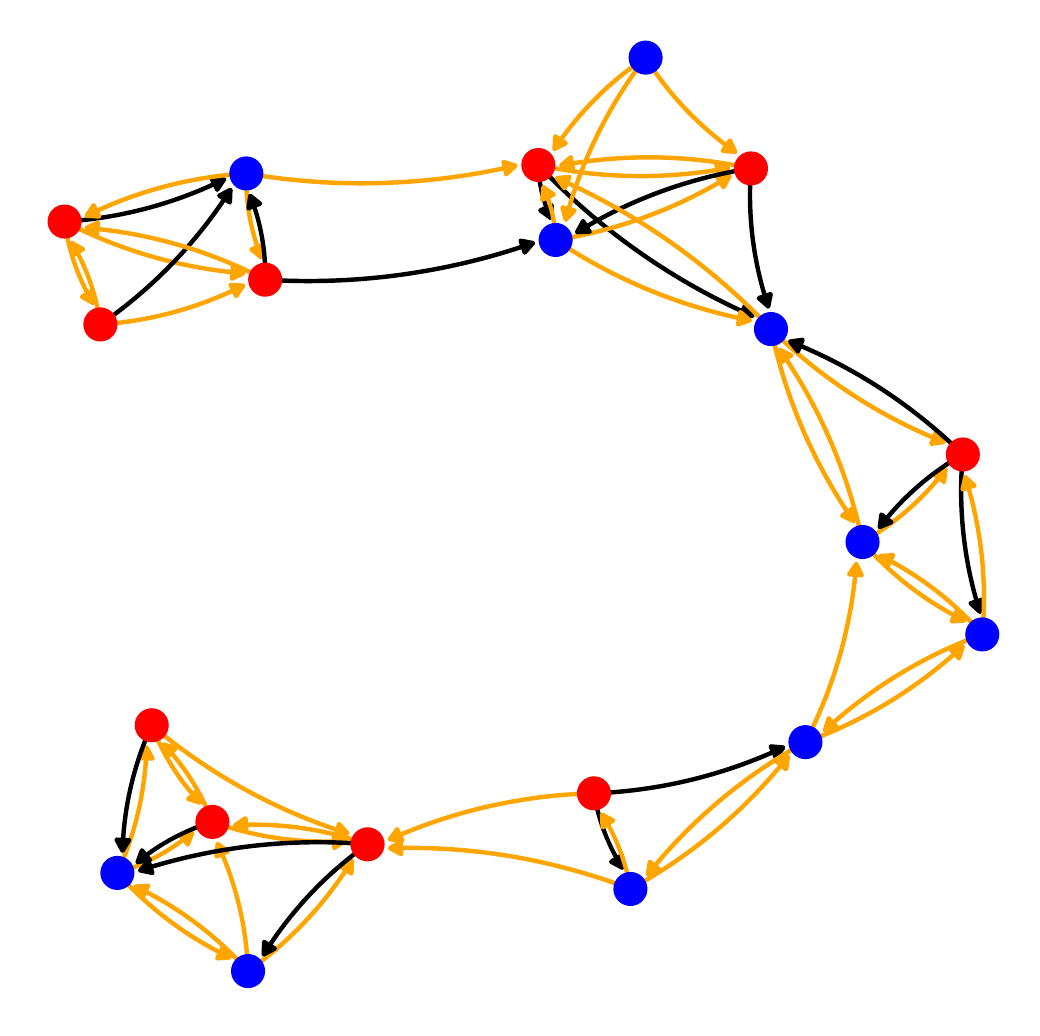}
\hspace{3cm}
\includegraphics[width = 5cm]{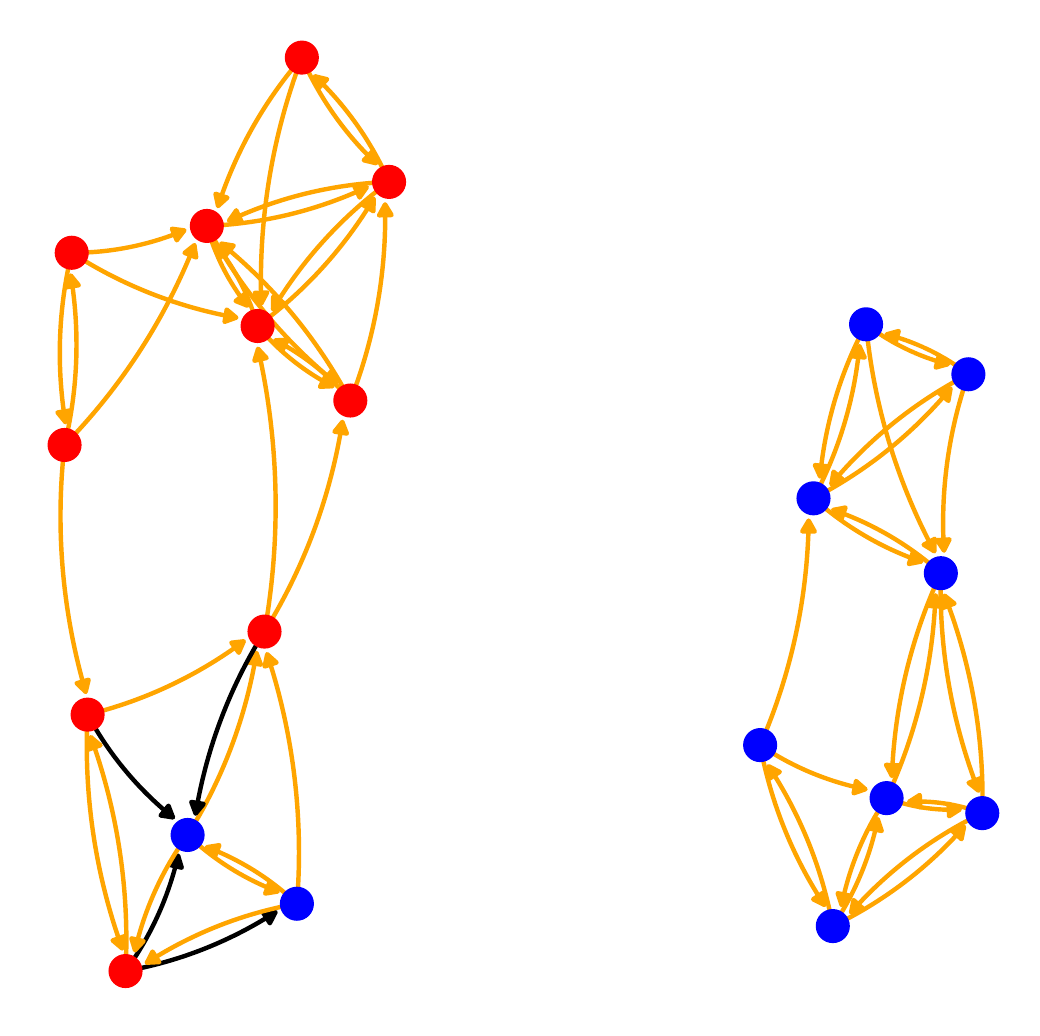}
\caption{On the left is the directed $3-$NN graph formed from 10 samples of $N(0,I_2)$(colored red) and 10 samples of $N(0.2,I_2)$ (colored blue). On the right is $3-$NN graph formed out of 10 samples each of $N(0,I_2)$(red) and $N(2,I_2)$(blue). The edges going from sample $1$ to sample $2$ are colored black. Edges within samples are colored gold.}
\label{fig::knn_figures}
\end{figure}
\end{example}

The test based on the $K$-nearest neighbors graph was introduced in \citet{henze1988multivariate} and will be our main object of interest.

Figures \ref{fig::mst_figures} and \ref{fig::knn_figures} highlight an interesting feature of graph-based two sample tests. Both figures contain data where the null (red points) in each case is $N(0,I_2)$ and the alternates (blue points) are $N(0.2 \cdot \indicator,I_2)$ and $N(2 \cdot \indicator,I_2)$, respectively. In both figures, there are much fewer cross-sample edges in the second case i.e. where the alternate is $N(2,I_2).$ This highlights an interesting principle namely, the more "different" two distributions are, the fewer cross-sample edges there are. Accordingly, the two sample test is often implemented as a 1-sided test where the null is rejected for
$$T(\Gscr(\Xscr_{N_1}\cup \Yscr_{N_2}))\leq \tau_\alpha,$$

where $\tau_\alpha$ is a threshold to be determined that will give a level-$\alpha$ test. We will elaborate on this further in Section \ref{section::consistency} when we look at the consistency of the test based on the $K$-nearest neighbors graph. We will also present a 2-sided version of the test and a key contribution of our work is a rigorous comparison of the power properties of the two tests.

\subsection{Poissonization}\label{section::poissonization}

In order to make the asymptotic behavior easier to analyze, we will be considering the Poissonized setting. This set up allows us to use the spatial independence of the Poisson process. Using the well known de-Poissonization techniques described in \citet{penrose2003random}, the results for the Poissonized setting can be used to derive analogous results for the un-Poissonized setting. See \Cref{remark:depoissonization} for more on de-Poissonization.

Let $f,g$ be densities on $\R^d$ and define $\phi_N(x):= \frac{N_1}{N}f(x) + \frac{N_2}{N}g(x)$ where $N_1+N_2 = N.$ Let $\Zcal_N:=\{Z_1,\dots,Z_{L_N}\}$ be the points sampled in the Poisson process with intensity function $N\phi_N = N_1 f + N_2 g.$ Here, the number of points $L_N$ is a Poisson random variable with parameter $N.$ For each point $z\in \Zcal_N,$ we assign the value $1$ or $2$ to the label $c_z$ with
\begin{equation}\label{eq::label_assignment}
    c_z =
    \begin{cases}
    1 \text{ with probability }\frac{N_1 f(x)}{N_1f(x) + N_2 g(x)},\\
    \\
    2 \text{ with probability } \frac{N_2 g(x)}{N_1f(x) + N_2g(x)}.
    \end{cases}
\end{equation}

The labels are assigned to all the points in $\Zcal_N$ independently. We will assume that $N_1,N_2$ are such that
$$\frac{N_1}{N} - p = o(N^{-\frac{1}{2}}), ~ \frac{N_2}{N} - q=o(N^{-\frac{1}{2}}),$$

where $q:= 1-p$. For the Poissonized setting we define the test statistic as
\begin{equation}\label{eq::poissonized_test_stat}
T(\Gscr_K(\Zcal_N)) = \sum_{x,y\in\Zcal_N}\psi(c_x,c_y)\indicator\{(x,y)\in E(\Gscr(\Zcal_N))\},
\end{equation}

\noindent where $\psi(c_x,c_y) = \indicator\{c_x=1,c_y=2\}.$ In Section \ref{subsection::clt_test_stat} we will show that if $\{k_N\}_N$ is a sequence of natural numbers such that $k_N = o(N^{1/4})$, then under $H_0$
$$\frac{1}{k_N\sqrt{N}}(T(\Gscr_{k_N}(\Zcal_N)) - \E_{H_0}(T(\Gscr_{k_N}(\Zcal_N)))) \to N(0,\sigma^2_0),$$

\noindent for some $\sigma^2_0>0.$ Hence, the test that rejects when
\begin{equation}\label{eq::poissonized_one_sided_test}
    \frac{1}{k_N\sqrt{N}}(T(\Gscr_{k_N}(\Zcal_N)) - \E_{H_0}(T(\Gscr_{k_N}(\Zcal_N))))\leq \sigma_0 z_\alpha,
\end{equation}

\noindent where $z_\alpha$ is the $\alpha$-quantile of the standard normal, is an asymptotically level-$\alpha$ test. Traditionally, this is the way the two sample graph-based test is implemented; as a 1-sided test. However, we will also be considering the 2-sided test i.e. the test that rejects when
\begin{equation}\label{eq::poissonized_two_sided_test}
\left|\frac{1}{k_N\sqrt{N}}(T(\Gscr_{k_N}(\Zcal_N)) - \E_{H_0}(T(\Gscr_{k_N}(\Zcal_N))))\right|\geq \sigma_0 z_{1-\alpha/2},
\end{equation}

where $z_{1-\alpha/2}$ is the $1-\alpha/2$ quantile of the standard normal.

\begin{remark}[Regarding de-Poissonization]
\label{remark:depoissonization} 

The Poissonized setting assumes that the sample size is $\pois(N)$, while the binomial setting takes the sample size to be some fixed $N$. De-Poissonization is a method by which asymptotic results in the Poissonized setting can be transferred to the fixed size binomial setting. In the context of geometric probability \citet{penrose2001central,penrose2005normal} define what are called stabilizing graph functionals, and provide results on de-Poissonization for this class (see, for example \cite[Lemma 4.2]{penrose2001central} and \cite[Theorem 2.3]{penrose2005normal}). For $k_N$ increasing with sample size, the $k_N$-NN graph is not stabilizing, and hence the results do not apply directly. A more general recipe for de-Poissonization is outlined in \citet[Section 2.5]{penrose2003random}. We believe that this can be used to de-Poissonize in the setting we consider. However, we do not have a proof at this time.
\end{remark}

\subsection{Summary of results}\label{section::summary_of_results}

The present work investigates the power properties of two sample tests based on the $K$-nearest neighbors (KNN) graph as defined in \Cref{example::knn_test}. Our primary focus is on the local asymptotic power of this test in parametric families, when the number of neighbors $K$ is allowed to grow with the sample size $N$. Since the parameter $K$ directly controls the edge density of the underlying graph, its influence on the power is an interesting theoretical question. From an empirical standpoint, the improvement in power obtained by increasing the edge density in graph-based tests has been noted before in the literature. \citet{friedmanrafskytest}, and \citet{chen2017new} consider a denser version of the MST, known as the $K$-MST. In both works, the authors noted an improvement in power with a higher value of $K$. In particular, the authors of \citet{chen2017new} suggested that the power improves because an increased graph density captures greater \emph{similarity information} between samples. More in line with our work, simulations in \citet{bhattacharya2019generalasymptoticframework} showed that allowing $K$ to grow with $N$ leads to improved power for the $K$-NN test.

In our work, we study the local asymptotic power of the $K$-NN test when $K=k_N = o(N^\frac{1}{4}).$ This fills an important gap in the literature by linking the growth of $K$ to the power of the two sample test. Our contributions are two-fold:

\begin{itemize}
\item Previous works on the local asymptotic power of graph-based statistics (see \cite{bhattacharya2020detectionthresholds,huang2024kernel}) have typically relied on the stabilizing graphs framework \cite{penrose2001central,penroseyukich2003weaklaws}. While the $K$-NN graph is stabilizing when $K$ is fixed, it is not stabilizing when $K$ is allowed to grow with the sample size. To go beyond the fixed $K$ case, we rely on a careful, hands-on analysis of the test statistic which allows us to precisely quantify the local dependence structure induced by the $K$-NN graph. A high-level description of these techniques is given in \Cref{section::technical_contributions}.

\item Using the above approach, we establish exact detection thresholds of the 1- and 2-sided tests defined in \eqref{eq::poissonized_one_sided_test} and \eqref{eq::poissonized_two_sided_test}. The detection thresholds depend intimately on the growth of $K$ with respect to $N$ and on the dimension $d$ of the data. We make this dependence explicit and highlight the consequences on the power of the 1- and 2-sided tests. This is described in \Cref{section::description_of_detection_thresholds} with the detailed results stated in \Cref{section::local_power_and_thresholds}.
\end{itemize}

\subsubsection{Technical contributions}\label{section::technical_contributions}

When deriving results for the case of growing $K,$ the main technical hurdle is that the underlying graph functional is no longer stabilizing. Recall that a graph functional $\Gcal$ associates a graph $\Gcal(S)$ given any finite set $S\subset \R^d.$ Given any point $x\in \R^d,$ we denote by $E(x,\Gcal(S))$ the set of edges incident on $x$ in the graph $\Gcal(S\cup {x}).$ A graph functional $\Gcal$ is said to be stabilizing if for any $S$ and $x,$ there exists an $R<\infty$ such that for any finite $\Acal \subset B(x,R)^c,$
$$E(x,\Gcal(S)) = E(x,\Gcal((S\cap B(x,R)) \cup \Acal)).$$

Intuitively, $\Gcal$ is said to be stabilizing if for any point $x$ in the graph, adding points far away enough does not affect the edges incident on $x$. Since edges are local, it is enough to focus on a small neighborhood of a typical point which in turn converges to a homogeneous Poisson process of intensity one after suitable spatial rescaling. Further, stabilizing graph functionals often have natural analogs on infinite sets. For instance, the fixed $K$-nearest neighbor graph can be defined on any locally finite point set, and the minimal spanning tree admits a version on infinite vertices (see \citet{aldous1992asymptotics}). As a result, the large sample limit of statistics calculated on a stabilizing graph functional can often be identified as expectations of the corresponding infinite volume graph functional evaluated on a homogeneous Poisson process. An example of this is \citet[Theorem 2.8]{penroseyukich2003weaklaws} We refer the reader to \citet{penrose2001central,penroseyukich2003weaklaws,penrose2005normal} for full details on stabilization.

In the case of $K$ growing, the value of $K$ depends on the size of the vertex set, and adding enough points to the vertex set eventually changes the neighbors of $x$. As a result, the graph functional is not stabilizing for growing $K$. At a more fundamental level, since the neighborhood of each vertex depends on the size of the vertex set there is no natural extension of this graph functional to infinite point sets. However, the local structure of the graph is still intact since each point $x$ only connects to the $K$ points closest to it. This allows us to restrict our attention to a ball of shrinking radius around $x$ for all the calculations involved in the proofs. When $K$ is fixed, the distance between nearest neighbors is of the order $N^{-\frac{1}{d}}$ regardless of the value of $K$. However, when $K = k_N$ grows with $N,$ the distance is of order $(k_NN^{-1})^{\frac{1}{d}}.$ By incorporating $k_N$ into the spatial scaling, it is still possible to approximate the neighborhood of each point by a homogeneous Poisson process. However, the limiting values have to be calculated explicitly since they cannot be expressed in terms of known graph functionals. The technical tools we use are provided in the supplementary material.

We have three main technical results.

\begin{enumerate}

\item In \Cref{section::consistency} we derive the weak limit of the statistic \eqref{eq::poissonized_test_stat}. In particular, we show that appropriately normalized, this converges in probability to the Henze-Penrose (HP) divergence. This same property was shown in \citet{bhattacharya2020detectionthresholds} for fixed $K$. This is the first of our results that show the continuity between the asymptotics of the statistic for fixed and growing $K$.

\item We derive the limiting distribution of the Poissonized test statistic under general alternatives and establish a central limit theorem for the appropriately standardized statistic (Theorem~\ref{thm::clt_general_stat}). We show that the asymptotic variance is of order $N k_N^2$ and that asymptotic normality holds for $k_N = o(N^{1/4})$. The proof uses Stein’s method for dependency graphs applied to a truncated version of the statistic, in the spirit of techniques developed for stabilizing graph functionals in \citet{penrose2005normal}. Since the stabilizing graph framework is not directly applicable in our setting, the argument relies on more hands-on bounds that quantify the local dependence structure of the nearest neighbor graph. These bounds allow us to establish convergence of the standardized statistic to normality in Wasserstein distance and suggest that stabilizing-graph methods may extend to a broader class of graph functionals.

\item We also provide a CLT for a conditional version of the statistic which relates to the implementation of the two sample test as a permutation test. Under the null, after conditioning on the locations of the points, the labels are exchangeable. Thus, instead of centering by the marginal expectation, we can center by the conditional expectation after conditioning on the locations of the points. The resulting statistic can be used to implement a conditional test. This statistic too is asymptotically normal after scaling by $k_N N^\frac{1}{2}$ (Theorem \ref{thm::clt_conditional_stat}). Using Stein's method for dependency graphs, one can show that this holds for any sequence $k_N\to \infty$ such that $k_N = o(N).$
\end{enumerate}

As an aside, we should note that the result for the asymptotic normality of the unconditional statistic holds for a smaller range of $k_N$ than that for the conditional statistic. We elaborate on this further in the discussion following \Cref{thm::clt_general_stat}.

\subsubsection{Power under local alternatives}\label{section::description_of_detection_thresholds}

Local asymptotic power is most naturally formulated by considering alternatives that approach the null as the sample size increases. In parametric families, where distributions can be represented by low-dimensional parameters, this corresponds to allowing the parameter under the alternative to converge to the null parameter at a certain rate with respect to the sample size. 

More rigorously, let $\{P_\theta\}_{\theta\in \Theta}$ be a family of distributions parametrized by elements of $\Theta\subset \R^p.$ Fix $\theta_1\in\Theta$ to be the null parameter and let $\{\theta_N\}_N$ be a sequence in $\Theta$ representing the local alternatives. The quantity of interest is the \emph{detection threshold}---the maximal rate at which $\theta_N$ may approach $\theta_1$ while the test continues to exhibit nontrivial asymptotic power. More precisely, the detection threshold of a two sample test is the sequence $\{\epsilon_N\}_N$ such that $\|\theta_N-\theta_1\|\gg \epsilon_N$ implies that the limiting power is $1$ and $\|\theta_N - \theta_1\|\ll \epsilon_N$ implies that the limiting power of the test is its level, $\alpha.$  

\begin{figure}
    \centering
    \captionsetup{width = 0.9\linewidth}
    \includegraphics[width=0.7\linewidth]{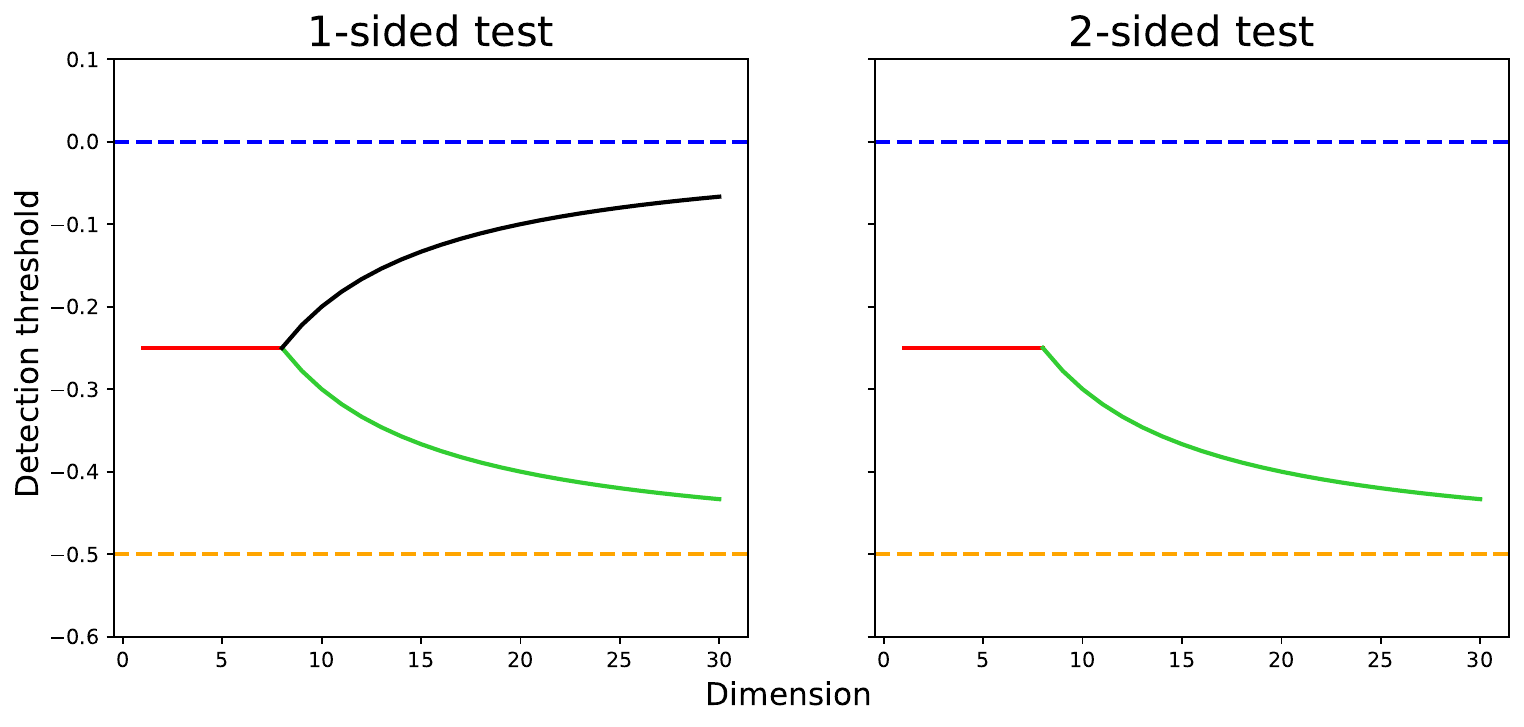}
    \caption{A comparison of the detection thresholds exponents of the 1- and 2-sided tests for a given $k_N = N^\gamma$. A lower value of the exponent represents a better detection threshold. The orange line represents the parametric rate and the blue line represents the case of fixed alternatives. In both cases, change in detection threshold occurs at the critical dimension $d=d_c(\gamma)$.}
    \label{fig::threshold_comparison}
\end{figure}

The graph-based two sample tests studied in the literature formulate the test as a 1-sided test. A key contribution of our work is to study the 2-sided version in \eqref{eq::rejection_region_for_2_sided_test} and carry out a comparison of the two, in the setting where $K = k_N = o(N^\frac{1}{4})$. To simplify matters, we will assume that $k_N = N^{\gamma}$ for some $0<\gamma<1/4.$
\begin{itemize}
\item We can define a critical dimension $d_c$ given by
\begin{equation}\label{eq::simplified_critical_dimension}
d_c(\gamma)= \lfloor 8(1-\gamma)\rfloor.    
\end{equation}
When $d\leq d_c(\gamma)$, the detection thresholds of the 1- and 2-sided tests are the same and equal $N^{-\frac{1}{4}}.$ This means, if $\|\theta_N - \theta_1\|\ll N^{-\frac{1}{4}},$ the limiting power of both tests is $\alpha$ and if $\|\theta_N - \theta_1\|\gg N^{-\frac{1}{4}},$ then the limiting power is 1. When $\theta_N - \theta_1 = hN^{-\frac{1}{4}}$ for some $h\in \R^p,$ we give an expression for the limiting power in terms of $h$ and the parametric family $\{P_\theta\}_\theta$.

\item When $d>d_c(\gamma),$ the situation is more delicate and the behavior of the two tests is drastically different. The limiting power of the 1-sided test can be described as follows.
\begin{itemize}
\item If $\|\theta_N - \theta_1\| \ll N^{-\frac{1}{2} + \frac{2(1-\gamma)}{d}},$ then the limiting power is $\alpha$.
\item If $\|\theta_N - \theta_1\| \gg N^{-\frac{2(1-\gamma)}{d}},$ then the limiting power is 1.
\item If $\theta_N - \theta_1 = h \epsilon_N$ for some unit vector $h$ and $N^{-\frac{1}{2} + \frac{2(1-\gamma)}{d}}\ll \epsilon_N \ll N^{-\frac{2(1-\gamma)}{d}},$ then the limiting power is 0 or 1 depending on $h.$
\end{itemize}

For $d>d_c(\gamma),$ there is an \emph{exponent gap} in the detection thresholds. Let $h$ be the unit vector in the direction of $\theta_N - \theta_1$. Depending on $h,$ the detection threshold is either $N^{-\frac{1}{2} + \frac{2(1-\gamma)}{d}}$ or $N^{-\frac{2(1-\gamma)}{d}}$. In the former case, the detection threshold represents a blessing of dimensionality since it improves with the dimension. On the other hand, the second case displays a curse of dimensionality since the detection threshold worsens with dimensions. The first frame of \Cref{fig::threshold_comparison} makes this clearer. The black and green curves represent the two rates. Depending on the direction $h,$ the detection threshold lies on either one of the curves. We also see the exponent gap widening with dimension.

\item The 2-sided test on the other hand demonstrates much more stable behavior with respect to the direction when $d>d_c(\gamma).$ The detection threshold is $N^{-\frac{1}{2} + \frac{2(1-\gamma)}{d}}$. The 2-sided test has limiting power $\alpha$ or 1 depending on whether $\theta_N - \theta_1$ converges to 0 faster or slower than $N^{-\frac{1}{2} + \frac{2(1-\gamma)}{d}}$, regardless of the direction in which $\theta_N$ approaches $\theta_1$. In particular, it does not demonstrate the exponent gap that plagues the 1-sided test. This is shown in the second frame of \Cref{fig::threshold_comparison}. The detection threshold exponent for the 2-sided test is given by a single curve as opposed to two curves for the 1-sided test. We also see the detection threshold improving with dimension after the critical point $d_c(\gamma)$, and it approaches the parametric rate of $N^{-\frac{1}{2}}.$
\end{itemize}
%

\subsection{Organization of the paper}

The rest of the paper is organized as follows. In Section \ref{section::consistency}, we derive the weak limit of the test statistic and use it to show that the 1- and 2-sided tests are consistent. In Section \ref{section::general_alternatives_distribution}, we derive the limiting distribution of the test statistic under general alternatives. Section \ref{section::local_power_and_thresholds} gives detailed results on the detection thresholds of the 1- and 2-sided tests as well as the limiting power at these thresholds. Section \ref{section::simulations} is dedicated to simulations on synthetic data used to validate the theoretical results, and a real data experiment. This section also contains a runtime analysis of the test.
\section{Consistency}\label{section::consistency}

In this section we show that the 1-sided and 2-sided tests are both consistent. For this, we require the notion of the \textit{Henze-Penrose dissimilarity measure} between two densities.

\begin{definition}\label{defn::HP_divergence}
Let $f,g$ be two densities on $\R^d.$ Let $p\in (0,1)$ and $q:=1-p.$ Then, the \textnormal{Henze-Penrose (HP) dissimilarity measure} between $f,g$ is defined as
$$\delta(f,g,p) = pq\int\frac{f(x)g(x)}{pf(x)+qg(x)}~dx.$$
\end{definition}

\noindent This belongs to the larger class of $f-$dissimilarities as defined in \citet{gyorfi1978fdissimilarity}. The following proposition shows that the HP dissimilarity is the limiting value of the statistic $T(\Zcal_N).$

\begin{proposition}\label{propn::weak_limit_of_stat} Let $f,g$ be two densities on $\R^d.$ Let $\{k_N\}_{N\geq 1}$ be a sequence of natural numbers such that $k_N = o(N)$. Then,
$$\frac{1}{Nk_N}T(\Gscr_{k_N}(\Zcal_N))\xrightarrow{p} \delta(f,g,p).$$
\end{proposition}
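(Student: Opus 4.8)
The plan is to prove convergence in probability by showing convergence of the first two moments, i.e.\ that $\frac{1}{Nk_N}\E[T(\Gscr_{k_N}(\Zcal_N))] \to \delta(f,g,p)$ and that $\operatorname{Var}\bigl(\frac{1}{Nk_N}T(\Gscr_{k_N}(\Zcal_N))\bigr) \to 0$. Since $T = \sum_{x,y\in\Zcal_N}\psi(c_x,c_y)\indicator\{(x,y)\in E(\Gscr_{k_N}(\Zcal_N))\}$, and conditionally on the point set $\Zcal_N$ the labels $c_x$ are assigned independently, the conditional expectation of each term $\psi(c_x,c_y)=\indicator\{c_x=1,c_y=2\}$ is $\frac{N_1 f(x)}{N\phi_N(x)}\cdot\frac{N_2 g(y)}{N\phi_N(y)}$. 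Writing $p_N:=N_1/N$, $q_N:=N_2/N$, so that $N\phi_N = N(p_N f + q_N g)$, the expectation becomes
\[
\E[T(\Gscr_{k_N}(\Zcal_N))] = \E\!\left[\sum_{(x,y)\in E(\Gscr_{k_N}(\Zcal_N))}\frac{p_N f(x)}{\phi_N(x)}\cdot\frac{q_N g(y)}{\phi_N(y)}\right],
\]
where the sum is over directed edges $x\to y$ of the $k_N$-NN graph.

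First I would handle the expectation. The key observation is a locality/continuity heuristic: in the $k_N$-NN graph on a Poisson process of intensity $N\phi_N$, a typical vertex $x$ has $k_N$ out-neighbors, all lying within a small ball around $x$ whose radius shrinks (because $k_N = o(N)$ forces the local sample density to infinity). On this shrinking ball, by Lebesgue differentiation / continuity of the densities almost everywhere, $g(y)/\phi_N(y) \approx g(x)/\phi_N(x)$, and $\phi_N \to \phi := pf+qg$ as $N_1/N\to p$. Hence each of the $k_N$ edges out of $x$ contributes approximately $\frac{p f(x)}{\phi(x)}\cdot\frac{q g(x)}{\phi(x)}$, giving
\[
\E[T(\Gscr_{k_N}(\Zcal_N))] \approx N k_N \int \frac{pf(x)}{\phi(x)}\cdot\frac{qg(x)}{\phi(x)}\,\phi(x)\,dx = Nk_N\, pq\!\int\!\frac{f(x)g(x)}{pf(x)+qg(x)}\,dx,
\]
which is exactly $Nk_N\,\delta(f,g,p)$. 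Making this rigorous requires a Mecke/Palm-calculus computation for the Poisson process, a truncation to a compact set where $f,g$ are bounded and bounded below, and a uniform control on the $k_N$-NN radius (e.g.\ via standard concentration for the number of Poisson points in a ball, showing $r_{k_N}(x)^d \asymp k_N/(N\phi(x))$ with high probability). The fact that $k_N$ may grow is not an obstacle here: the ball still shrinks as long as $k_N/N\to 0$.

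Next I would bound the variance. Write $T = \sum_{e} \psi_e \indicator\{e\in E\}$ over ordered pairs $e=(x,y)$. The variance decomposes via the spatial independence of the Poisson process and the conditional independence of the labels into: (i) a ``diagonal'' part from pairs of edges sharing a vertex or involved in overlapping neighborhoods, and (ii) a ``label variance'' part from the randomness in $c_x$. Each vertex has out-degree exactly $k_N$ and in-degree $O(k_N)$ (the in-degree of a point in a $K$-NN graph in $\R^d$ is bounded in terms of a dimension-dependent constant times $K$ — a classical geometric fact), so the number of pairs of edges sharing structure is $O(Nk_N^2)$, and each term is $O(1)$; likewise the label-induced covariance is controlled because two edges not sharing a vertex get independent labels. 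Thus $\operatorname{Var}(T) = O(N k_N^2)$, whence $\operatorname{Var}\bigl(\frac{T}{Nk_N}\bigr) = O\bigl(\frac{1}{N}\bigr)\to 0$. Combining with the mean computation and Chebyshev gives the claim.

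The main obstacle I anticipate is the rigorous justification of the local approximation in the mean computation — specifically, controlling the error from replacing $g(y)/\phi_N(y)$ by $g(x)/\phi_N(x)$ over the random $k_N$-NN neighborhood, uniformly enough to integrate. This needs (a) a high-probability bound on the $k_N$-NN radius at a typical point, (b) an $L^1$-type continuity statement for $f,g$ (Lebesgue points), and (c) a dominated-convergence argument near points where $\phi$ is small or the densities are discontinuous, handled by a compact-set truncation plus tail bounds. The in-degree bound, though elementary, also deserves a careful statement since it is what keeps the variance linear in $N$ rather than quadratic. I would organize the proof so that a single lemma on $k_N$-NN radii and degrees in a Poisson process does the geometric heavy lifting for both the mean and the variance.
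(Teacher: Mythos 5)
Your overall route---marginal mean via the Mecke/Palm formula and a shrinking nearest-neighbor radius, a variance bound, then Chebyshev---is sound, and it relies on exactly the ingredients the paper uses (Palm theory, the high-probability bound $r_N(K)\asymp (k_N/N)^{1/d}$ on the $k_N$-NN radius, continuity of the densities, and the cone-based $O(k_N)$ in-degree bound). The paper packages the second-moment argument differently: it conditions on $\Fcal_N=\sigma(\Zcal_N,L_N)$, proves that $\frac{1}{Nk_N}\E(T\mid\Fcal_N)$ converges in $L^2$ to $\delta(f,g,p)$ (via Lemma \ref{lemma::convergence_to_limiting_value}), and then disposes of $T-\E(T\mid\Fcal_N)$ using the conditional variance bound $\Var(T\mid\Fcal_N)=O_p(Nk_N^2)$, which is a genuine dependency-graph count because, given the positions, only edges sharing an endpoint have correlated labels. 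Your version works with the unconditional mean and the total variance instead; the two are equivalent in spirit, but the conditioning buys a clean separation of the label randomness from the positional randomness.

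The one step you state more strongly than your sketch justifies is $\Var(T)=O(Nk_N^2)$. The in-degree/vertex-sharing count controls only the label-induced part, i.e.\ $\E[\Var(T\mid\Fcal_N)]$. The positional part, $\Var(\E(T\mid\Fcal_N))$, is the variance of the Poisson functional $\sum_{(x,y)\in E(\Gscr_{k_N})}h_N(x,y)$, and there indicators of vertex-disjoint edges are still dependent through the point configuration (whether $(x,y)$ is an edge depends on every point of $\Zcal_N$ in $B(x,\|x-y\|)$). Counting pairs of edges with overlapping neighborhoods gives order $Nk_N^3$, not $Nk_N^2$, and genuinely distant pairs are only approximately independent---you need the truncation to edges of length at most $r_N(K)$, plus the tail bound on the NN radius, to decouple them. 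This does not sink the proposition: Chebyshev only needs $\Var(T)=o(N^2k_N^2)$, and $O(Nk_N^3)$ together with the truncation argument suffices since $k_N=o(N)$; alternatively, proving $L^2$ convergence of the normalized conditional expectation, as the paper does, handles the positional fluctuation in one stroke (the paper later pins down its exact order via Penrose's variance lemma, but that precision is not needed for consistency). So either weaken the variance claim to what your counting actually delivers, or adopt the conditional decomposition.
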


Under the null $f=g,$ $\delta(f,f,p) = pq.$ Furthermore, from \citet[Theorem 1 and Corollary 1]{gyorfi1978fdissimilarity} we have that $\delta(f,g,p)\leq pq$ with equality holding if and only if $f,g$ are equal everywhere except on a set of measure $0.$ These facts combined with the asymptotic normality of $T(\Gscr_{k_N})$ under the null (\Cref{thm::clt_general_stat} in the next section) show that the 1- and 2-sided tests are consistent. The convergence in \Cref{propn::weak_limit_of_stat} hinges on the fact that for $k_N\to \infty,$ the distance between a point and its $k_N$ nearest neighbors converges to 0 at the rate $(k_N N^{-1})^\frac{1}{d}$. As an estimator of the HP dissimilarity, the statistic $T(\Gscr_{k_N})$ is akin to nearest neighbor estimators of densities. Our proof also suggests that it should be possible to extend this convergence for other norms in $\R^d.$

The HP dissimilarity has been repeatedly obtained as the limiting value of graph-based statistics. Proposition 2.1 of \citet{bhattacharya2020detectionthresholds} shows that when $K$ is fixed,
$$\frac{1}{N}T(\Gscr_K(\Zcal_N)) \xrightarrow{p} K\cdot \delta(f,g,p),$$

In fact, the same result proves the above convergence to the HP dissimilarity holds for any stabilizing graph functional which also satisfies some additional degree conditions. Earlier, \citet{henze1999multivariate} proved the same result for the MST. In \citet{ariascastro2015consistency} the authors showed that the HP dissimilarity is also the weak limit for the statistic based on the minimum bipartite matching (MBM) graph, as defined in \citet{rosenbaum2005exact2sampletest}. It is unknown whether the MBM is stabilizing, which shows that stabilization is not a prerequisite.

These results accompanied by \Cref{propn::weak_limit_of_stat} suggest a general extension to \emph{dense} versions of a given base graph. Note that the $K$-NN graph can be constructed by iteratively constructing the $1$-NN graph. Initially, all edges are available to us, and we construct the usual $1$-NN graph. On the second iteration, we construct the $1$-NN graph restricted to those edges not occurring in the first iteration. We can repeat this process $K$ times, each time restricting the set of available edges to those that have not been selected in any previous iteration. Taking the union of these $K$ graphs gives the $K$-NN graph. This procedure can be used to produce a dense version of any given base graph. One can now ask a more general question---does a result similar to \Cref{propn::weak_limit_of_stat} hold if we consider the number of copies $K$ to be fixed or increasing with $N$, for a different base graph? By tailoring the approach for each specific base graph, it should be possible to show the HP dissimilarity as the limiting object in other cases. To handle a large family of graphs in one fell swoop, one can attempt to extend the definition of stabilization itself to incorporate the notion of varying densities.

The proof of Proposition \ref{propn::weak_limit_of_stat} is given in \appendixB.

\section{Distribution under general alternatives}\label{section::general_alternatives_distribution}

Recall the Poissonized set up from Section \ref{section::poissonization}. Let $f,g$ be densities on $\R^d.$ Define
\begin{align*}
    \phi_N(x) &= \frac{N_1}{N}f(x) + \frac{N_2}{N}g(x),\\
    \phi(x) &= pf(x) + qg(x).
\end{align*}
$\Zcal_N = \{Z_1,\dots,Z_{L_N}\}$ denotes the set of points sampled from a Poisson process on $\R^d$ with intensity function $N\phi_N(x) = N_1f(x) + N_2g(x)$ where $N_1+N_2 = N.$ Since $f,g$ are densities, we have that $L_N \sim \poi(N).$ To each point $z\in \Zcal_N,$ we assign the label $1$ or $2$ with probabilities proportional to $N_1f(z), N_2g(z).$

The normalized test statistic is
$$\Rcal(\Gscr_{k_N}(\Zcal_N)) = \frac{1}{k_N\sqrt{N}}(T(\Gscr_{k_N}(\Zcal_N)) - \E_{H_1}(T(\Gscr_{k_N}(\Zcal_N)))).$$

In this section, we will derive the asymptotic distribution of $\Rcal(\Gscr_{k_N}(\Zcal_N))$ as $N\to \infty$ when $k_N = o(N^{1/4}).$ To start, we prove the asymptotic normality of  $\Rcal(\Gscr_{k_N})$ after conditioning on an appropriate sigma algebra.

\subsection{CLT for a conditional statistic}\label{subsection::clt_conditional_stat}

To motivate the conditional CLT, it is useful to consider formulation of graph-based two sample tests as permutation tests. In the un-Poissonized setting, the data consists of $X_1,..,X_{N_1}\overset{\textnormal{i.i.d}}{\sim} F$ and $Y_1,\dots,Y_{N_2}\overset{\textnormal{i.i.d}}{\sim} G$ for two distribution $F,G$. The test statistic is then defined as
$$T(\Gscr_K) = \sum_{i=1}^{N_1}\sum_{j=1}^{N_2}\indicator\{(X_i,Y_j)\in \Gscr_K\},$$

where $\Gscr_K$ is the directed $K$-NN graph assembled from $\{X_1,..,X_{N_1},Y_1,\dots,Y_{N_2}\}.$ Under the null $F=G,$ an alternate way to sample the data is to first consider $N=N_1+N_2$ points, $Z_1,\dots,Z_N \overset{\textnormal{i.i.d}}{\sim} F.$ The labels of the points are $\{c_1,\dots,c_N\}$ such that $c_i=1$ for $i\leq N_1$ and $c_i=2$ otherwise. The test statistic is now defined as
$$T(\Gscr_K) = \sum_{i=1}^N\sum_{i=1}^N \indicator\{(Z_i,Z_j)\in \Gscr_K, c_i=1,c_j =2\}.$$

Sampling $B$ i.i.d uniform permutations $\{\tau_i\}_{i=1}^B$, we can define 
$$T^{(b)}(\Gscr_K) = \sum_{i=1}^N\sum_{j=1}^N \indicator\{(Z_i,Z_j)\in \Gscr_K, c_{\tau_b(i)}=1, c_{\tau_b(j)}=2\}.$$

Under the null, $T(\Gscr_K),T^{(1)}(\Gscr_K),\dots,T^{(B)}(\Gscr_K)$ are exchangeable due to the independence of locations and labels, which yields the permutation test. Thus, under the null the permutation test is obtained by conditioning on the locations of the points and resampling the labels.

Analogously, in the Poissonized setting we instead condition on $\Fcal_N = \sigma(\{Z_1,\dots,Z_{L_N}\})$, the sigma algebra containing the information on the locations of the points. The randomness now comes only from the labels, and we consider the conditional statistic
$$\Rcal_\cond(\Gscr_{k_N}(\Zcal_N)) = \frac{1}{\sqrt{N}k_N}(T(\Gscr_{k_N}(\Zcal_N)) - \E_{H_1}(T(\Gscr_{k_N}(\Zcal_N)| \Fcal_N)).$$

We now show the asymptotic normality of this statistic under general alternatives.

\begin{theorem}\label{thm::clt_conditional_stat}
Let $f,g$ be densities on $\R^d.$ Let $k_N = o(N).$ Define $\Fcal_N = \sigma(\Zcal_N,L_N),$ the sigma algebra generated by the points in the Poisson process. Then we have that
\begin{equation}\label{eqn::clt_for_conditional_stat}
\frac{\Rcal_\cond(\Gscr_{k_N}(\Zcal_N))}{\sqrt{\Var(\Rcal_\cond(\Gscr_{k_N}(\Zcal_N))\mid \Fcal_N)}} \xrightarrow{d} N(0,1).
\end{equation}

Furthermore,
\begin{equation}\label{eqn::asymptotic_conditional_variance}
    \frac{1}{Nk_N^2}\Var(\Rcal_\cond(\Gscr_{k_N}(\Zcal_N))|\Fcal_N)\xrightarrow{p} \sigma^2_\cond,
\end{equation}
where
$$\sigma^2_\cond = pq\int \frac{f(x)g(x)(pf(x)-qg(x))^2}{\phi^3(x)}~dx.$$
\end{theorem}

Under the null, the conditional variance in \eqref{eqn::clt_for_conditional_stat} is easily obtained from the adjacency structure of the graph $\Gcal_{k_N}.$ Indeed, after conditioning on $\Zcal_N,$ the statistic $T(\Gscr_{k_N})$ is a sum of dependent Bernoulli random variables, with the dependence structure encoded by $\Gcal_{k_N}.$ We elaborate on the exact conditional variance calculation in \Cref{subsection::runtime_analysis}. The above result gives a pivotal statistic that can be used to carry out the test when the sample size makes the permutation test computationally prohibitive. The proof used Stein's method for dependency graphs and is given in \appendixB.

\subsection{CLT for the test statistic}\label{subsection::clt_test_stat}

We now show the asymptotic normality in the unconditional case.

\begin{theorem}\label{thm::clt_general_stat}
Let $f,g$ be densities on $\R^d$ and let $k_N \to \infty$ with $k_N = o(N).$ Then,
$$\frac{1}{Nk_N^2}\Var(T(\Gscr_{k_N}(\Zcal_N))) \to \sigma^2$$
where
\begin{equation}\label{eq::general_unconditional_variance}
\sigma^2 = pq\bigintsss \frac{f(x)g(x)(pf(x)-qg(x))^2}{\phi(x)^3}~dx + p^2q^2\bigintsss \frac{f(x)^2g(x)^2}{\phi(x)^3}~dx.
\end{equation}

Furthermore, when $k_N = o(N^{1/4}),$
$$\Rcal(\Gscr_{k_N}(\Zcal_N)) \xrightarrow{d} N(0,\sigma^2)$$
\end{theorem}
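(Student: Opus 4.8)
The plan is to establish the CLT for $\Rcal(\Gscr_{k_N}(\Zcal_N))$ by decomposing the (centered) statistic into a sum of local contributions indexed by the points of the Poisson process, and then applying a normal approximation theorem tailored to functionals of Poisson processes with ``weak long-range dependence.'' Concretely, write $T(\Gscr_{k_N}(\Zcal_N)) = \sum_{x \in \Zcal_N} \xi(x, \Zcal_N)$ where $\xi(x, \Zcal_N) = \sum_{y} \psi(c_x, c_y)\indicator\{(x,y) \in E(\Gscr_{k_N}(\Zcal_N))\}$ records the edges emanating from (and, for the purposes of counting, incident to) $x$. The score $\xi(x,\Zcal_N)$ is not determined by a bounded-radius neighborhood of $x$ — this is precisely why the $k_N$-NN graph is not stabilizing — but the radius $R_{k_N}(x)$ needed to see the $k_N$ nearest neighbors of $x$ concentrates sharply: $R_{k_N}(x)^d$ is, up to constants, $k_N/N$ with Poisson-type fluctuations. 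So the effective ``stabilization radius'' shrinks like $(k_N/N)^{1/d}$, and the key is to quantify how this scaling interacts with the growth of $k_N$.

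The steps I would carry out are as follows. First, I would reduce to the first assertion of the theorem together with a central limit theorem: Theorem \ref{thm::clt_general_stat} already asserts $\Var(T)/(Nk_N^2) \to \sigma^2$, so it suffices to prove asymptotic normality of the normalized sum, and for this I would invoke a quantitative CLT for Poisson functionals — either the Stein/Malliavin bound of Last--Peccati--Schulte or a dependency-graph / Stein's method argument as in the conditional case mentioned in the excerpt — which controls the Wasserstein or Kolmogorov distance to $N(0,\Var(T))$ in terms of second- and fourth-order difference operators $D_z T$, $D^2_{z_1,z_2} T$. Second, I would bound the first difference operator: adding a point $z$ changes the graph only for points $x$ that either have $z$ among their $k_N$ nearest neighbors, or lie within distance $R_{k_N}(x)$ of $z$; a volume/order-statistics argument shows that the number of affected $x$ is $O(k_N)$ with overwhelming probability, and each affected score changes by $O(k_N)$, so $\|D_z T\|$ is typically $O(k_N)$ but can be as large as $O(k_N^2)$ on rare events. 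Third, I would feed these moment bounds into the abstract CLT; the resulting bound on the distance to normality takes the schematic form (moments of $D T$, $D^2 T$) divided by $\Var(T)^{\text{(power)}} \asymp (Nk_N^2)^{\text{(power)}}$, and one checks this tends to $0$. This is where the restriction $k_N = o(N^{1/4})$ enters: the fourth-moment / bad-event contributions scale like a power of $k_N$ that must be beaten by a power of $N$, and $k_N^4 = o(N)$ is exactly the threshold that makes the error terms vanish — consistent with the authors' remark that they believe the true range is $o(N)$ but their technique only gives $o(N^{1/4})$.

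The main obstacle, and the step I expect to consume most of the work, is the control of the difference operators uniformly over the unbounded domain $\R^d$ with a general (non-compactly-supported, possibly unbounded) density $\phi$. Two intertwined difficulties arise: (i) in regions where $\phi$ is small, the $k_N$-nearest-neighbor radius $R_{k_N}(x)$ is large, so ``locality'' degrades and one must show these regions contribute negligibly — this requires tail bounds on $R_{k_N}(x)$ combined with integrability of $\phi$ and the fact that the integrand in $\sigma^2$ is controlled by $\phi$; and (ii) the scores $\xi(x,\Zcal_N)$ themselves grow with $k_N$, so unlike the classical stabilizing-graph setting the summands are not bounded, and every moment bound carries explicit $k_N$ factors that must be tracked carefully through the Stein bound. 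A secondary technical point is handling the in-degree contribution: $x$ can be a $k_N$-nearest neighbor of arbitrarily many points $y$ (the in-degree is not $O(k_N)$ deterministically), but a standard geometric ``cone covering'' argument bounds the in-degree by $c_d k_N$ with exponential tails, which suffices. Once these geometric estimates are in place, the remaining computations — matching the variance to \eqref{eq::general_unconditional_variance} (already granted) and checking the error terms vanish for $k_N = o(N^{1/4})$ — are routine.
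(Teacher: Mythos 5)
Your route to asymptotic normality is genuinely different from the paper's. You propose a Malliavin--Stein / second-order Poincar\'e-type bound for Poisson functionals, controlling the add-one difference operators $D_zT$ and $D^2_{z_1,z_2}T$; the paper instead truncates the statistic to edges of length at most $r_N(K) = (K\max(k_N,(\log N)^2)/N)^{1/d}$, partitions the support into boxes of that side length, and applies Stein's method for dependency graphs to the box-level sums $X(i,N,K)$, whose dependency graph has degree bounded by a dimensional constant; the truncation error is killed by Lemma \ref{lemma::nearest_neighbor_distance_bound}. Your heuristic accounting is sound (indeed $D_zT = O(k_N)$ \emph{deterministically}, since the in-degree of $z$ is at most $C_dk_N$ by the cone argument of Lemma \ref{lemma::biau_devroye_cones}, so the ``rare $O(k_N^2)$ events'' you worry about do not arise), and both routes plausibly land on the same $k_N=o(N^{1/4})$ restriction from loose fourth-moment bounds. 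One caveat on your ``main obstacle'': the paper does not fight unbounded supports or vanishing densities at all --- its truncation/box construction and moment bounds tacitly use a compact support with $f,g$ bounded above and below (as in Lemma \ref{lemma::nearest_neighbor_distance_bound}), consistent with Assumption \ref{assume::assumptions_on_parametrized_family}; so if you insist on genuinely general densities on $\R^d$ you are proving something stronger than the paper does, and the tail control you sketch would need to be supplied in full.

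The genuine gap is your treatment of the variance. You write that the theorem ``already asserts'' $\Var(T)/(Nk_N^2)\to\sigma^2$ and take it as granted, but that convergence, with the specific value \eqref{eq::general_unconditional_variance}, is part of what must be proved --- and it is not optional even for your CLT step, because $\Rcal$ is normalized by the deterministic factor $k_N\sqrt N$, so a Stein bound on the distance between $(T-\E T)/\sqrt{\Var T}$ and $N(0,1)$ only yields $\Rcal\xrightarrow{d}N(0,\sigma^2)$ after one identifies $\lim \Var(T)/(Nk_N^2)=\sigma^2>0$. In the paper this is a substantial piece of work: one decomposes $\Var(T)=\E[\Var(T\mid\Fcal_N)]+\Var(\E[T\mid\Fcal_N])$, shows $\frac{1}{Nk_N^2}\Var(T\mid\Fcal_N)\to pq\int f g(pf-qg)^2/\phi^3$ via the $L^2$-convergence lemmas for the local sums $\kappa_N$, $\tau_N^{\uparrow},\tau_N^{\downarrow},\tau_N^{+}$ (which in turn need the limiting in-degree moments of Lemma \ref{lemma::limiting_moment_of_indegree}), and shows $\frac{1}{Nk_N^2}\Var(\E[T\mid\Fcal_N])\to p^2q^2\int f^2g^2/\phi^3$ using Penrose's variance identity for Poisson functionals. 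None of this is routine bookkeeping, and your proposal contains no substitute for it; as written, the argument is circular on exactly the component that produces the two integrals in \eqref{eq::general_unconditional_variance}.
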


\Cref{thm::clt_general_stat} relies on the fact that when $k_N = o(N),$ the $k_N$-nearest neighbors of a point all lie in a ball of shrinking radius around it. In particular, the probability of two points lying in well-separated regions being nearest neighbors is negligible. To use this in the proof, we divide the region into a grid of boxes shrinking at an appropriate rate. By the argument sketched above, we can then restrict ourselves to considering nearest neighbors within neighboring boxes. This allows us to prove asymptotic normality using Stein's method for a truncated version of $\Rcal(\Gscr(\Zcal_N))$. By using Slutsky's theorem, we recover the asymptotic normality result for the original statistic. Theorem \ref{thm::clt_general_stat} is proved in \appendixB.

The asymptotic normality in \Cref{thm::clt_conditional_stat} and \Cref{thm::clt_general_stat} is proved using Stein's method for dependency graphs. However, the range of $k_N$ considered is different for the two---respectively, $k_N = o(N)$ and $k_N = o(N^\frac{1}{4})$. We now give a heuristic of why this happens. The Stein's method approach requires bounding the Wasserstein distance to the standard normal. The Wasserstein bound involves powers of $k_N$ in both cases. These are unavoidable, since the dependence increases with $k_N$, which makes the normal approximation worse with increasing growth of $k_N$. In \Cref{thm::clt_conditional_stat}, after conditioning on $\Fcal_N,$ the randomness comes only from the labels of the points. In this case, the test statistic is a sum of Bernoulli random variables indexed by the edges of the graph. Only the random variables corresponding to the edges that share an end point can be correlated. Hence, the dependency graph in Stein's method is closely related to the $k_N$-NN graph obtained from the points in $\Zcal_N.$ This additional structure allows us to quantify the local dependence very well.

In contrast, the marginal CLT in \Cref{thm::clt_general_stat} has much greater dependence between points. This arises because, without any conditioning, potentially any two points could be nearest neighbors of each other. The truncation argument we use does restrict this dependence to nearby points, but the Wasserstein bound still contains additional powers of $k_N$ compared to the conditional case. As a result, we only obtain asymptotic normality for $k_N = o(N^\frac{1}{4})$. It is likely that there is a different argument which holds for the full range of $k_N = o(N)$. Having pointed this out, we should note that \Cref{thm::clt_conditional_stat} gives a valid way to test the null for $k_N = o(N)$. Thus, the restriction on the growth of $k_N$ does not pose a practical threat to the test. We elaborate on this further in \Cref{section::simulations}.
\section{Local power of the two sample test}\label{section::local_power_and_thresholds}

We now come to the local power of the $K$-NN test in a parametric family. We first state the assumptions we make on the family of distributions we consider. The assumptions we make are the same as the ones made by \citet{bhattacharya2020detectionthresholds} in proving the results for fixed $K.$

For a function $g(z_1,z_2):\R^d\times \R^m \to \R,$ we denote the gradient and Hessian with respect to $z_1$ for a fixed $z_2$ by $\nabla_{z_1}g(z_1,z_2)$ and $\Hnormal_{z_1}g(z_1,z_2)$ similarly denote by $\nabla_{z_2}g(z_1,z_2)$ and $\Hnormal_{z_2}g(z_1,z_2)$ the gradient and the Hessian of $g$ with respect to $z_2$ for a fixed $z_1.$ With the notation defined, we now state our assumptions.

\begin{assumption}\label{assume::assumptions_on_parametrized_family}
Let $\{P_\theta\}_{\theta\in \Theta}$ be a family of distributions parametrized by the elements of a convex set $\Theta\subset \R^m.$ We will assume the following properties for $\{P_\theta\}_\theta:$
\begin{enumerate}
    \item For all $\theta\in \Theta,$ the density $p(\cdot|\theta)$ has a common support $S$ such that $S$ is compact, convex with non-empty interior and $p(\cdot | \theta)$ are uniformly bounded above and below on $S.$
    \item The support $S$ satisfies $S = \overline{\textnormal{int}(S)}$ and $\partial S$ has Lebesgue measure zero.
    \item For all $\theta\in \Theta$ the functions $p(\cdot|\theta)$ and $\nabla_\theta p(\cdot|\theta)$ are three times continuously differentiable over $S$.
    \item $\E\left[\frac{h^T\nabla_{\theta_1}p(X|\theta)}{p(X|\theta)}\right]^2$ is finite and positive for all $\theta\in \Theta$ and $h\in \R^m,~ h\neq 0.$
    \item The function $p(x|\cdot)$ is three times continuously differentiable in $\Theta$ for all $x\in S.$
\end{enumerate}
\end{assumption}

We assume that all densities have the same compact support to simplify the proofs. However, we fully expect that one can circumvent it assuming that the tails of the distributions decay fast enough. Indeed, our simulations involve the spherical normal family which does not satisfy the compact support assumption. However, the behavior predicted by our results is observed, indicating that our results do hold in the more general setting as well. The assumptions on the differentiability and smoothness are required to analyze the difference in the null and alternate means under local alternative cases. Under looser assumptions on the differentiability, our methods could give upper and lower bounds on the detection thresholds, but it will be difficult to obtain exact power expressions.

To find the local power of the $K$-NN test, we need the asymptotic distribution of the statistic when the two densities $f,g$ are given by $f=p(\cdot|\theta_1), g = p(\cdot|\theta_N)$ where $\theta_1\in \Theta$ is fixed and $\theta_N\to \theta_1$. The proof of \Cref{thm::clt_general_stat}, which gives the asymptotic distribution under general alternatives, can be adapted to give the required result. This is summarized in the following lemma.

\begin{lemma}\label{lemma::local_alternative_clt} Let $f=p(\cdot|\theta_1), g=p(\cdot|\theta_N)$ such that $\theta_N\to \theta_1$ as $N\to \infty.$ Let $k_N = o(N^{1/4}).$ Then
$$\Rcal(\Gscr_{k_N}(\Zcal_N)) \xrightarrow{d} N(0,\sigma^2_0)$$
where 
\begin{equation}\label{eq::null_variance}
\sigma^2_0 = pq((p-q)^2+pq).
\end{equation}
\end{lemma}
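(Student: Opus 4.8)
The plan is to obtain Lemma~\ref{lemma::local_alternative_clt} as a direct corollary of Theorem~\ref{thm::clt_general_stat}. Theorem~\ref{thm::clt_general_stat} already gives, for $f,g$ fixed densities and $k_N=o(N^{1/4})$, that $\Rcal(\Gscr_{k_N}(\Zcal_N))\xrightarrow{d} N(0,\sigma^2)$ with the explicit variance \eqref{eq::general_unconditional_variance}. The only subtlety here is that in the local-alternative setting the densities are not fixed: $g=p(\cdot\mid\theta_N)$ moves with $N$. So the first thing I would do is check that the proof of Theorem~\ref{thm::clt_general_stat} goes through verbatim with a triangular-array version of $g$. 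This is exactly the claim made in the text preceding the lemma (``The proof of Theorem~\ref{thm::clt_general_stat}\ldots can be easily adapted''). Concretely, the Stein's-method-for-dependency-graphs argument sketched after Theorem~\ref{thm::clt_general_stat} only uses (i) that the $k_N$ nearest neighbours of a point lie in a ball of shrinking radius, which is a purely geometric statement about the point process and is unaffected by the labelling law, and (ii) uniform upper and lower bounds on the intensity $N\phi_N = N_1 p(\cdot\mid\theta_1)+N_2 p(\cdot\mid\theta_N)$ on the common compact support $S$; since $\theta_N\to\theta_1$ and, by Assumption~\ref{assume::assumptions_on_parametrized_family}(1), the densities $p(\cdot\mid\theta)$ are uniformly bounded above and below on $S$, these bounds hold uniformly in $N$. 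Hence the moment bounds driving the Stein error term are uniform along the sequence, and the triangular-array CLT holds with limiting variance $\lim_N \sigma^2_N$, where $\sigma^2_N$ is \eqref{eq::general_unconditional_variance} evaluated at $f=p(\cdot\mid\theta_1)$, $g=p(\cdot\mid\theta_N)$, provided this limit exists.

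The second, genuinely computational, step is to evaluate that limit. Set $f=g=p(\cdot\mid\theta_1)=:f_0$ in \eqref{eq::general_unconditional_variance} — this is legitimate because $\theta_N\to\theta_1$ and, by the smoothness in Assumption~\ref{assume::assumptions_on_parametrized_family}(3), $p(\cdot\mid\theta_N)\to p(\cdot\mid\theta_1)$ uniformly on the compact set $S$, so the integrands in \eqref{eq::general_unconditional_variance} converge uniformly and one may pass to the limit under the integral (dominated convergence, with the uniform lower bound on $\phi$ controlling the $\phi^{-3}$ factor). When $f=g=f_0$ we have $\phi=pf_0+qf_0=f_0$, so $\phi^3=f_0^3$. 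The first integral becomes
\begin{equation*}
pq\int \frac{f_0(x)^2\,(p-q)^2 f_0(x)^2}{f_0(x)^3}\,dx = pq(p-q)^2\int f_0(x)\,dx = pq(p-q)^2,
\end{equation*}
and the second becomes
\begin{equation*}
p^2q^2\int \frac{f_0(x)^4}{f_0(x)^3}\,dx = p^2q^2\int f_0(x)\,dx = p^2q^2.
\end{equation*}
Adding these gives $\sigma^2_0 = pq(p-q)^2 + p^2q^2 = pq\big((p-q)^2+pq\big)$, which is exactly \eqref{eq::null_variance}. The same substitution into $\sigma^2_\cond$ from Theorem~\ref{thm::clt_conditional_stat} recovers the first term $pq(p-q)^2$ only, consistent with the conditional test having smaller asymptotic variance; I would note this as a sanity check but it is not needed for the statement.

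Putting the two pieces together: by the triangular-array extension of Theorem~\ref{thm::clt_general_stat}, $\Rcal(\Gscr_{k_N}(\Zcal_N))$ is asymptotically normal with variance $\lim_N\sigma^2_N$; by the uniform-convergence argument this limit is the value of \eqref{eq::general_unconditional_variance} at $f=g=p(\cdot\mid\theta_1)$, which by the display above equals $\sigma^2_0=pq((p-q)^2+pq)$. I would also verify the mild point that the centering in $\Rcal$ uses $\E_{H_1}(T)$, i.e.\ the expectation under the actual (moving) alternative, so no extra bias term is introduced — the CLT is about $T$ minus its own mean, so this is automatic. The main obstacle, and the only place where care is really required, is the first step: justifying that the Stein-method bounds in the proof of Theorem~\ref{thm::clt_general_stat} are uniform over the triangular array rather than merely holding for each fixed pair of densities. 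As argued above this reduces to uniform two-sided bounds on the intensity and a uniform shrinking-radius bound, both of which follow from the compact-support and smoothness parts of Assumption~\ref{assume::assumptions_on_parametrized_family} together with $\theta_N\to\theta_1$; once that is in hand the rest is the short integral computation above and an appeal to Slutsky exactly as in Theorem~\ref{thm::clt_general_stat}.
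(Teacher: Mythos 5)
Your proposal is correct and follows essentially the same route as the paper: the paper also obtains this lemma by noting that the proof of Theorem~\ref{thm::clt_general_stat} adapts to the local-alternative (triangular-array) setting, and then evaluates the general variance \eqref{eq::general_unconditional_variance} at $f=g=p(\cdot|\theta_1)$ to get $\sigma^2_0 = pq((p-q)^2+pq)$. Your explicit check that the Stein-method bounds are uniform along the array (via the uniform two-sided density bounds on the compact support) is exactly the adaptation the paper leaves implicit, and your variance computation matches the paper's.
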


The null variance in \eqref{eq::null_variance} is obtained by considering the general unconditional variance in \eqref{eq::general_unconditional_variance} for $f=g.$ Under the null, the expected value of the statistic is $Nk_N\frac{N_1N_2}{N^2}.$ Therefore, the 1-sided test rejects the null hypothesis when 
\begin{equation}\label{eq::rejection_region_for_1_sided_test}
    \frac{1}{k_N\sqrt{N}\sigma_0}\left(T(\Gscr_{k_N}(\Zcal_N)) - Nk_N\frac{N_1N_2}{N^2}\right) < z_\alpha
\end{equation}
and the 2-sided test rejects when
\begin{equation}\label{eq::rejection_region_for_2_sided_test}
    \frac{1}{k_N\sqrt{N}\sigma_0}\left|T(\Gscr_{k_N}(\Zcal_N)) - Nk_N\frac{N_1N_2}{N^2}\right| > z_{1-\alpha/2}
\end{equation}
where $z_{\alpha}, z_{1-\alpha/2}$ are the $\alpha$ and $1-\alpha/2$ quantiles, respectively, of the standard normal.

To state the theorems, we need some notation. For $\theta_1\in \Theta$ and $h\in \R^m,$ we define
\begin{align}
    a(\theta_1,h) &:= \frac{r^2}{2\sigma_0}\E\left[\frac{h^T\nabla_{\theta_1}p(X|\theta_1)}{p(X|\theta_1)}\right]^2 \label{eq::Hessian_term}\\
    b(\theta_1,h) &:= \frac{p^2q}{2(d+2)V_d^{\frac{2}{d}}\sigma_0} \bigintssss_S h^T\nabla_{\theta_1}\left(\frac{\text{tr}(\text{H}_xp(x|\theta_1))}{p(x|\theta_1)}\right)p^{\frac{d-2}{d}}(x|\theta_1) ~ dx \label{eq::Gradient_term},
\end{align}

\noindent where $\sigma^2_0$ is the null variance in \eqref{eq::null_variance}, $r=2pq$, $V_d$ denotes the volume of the $d$ dimensional unit ball and $\text{H}_x$ (as stated previously) denotes the Hessian with respect to $x$ at $x.$

In our results, we assume $k_N = N^{\gamma}$ for some $0<\gamma<1/4$.  Recall that the critical dimension $d_c(\gamma)$ is defined as
\begin{equation}\label{eq::critical_dimension_recalled}
d_c(\gamma) = \lfloor 8(1-\gamma)\rfloor.    
\end{equation}

The limiting power of both tests is different depending on whether $d\leq d_c(\gamma)$ or $d>d_c(\gamma)$. We will refer to these cases as \emph{below criticality} and \emph{above criticality}, respectively. We should point out that our methods work for any $k_N\to \infty$ satisfying $k_N = o(N^{\frac{1}{4}}).$ However, the definition of the critical dimension can be tedious. To better communicate the results, we stick to the case of $k_N = N^\gamma.$ A description of the critical dimension for a general growing sequence $\{k_N\}$ is provided in \Cref{subsection::proof_sketch}.

\subsection{Local power below criticality}

We now state the detection thresholds for $d\leq d_c(\gamma).$

\begin{theorem}\label{thm::power_below_criticality}

Let $\{P_\theta\}_{\theta\in \Theta}$ be a parametrized family satisfying \Cref{assume::assumptions_on_parametrized_family}. Let $\Zcal_N$ be the samples from the Poisson process with $f=p(\cdot|\theta_1),~g=p(\cdot|\theta_N)$ with labels assigned as in \eqref{eq::label_assignment}. Assume $k_N = N^{\gamma}$ with $0<\gamma<1/4$, and let $\theta_N \to \theta_1$ with $\epsilon_N := \theta_N - \theta_1.$ Finally, assume that $d\leq d_c(\gamma).$

Consider the 1- and 2-sided tests based on the $k_N$-NN graph with rejection regions as defined in \eqref{eq::rejection_region_for_1_sided_test} and \eqref{eq::rejection_region_for_2_sided_test}, respectively. The limiting power is given as follows.
\begin{enumerate}
\item If $\|\epsilon_N\| \ll N^{-\frac{1}{4}}$ then the limiting power of both tests is $\alpha.$
\item If $\|\epsilon_N\|\gg N^{-\frac{1}{4}}$ then the limiting power of both tests is 1.
\item If $\epsilon_N = hN^{-\frac{1}{4}}$ then:
\begin{itemize}
\item The limiting power of the 1-sided test is $\Phi(z_\alpha+a(h,\theta_1)).$
\item The limiting power of the 2-sided test is $\Phi(z_{\alpha/2}-a(h,\theta_1))+\Phi(z_{\alpha/2} + a(h,\theta_1))$ 
\end{itemize}
where $\Phi$ is the standard normal distribution function.
\end{enumerate}

\end{theorem}

The behavior of 1- and 2-sided tests in the case $d\leq d_c(\gamma)$ is fairly simple to describe and aligns with the results in many other settings. There is a single, sharp decay rate of $\epsilon_N := \theta_N - \theta_1$, in this case $N^{-\frac{1}{4}}$, which serves as the boundary between the regimes where the tests are powerless and powerful. If $\epsilon_N$ decays faster than $N^{-\frac{1}{4}},$ the tests are powerless. When it decays faster than $N^{-\frac{1}{4}},$ the tests are powerful. When the rate of decay is exactly $N^{-\frac{1}{4}},$ then our results describe the limiting power of two tests in terms of standard normal CDF. \Cref{thm::power_below_criticality} is proved in \appendixC.

\subsection{Local power above criticality}\label{sec::local_power_above_criticality}

We now describe the detection thresholds for $d>d_c(\gamma).$ In this regime, the limiting power is more subtle and to describe it, we will need some Terminology. Recall that we assume $k_N = N^{\gamma}$ for some $0<\gamma<1/4.$ When $d>d_c(\gamma),$ we have
\begin{equation}\label{eq::comparing_lower_and_upper_thresholds}
N^{-\frac{1}{2} + \frac{2(1-\gamma)}{d}}\ll N^{-\frac{1}{4}} \ll N^{-\frac{2(1-\gamma)}{d}}.
\end{equation}

We will refer to the rates $N^{-\frac{1}{2} + \frac{2(1-\gamma)}{d}}$ and $N^{-\frac{2(1-\gamma)}{d}},$ called the \emph{lower threshold} and \emph{upper threshold}, respectively. There are now three broad cases for the rate of decay of $\epsilon_N := \theta_N - \theta_1$, given as follows:
\begin{enumerate}
\item \textbf{Power below the lower threshold :} $\|\epsilon_N\| \ll N^{-\frac{1}{2}+\frac{2(1-\gamma)}{d}}.$
\item \textbf{Power in the exponent gap:} $N^{-\frac{1}{2}+\frac{2(1-\gamma)}{d}} \ll \|\epsilon_N\| \ll N^{-\frac{2(1-\gamma)}{d}}.$
\item \textbf{Power above the upper threshold:} $N^{-\frac{2(1-\gamma)}{d}} \ll \|\epsilon_N\|.$
\end{enumerate}

The next result gives the limiting power of the two tests in the above three regimes.

\begin{theorem}\label{thm::both_tests_above_criticality_broad_regimes}

Consider the 1- and 2-sided tests with rejection regions as defined in \eqref{eq::rejection_region_for_1_sided_test} and \eqref{eq::rejection_region_for_2_sided_test}, respectively. We assume the same conditions as \Cref{thm::power_below_criticality}, except now $d>d_c(\gamma)$ where $k_N = N^\gamma$. The limiting power is given as follows:

\begin{enumerate}
\item \textbf{Power below the lower threshold}: If $\|\epsilon_N\|\ll N^{-\frac{1}{2} + \frac{2(1-\gamma)}{d}},$ the limiting power of both tests is $\alpha.$
\item \textbf{Power in the exponent gap}: If $N^{-\frac{1}{2} + \frac{2(1-\gamma)}{d}} \ll \|\epsilon_N\|\ll N^{-\frac{2(1-\gamma)}{d}}$ and if $\frac{\epsilon_N}{\|\epsilon_N\|} = h,$ then: 
    \begin{itemize}
        \item The limiting power of the 1-sided test is $0$ or $1$ according to $b(h,\theta_1)$ being positive or negative, respectively.
        \item The limiting power of the 2-sided test is 1.
    \end{itemize}
\item \textbf{Power above the upper threshold}: If $N^{-\frac{2(1-\gamma)}{d}} \ll \|\epsilon_N\|$, then the limiting power of both tests is $1.$
\end{enumerate}
\end{theorem}

\begin{figure}[t]
    \centering
    \captionsetup{width = 0.9\linewidth}
    \includegraphics[width=0.7\linewidth]{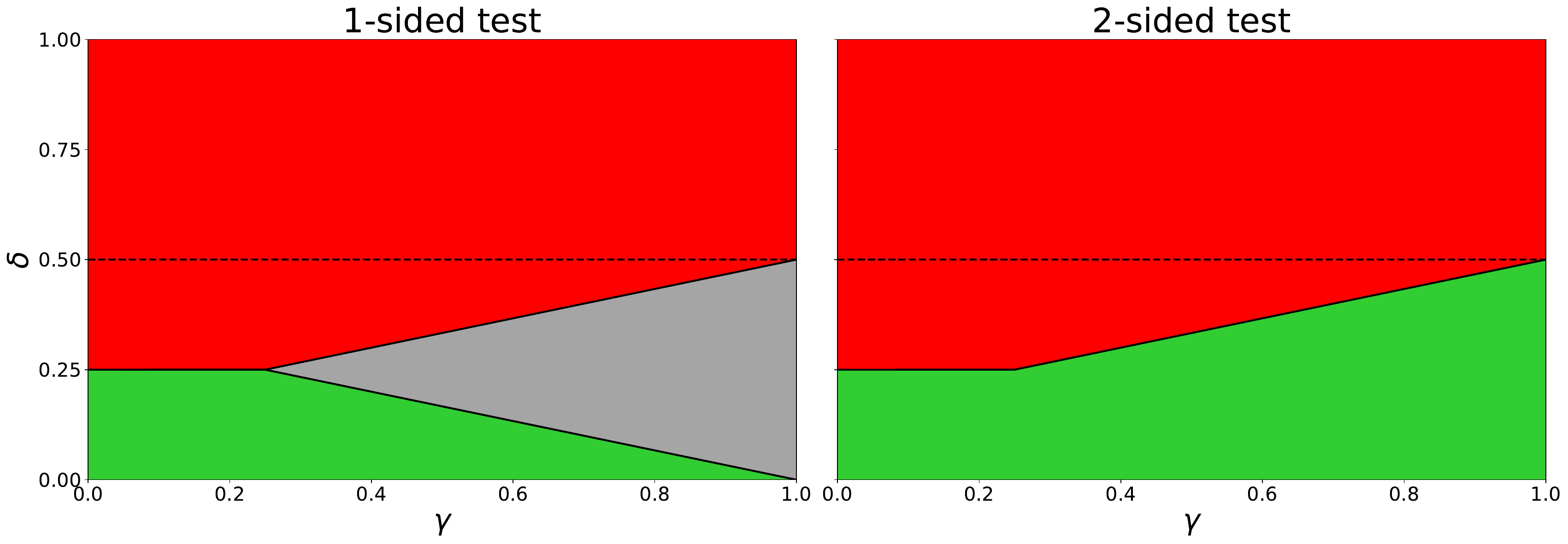}
    \caption{Limiting power comparison of the two tests for some $d\leq 8$. The X and Y-axes, respectively, represent $\gamma$ and $\delta$. Red denotes limiting power $\alpha$, green denotes limiting power $1$ and gray denotes limiting power 0 or 1. The slopes of the two lines change at $\gamma = \gamma_c(d).$ The dotted horizontal line represents the parametric rate $\delta = \frac{1}{2}$.}
    \label{fig::power_comparison_below_criticality}
\end{figure}

\Cref{thm::both_tests_above_criticality_broad_regimes} is proved in \appendixC. \Cref{thm::power_below_criticality,thm::both_tests_above_criticality_broad_regimes} are best illustrated when we take the number of neighbors to be $k_N = N^{\gamma}$ for $\gamma<1/4$ and the null and alternate parameter distance to be $\|\epsilon\| = N^{-\delta}$ for $\delta>0$. For a given value of $\gamma$ and ambient dimension $d,$ the two results then allow us to find the ranges of $\delta$ where the two tests are powerful and powerless. This provides an assessment of how sensitive the two tests are to differences in distributions.
\begin{itemize}
\item \textbf{Effect of $\gamma$:} For this part, we will assume that the ambient dimension $d$ is fixed. In \eqref{eq::critical_dimension_recalled}, we used $\gamma$ to define a critical dimension $d_c(\gamma)$ which serves as a phase transition point in the detection thresholds. Inverting this definition, we can instead define a critical growth rate of $k_N$ given by
\begin{equation}\label{eq::critical_gamma_definition}
\gamma_c(d) = 1 - \frac{d}{8}.
\end{equation}
From \Cref{thm::power_below_criticality}, if $k_N = N^\gamma$ with $\gamma\leq \gamma_c(d),$ then both tests have limiting power is $\alpha$ if $\delta>1/4$ and limiting power 1 if $\delta<1/4.$ When $\delta = 1/4,$ the limiting power is given in terms of the normal CDF according to the third point in \Cref{thm::power_below_criticality}. In this case, the detection threshold of both tests is $\delta = \frac{1}{4}$ regardless of the growth of $k_N.$

When $\gamma>\gamma_c(d),$ \Cref{thm::both_tests_above_criticality_broad_regimes} applies. The 2-sided test has limiting power $\alpha$ if $\delta>\frac{1}{2}-\frac{2(1-\gamma)}{d}$ and limiting power $1$ otherwise. The detection threshold of the 2-sided test in this case is $\delta = \frac{1}{2} - \frac{2(1-\gamma)}{d}$. Crucially, we see that as $\gamma\to 1,$ 
$$\frac{1}{2} - \frac{2(1-\gamma)}{d} \uparrow \frac{1}{2}.$$

Thus, the detection threshold of the 2-sided test actually improves and approaches the parametric rate of $\delta = \frac{1}{2}$ as we increase the growth of $k_N.$ The change in the detection threshold of the 2-sided test is summarized by the right-hand frame of \Cref{fig::power_comparison_below_criticality}.

The 1-sided test has a more subtle behavior when $\gamma>\gamma_c(d).$ The limiting power is $\alpha$ if $\delta> \frac{1}{2} - \frac{2(1-\gamma)}{d}.$ The limiting power is 1 if $\delta<\frac{2(1-\gamma)}{d}.$ When $\delta \in \left(\frac{2(1-\gamma)}{d},\frac{1}{2} - \frac{2(1-\gamma)}{d}\right)$, i.e. in the exponent gap, the limiting power is 0 or 1 depending on the direction in which $\theta_N$ approaches $\theta_1$. The effect of $\gamma$ on the detection threshold of the 1-sided test is shown in the left-hand frame of \Cref{fig::power_comparison_below_criticality}.

\item \textbf{Effect of dimension $d$:} From \eqref{eq::critical_gamma_definition}, we see that $\gamma_c(d)<0$ for $d\geq 9.$ Thus, when $d\geq 9,$ only \Cref{thm::both_tests_above_criticality_broad_regimes} applies regardless of the value of $\gamma$ and the detection thresholds of both tests are always different. In this case, a more representative plot of the limiting power is given in \Cref{fig::power_comparison_above_criticality}. Particularly, the exponent gap for the 1-sided test is present for all values of $\gamma$ as opposed to for $\gamma>\gamma_c(d)$ for $d\leq 8.$

On a similar note, \eqref{eq::critical_dimension_recalled} shows that $d_c(\gamma)\leq 8$ for any value of $\gamma.$ For $0<\gamma<1/4,$ the range covered by our results, the critical dimension can only be $d=7,8.$ Thus, for $\gamma<1/4$ and $d\leq 6,$ \Cref{thm::power_below_criticality} shows that the detection threshold of both tests is $\delta=1/4.$ 

By considering the definition of the $d_c(\gamma)$, we can trace the effect of the dimension on the detection threshold for a fixed value of $\gamma$. From \Cref{thm::both_tests_above_criticality_broad_regimes}, the 2-sided test, when $d>d_c(\gamma),$ the detection threshold is $\delta = \frac{1}{2}-\frac{2(1-\gamma)}{d}.$ On the other hand, the 1-sided test has a detection threshold of $\delta = \frac{1}{2}-\frac{2(1-\gamma)}{d}$ or $\delta = \frac{2(1-\gamma)}{d}$. We can observe that as $d\to \infty,$
$$\frac{1}{2} - \frac{2(1-\gamma)}{d} \uparrow \frac{1}{2},  \quad \frac{2(1-\gamma)}{d}\downarrow 0.$$

Thus, the 2-sided test progressively improves with increasing dimension and its detection threshold approaches the parametric rate of $\delta = \frac{1}{2}$. On the other hand, the exponent gap for the 1-sided test widens, and the test could be close to optimal ($\delta = 1/2$) or close to powerless ($\delta = 0$) depending on the direction in which the local alternative $\theta_N$ approaches the null $\theta_1$. This can be seen in \Cref{fig::threshold_comparison}. In \Cref{section::simulations}, we show that this effect of direction can be drastic even for moderate dimensions such as $d=25.$

It is interesting to see what happens when $\gamma=0.$ This corresponds to the case of $k_N$ being fixed. In this case simply plugging $\gamma=0$ into previous expressions gives the critical dimension $d_c(\gamma) = 8$. Substituting $\gamma = 0$ in \Cref{thm::power_below_criticality} gives that for $d\leq 8,$ the detection threshold of the 1-sided test is given as $\delta = \frac{1}{4}$. Taking $\gamma=0$ in \Cref{thm::both_tests_above_criticality_broad_regimes} gives that for $d\geq 9,$ the detection can be $\delta = \frac{1}{2} - \frac{2}{d}$ or $\delta = \frac{2}{d}$ depending on the direction. These align exactly with the rates obtained in \citet{bhattacharya2020detectionthresholds} for the case of $K$ fixed. However, their results do not follow from ours since we require $k_N \to \infty.$ 

\end{itemize}

\begin{figure}[t]
    \centering
    \captionsetup{width = 0.9\linewidth}
    \includegraphics[width=0.7\linewidth]{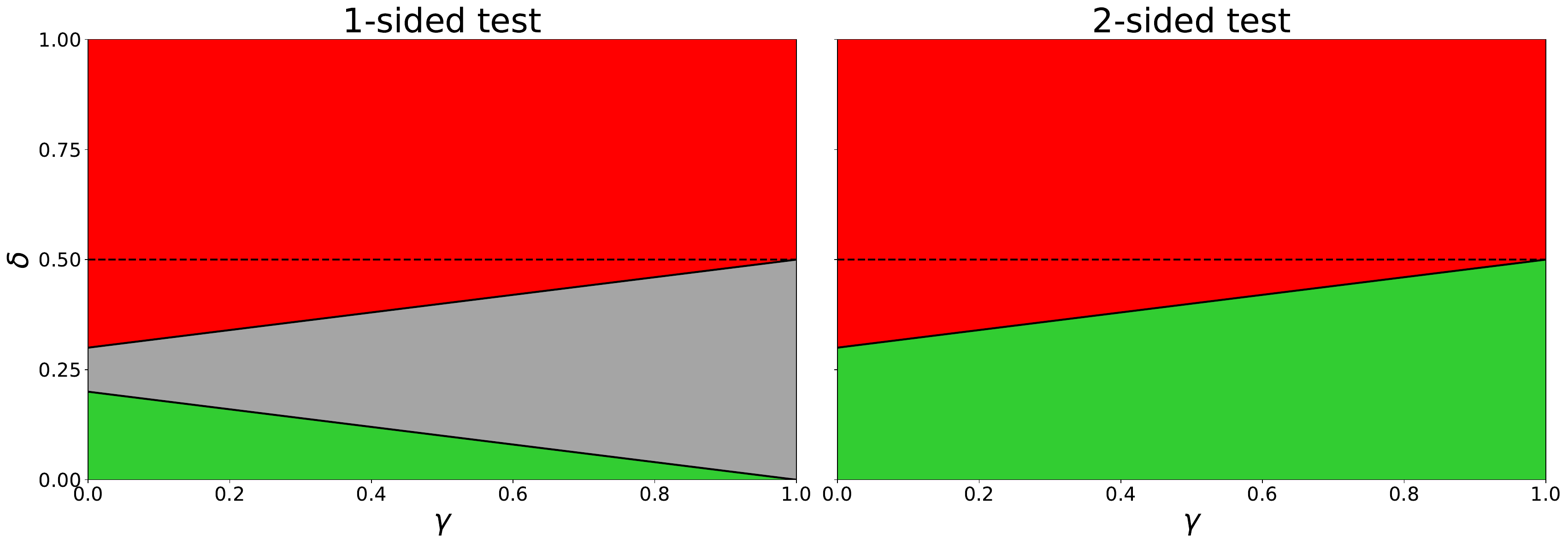}
    \caption{Limiting power comparison of the two tests for $d\geq 9$. The X and Y-axes, respectively, represent $\gamma$ and $\delta$. Red denotes limiting power $\alpha$, green denotes limiting power $1$ and gray denotes limiting power 0 or 1. The dotted horizontal line represents the parametric rate $\delta = \frac{1}{2}$.}
    \label{fig::power_comparison_above_criticality}
\end{figure}

We point out that when $b(h,\theta_1)=0,$ \Cref{thm::both_tests_above_criticality_broad_regimes} does not give the limiting power between the lower and upper thresholds. This corresponds to certain first-order degeneracy of the parametric family in the direction $h$, and the detection thresholds are different in this case. We elaborate on this point in the next section, where we sketch out the proof. Furthermore, our simulations cover the case of the equicorrelation family, where we do encounter a case of $b(h,\theta_1)=0.$ Our simulations indicate that even in this case, the 1- and 2-sided tests have different power properties with the 2-sided test performing better. The direction $h$ continues to play a role, albeit differently.

\Cref{thm::power_below_criticality,thm::both_tests_above_criticality_broad_regimes} almost completely describe the limiting power of the two tests. The only instance they do not cover is the limiting power at the exact lower and upper thresholds when $d>d_c(\gamma)$. The below result covers these cases.

\begin{theorem}\label{thm::both_tests_lower_upper_thresholds}

Consider the 1- and 2-sided tests with rejection regions as defined in \eqref{eq::rejection_region_for_1_sided_test} and \eqref{eq::rejection_region_for_2_sided_test}. Under the same assumptions and notation as \Cref{thm::both_tests_above_criticality_broad_regimes}, the limiting power at the lower and upper thresholds is given as follows:
\begin{itemize}
\item \textbf{Power at the lower threshold:} If $\epsilon_N = hN^{-\frac{1}{2}+\frac{2(1-\gamma)}{d}}$ with $b(h,\theta_1)\neq 0,$ then:
\begin{itemize}
    \item The 1-sided test has limiting power $\Phi(z_\alpha - b(h,\theta_1)).$
    \item The 2-sided test has limiting power $\Phi(z_\alpha - b(h,\theta_1)) + \Phi(z_\alpha + b(h,\theta_1)).$
\end{itemize}
\item \textbf{Power at the upper threshold:} If $\epsilon_N = hN^{-\frac{2(1-\gamma)}{d}}$ with $a(h,\theta_1)\neq b(h,\theta_1),$
\begin{itemize}
\item The 1-sided test has limiting power 0 or 1 if $a(h,\theta_1) - b(h,\theta_1)$ is positive or negative, respectively.
\item The 2-sided test has limiting power 1.
\end{itemize}
\end{itemize}

In all cases, $\Phi$ denotes the standard normal distribution function.
\end{theorem}

\Cref{thm::both_tests_lower_upper_thresholds} is proved in \appendixC.

\subsection{Proof sketch}\label{subsection::proof_sketch}

We briefly describe the proof strategy of \Cref{thm::power_below_criticality,thm::both_tests_above_criticality_broad_regimes,thm::both_tests_lower_upper_thresholds}. To test the null, we use the statistic
\begin{equation*}
\frac{1}{k_N\sqrt{N}}\left(T(\Gscr_{k_N}(\Zcal_N)) - \E_{H_0}(T(\Gscr_{k_N}(\Zcal_N)))\right)
\end{equation*}

Using the CLT under local alternatives given in \Cref{lemma::local_alternative_clt} the power analysis comes down to estimating the difference in null and alternate means,
$$\Delta_N := \frac{1}{k_N\sqrt{N}}(\mu_N(\theta_1,\theta_N)-\mu_N(\theta_1,\theta_1)),$$

where $\mu_N(\theta_1,\theta_2)$ denotes the expected value of $T(\Gscr(\Zcal_N))$ when $f=p(\cdot|\theta_1), g=p(\cdot|\theta_2).$

Suppose we take $\theta_N - \theta_1 = h\delta_N$ for some $h\in \R^p.$ Expanding the function $\mu_N(\theta_1,\cdot)$ around $\theta_1$ and analyzing the gradient and Hessian terms, we get that
\begin{equation}\label{eq::diff_of_means_heuristic_expansion}
\Delta_N \approx -a(h,\theta_1)N^{\frac{1}{2}}\delta_N^2 + b(h,\theta_1)N^{\frac{1}{2}}\left(\frac{k_N}{N}\right)^{\frac{2}{d}}\delta_N,
\end{equation}

where $a(h,\theta_1), b(h,\theta_1)$ are as defined in \eqref{eq::Hessian_term} and \eqref{eq::Gradient_term}, respectively. With the above expansion, we can now derive the limiting power for the 1- and 2-sided tests

We define the following, more general versions of the critical dimension.
\begin{align}
d_c^\lowernormal(k_N) := \liminf_{n\to \infty} \lfloor 8(1-\log_N k_N) \rfloor, \label{eq::lower_critical_dimension}\\
d_c^\uppernormal(k_N) := \limsup_{n\to \infty} \lfloor 8(1-\log_N k_N) \rfloor. \label{eq::upper_critical_dimension}
\end{align}

Note that for $k_N = N^\gamma,$ the critical dimensions above are the same and equal $d_c(\gamma)$ defined in \eqref{eq::critical_dimension_recalled}.

The 1-sided test has limiting power 0, $\alpha$ and 1 when $\Delta_N \to -\infty, 0, \infty$, respectively. The 2-sided test has limiting power $\alpha$ and $1$ when $|\Delta_N|\to 0,\infty$, respectively. Using \eqref{eq::diff_of_means_heuristic_expansion}, we can derive the limiting power. When, $d\leq d_c^\lowernormal(k_N),$ we have
\begin{itemize}
\item If $\delta_N\ll N^{-\frac{1}{4}},$ then $\Delta_N \to 0$ and the limiting power of both tests is $\alpha.$
\item If $\delta_N \gg N^{-\frac{1}{4}},$ then $\Delta_N \to -\infty$ and both tests have power $1.$
\end{itemize}

This is exactly the result of \Cref{thm::power_below_criticality}, with a more general version of the critical dimension. Similarly, we can derive a general version of \Cref{thm::both_tests_above_criticality_broad_regimes} by considering $d\geq d_c^\uppernormal(k_N).$ In this case, we have more general definitions of the lower and upper threshold. Specifically, we see:
\begin{itemize}
\item The lower threshold is given by $N^{-\frac{1}{2}}\left(\frac{N}{k_N}\right)^\frac{2}{d}.$ When $\delta_N$ decays faster than this rate, $\Delta_N\to 0$ and both tests have power $\alpha.$ When $k_N = N^\gamma,$ the lower threshold is equal to the lower threshold defined in \Cref{sec::local_power_above_criticality}.
\item The upper threshold is given by $\left(\frac{k_N}{N}\right)^\frac{2}{d}.$ When $\delta_N$ decays slower than this rate, the first term in \eqref{eq::diff_of_means_heuristic_expansion} dominates and $\Delta_N \to -\infty.$ For $k_N = N^\gamma,$ the upper threshold is exactly the upper threshold defined in \Cref{sec::local_power_above_criticality}.
\item The exponent gap is given by the rates $N^{-\frac{1}{2}}\left(\frac{N}{k_N}\right)$ and $\left(\frac{k_N}{N}\right)^\frac{2}{d}.$ In this gap, it is always the second term of \eqref{eq::diff_of_means_heuristic_expansion} that dominates, and $|\Delta_N|\to \infty$. Hence, the 2-sided test always has power $1$. The sign of $b(h,\theta_1)$ on the other hand determines the power of the 1-sided test. Depending on $b(h,\theta_1)$ being positive or negative, $\Delta_N\to \infty$ or $-\infty$, respectively, and the 1-sided test has power 0 or 1 accordingly.  
\end{itemize}

Similarly, at the exact thresholds $\delta_N = N^{-\frac{1}{2}}\left(\frac{N}{k_N}\right)^\frac{2}{d}$ and $\delta_N = \left(\frac{k_N}{N}\right)^\frac{2}{d}$, the limiting value of $\Delta_N$ can be found out from \eqref{eq::diff_of_means_heuristic_expansion}, and we get the general version of \Cref{thm::both_tests_lower_upper_thresholds}. The restrictions of $b(h,\theta_1)\neq 0$ and $a(h,\theta_1) - b(h,\theta_1)\neq 0$ in \Cref{thm::both_tests_above_criticality_broad_regimes,thm::both_tests_lower_upper_thresholds} come from the fact that we only consider the first two terms in the expansion \eqref{eq::diff_of_means_heuristic_expansion}. The cases of $b(h,\theta_1)=0$ and $a(h,\theta_1) - b(h,\theta_1)$ are not covered by the above expansion. However, by considering further terms, it should be possible to cover other cases as well.

\eqref{eq::diff_of_means_heuristic_expansion} provides intuition for the exponent gap of the 1-sided test. Recall that the test statistic counts the number of cross-group edges in the $K$-NN graph formed by pooling the two groups $\Xcal_N := {X_1,\dots,X_{N_1}}\sim P_{\theta_1}$ and $\Ycal_N := {Y_1,\dots,Y_{N_2}} \sim P_{\theta_N}$. The number of points in $\Ycal_N$ with nearest neighbors in $\Xcal_N$ depends on both cross-group distances $|X_i-Y_j|$ and within-group distances $|Y_i-Y_j|$. As $|\theta_N-\theta_1|$ increases, cross-group distances typically increase, corresponding to the first term of \eqref{eq::diff_of_means_heuristic_expansion}, where $-a(h,\theta_1)$ is always negative. In contrast, within-group distances may increase or decrease depending on whether $P_{\theta_N}$ is an overall expansion or shrinking of $P_{\theta_1}$. This effect is captured by the second term, where $b(h,\theta_1)$ quantifies the induced distributional expansion/shrinkage under a deformation of $P_{\theta_1}$ in the direction $h$. Thus, the difference in means reflects the competing effects of parameter separation and distributional expansion/shrinkage.

If $\theta_N\neq\theta_1$ are fixed, the shrinkage term in \eqref{eq::diff_of_means_heuristic_expansion} vanishes asymptotically, leaving parameter separation as the dominant effect, consistent with \Cref{propn::weak_limit_of_stat}. However, when $\theta_N\to\theta_1$, the shrinkage term dominates over a certain range of $|\theta_N-\theta_1|$, precisely the exponent gap of the 1-sided test. If $b(h,\theta_1)<0$, corresponding to overall shrinkage, within-group distances decrease relative to cross-group distances, reducing the number of cross-group edges and yielding high power for the 1-sided test. Conversely, if $b(h,\theta_1)>0$, corresponding to expansion, within-group distances increase relative to cross-group distances, producing more cross-group edges than under the null, and the 1-sided test has limiting power 0.
\section{Simulations}\label{section::simulations}

We now come to the performance of the test in practice. Increasing the value of $K$ with sample size creates a larger graph which comes with additional computational costs. We study the impact of this on execution time in \Cref{subsection::runtime_analysis}. We compare the runtime of the two sample test when using a permutation test, against using the asymptotic distribution.

More pertinent to the theoretical results presented earlier, we consider the empirical power of 1- and 2-sided tests on synthetic and real data. For the experiments on synthetic data in \Cref{subsection::power_comparison_simulation}, we consider multiple families of scale perturbations of Gaussian distributions. An important goal is to demonstrate that the power of the 1-sided $K$-NN test can be significantly affected by the direction in which the alternate approaches the null, while the 2-sided test is more stable. In the real data example in \Cref{section::real_data_example}, we apply the two sample tests to the task of distinguishing between two types of news articles.

\subsection{Runtime analysis}\label{subsection::runtime_analysis}

In \Cref{subsection::clt_conditional_stat} we describe the implementation of graph-based two sample tests as a permutation test. Alternately, \Cref{thm::clt_conditional_stat} proposes a pivotal statistic to test the null. We will compare the runtime of both algorithms. In both cases, the first step is to compute all pairwise distances to construct the $K$-NN graph. This operation has a time complexity of $O(N^2)$ for $K=o(N)$ and is common to both methods. We now discuss the method-specific overheads.

The discussion preceding \Cref{thm::clt_conditional_stat} shows that implementing the permutation test requires sampling $B$ uniform permutations of $\{1,\dots,N\}$ and recomputing the statistic $B$ times, which requires $O(NKB)$ steps. In \Cref{fig::runtime_plots}, the right-hand panel plots the run time of the permutation test for a range of values of $N,K$ with $B=300$ being fixed.

An alternative way is to use the conditional CLT given in \Cref{thm::clt_conditional_stat}, which requires computing the conditional variance under the null. To find the conditional variance, we refer again to the discussion preceding \Cref{thm::clt_conditional_stat}. Under the null, the data consist of $N_1 + N_2 = N$ i.i.d points $Z_1,\dots,Z_N$ with labels $c_1,\dots,c_N$, where $N_1, N_2$ are the group sizes. Since the labels are exchangeable under the null, the conditional distribution of the statistic $T(\Gscr_K)$ is given as
$$T(\Gscr_K)\mid \{Z_1,\dots,Z_N\} \sim \sum_{e\in \Gscr_K} U_e,$$

where $\{U_e\}_{e\in\Gscr_K}$ is a collection of Bernoulli random variables indexed by the edges of $\Gscr_K,$ the $K$-NN graph formed from the sample at hand. By the exchangeability of the labels, the covariance structure of the $U_e$'s can be easily obtained.

\begin{itemize}
    \item $U_e \sim \ber\left(\frac{N_1N_2}{N(N-1)}\right)$ for all $e\in \Gscr_K$.
    \item $\Cov(U_e,U_f) = \frac{N_1N_2(N_2-1)}{N(N-1)(N-2)} - \frac{N_1^2N_2^2}{N^2(N-1)^2}$ if $e=(i,j), f=(i,k)$ for $i,j,k$ distinct. 
    \item $\Cov(U_e,U_f) = \frac{N_1N_2(N_1-1)}{N(N-1)(N-2)} - \frac{N_1^2N_2^2}{N^2(N-1)^2}$ if $e=(i,j), f=(k,j)$ for $i,j,k$ distinct. 
    \item $\Cov(U_e,U_f) = - \frac{N_1^2N_2^2}{N^2(N-1)^2}$ if $e=(i,j), f=(j,k)$ for $i,j,k$ distinct. 
    \item $\Cov(U_e,U_f) = -\frac{N_1^2 N_2^2}{N^2(N-1)^2}$ if $e = (i,j), f = (j,i)$ for $i\neq j.$
    \item $\Cov(U_e,U_f) = \frac{N_1N_2(N_1-1)(N_2-1)}{N(N-1)(N-2)(N-3)} - \frac{N_1^2N_2^2}{N^2(N-1)^2}$ for $e=(i,j), f=(k,l)$ for $i,j,k,l$ distinct.
\end{itemize}

\begin{figure}[t]
    \centering
    \captionsetup{width = 0.9\linewidth}
    \includegraphics[width=0.7\linewidth]{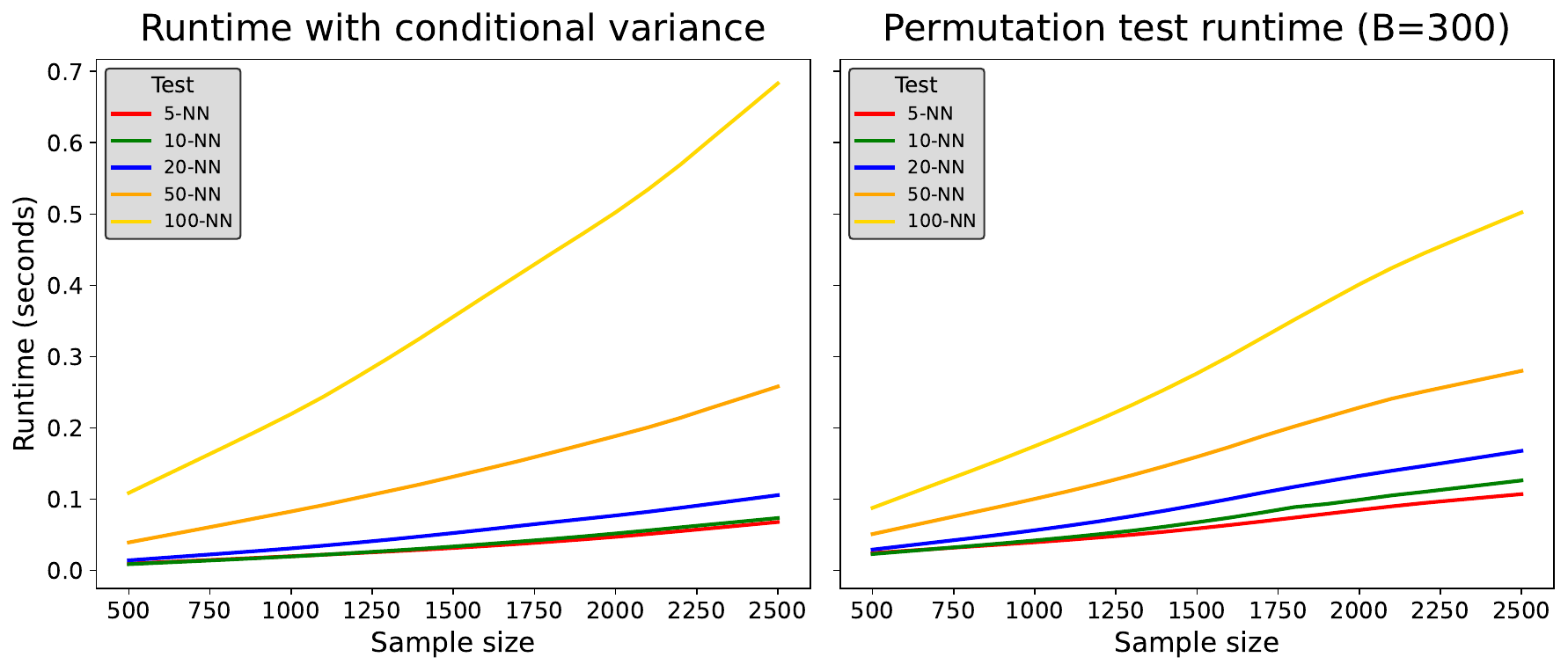}
    \caption{Runtime comparison with $K = 5,10,20,50,100$}
    \label{fig::runtime_plots}
\end{figure}

Using the above, the conditional variance $\Var\left(T(\Gscr_K)\mid Z_1,\dots,Z_N \right)$ under the null follows from the adjacency structure of the graph $\Gscr_K.$ The conditional variance computation requires counting the number of ordered pairs of edges falling in each of the categories above. Using the specific structure of the $K$-NN graph, some calculations can be simplified. For instance, the number of pairs in the second group is immediately seen to be $NK(K-1).$ However, other types of pairs are non-trivial to compute and the time complexity is $O(NK^2).$ The left-hand panel of \Cref{fig::runtime_plots} plots the runtime of the test using the conditional variance for various values of $N,K$.

\subsection{Power comparisons on synthetic data}\label{subsection::power_comparison_simulation}

We now consider power of the 1- and 2-sided tests on synthetic data in two examples.

\subsubsection*{Normal scale perturbations}

In the first set of simulations, we consider an $m$-parameter Gaussian family  with the following  parametrization. For $\theta\in \R^m_+$, $p(\cdot|\theta)$ denotes the density of $N(0,\Sigma_\theta)$ on $\R^d,$ where $\Sigma_\theta$ is the diagonal matrix with the first $m$ diagonal entries given by the components of $\theta$ and the rest equal to 1, with $m\leq d.$ Thus, the density is given by
$$p(x\mid \theta) = \frac{1}{(2\pi)^{\frac{d}{2}} (\theta_1\dots\theta_m)^\frac{1}{2}} \exp\left(-\sum_{i=1}^m\frac{x_i^2}{2\theta_i} + \sum_{j>m}\frac{x_i^2}{2}\right),$$

We take the null to be some parameter $\theta_1\in \R_+^m $, and the local alternative is given by $\theta_N = \theta_1 + hN^{b}$ for some choice of $h\in \R^m\setminus \{0\}$ and some negative exponent $b$. This represents changes in scale, where the local alternative is obtained by perturbing some of the variance components.

The family described above is supported over all of $\R^d$, while our results hold only for families with compact support. For this purpose, we can consider the densities $\{p_L(\cdot \mid \theta)\}_\theta$ which represent the above family truncated to $[-L,L]^d$. In this case, the truncated density is given by
$$p_L(x\mid \theta) = \left(Z_L(\theta)\right)^{-1} p(x\mid \theta)  \indicator\left\{x\in [-L,L]^d\right\},$$
for $x\in [-L,L]^d$, where $Z_L(\theta)$ is the normalizing constant. Following the second part of \Cref{thm::both_tests_above_criticality_broad_regimes} we see that the power in the exponent gap depends on the sign of the term $b(h,\theta_1)$ defined in \eqref{eq::Gradient_term}. The same equation shows that,
$$b(\theta_1,h) \propto \bigintssss_S h^T\nabla_{\theta_1}\left(\frac{\text{tr}(\text{H}_xp(x|\theta_1))}{p(x|\theta_1)}\right)p^{\frac{d-2}{d}}(x|\theta_1) ~ dx,$$
where $S$ denotes the support of the parametric family.  Hence, it suffices to determine the sign of the above integral. Since the density is available in this case, we can calculate this quantity. A long but simple calculation shows that for the truncated parametric family we consider,
\begin{align*}
b(h,\theta_1) &\propto \sum_{i=1}^m h_i(1-2\E(X_i^2)),
\end{align*}
where $X_i \sim N\left(0,\frac{d}{d-2}\theta_i\right) \mid_{[-L,L]}$. Thus, the sign of $b(h,\theta_1)$ is determined by the sign of $\sum h_i.$ The complete calculation is given in \appendixD.

In our simulations, we will consider the untruncated family. Furthermore, we will take $d=25$ and $m=10$, with the null being $\theta_1 = \indicator_m$. Using the above steps, we get that for the untruncated family
\begin{equation}\label{eq::gradient_term_sign_in_normal_scale_family}
b(h,\indicator_m) \propto -\sum_{i=1}^m h_i.
\end{equation}

\begin{figure}
    \centering
    \captionsetup{width = 0.9\linewidth}
    \includegraphics[width=0.7\linewidth]{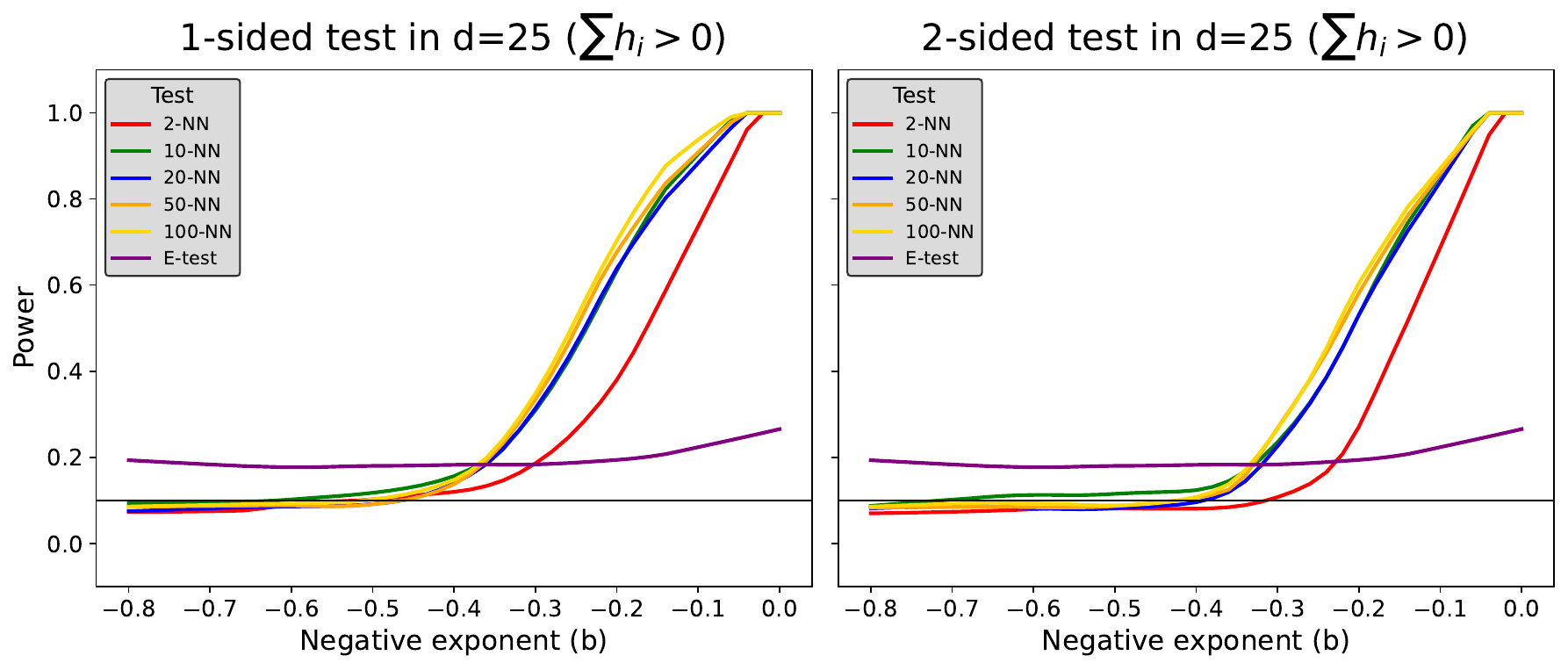}
    \caption{Power comparison of 1- and 2-sided test for normal scale perturbations with $\sum h_i>0$.}
    \label{fig::spiked_pos_sum_lineplots}
\end{figure}

Hence, $b(h,\theta_1)>0$ when $\sum_{i=1}^{10} h_i<0$ and vice versa. Given a growth rate of $k_N = N^\gamma$ for the number of neighbors, we see from \eqref{eq::critical_dimension_recalled} that the critical dimension $d_c(\gamma)$ satisfies $d_c(\gamma)\leq 8$ for all $\gamma.$ Therefore, for $d=25$, \Cref{thm::both_tests_above_criticality_broad_regimes} applies for all values of $\gamma$. With this, we can state the predicted power of the 1- and 2-sided tests. Recall that the null is $\theta_1$ and the local alternative is $\theta_N = \theta_1 + hN^{b}$ for some negative exponent $b\in [-1,0]$ while the number of neighbors is $k_N = N^\gamma$ for some $\gamma<1/4.$

\begin{itemize}
\item For $b<-\frac{1}{2} + \frac{2(1-\gamma)}{d}$, both tests have limiting power $\alpha.$
\vspace{2mm}
\item For $-\frac{1}{2} + \frac{2(1-\gamma)}{d}<b<-\frac{2(1-\gamma)}{d},$ when $\sum_{i=1}^{10}h_i$ is negative or positive the 1-sided test has limiting power 0 or 1, respectively. The 2-sided test has limiting power 1 in both cases.
\vspace{2mm}
\item For $b>-\frac{2(1-\gamma)}{d},$ the limiting power of both tests is 1.
\end{itemize}

\begin{figure}[t]
    \centering
    \captionsetup{width = 0.9\linewidth}
    \includegraphics[width=0.7\linewidth]{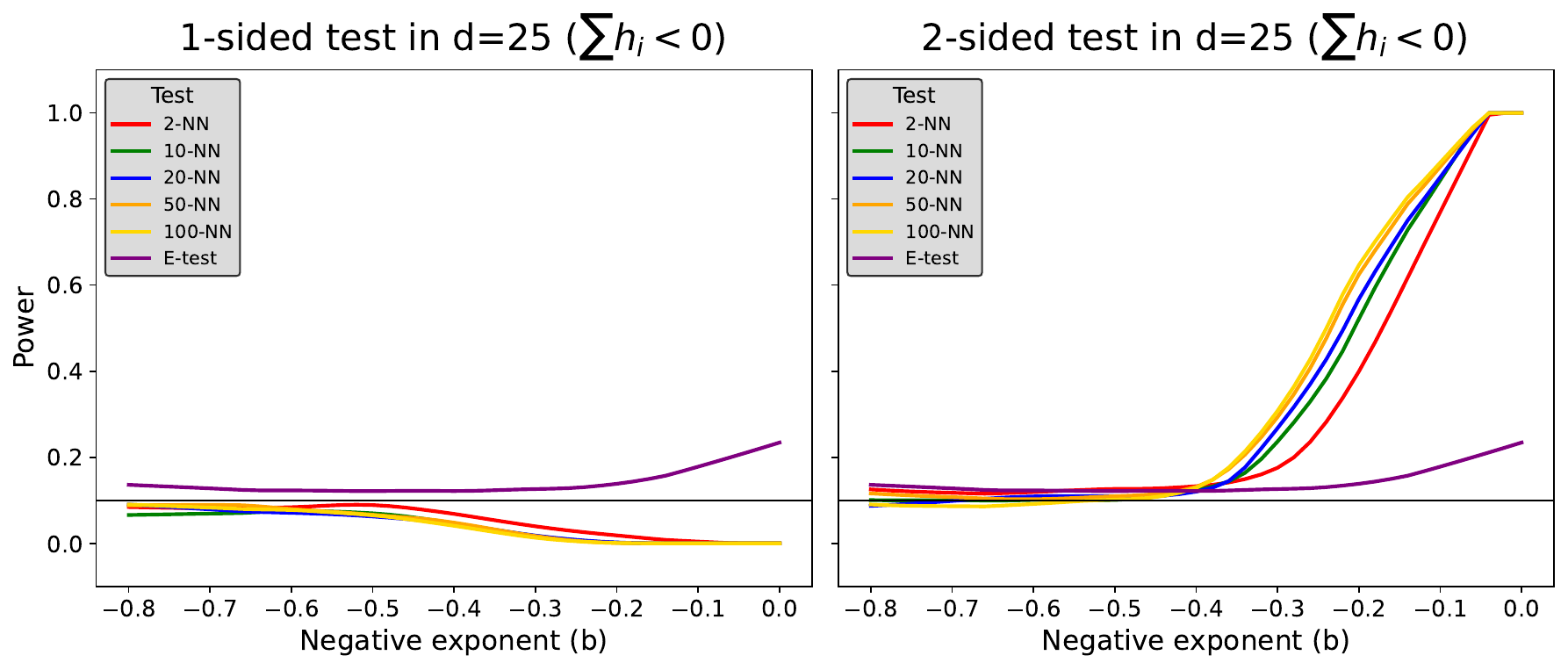}
    \caption{Power comparison of 1- and 2-sided tests for normal scale perturbations with $\sum h_i < 0$.}
    \label{fig::spiked_neg_sum_lineplots}
\end{figure}

In the simulation setup, we take the group sizes to be $N_1 = 12000, N_2 = 8000$ giving a total sample size of $N=20000.$ The null is $\theta_1 = \indicator_{10}$ which corresponds to the standard normal in $d=25$ dimensions. To simulate local alternatives, we take $\theta_N = \theta_1 + hN^{b}$ for $b$ taking a range of values in $[-1,0].$ We consider the $K$-NN test for $K=2,10,20,50,100$. For each choice of $b$ and $K$, we consider the empirical power across 100 trials. To demonstrate the change in power according to the direction, we consider a vector $h\in R^{10}$ with $\sum h_i >0$ and find the empirical power when the direction is given by $h$ and $-h.$ The null was tested at the level $0.1$.
\begin{itemize}
\item \Cref{fig::spiked_pos_sum_lineplots} plots the empirical power of the 1- and 2-sided tests in the normal scale model for $K=2,10,20,50,100.$ In \Cref{fig::spiked_pos_sum_lineplots}, we have considered a vector $h\in R^{10}$ with $\sum h_i > 0.$ In this case, the power of both tests is similar. This aligns with the results since in this case $b(h,\theta_1)<0$ as shown above, and both tests have power 1 for $b>-\frac{1}{2} + \frac{2(1-\gamma)}{d}.$ Moreover, the power generally increases for both tests with increasing values of $K.$ This also aligns with the detection thresholds derived in \Cref{thm::both_tests_above_criticality_broad_regimes}.

For comparison, we have also plotted the power of the energy distance test (\citet{szekely2004testing}). The simulation setup is the same as for the $K$-NN tests, but with sample sizes $N_1=600, N_2=400$ for computational reasons. Surprisingly, the energy distance test performs poorly in this setting.

\item \Cref{fig::spiked_neg_sum_lineplots} plots the power of the two tests when the deviation is given $-h$. The direction is now given by the negative of the vector $h$ considered for the simulations in \Cref{fig::spiked_pos_sum_lineplots}. Hence, we have $\sum(-h_i)<0$ which gives $b(h,\theta_1)>0$. In this case, we see that the 1-sided test is extremely poor and has power close to 0 even for the negative exponent $b$ close to 0. At first glance, this seems counterintuitive. The case $b=0$ corresponds to the case of fixed alternatives. Since the 1-sided test is consistent (\Cref{propn::weak_limit_of_stat}), it should actually have power close to 1 in this setting. This discrepancy can be explained by \Cref{thm::both_tests_above_criticality_broad_regimes}. Since in this case $b(h,\theta_1)>0,$  the result shows that the detection threshold for $k_N = N^\gamma$ is given by $N^{-\frac{2(1-\gamma)}{d}}.$ For $d=25$, the exponent is extremely close to 0 for all values of $\gamma$. As a result, in our simulations the 1-sided test has power close to 0 even for $b\approx 0.$

On the other hand, the power of the 2-sided test is almost unchanged, and we also see an improvement in the power with increasing $K$. Similarly, the energy distance test is also unchanged and performs poorly even after reversing the direction.

\end{itemize}

Further simulations exploring alternate settings are given in \appendixD.

\subsubsection*{Normal equicorrelation family}

We will now consider the family of distributions over $R^d$ given by $p(\cdot \mid \rho)$ given by $N(0,\Sigma_\rho)$ where $\Sigma_\rho$ is the $d-$dimensional matrix consisting of 1's on the diagonal and all other entries being $\rho$. This is the well known normal equicorrelation family.

Since $\Sigma_\rho = (1-\rho)I_d + \rho \indicator_d \indicator_d^T,$ there exists an orthogonal matrix $U$ such that
$$U \Sigma_\rho U^T = I_d + \rho\cdot \diag(d-1,-1,\dots,-1).$$
In addition, $U$ does not depend on $\rho$. Further, we use the Euclidean distance to calculate distances between points, which is unchanged under orthogonal transformations. Hence, the normal equicorrelation family is a subset of the larger family of diagonal perturbations of the normal covariance matrix we considered in the previous section.

We take the null to be $\rho_0 = 0$ which corresponds to the standard normal, and the alternate to be $\rho_N = hn^b$ for some negative exponent $b$ and some nonzero $h\in \R$. In terms of the normal scale family of the previous section, this corresponds to $\theta_1 = \indicator_d$, $\theta_N = \theta_1 + \rho h N^b$, where $h = (d-1)e_1 - \sum_{i=2}^d e_i.$ The number of variance parameters $m$ is now equal to the ambient dimension $d$.

Following the calculation in \eqref{eq::gradient_term_sign_in_normal_scale_family}, it is enough to determine the sign of $\sum h_i$ to determine the sign of $b(h,\theta_1)$ and hence find the detection thresholds. In this case $\sum h_i = 0.$ Therefore, $b(h,\theta_1)=0$ and the results of \Cref{thm::both_tests_above_criticality_broad_regimes} do not apply. However, even in this case the 1- and 2-sided tests can behave quite differently depending on the sign of $\rho$.

\begin{figure}[t]
    \centering
    \captionsetup{width = 0.9\linewidth}
    \includegraphics[width=0.7\linewidth]{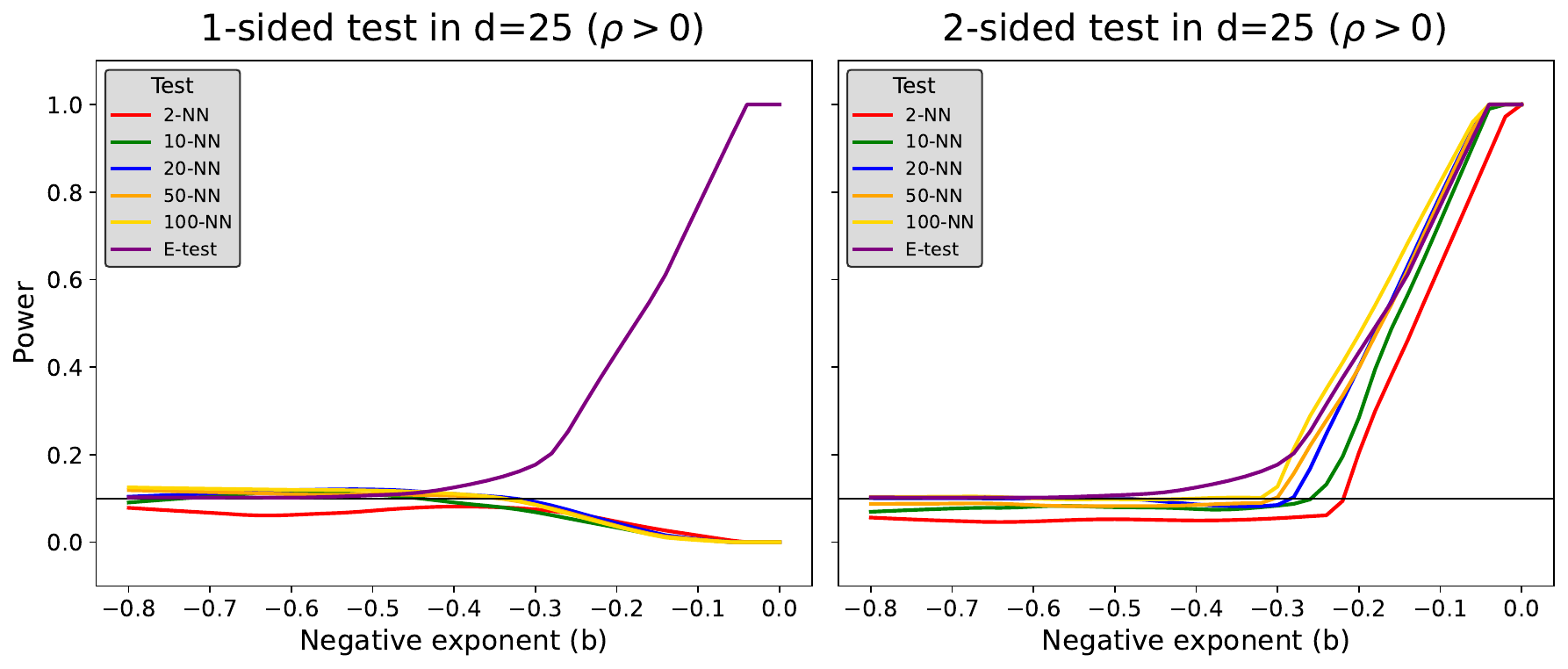}
    \caption{Power comparison of 1- and 2-sided tests in the normal equicorrelation family with $\rho>0.$ We take $d=25$, and group sizes $N_1 = 12000, N_2 = 8000$.}
    \label{fig::equicorrelation_family_lineplots}
\end{figure}

\Cref{fig::equicorrelation_family_lineplots} plots the power of the 1- and 2-sided tests for $\rho>0.$ There is again a stark difference between the two tests, the 1-sided test being quite poor and having power close to 0 even for $b\approx 0$. Although our results do not apply in this case since $b(h,\theta_1)=0,$ we still see that the 1-sided test can have extremely low power. The 2-sided test, on the other hand, performs well, as does the energy distance test. Furthermore, its power improves with increasing $K$.

In the case of $\rho<0,$ we chose not to include a plot since all tests consistently displayed low power in our simulations. This is expected since the equicorrelation family imposes the restriction $-\frac{1}{d-1}\leq \rho$ to ensure $\Sigma_\rho$ is non-negative definite. For $d=25,$ this $-\frac{1}{24}<\rho<0$ when considering $\rho<0.$ Even for large sample sizes, the tests fail to detect a difference between the two distributions.

\subsection{Real data experiment}\label{section::real_data_example}

To evaluate the proposed test on real data, we used the 20 Newsgroups dataset \citep{lang1995newsweeder}, consisting of approximately 18,000 news articles across 20 categories. Documents were embedded into a 384-dimensional Euclidean space using the pretrained \texttt{all-MiniLM-L6-v2} model from the \texttt{sentence-transformers} library \citep{reimers-2019-sentence-bert}. We restricted attention to the \texttt{rec.sport.baseball} and \texttt{rec.sport.hockey} categories (roughly 1,000 articles each). After computing embeddings for the full dataset, we applied PCA and retained enough components to explain 90\% of the variance, resulting in 200-dimensional embeddings for the subsequent analysis.

\begin{figure}[h]
\centering
\captionsetup{width = 0.9\linewidth}
\includegraphics[width=0.7\linewidth]{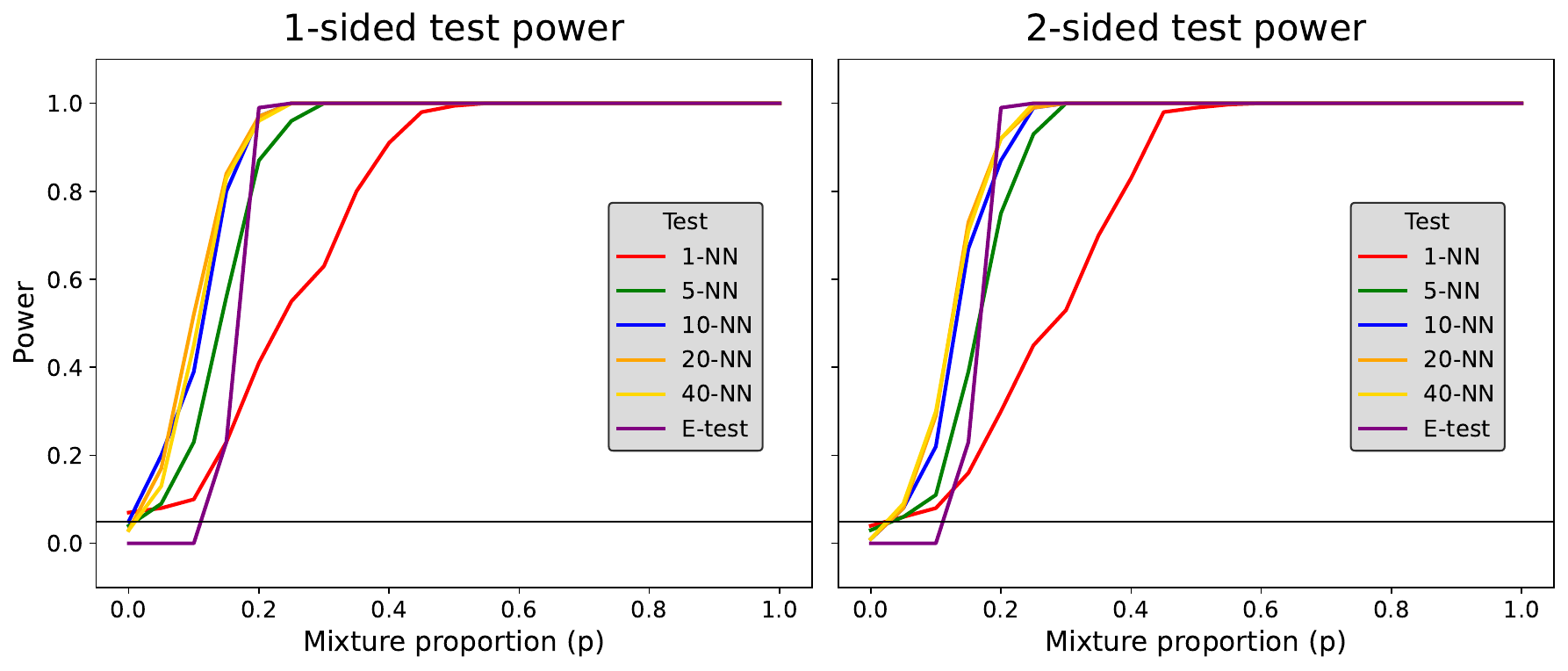}
\caption{Performance of the $K$-NN two sample tests when distinguishing sports articles.}
\label{fig::real_data_power}
\end{figure}

Let $P_\baseball$ and $P_\hockey$ denote the distributions of the baseball and hockey articles. We considered
$$H_0:F=G=P_\baseball
\quad \versus \quad
H_1:F=P_\baseball,;
G=(1-p)P_\baseball+pP_\hockey,$$
where $p\in[0,1]$ allows interpolation between the null $(p=0)$ and a fully distinct alternative $(p=1)$. For each $p$ and $K\in{1,5,10,20,40}$, we sampled 500 baseball articles for $F$ and 500 articles from the mixture for $G$, repeated the experiment 100 times, and estimated the empirical power. The results are shown in \Cref{fig::real_data_power}. Both tests exhibit similar performance, with power increasing as $K$ increases, while the energy distance test remains competitive.

Based on the synthetic and real data experiments, we recommend the 2-sided test in practice. Since our results address the case of growing $K$, there is also the question of choosing $K$ to boost the power. On one hand, all the simulations suggest that the power of the 2-sided test generally improves with increasing $K$. However, larger values of $K$ increase dependence in the graph which degrades the normal approximation of \Cref{thm::clt_conditional_stat}. Indeed, taking $K=N$ always gives the fully connected graph and hence a constant statistic. Our simulations suggest that $K\leq N/30$ provides a good compromise. The experiments on real and synthetic indicate that the power is generally increasing up to this value of $K$. Additional simulations in \appendixD show that the statistic is approximately normal in distribution in this range of $K$ while increasing $K$ beyond this point can impact the quality of the normal approximation. Developing a procedure that adaptively selects $K$ from multiple candidates remains an interesting direction for future work.

\bibliography{bibliography}

@article{bhattacharya2019generalasymptoticframework,
  title={A general asymptotic framework for distribution-free graph-based two-sample tests},
  author={Bhattacharya, Bhaswar B},
  journal={Journal of the Royal Statistical Society Series B: Statistical Methodology},
  volume={81},
  number={3},
  pages={575--602},
  year={2019},
  publisher={Oxford University Press}
}

@article{bhattacharya2020detectionthresholds,
  title={Asymptotic distribution and detection thresholds for two-sample tests based on geometric graphs},
  author={Bhattacharya, Bhaswar B},
  journal={The Annals of Statistics},
  volume={48},
  number={5},
  pages={2879--2903},
  year={2020},
  publisher={JSTOR}
}

@article{friedmanrafskytest,
  title={Multivariate generalizations of the Wald-Wolfowitz and Smirnov two-sample tests},
  author={Friedman, Jerome H and Rafsky, Lawrence C},
  journal={The Annals of Statistics},
  pages={697--717},
  year={1979},
  publisher={JSTOR}
}

@article{ariascastro2015consistency,
  title={On the consistency of the crossmatch test},
  author={Arias-Castro, Ery and Pelletier, Bruno},
  journal={Journal of Statistical Planning and Inference},
  volume={171},
  pages={184--190},
  year={2016},
  publisher={Elsevier}
}

@article{henze1999multivariate,
  title={On the multivariate runs test},
  author={Henze, Norbert and Penrose, Mathew D},
  journal={Annals of statistics},
  pages={290--298},
  year={1999},
  publisher={JSTOR}
}

@article{henze1988multivariate,
  title={A multivariate two-sample test based on the number of nearest neighbor type coincidences},
  author={Henze, Norbert},
  journal={The Annals of Statistics},
  volume={16},
  number={2},
  pages={772--783},
  year={1988},
  publisher={Institute of Mathematical Statistics}
}

@article{penroseyukich2003weaklaws,
  title={Weak laws of large numbers in geometric probability},
  author={Penrose, Mathew D and Yukich, Joseph E},
  journal={The Annals of Applied Probability},
  volume={13},
  number={1},
  pages={277--303},
  year={2003},
  publisher={Institute of Mathematical Statistics}
}

@article{penrose2007gaussianlimits,
author = {Mathew Penrose},
title = {{Gaussian Limts for Random Geometric Measures}},
volume = {12},
journal = {Electronic Journal of Probability},
number = {none},
publisher = {Institute of Mathematical Statistics and Bernoulli Society},
pages = {989 -- 1035},
keywords = {Random measures},
year = {2007},
doi = {10.1214/EJP.v12-429}
}

@article{gyorfi1978fdissimilarity,
  title={f-dissimilarity: A generalization of the affinity of several distributions},
  author={Gy{\"o}rfi, L{\'a}szl{\'o} and Nemetz, Tibor},
  journal={Ann. Inst. Statist. Math},
  volume={30},
  number={Part A},
  pages={105--113},
  year={1978}
}

@article{weiss1960twosampletests,
  title={Two-sample tests for multivariate distributions},
  author={Weiss, Lionel},
  journal={The Annals of Mathematical Statistics},
  volume={31},
  number={1},
  pages={159--164},
  year={1960},
  publisher={Institute of Mathematical Statistics}
}

@article{schilling1986multivariatetwosampletest,
  title={Multivariate two-sample tests based on nearest neighbors},
  author={Schilling, Mark F},
  journal={Journal of the American Statistical Association},
  volume={81},
  number={395},
  pages={799--806},
  year={1986},
  publisher={Taylor \& Francis}
}

@article{rosenbaum2005exact2sampletest,
  title={An exact distribution-free test comparing two multivariate distributions based on adjacency},
  author={Rosenbaum, Paul R},
  journal={Journal of the Royal Statistical Society Series B: Statistical Methodology},
  volume={67},
  number={4},
  pages={515--530},
  year={2005},
  publisher={Oxford University Press}
}

@article{biswas2014distribution,
  title={A distribution-free two-sample run test applicable to high-dimensional data},
  author={Biswas, Munmun and Mukhopadhyay, Minerva and Ghosh, Anil K},
  journal={Biometrika},
  volume={101},
  number={4},
  pages={913--926},
  year={2014},
  publisher={Oxford University Press}
}

@article{gretton2012kernel,
  title={A kernel two-sample test},
  author={Gretton, Arthur and Borgwardt, Karsten M and Rasch, Malte J and Sch{\"o}lkopf, Bernhard and Smola, Alexander},
  journal={The Journal of Machine Learning Research},
  volume={13},
  number={1},
  pages={723--773},
  year={2012},
  publisher={JMLR. org}
}

@article{gretton2006kernel,
  title={A kernel method for the two-sample-problem},
  author={Gretton, Arthur and Borgwardt, Karsten and Rasch, Malte and Sch{\"o}lkopf, Bernhard and Smola, Alex},
  journal={Advances in neural information processing systems},
  volume={19},
  year={2006}
}

@article{huang2024kernel,
  title={A Kernel Measure of Dissimilarity between M Distributions},
  author={Huang, Zhen and Sen, Bodhisattva},
  journal={Journal of the American Statistical Association},
  pages={1--27},
  year={2024},
  publisher={Taylor \& Francis}
}

@InProceedings{pmlr-v119-liu20m,
  title = 	 {Learning Deep Kernels for Non-Parametric Two-Sample Tests},
  author =       {Liu, Feng and Xu, Wenkai and Lu, Jie and Zhang, Guangquan and Gretton, Arthur and Sutherland, Danica J.},
  booktitle = 	 {Proceedings of the 37th International Conference on Machine Learning},
  pages = 	 {6316--6326},
  year = 	 {2020},
  editor = 	 {III, Hal Daumé and Singh, Aarti},
  volume = 	 {119},
  series = 	 {Proceedings of Machine Learning Research},
  month = 	 {13--18 Jul},
  publisher =    {PMLR},
  pdf = 	 {http://proceedings.mlr.press/v119/liu20m/liu20m.pdf},
  abstract = 	 {We propose a class of kernel-based two-sample tests, which aim to determine whether two sets of samples are drawn from the same distribution. Our tests are constructed from kernels parameterized by deep neural nets, trained to maximize test power. These tests adapt to variations in distribution smoothness and shape over space, and are especially suited to high dimensions and complex data. By contrast, the simpler kernels used in prior kernel testing work are spatially homogeneous, and adaptive only in lengthscale. We explain how this scheme includes popular classifier-based two-sample tests as a special case, but improves on them in general. We provide the first proof of consistency for the proposed adaptation method, which applies both to kernels on deep features and to simpler radial basis kernels or multiple kernel learning. In experiments, we establish the superior performance of our deep kernels in hypothesis testing on benchmark and real-world data. The code of our deep-kernel-based two-sample tests is available at github.com/fengliu90/DK-for-TST.}
}

@article{ghosal2022multivariate,
  title={Multivariate ranks and quantiles using optimal transport: Consistency, rates and nonparametric testing},
  author={Ghosal, Promit and Sen, Bodhisattva},
  journal={The Annals of Statistics},
  volume={50},
  number={2},
  pages={1012--1037},
  year={2022},
  publisher={Institute of Mathematical Statistics}
}

@article{szekely2004testing,
  title={Testing for equal distributions in high dimension},
  author={Sz{\'e}kely, G{\'a}bor J and Rizzo, Maria L and others},
  journal={InterStat},
  volume={5},
  number={16.10},
  pages={1249--1272},
  year={2004},
  publisher={Citeseer}
}

@article{baringhaus2004new,
  title={On a new multivariate two-sample test},
  author={Baringhaus, Ludwig and Franz, Carsten},
  journal={Journal of multivariate analysis},
  volume={88},
  number={1},
  pages={190--206},
  year={2004},
  publisher={Elsevier}
}

@article{ramdas2017wasserstein,
  title={On wasserstein two-sample testing and related families of nonparametric tests},
  author={Ramdas, Aaditya and Garc{\'\i}a Trillos, Nicol{\'a}s and Cuturi, Marco},
  journal={Entropy},
  volume={19},
  number={2},
  pages={47},
  year={2017},
  publisher={MDPI}
}

@article{chen2017new,
  title={A new graph-based two-sample test for multivariate and object data},
  author={Chen, Hao and Friedman, Jerome H},
  journal={Journal of the American statistical association},
  volume={112},
  number={517},
  pages={397--409},
  year={2017},
  publisher={Taylor \& Francis}
}

@book{penrose2003random,
  title={Random geometric graphs},
  author={Penrose, Mathew},
  volume={5},
  year={2003},
  publisher={OUP Oxford}
}

@book{biau2015lectures,
  title={Lectures on the nearest neighbor method},
  author={Biau, G{\'e}rard and Devroye, Luc},
  volume={246},
  year={2015},
  publisher={Springer}
}

@article{rosssteinsmethod,
author = {Nathan Ross},
title = {{Fundamentals of Stein’s method}},
volume = {8},
journal = {Probability Surveys},
number = {none},
publisher = {Institute of Mathematical Statistics and Bernoulli Society},
pages = {210 -- 293},
keywords = {concentration of measure inequalities, distributional approximation, Exchangeable pairs, size-bias distribution, Stein’s method},
year = {2011},
doi = {10.1214/11-PS182},
URL = {https://doi.org/10.1214/11-PS182}
}

@article{penrose2005normal,
  title={Normal approximation in geometric probability},
  author={Penrose, Mathew D and Yukich, Joseph E},
  journal={Stein’s method and applications},
  volume={5},
  pages={37--58},
  year={2005},
  publisher={Lect. Notes Ser. Inst. Math. Sci. Natl. Univ. Singap}
}

@article{SZEKELY20131249,
title = {Energy statistics: A class of statistics based on distances},
journal = {Journal of Statistical Planning and Inference},
volume = {143},
number = {8},
pages = {1249-1272},
year = {2013},
issn = {0378-3758},
author = {Gábor J. Székely and Maria L. Rizzo},
keywords = {Energy distance, Goodness-of-fit, Multivariate independence, Distance covariance, Distance correlation},
abstract = {Energy distance is a statistical distance between the distributions of random vectors, which characterizes equality of distributions. The name energy derives from Newton's gravitational potential energy, and there is an elegant relation to the notion of potential energy between statistical observations. Energy statistics are functions of distances between statistical observations in metric spaces. Thus even if the observations are complex objects, like functions, one can use their real valued nonnegative distances for inference. Theory and application of energy statistics are discussed and illustrated. Finally, we explore the notion of potential and kinetic energy of goodness-of-fit.}
}

@book{lehmann2005testing,
  title={Testing statistical hypotheses},
  author={Lehmann, Erich Leo and Romano, Joseph P},
  year={2005},
  publisher={Springer}
}

@article{smirnov1939estimation,
  title={On the estimation of the discrepancy between empirical curves of distribution for two independent samples},
  author={Smirnov, Nikolai V},
  journal={Bull. Math. Univ. Moscou},
  volume={2},
  number={2},
  pages={3--14},
  year={1939}
}

@article{mann1947test,
  title={On a test of whether one of two random variables is stochastically larger than the other},
  author={Mann, Henry B and Whitney, Donald R},
  journal={The annals of mathematical statistics},
  pages={50--60},
  year={1947},
  publisher={JSTOR}
}

@article{wald1940test,
  title={On a test whether two samples are from the same population},
  author={Wald, Abraham and Wolfowitz, Jacob},
  journal={The Annals of Mathematical Statistics},
  volume={11},
  number={2},
  pages={147--162},
  year={1940},
  publisher={JSTOR}
}

@article{lehmann1975statistical,
  title={Statistical methods based on ranks},
  author={Lehmann, Erich Leo and others},
  journal={Nonparametrics. San Francisco, CA, Holden-Day},
  volume={2},
  year={1975},
  publisher={Springer}
}

@article{hsiao2016mapping,
  title={Mapping cell populations in flow cytometry data for cross-sample comparison using the Friedman--Rafsky test statistic as a distance measure},
  author={Hsiao, Chiaowen and Liu, Mengya and Stanton, Rick and McGee, Monnie and Qian, Yu and Scheuermann, Richard H},
  journal={Cytometry Part A},
  volume={89},
  number={1},
  pages={71--88},
  year={2016},
  publisher={Wiley Online Library}
}

@article{chen2023graph,
  title={Graph-based change-point analysis},
  author={Chen, Hao and Chu, Lynna},
  journal={Annual Review of Statistics and Its Application},
  volume={10},
  number={1},
  pages={475--499},
  year={2023},
  publisher={Annual Reviews}
}

@article{chen2019sequential,
  title={Sequential change-point detection based on nearest neighbors},
  author={Chen, Hao},
  journal={The Annals of Statistics},
  volume={47},
  number={3},
  pages={1381--1407},
  year={2019},
  publisher={JSTOR}
}

@article{ZHANG2020107285,
title = {A graph-based multi-sample test for identifying pathways associated with cancer progression},
journal = {Computational Biology and Chemistry},
volume = {87},
pages = {107285},
year = {2020},
issn = {1476-9271},
author = {Qingyang Zhang and Ghadeer Mahdi and Jian Tinker and Hao Chen},
keywords = {Edge-count test, Tumorigenesis, Serous ovarian cancer, Pathway analysis, The Cancer Genome Atlas},
abstract = {Cancer is in general not a result of an abnormality of a single gene but a consequence of changes in many genes, it is therefore of great importance to understand the roles of different oncogenic and tumor suppressor pathways in tumorigenesis. In recent years, there have been many computational models developed to study the genetic alterations of different pathways in the evolutionary process of cancer. However, most of the methods are knowledge-based enrichment analyses and inflexible to analyze user-defined pathways or gene sets. In this paper, we develop a nonparametric and data-driven approach to testing for the dynamic changes of pathways over the cancer progression. Our method is based on an expansion and refinement of the pathway being studied, followed by a graph-based multivariate test, which is very easy to implement in practice. The new test is applied to the rich Cancer Genome Atlas data to study the (epi)genetic alterations of 186 KEGG pathways in the development of serous ovarian cancer. To make use of the comprehensive data, we incorporate three data types in the analysis representing gene expression level, copy number and DNA methylation level. Our analysis suggests a list of nine pathways that are closely associated with serous ovarian cancer progression, including cell cycle, ERBB, JAK-STAT signaling and p53 signaling pathways. By pairwise tests, we found that most of the identified pathways contribute only to a particular transition step. For instance, the cell cycle and ERBB pathways play key roles in the early-stage transition, while the ECM receptor and apoptosis pathways contribute to the progression from stage III to stage IV. The proposed computational pipeline is powerful in detecting important pathways and gene sets that drive cancers at certain stage(s). It offers new insights into the understanding of molecular mechanism of cancer initiation and progression.}
}

@inproceedings{zhao2006improved,
  title={Improved comparison of protein subcellular location patterns},
  author={Zhao, Ting and Soto, Stalia and Murphy, Robert F},
  booktitle={3rd IEEE International Symposium on Biomedical Imaging: Nano to Macro, 2006.},
  pages={562--565},
  year={2006},
  organization={IEEE}
}

@article{mukherjee2022distribution,
  title={Distribution-free multisample tests based on optimal matchings with applications to single cell genomics},
  author={Mukherjee, Somabha and Agarwal, Divyansh and Zhang, Nancy R and Bhattacharya, Bhaswar B},
  journal={Journal of the American Statistical Association},
  volume={117},
  number={538},
  pages={627--638},
  year={2022},
  publisher={Taylor \& Francis}
}

@article{aslan2005new,
  title={New test for the multivariate two-sample problem based on the concept of minimum energy},
  author={Aslan, B and Zech, G},
  journal={Journal of Statistical Computation and Simulation},
  volume={75},
  number={2},
  pages={109--119},
  year={2005},
  publisher={Taylor \& Francis}
}

@article{chatterjee2025boosting,
  title={Boosting the power of kernel two-sample tests},
  author={Chatterjee, Anirban and Bhattacharya, Bhaswar B},
  journal={Biometrika},
  volume={112},
  number={1},
  pages={asae048},
  year={2025},
  publisher={Oxford University Press}
}

@article{song2024generalized,
  title={Generalized kernel two-sample tests},
  author={Song, Hoseung and Chen, Hao},
  journal={Biometrika},
  volume={111},
  number={3},
  pages={755--770},
  year={2024},
  publisher={Oxford University Press}
}

@inproceedings{
chatterjee2025martingale,
title={A Martingale Kernel Two-Sample Test},
author={Anirban Chatterjee and Aaditya Ramdas},
booktitle={37th International Conference on Algorithmic Learning Theory},
year={2025},
url={https://openreview.net/forum?id=l6dWLKPvkR}
}

@article{shekhar2022permutation,
  title={A permutation-free kernel two-sample test},
  author={Shekhar, Shubhanshu and Kim, Ilmun and Ramdas, Aaditya},
  journal={Advances in Neural Information Processing Systems},
  volume={35},
  pages={18168--18180},
  year={2022}
}

@article{kim2020robust,
  title={Robust multivariate nonparametric tests via projection averaging},
  author={Kim, Ilmun and Balakrishnan, Sivaraman and Wasserman, Larry},
  journal={The Annals of Statistics},
  volume={48},
  number={6},
  pages={3417--3441},
  year={2020},
  publisher={JSTOR}
}

@article{pan2018ball,
  title={Ball divergence: nonparametric two sample test},
  author={Pan, Wenliang and Tian, Yuan and Wang, Xueqin and Zhang, Heping},
  journal={Annals of statistics},
  volume={46},
  number={3},
  pages={1109},
  year={2018}
}

@article{kim2021classification,
  title={Classification accuracy as a proxy for two-sample testing},
  author={Kim, Ilmun and Ramdas, Aaditya and Singh, Aarti and Wasserman, Larry},
  journal={Annals of Statistics},
  volume={49},
  number={1},
  pages={411--434},
  year={2021},
  publisher={Institute of Mathematical Statistics}
}

@inproceedings{lopez2017revisiting,
  title={Revisiting classifier two-sample tests},
  author={Lopez-Paz, David and Oquab, Maxime},
  booktitle={International Conference on Learning Representations},
  year={2017}
}

@article{chwialkowski2015fast,
  title={Fast two-sample testing with analytic representations of probability measures},
  author={Chwialkowski, Kacper P and Ramdas, Aaditya and Sejdinovic, Dino and Gretton, Arthur},
  journal={Advances in Neural Information Processing Systems},
  volume={28},
  year={2015}
}

@article{jitkrittum2016interpretable,
  title={Interpretable distribution features with maximum testing power},
  author={Jitkrittum, Wittawat and Szab{\'o}, Zolt{\'a}n and Chwialkowski, Kacper P and Gretton, Arthur},
  journal={Advances in Neural Information Processing Systems},
  volume={29},
  year={2016}
}

@inproceedings{reimers-2019-sentence-bert,
  title = "Sentence-BERT: Sentence Embeddings using Siamese BERT-Networks",
  author = "Reimers, Nils and Gurevych, Iryna",
  booktitle = "Proceedings of the 2019 Conference on Empirical Methods in Natural Language Processing",
  month = "11",
  year = "2019",
  publisher = "Association for Computational Linguistics",
}

@incollection{lang1995newsweeder,
  title={Newsweeder: Learning to filter netnews},
  author={Lang, Ken},
  booktitle={Machine learning proceedings 1995},
  pages={331--339},
  year={1995},
  publisher={Elsevier}
}

@article{ruth2014new,
  title={A new multivariate two-sample test using regular minimum-weight spanning subgraphs},
  author={Ruth, David M},
  journal={Journal of Statistical Distributions and Applications},
  volume={1},
  number={1},
  pages={22},
  year={2014},
  publisher={Springer}
}

@article{penrose2001central,
  title={Central limit theorems for some graphs in computational geometry},
  author={Penrose, Mathew D and Yukich, Joseph E},
  journal={Annals of Applied probability},
  pages={1005--1041},
  year={2001},
  publisher={JSTOR}
}

@article{chenzhang2015graphbasedchangepoint,
author = {Hao Chen and Nancy Zhang},
title = {{Graph-based change-point detection}},
volume = {43},
journal = {The Annals of Statistics},
number = {1},
publisher = {Institute of Mathematical Statistics},
pages = {139 -- 176},
keywords = {Change-point, complex data, graph-based tests, High-dimensional data, network data, non-Euclidean data, nonparametrics, scan statistic, tail probability},
year = {2015}
}

@article{aldous1992asymptotics,
  title={Asymptotics for Euclidean minimal spanning trees on random points},
  author={Aldous, David and Steele, J Michael},
  journal={Probability Theory and Related Fields},
  volume={92},
  number={2},
  pages={247--258},
  year={1992},
  publisher={Springer}
}

\appendix

\crefalias{section}{appendix}
\crefname{appendix}{appendix}{appendices}
\Crefname{appendix}{Appendix}{Appendices}

\section{Initial technical results}

\label{appendix::initial_technical_results}

This appendix is dedicated to some initial technical results that we will frequently use in our calculations. We begin with a standard bound on the lower tails of Poisson random variables which follows from the Chernoff bound for Poisson random variables.



\begin{lemma}\label{lemma::poisson_bound}

Let $X$ be a Poisson random variable with mean $\mu.$ Then, for $0\leq t\leq 1,$
$$\prob(X\leq (1-t)\mu)\leq \exp\left(-\frac{t^2\mu}{2}\right).$$

For all $t>0,$
$$\prob(X\geq (1+t)\mu)\leq \exp\left(-\frac{t^2\mu}{2}\right).$$
\end{lemma}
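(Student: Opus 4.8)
\emph{Proof strategy.} Both inequalities are instances of the Chernoff bound, so the plan is to apply the exponential Markov inequality together with the explicit Poisson moment generating function $\E[e^{sX}]=\exp(\mu(e^{s}-1))$, valid for every $s\in\R$, and then to optimize over the free parameter $s$. This reduces each bound to a one-variable deterministic inequality that can be verified by elementary calculus.

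For the lower tail I would fix $s>0$ and write
\[
\prob\big(X\le(1-t)\mu\big)=\prob\big(e^{-sX}\ge e^{-s(1-t)\mu}\big)\le e^{s(1-t)\mu}\,\E[e^{-sX}]=\exp\!\Big(\mu\big(e^{-s}-1+s(1-t)\big)\Big).
\]
For $t\in(0,1)$ the exponent is a strictly convex function of $s$ (its second derivative is $e^{-s}>0$) minimized at $s=-\log(1-t)>0$, the endpoints $t=0,1$ being immediate; substituting this value gives $\prob(X\le(1-t)\mu)\le\exp(-\mu\,\psi(t))$ with $\psi(t):=t+(1-t)\log(1-t)$. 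It then remains to prove the deterministic estimate $\psi(t)\ge t^{2}/2$ on $[0,1)$. Setting $h(t):=\psi(t)-t^{2}/2$, one has $h(0)=0$, $h'(t)=-\log(1-t)-t$ with $h'(0)=0$, and $h''(t)=t/(1-t)\ge0$; hence $h'\ge0$ on $[0,1)$ and so $h\ge0$, which is exactly the required inequality.

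For the upper tail the same scheme applies with $e^{sX}$ in place of $e^{-sX}$: for $s>0$,
\[
\prob\big(X\ge(1+t)\mu\big)\le e^{-s(1+t)\mu}\,\E[e^{sX}]=\exp\!\Big(\mu\big(e^{s}-1-s(1+t)\big)\Big),
\]
and optimizing at $s=\log(1+t)>0$ reduces matters to an elementary estimate for $t\mapsto(1+t)\log(1+t)-t$, which is handled by the same argument used above, namely comparing the function and its first two derivatives at the origin. I do not expect any genuine obstacle here: this is a classical concentration estimate and the whole content is the two convexity inequalities for $\psi$ and its upper-tail counterpart, the first of which is spelled out above. In the uses of this lemma elsewhere in the paper only a bound of the form $\exp(-c\,t^{2}\mu)$ for a fixed $t$ is ever needed, so the precise constant in the exponent is not delicate.
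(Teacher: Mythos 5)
Your lower-tail argument is correct and is exactly the standard Chernoff computation the paper has in mind (the paper states the lemma without proof, citing the Chernoff bound): optimizing at $s=-\log(1-t)$ gives exponent $-\mu\psi(t)$ with $\psi(t)=t+(1-t)\log(1-t)$, and your calculus verification of $\psi(t)\ge t^2/2$ on $[0,1]$ is sound.

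The upper-tail half, however, has a genuine gap, and it sits precisely in the step you wave through ``by the same argument.'' Optimizing at $s=\log(1+t)$ yields the exponent $-\mu\,h(t)$ with $h(t)=(1+t)\log(1+t)-t$, and the inequality you would need, $h(t)\ge t^2/2$, is false for every $t>0$: setting $g(t)=h(t)-t^2/2$ one has $g(0)=0$ and $g'(t)=\log(1+t)-t<0$, so $g<0$ on $(0,\infty)$ — the convexity argument runs in the wrong direction here. This is not just a defect of the method: the bound $\prob(X\ge(1+t)\mu)\le\exp(-t^2\mu/2)$ claimed ``for all $t>0$'' is itself false for large $t$ (e.g.\ $\mu=1$, $t=100$: $\prob(X\ge 101)\ge e^{-1}/101!\approx e^{-369}$, far larger than $e^{-5000}$), since the Poisson upper tail decays like $\exp(-\mu t\log t)$, not like a Gaussian in $t$. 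What Chernoff actually gives is $\prob(X\ge(1+t)\mu)\le\exp(-\mu h(t))$, and via $h(t)\ge \frac{t^2}{2(1+t/3)}$ one gets the Bernstein-type bound $\exp\left(-\frac{t^2\mu}{2(1+t/3)}\right)$, or $\exp(-t^2\mu/3)$ for $t\in(0,1]$. Your closing observation is the right repair: everywhere the upper tail is invoked in the paper (e.g.\ in Lemma \ref{lemma::limiting_moment_of_indegree}) the relevant $t$ is bounded, so a bound of the form $\exp(-c\,t^2\mu)$ with a smaller constant suffices; but as written your proof cannot establish the stated inequality, and the statement itself should be weakened in one of these ways.
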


A corollary of the above lemma is the following concentration inequality for $\Gamma(M,1)$ random variables for $M\in \N.$

\begin{lemma}\label{corollary::gamma_concentration}
Let $X\sim \Gamma(M,1)$ for some $M\in \N.$ Then
$$\prob(X\geq M+u)\leq \exp\left(-\frac{(u+1)^2}{2(M+u)}\right).$$
\end{lemma}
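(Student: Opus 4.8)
The plan is to exploit the classical identity relating a $\Gamma(M,1)$ variable with integer shape to a Poisson count. If $(N_t)_{t\ge 0}$ is a rate-$1$ Poisson process on $[0,\infty)$ and $X$ denotes the arrival time of its $M$-th point, then $X\sim\Gamma(M,1)$, and for any $t>0$ the event $\{X>t\}$ — that the $M$-th arrival has not yet occurred by time $t$ — coincides with $\{N_t\le M-1\}$, where $N_t\sim\mathrm{Poisson}(t)$. Since $X$ has a density, $\prob(X\ge t)=\prob(X>t)$, so
$$\prob(X\ge M+u)=\prob\bigl(N_{M+u}\le M-1\bigr),$$
where $N_{M+u}\sim\mathrm{Poisson}(M+u)$; this is valid whenever $M+u>0$, in particular for $u\ge 0$.

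Next I would massage the right-hand side into the exact form covered by the lower-tail bound in Lemma \ref{lemma::poisson_bound}. Write $M-1=(1-s)(M+u)$ with $s:=\dfrac{u+1}{M+u}$. Since $u\ge 0$ we have $u+1>0$, and since $M\ge 1$ (by hypothesis) we have $u+1\le M+u$; hence $s\in(0,1]$, which is precisely the range of the deviation parameter allowed by Lemma \ref{lemma::poisson_bound}. Applying that lemma with mean $\mu=M+u$ and deviation $s$ gives
$$\prob\bigl(N_{M+u}\le M-1\bigr)=\prob\bigl(N_{M+u}\le (1-s)(M+u)\bigr)\le \exp\left(-\frac{s^2(M+u)}{2}\right).$$

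Finally I would simplify the exponent: $s^2(M+u)=\dfrac{(u+1)^2}{(M+u)^2}\,(M+u)=\dfrac{(u+1)^2}{M+u}$, so the bound becomes $\exp\!\left(-\dfrac{(u+1)^2}{2(M+u)}\right)$, as claimed. There is no genuine obstacle in this argument — the only thing needing care is verifying the constraint $0\le s\le 1$ demanded by Lemma \ref{lemma::poisson_bound}, which reduces exactly to $M\ge 1$ together with $u\ge 0$; once the Poisson-to-Gamma translation is in hand, everything else is a one-line substitution.
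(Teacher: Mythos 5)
Your argument is correct and is essentially the paper's own proof: both rest on the identity $\prob(X\geq M+u)=\prob(\mathrm{Poisson}(M+u)\leq M-1)$ (which you derive via the Poisson-process arrival-time interpretation and the paper derives via the explicit Gamma CDF for integer shape), followed by the lower-tail bound of Lemma \ref{lemma::poisson_bound} with deviation parameter $\frac{u+1}{M+u}$. Your explicit check that this parameter lies in $(0,1]$ (using $M\geq 1$, $u\geq 0$) is a small point of added care that the paper leaves implicit.
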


\begin{proof}
Since $M\in \N,$ we know that the CDF $F_X$ of $X$ is
$$F_X(v) = 1-\sum_{k=0}^{M-1}\frac{v^k}{k!}e^{-v} = \prob(\poi(v)\geq M).$$ Hence,
\begin{align*}
    \prob(X\geq M+u) &= 1-\prob(\poi(M+u)\geq M)\\
    &= \prob(\poi(M+u)\leq M-1)\\
    &\leq \exp\left(-\frac{(u+1)^2}{2(M+u)}\right) \quad \ldots \quad \textnormal{from \Cref{lemma::poisson_bound}.}
\end{align*}

\end{proof}

The core set-up involves a Poisson process with intensity function $N_1 f + N_2 g$ for some densities $f,g.$ We will often be interested in the probability of two points being nearest neighbors. Specifically, we want to know how close the typical nearest neighbor of a point is to it. For this, we will need to integrate the intensity  function over balls of small radii. The following lemma will prove useful.

\begin{lemma}\label{lemma::ball_and_surface_integrals}

Let $S$ be an open set in $\R^d$ with $f$ a real valued, three times continuously differentiable function defined on $S$. Let $x\in S$ and $H_x f(x)$ denote the Hessian of $f$ at $x.$ Let $B(x,r)$ denote the ball of radius $r$ around $x$ such that $B(x,r)\subset S.$ Let $\partial B(x,r)$ denote its boundary. Then, as $r\to 0,$
\begin{align}
    \int_{B(x,r)} f(z)~dz &= f(x)V_d r^d + \frac{V_d \tr(\Hnormal_x f(x))}{2(d+2)} r^{d+2} + \delta_1(x,r), ,\label{eq::ball_integral_asymptotics}\\
    \int_{\partial B(x,r)} f(z)~dz &= f(x)dV_d r^{d-1} + \frac{V_d\tr(\Hnormal_x f(x))}{2} r^{d+1} + \delta_2(x,r) \label{eq::surface_integral_asymptotics},
\end{align}

where $|\delta_1(x,r)|\leq C_1(x)r^{d+3}$ and $|\delta_2(x,r)|\leq C_2(x) r^{d+2}$ for all $x,r$ as above, for some non-negative functions $C_1,C_2$ of $x.$\\

The functions $C_1,C_2$ can be taken as constants if $f$ has uniformly bounded third partial derivatives on $S.$ 
\end{lemma}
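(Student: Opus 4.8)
The plan is to reduce both asymptotic expansions to the second-order Taylor expansion of $f$ about $x$, after which each identity follows from elementary monomial integrals over balls and spheres centered at the origin. First I would pass to centered coordinates $z = x+u$, so that the domains of integration become $B(0,r)$ and $\partial B(0,r)$. Fix $r_0>0$ small enough that $\overline{B(x,r_0)}\subset S$ and restrict attention to $0<r\le r_0$. On the compact ball $\overline{B(x,r_0)}$ the third-order partial derivatives of $f$ are bounded; call such a bound $M(x)$. Taylor's theorem with the Lagrange form of the remainder then gives, for $\|u\|\le r_0$,
$$f(x+u) = f(x) + \nabla f(x)^{T} u + \frac{1}{2}\, u^{T}\Hnormal_x f(x)\, u + R(x,u), \qquad |R(x,u)| \le C(x)\|u\|^{3},$$
where $C(x)$ is an absolute multiple of $M(x)$. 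If the third partials of $f$ are uniformly bounded on $S$, then $M(x)$, and hence $C(x)$, may be chosen independent of $x$.

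Next I would integrate the three leading terms and the remainder separately, using the following standard facts, all consequences of writing integrals over the ball as integrals over radial shells (via $\int_{\partial B(0,\rho)}1 = dV_d\rho^{d-1}$) together with the invariance of $B(0,r)$ and $\partial B(0,r)$ under flipping the sign of a single coordinate: $\int_{B(0,r)}1\,du = V_d r^d$ and $\int_{\partial B(0,r)}1 = dV_d r^{d-1}$; over either domain $\int u_i = 0$ and $\int u_i u_j = 0$ for $i\ne j$; $\int_{B(0,r)}u_i^2\,du = \frac{1}{d}\int_{B(0,r)}\|u\|^2\,du = \frac{V_d}{d+2}\,r^{d+2}$; and $\int_{\partial B(0,r)}u_i^2 = \frac{1}{d}\int_{\partial B(0,r)}\|u\|^2 = V_d r^{d+1}$. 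The constant term contributes $f(x)V_d r^d$ for the ball and $f(x)dV_d r^{d-1}$ for the sphere; the gradient term vanishes by the odd symmetry; and the Hessian term contributes $\frac{1}{2}\sum_i (\Hnormal_x f(x))_{ii}\cdot\frac{V_d}{d+2}r^{d+2} = \frac{V_d\tr(\Hnormal_x f(x))}{2(d+2)}r^{d+2}$ for the ball and $\frac{V_d\tr(\Hnormal_x f(x))}{2}r^{d+1}$ for the sphere. Finally, the remainder integral is bounded by $C(x)\int\|u\|^3$, which equals $C(x)\frac{dV_d}{d+3}r^{d+3}$ over $B(0,r)$ and $C(x)\,dV_d\,r^{d+2}$ over $\partial B(0,r)$; this yields $\delta_1,\delta_2$ with the stated bounds, with $C_1(x) = \frac{dV_d}{d+3}C(x)$ and $C_2(x) = dV_d\,C(x)$.

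There is no substantial obstacle here; the computation is routine bookkeeping. The only point requiring a little care is the justification and uniformity of the cubic remainder bound: one must fix the neighborhood $\overline{B(x,r_0)}$ inside the open set $S$ before invoking Taylor's theorem so that $M(x)$ is finite, and one must observe that under the uniform-boundedness hypothesis on the third derivatives this bound — and therefore the functions $C_1,C_2$ — can be taken to be genuine constants.
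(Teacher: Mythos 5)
Your proposal is correct and follows essentially the same route as the paper: Taylor-expand $f$ to second order with a cubic remainder, kill the gradient and off-diagonal Hessian contributions by symmetry, and evaluate the diagonal second-moment integrals to produce the stated coefficients, with the remainder absorbed into $\delta_1,\delta_2$. The only cosmetic difference is that the paper parametrizes $\partial B(x,r)$ explicitly in spherical coordinates and then obtains the ball integral by integrating the surface result over the radius, whereas you quote the standard ball and sphere moment formulas directly; both yield identical expansions and remainder bounds.
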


\begin{proof} WLOG we assume that $x=0.$ Expanding $f$ for $y$ near $0$ gives
\begin{equation}\label{eq::taylor_expansion}
f(y) = f(0) + \nabla f(0)^T (y-x) + \frac{1}{2}y^T \Hnormal f(x) y + O(\|y\|^3).
\end{equation}

The constant in the big-O term depends only on $x.$ If $f$ has bounded third derivatives, then the constant depends only on $f.$

We prove \eqref{eq::surface_integral_asymptotics} first. We change to spherical coordinates and parametrize $\partial B(x,r)$ by the variables $\psi_1,...,\psi_{d-1}$ with $\psi_{d-1} \in [0,2\pi)$ and $\psi_i\in [0,\pi)$ for $i\neq d-1.$ The change of variables is given by
\begin{align*}
    y_i &= r\sin(\psi_1)...\sin(\psi_{i-1})\cos(\psi_i) \text{ for }i\neq d,\\
    y_d &= r\sin(\psi_1)...\sin(\psi_d).
\end{align*}

Let $J$ denote the Jacobian for the change of coordinates. It can be shown that the determinant of the Jacobian $J$ at $\Psi = (\psi_1,...,\psi_{d-1})$ is given by 
$$|J(\Psi)| = r^{d-1}\sin^{d-2}(\psi_1)\sin^{d-3}(\psi_2)...\sin(\psi_1).$$

We now find the integral over $\partial B(0,r)$ by integrating every term in the Taylor expansion individually.
$$\int_{\partial B(x,r)}f(0)~dy = f(0)\int J(\Psi)~d \Psi = f(0)dV_d r^{d-1}.$$

To calculate the integral of the gradient term, we can notice that due to the parametrization given above,
$$\int_{\partial B(0,r)} y_i~dy = 0.$$

Hence, the gradient term integrates to $0.$ To find the Hessian term, we first notice that for $i\neq j,$
$$\int_{\partial B(0,r)}y_iy_j~dy = 0.$$

Further more,
\begin{align*}
\int_{\partial B(0,r)}y_i^2 ~dy &= \frac{1}{d}\int_{\partial B(0,r)} \|y\|_2^2 ~dy\\
&= \frac{1}{d}r^2\int |J(\Psi)|~d\Psi = V_dr^{d+1},
\end{align*}
where the first equality is due to symmetry. Hence, the integral of the Hessian term is given by
\begin{align*}
\frac{1}{2}\int_{\partial B(0,r)} y^T \Hnormal f(0) y ~dy &= \frac{1}{2} \int_{\partial B(0,r)} \sum h_{ii}y_i^2 ~dy\\
&= \frac{V_d \tr(\Hnormal f(0))}{2} r^{d+1}.
\end{align*}

Finally, the big-O term is bounded by $C_x r^3.$ Hence, by using the same ideas as above we get that the integral of the remainder term can be bounded by $O(r^{d+2}).$ This proves \eqref{eq::surface_integral_asymptotics}. Since
$$\int_{B(0,r)} f(z)~dz = \int_0^r\left(\int_{\partial B(0,u) }f(z)~dz\right)~du,$$

\eqref{eq::ball_integral_asymptotics} follows by integrating the individual expressions from 0 to $r.$
\end{proof}

A combination of the above results shows that the $k_N$-nearest neighbors of a point all lie at a distance of order smaller than $(k_N/N)^\frac{1}{d}$ from it. This is made precise in the following result.\\




\begin{lemma}\label{lemma::nearest_neighbor_distance_bound}

Let $k_N = o(N)$ and let $h$ be a density with support $S$ that satisfies the following: 
\begin{enumerate}[(a)]
    \item $S$ is compact, convex with non-empty interior satisfying $S = \overline{\textnormal{int}(S)}$ and $\partial S$ has Lebesgue measure zero.
    \item  $h$ is thrice continuously differentiable and uniformly bounded below on $S.$
\end{enumerate}

Let $\Gcal_N$ denote the Poisson process with intensity $Nh$, $\Gcal_N(x,r)$ denote the set of points in $\Gcal_N$ lying in $B(x,r)$ and $r_N(K) := \left(K \cdot \dfrac{\max\{k_N,(\log N)^2\}}{N}\right)^\frac{1}{d}.$ Let $\alpha>0$ be given.\\

Then, there exists a $K>0$ such that eventually for all large $N$ we have
\begin{equation*}
\sup_{x\in \textnormal{int}(S)}\prob(|\Gcal_N(x,r_N(K))|\leq k_N-1) \leq N^{-\alpha}
\end{equation*}
\end{lemma}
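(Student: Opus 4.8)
The plan is to use that, for each fixed $x$, the count $|\Gcal_N(x,r_N(K))|$ is exactly a Poisson random variable with mean $\mu_N(x) := N\int_{B(x,r_N(K))\cap S} h(z)\,dz$, and then to bound $\mu_N(x)$ from below \emph{uniformly} over $x\in\textnormal{int}(S)$. Since $k_N = o(N)$ and $(\log N)^2 = o(N)$, the radius $r_N(K)\to 0$ for every fixed $K$, so we are genuinely in the small-ball regime.

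First I would prove a purely geometric fact: there are constants $c_0 = c_0(S)>0$ and $r_* = r_*(S)>0$ such that $\mathrm{vol}\big(B(x,r)\cap S\big)\ge c_0\, r^d$ for all $x\in S$ and all $0<r\le r_*$. This is the only place convexity of $S$ is used. Fix a closed ball $\bar B(x_0,\rho_0)\subset\textnormal{int}(S)$ and put $D := \mathrm{diam}(S)<\infty$; for any $x\in S$ the convex hull of $\{x\}\cup\bar B(x_0,\rho_0)$ lies in $S$, and near $x$ this hull contains a spherical sector with apex $x$ whose half-angle is at least $\arcsin(\rho_0/D)$, so for $r\le r_*$ its volume is at least $c_0 r^d$. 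Combining this with $h_{\min} := \inf_S h > 0$ (positive by hypothesis (b)) gives, as soon as $r_N(K)\le r_*$,
$$
\mu_N(x)\;\ge\; N\,h_{\min}\,c_0\,r_N(K)^d \;=\; h_{\min}c_0\,K\max\{k_N,(\log N)^2\}\;=:\;C K\max\{k_N,(\log N)^2\},
$$
with $C := h_{\min}c_0 > 0$, uniformly in $x\in\textnormal{int}(S)$.

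With the mean controlled, I would finish with the Poisson lower-tail bound of Lemma \ref{lemma::poisson_bound}. Choose $K$ large enough that $CK\ge 4$; then $\mu_N(x)\ge 4\max\{k_N,(\log N)^2\}\ge 4k_N$, so writing $k_N-1 = (1-t)\mu_N(x)$ we get $1-t = (k_N-1)/\mu_N(x)\le 1/4$, i.e. $t\ge 3/4$, and Lemma \ref{lemma::poisson_bound} yields
$$
\prob\big(|\Gcal_N(x,r_N(K))|\le k_N-1\big)\;\le\;\exp\!\Big(-\tfrac{t^2}{2}\mu_N(x)\Big)\;\le\;\exp\!\Big(-\tfrac{9}{32}\cdot 4(\log N)^2\Big)\;=\;\exp\!\Big(-\tfrac{9}{8}(\log N)^2\Big).
$$
Since $\tfrac{9}{8}(\log N)^2\ge \alpha\log N$ for all large $N$, the right-hand side is at most $N^{-\alpha}$, and the bound is uniform in $x$ because every estimate above was; finally $r_N(K)\le r_*$ holds for all large $N$ since $r_N(K)\to 0$. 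This proves the claim with $K$ depending only on $h_{\min}$ and $c_0(S)$, and the threshold on $N$ depending on $\alpha$, $K$ and $r_*$.

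The main obstacle is the geometric lower bound near $\partial S$: for $x\in\textnormal{int}(S)$ lying much closer to the boundary than $r_N(K)$, the ball $B(x,r_N(K))$ protrudes from $S$ and the naive estimate $\int_{B(x,r)}h\approx h(x)V_dr^d$ (which one could otherwise read off Lemma \ref{lemma::ball_and_surface_integrals}) fails; convexity is exactly what salvages a lower bound of order $r^d$, and making the constant $c_0$ genuinely uniform over all of $S$, including points arbitrarily close to $\partial S$, is the step that needs care. Everything else reduces to the one-line Poisson tail estimate above. (If one prefers not to prove the sector bound from scratch, one may instead invoke the standard interior-cone property of convex bodies.)
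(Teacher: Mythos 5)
Your proposal is correct and follows essentially the same route as the paper: reduce everything to a uniform-in-$x$ lower bound $N\int_{B(x,r_N(K))\cap S}h\,dz \ge cK\max\{k_N,(\log N)^2\}$ and then finish with the Poisson lower-tail bound of Lemma \ref{lemma::poisson_bound} after choosing $K$ large. The only (minor) divergence is in how the uniform volume bound $\lambda(B(x,r)\cap S)\ge c_0 r^d$ is established: you construct an interior cone toward a fixed ball $\bar B(x_0,\rho_0)\subset\textnormal{int}(S)$ (where, as you note, the near-tangent degeneracy for $x$ close to that ball needs a little extra care), whereas the paper takes the positive minimum of the continuous function $x\mapsto\lambda(S\cap B(x,R))$ over the compact set $S$ and scales the set down about $x$ using convexity, which avoids that issue entirely.
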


\begin{proof}  

Fix an $R>0$ and let $f(x) = \lambda(S\cap B(x,R))$ where $\lambda$ denotes the lebesgue measure. Note that $f$ is a continuous function on $S$. Clearly $f(x)>0$ for all $x\in \textnormal{int}(S).$ Furthermore, because $S = \overline{\textnormal{int}(S)},$ we also have that $f(x)>0$ for all $x\in \partial S.$ Since $S$ is compact the minimum value of $f$ is achieved and is positive. Let $M:= \min_{x\in S} f(x).$

By convexity of $S$
$$ x + \theta \cdot ((S\cap B(x,R)) - x)\subset S\cap B(x,R\theta).$$

Hence,
$$\lambda(S\cap B(x,R\theta))\geq \lambda(x + \theta\cdot ((S\cap B(x,R)))-x)\geq M\theta^d$$

for any $0\leq \theta\leq 1.$ Since $r_N(K)\to 0$ for any given $K,$ we have that for any given $K$ eventually for all large $N,$
$$\lambda(S\cap B(x,r_N(K)))\geq MR^{-d}K\frac{\max((\log N)^2,k_N)}{N}.$$

Note that for any $x\in S$ and any $r>0,$ $\Gcal_N(x,r)$ is $\poi(N\cdot h(B(x,r)))$ where $h(A) := \int_A h(z)~dz$ for any Borel set $A.$ If $C>0$ is such that $h(x)>C$ for all $x\in S$ then,
$$N\cdot h(B(x,r_N(K)))\geq C\cdot \lambda(S\cap B(x,r_N(K)))\geq CMR^{-d}K \cdot \max((\log N)^2,k_N).$$

Hence, for any fixed $K$ eventually we have the bound
$$\sup_{x\in \textnormal{int}(S)}\prob(|\Gcal_N(x,r_N(K))|\leq k_N-1) \leq \prob(\poi(CMR^{-d}K \cdot \max((\log N)^2,k_N))\leq k_N-1).$$

By taking $K$ large enough and using \Cref{lemma::poisson_bound}, the proof is complete.

\end{proof}

We now present a result that is sometimes called the Palm theory of Poisson processes. It allows us to write the expectation of certain functionals of Poisson processes in a cleaner manner.

\begin{lemma}\textnormal{(\citet[Theorem 1.6]{penrose2003random})} \label{lemma::palm theory identity}
Let $h$ be a density function, $\lambda>0$ be a constant and let $\Gcal_{\lambda h}$ denote the Poisson process with intensity function $\lambda h.$ Let $s>0$ be an integer and $u(\Ycal,\Xcal)$ be a bounded function defined on pairs of finite sets such that $\Ycal \subset \Xcal$ which is $0$ if $|\Ycal|\neq s.$ Then,
\begin{equation}\label{eq::palm_theory_identity}
\E\left(\sum_{\Ycal\subset \Gcal_{\lambda h}} u(\Ycal,\Gcal_{\lambda h})\right) = \frac{\lambda^s}{s!} \int \E\left(u(\{z_1,...,z_s\},\{z_1,...,z_s\}\cup \Gcal_{\lambda h})\right) \prod_{i=1}^s h(z_i) ~ dz_i
\end{equation}
\end{lemma}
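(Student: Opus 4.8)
The statement is the $s$-point Palm (Mecke) formula for a Poisson process of finite total intensity, so the plan is to reduce everything to the elementary description of such a process as a Poisson number of i.i.d.\ points. Write $\Gcal_{\lambda h}$ as follows: let $L\sim\poi(\lambda)$ --- this uses that $h$ is a density, so the total mass of $\lambda h$ is $\lambda<\infty$ --- and given $L=n$ let $W_1,\dots,W_n$ be i.i.d.\ with density $h$, independent of $L$; then $\{W_1,\dots,W_n\}$ has the law of $\Gcal_{\lambda h}$. Since $h$ is absolutely continuous, the $W_i$ are almost surely distinct, so a.s.\ the sum over $s$-element subsets $\Ycal\subset\Gcal_{\lambda h}$ coincides with the sum over index sets $\{i_1<\dots<i_s\}\subseteq\{1,\dots,L\}$ of $u(\{W_{i_1},\dots,W_{i_s}\},\{W_1,\dots,W_L\})$; the hypothesis that $u$ vanishes unless $|\Ycal|=s$ is what makes this the only contribution.

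First I would condition on $L=n$. By exchangeability of $W_1,\dots,W_n$ the $\binom{n}{s}$ terms have a common expectation, so the conditional expectation equals $\binom{n}{s}\,\E[u(\{W_1,\dots,W_s\},\{W_1,\dots,W_n\})]$. Next I would condition further on $W_1=z_1,\dots,W_s=z_s$ and use independence to rewrite this as $\binom{n}{s}\int \E[u(\{z_1,\dots,z_s\},\{z_1,\dots,z_s\}\cup\{W_{s+1},\dots,W_n\})]\,\prod_{i=1}^s h(z_i)\,dz_i$, where $W_{s+1},\dots,W_n$ are i.i.d.\ with density $h$. Boundedness of $u$ licenses the use of Tonelli/Fubini throughout this step.

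Now I would reassemble the Poisson average over $n\ge s$. Using $\tfrac{\lambda^n}{n!}\binom{n}{s}=\tfrac{\lambda^s}{s!}\cdot\tfrac{\lambda^{n-s}}{(n-s)!}$ and substituting $m=n-s$, the outer average becomes $\tfrac{\lambda^s}{s!}\int\bigl(\sum_{m\ge0}e^{-\lambda}\tfrac{\lambda^m}{m!}\E[u(\{z_1,\dots,z_s\},\{z_1,\dots,z_s\}\cup\{W_1',\dots,W_m'\})]\bigr)\,\prod_{i=1}^s h(z_i)\,dz_i$, with $W_1',\dots,W_m'$ i.i.d.\ of density $h$. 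But by the very same ``Poisson number of i.i.d.\ points'' description, the inner bracket is exactly $\E[u(\{z_1,\dots,z_s\},\{z_1,\dots,z_s\}\cup\Gcal_{\lambda h})]$ for an independent copy $\Gcal_{\lambda h}$, which is the claimed identity.

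The only genuinely delicate points are bookkeeping rather than conceptual: (i) checking that the almost-sure simplicity of $\Gcal_{\lambda h}$ lets one pass from ``subsets of the random point set'' to ``index subsets'' without over- or under-counting, and (ii) justifying the interchanges of summation, expectation and integration, both handled by $\|u\|_\infty<\infty$ together with $\sum_n e^{-\lambda}\tfrac{\lambda^n}{n!}\binom{n}{s}<\infty$. Alternatively one can simply invoke \citet[Theorem 1.6]{penrose2003random} directly; the argument above is the standard proof of that result specialized to the finite-mass intensity $\lambda h$.
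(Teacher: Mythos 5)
Your proposal is correct; the paper does not prove this lemma at all but simply imports it as Theorem 1.6 of \citet{penrose2003random}, and your argument is exactly the standard proof of that theorem: represent the finite-intensity Poisson process as a $\poi(\lambda)$ number of i.i.d.\ points with density $h$, condition on the count, use exchangeability to reduce the sum over $s$-subsets to $\binom{n}{s}$ copies of one term, and re-sum the Poisson weights via $\tfrac{\lambda^n}{n!}\binom{n}{s}=\tfrac{\lambda^s}{s!}\tfrac{\lambda^{n-s}}{(n-s)!}$. The bookkeeping points you flag (a.s.\ simplicity of the process and Fubini under boundedness of $u$) are indeed the only delicate steps, and they are handled correctly.
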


The results given so far mostly deal with the out-neighbors of a given point. We conclude this first appendix by proving a result on the moments of the in-degree of a point in the Poisson process. We start by recalling the definition of a cone in $\R^d.$\\

For $x\in \R^d$ non-zero and $\theta>0,$ the cone $\Ccal(x,\theta)$ at $x$ of angle $\theta$ is defined as
$$\Ccal(x,\theta):= \{0\}\cup \left\{y:\arccos\left(\frac{x^Ty}{\|x\|\cdot \|y\|}\right)\leq \theta \right\}$$

We now state a result from \citet{biau2015lectures}.

\begin{lemma}\textnormal{(\citet[Lemma 20.5]{biau2015lectures})}\label{lemma::biau_devroye_cones} For $x\in \R^d$ and $\theta\leq \pi/6.$ Let $\Ccal(x,\theta)$ denote the cone around $x$ of angle $\theta.$ Then for any $y_1,y_2\in \Ccal(x,\theta)$ with $\|y_1\|\leq \|y_2\|,$ we have $\|y_1-y_2\|\leq \|y_2\|.$
\end{lemma}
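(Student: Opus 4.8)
The plan is to reduce the statement to a one-variable inequality via the law of cosines. First I would dispose of the degenerate cases: if $y_1 = 0$ the claimed inequality reads $\|y_2\| \le \|y_2\|$, and if $y_2 = 0$ then $\|y_1\| \le \|y_2\| = 0$ forces $y_1 = 0$ as well, so henceforth we may assume $y_1, y_2$ are both nonzero.

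Next I would control the angle between $y_1$ and $y_2$. Consider $\rho(u,v) := \arccos\bigl(u^{T}v/(\|u\|\,\|v\|)\bigr)$, the angular (geodesic) distance between the directions of two nonzero vectors $u,v$; this is a metric on directions and in particular satisfies the triangle inequality. Since $y_1, y_2 \in \Ccal(x,\theta)$, the definition of the cone gives $\rho(y_1, x) \le \theta$ and $\rho(x, y_2) \le \theta$, so the angle $\gamma := \rho(y_1,y_2)$ between $y_1$ and $y_2$ satisfies $\gamma \le 2\theta \le \pi/3$; hence $\cos\gamma \ge \cos(\pi/3) = 1/2$.

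Now apply the law of cosines in the plane spanned by $y_1, y_2$: $\|y_1 - y_2\|^2 = \|y_1\|^2 + \|y_2\|^2 - 2\|y_1\|\,\|y_2\|\cos\gamma \le \|y_1\|^2 + \|y_2\|^2 - \|y_1\|\,\|y_2\|$, using $\cos\gamma \ge 1/2$. It then remains to check that this upper bound is at most $\|y_2\|^2$, which is equivalent to $\|y_1\|^2 - \|y_1\|\,\|y_2\| = \|y_1\|\bigl(\|y_1\| - \|y_2\|\bigr) \le 0$; this holds precisely because $\|y_1\| \le \|y_2\|$, which finishes the proof. There is no substantial obstacle here; the only step needing a moment's care is invoking the triangle inequality for the angular distance to get $\gamma \le 2\theta$, which is standard.
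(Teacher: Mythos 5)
Your proof is correct: the angular-metric triangle inequality gives $\gamma\leq 2\theta\leq \pi/3$, and the law-of-cosines computation together with $\|y_1\|\leq\|y_2\|$ yields $\|y_1-y_2\|^2\leq \|y_2\|^2$, with the degenerate cases handled. The paper itself does not prove this lemma but imports it from \citet[Lemma 20.5]{biau2015lectures}, and your argument is essentially the standard one behind that result, so there is nothing to fix.
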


Note that given an angle $\theta$,  $\R^d$ can be covered by a finite number of cones of angle $\theta$. Hence, using the above Lemma, we can see that given a set of points $A$ in $\R^d,$ a point $x\in A$ can be one of the $k$-nearest neighbors of at most $C_dk$ other points in $A$ where $C_d$ is some constant that depends only on $d.$ In particular, the in-degree of the any point in the $k_N$-NN graph is bounded by $C_dk_N.$ We now build on this and find the limiting first and second moments of the in-degree of a point in the Poisson process.

\begin{lemma}\label{lemma::limiting_moment_of_indegree} Let $h$ be a density on $\R^d$ bounded below on its support. Let $\Gcal_N$ denote the Poisson process on $\R^d$ with intensity function $Nh.$ Let $z$ be a point in the support of $h$ and let $\Gcal_N^z$ denote the set $\Gcal_N\cup\{z\}.$ Let $d^\downarrow_N(z)$ denote the in-degree of $z$ in the graph $\Gscr_{k_N}(\Gcal_N^z).$ Then the following hold:
\begin{align}
&\frac{\E(d^\downarrow_N(z))}{k_N} \to 1, \label{eq::limiting_first_moment_of_indegree}\\
&\frac{\E(d^\downarrow_N(z))^2}{k_N^2} \to 1. \label{eq::limiting_second_moment_of_indegree}
\end{align}
\end{lemma}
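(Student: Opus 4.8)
The plan is to exploit the basic combinatorial fact established just before the statement: by the cone-covering argument (Lemma~\ref{lemma::biau_devroye_cones}), the in-degree $d^\downarrow_N(z)$ is deterministically bounded by $C_d k_N$ for a constant $C_d$ depending only on $d$. This uniform bound immediately gives uniform integrability of $d^\downarrow_N(z)/k_N$ and $(d^\downarrow_N(z)/k_N)^2$, so it suffices to identify the limits in probability (or even just to compute $\E(d^\downarrow_N(z))/k_N$ and $\E(d^\downarrow_N(z))^2/k_N^2$ directly via Palm calculus). The key structural observation is that $d^\downarrow_N(z) = \sum_{x\in\Gcal_N}\indicator\{z \text{ is among the } k_N \text{ nearest neighbors of } x \text{ in }\Gcal_N^z\}$, and ``$z$ is among the $k_N$ nearest neighbors of $x$'' is equivalent to ``$|\Gcal_N \cap B(x,\|x-z\|)\setminus\{x\}| \le k_N - 1$'' (points strictly closer to $x$ than $z$ number fewer than $k_N$).

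First I would handle the first moment. Apply the Palm theory identity (Lemma~\ref{lemma::palm theory identity}) with $s=1$ to write
\[
\E(d^\downarrow_N(z)) = N\int \prob\bigl(|\Gcal_N \cap B(x,\|x-z\|)| \le k_N - 1\bigr)\, h(x)\, dx,
\]
where the Poisson variable $|\Gcal_N\cap B(x,\|x-z\|)|$ has mean $N\cdot h(B(x,\|x-z\|))$. By Lemma~\ref{lemma::nearest_neighbor_distance_bound}, the relevant values of $\|x-z\|$ are of order $(k_N/N)^{1/d}$; rescaling $x = z + (k_N/N)^{1/d} v$ and using Lemma~\ref{lemma::ball_and_surface_integrals} to expand $h(B(x,\|x-z\|)) \approx h(z) V_d \|x-z\|^d$, the integrand becomes, to leading order, $\prob(\poi(k_N \|v\|^d (1+o(1))) \le k_N - 1)$, which for large $k_N$ concentrates: it is $\approx 1$ when $\|v\|^d < 1$ and $\approx 0$ when $\|v\|^d > 1$. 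Integrating $v$ over the unit ball $\{\|v\|^d < 1\}$, whose volume is $V_d$, and accounting for the Jacobian $(k_N/N)$ and the outer factor $N$, the $N$'s cancel, $h(z)$ cancels against a $1/h(z)$ from the change of variables, and we are left with $\E(d^\downarrow_N(z))/k_N \to 1$. The heuristic is simply that each point $x$ has exactly $k_N$ out-neighbors, so the total number of directed edges is $k_N |\Gcal_N|$; distributing these as in-edges over $|\Gcal_N|$ points gives an average in-degree of $k_N$, and the Poisson concentration makes the local fluctuation of this count negligible relative to $k_N$.

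For the second moment I would use Palm theory with $s=2$ to split $\E(d^\downarrow_N(z))^2 = \E(d^\downarrow_N(z)) + \E\sum_{x_1\neq x_2}(\cdots)$; the first term is $O(k_N) = o(k_N^2)$ by the first part, and the cross term, again by Palm's identity, is
\[
N^2\int\int \prob\bigl(|\Gcal_N^{x_1,x_2}\cap B(x_1,\|x_1-z\|)| \le k_N-1,\ |\Gcal_N^{x_1,x_2}\cap B(x_2,\|x_2-z\|)|\le k_N-1\bigr) h(x_1)h(x_2)\,dx_1 dx_2.
\]
After the same rescaling $x_i = z + (k_N/N)^{1/d}v_i$, the two balls $B(z,\|x_1-z\|)$ and $B(z,\|x_2-z\|)$ overlap in a region whose expected point count is $o(k_N)$ unless $\|v_1\|,\|v_2\|$ are comparable, but even then the two indicator events become asymptotically a pair of Poisson concentration events that each force $\|v_i\|^d<1$; the dependence introduced by the shared points is of lower order because the overlap contributes a vanishing fraction of each Poisson mean as $k_N\to\infty$. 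So the joint probability factorizes in the limit into the product of the two marginal limits, yielding $\E(d^\downarrow_N(z))^2/k_N^2 \to 1\cdot 1 = 1$. The main obstacle I anticipate is precisely this last point — controlling the correlation between the two in-edge indicators coming from the shared randomness of $\Gcal_N$ in the overlapping balls — and making the ``overlap is negligible'' claim quantitative enough (uniformly in $v_1,v_2$ in the relevant $O(1)$ region) to pass to the limit; this likely requires a careful bound on $\prob(\poi(\mu_1)\le k_N-1,\ \poi(\mu_2)\le k_N-1)$ for correlated Poisson counts with $\mu_i\asymp k_N$, for which the deterministic bound $d^\downarrow_N \le C_d k_N$ on the dominated-convergence side is the main saving grace, since it lets us restrict attention to the region where both rescaled distances are $O(1)$ and reduces everything to a bounded-domain integral.
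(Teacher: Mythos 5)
Your first-moment argument is the paper's: Palm theory with $s=1$, restriction to $\|x-z\|\le r_N(K)$ via Lemma \ref{lemma::nearest_neighbor_distance_bound}, and Poisson concentration (Lemma \ref{lemma::poisson_bound}) to show the integrand is within $k_N^{-2}$ of $1$ for $\|x-z\|$ below $\left(\frac{k_N(1-\epsilon)}{Nh(z)V_d}\right)^{1/d}$ and within $k_N^{-2}$ of $0$ above $\left(\frac{k_N(1+\epsilon)}{Nh(z)V_d}\right)^{1/d}$; the paper does this by a liminf/limsup sandwich in $\epsilon$ rather than your rescaling $x=z+(k_N/N)^{1/d}v$, but that is cosmetic. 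The diagonal term of the second moment is also handled the same way (it is $O(k_N)$, the paper citing the cone bound of Lemma \ref{lemma::biau_devroye_cones}).

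For the off-diagonal term, however, your proposed justification has a genuine flaw, and it is exactly the point you flagged as the obstacle. The claim that ``the overlap contributes a vanishing fraction of each Poisson mean as $k_N\to\infty$'' is false: when $x_1$ and $x_2$ lie at comparable distances from $z$ and in nearby directions (a region of positive measure in the rescaled variables $v_1,v_2$), the balls $B(x_1,\|x_1-z\|)$ and $B(x_2,\|x_2-z\|)$ overlap in a set carrying a constant fraction of each Poisson mean, so the two indicator events do \emph{not} decorrelate, and an ``asymptotic factorization'' argument cannot be pushed through as stated. The correct observation — which makes the correlation issue moot — is that no factorization is needed: outside the thin transition shells in $\|x_i-z\|$, each \emph{marginal} probability is within $k_N^{-2}$ of $0$ or $1$ (exactly the first-moment bounds), and then the elementary inequalities $\prob(A)+\prob(B)-1\le\prob(A\cap B)\le\min\left(\prob(A),\prob(B)\right)$ pin the joint probability to within $2k_N^{-2}$ of the product of the limits, uniformly in $(x_1,x_2)$, regardless of dependence. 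The paper sets this up cleanly by splitting $B(x_1,\|x_1-z\|)\cup B(x_2,\|x_2-z\|)$ into the two set differences and the intersection, giving independent Poisson counts $Z_1,Z_2,Z_3$ with the two events written as $\{Z_1+Z_3\le k_N-1\}$ and $\{Z_2+Z_3\le k_N-1\}$, where $Z_1+Z_3$ and $Z_2+Z_3$ have precisely the marginal distributions already controlled in the first-moment computation. With that substitution your outline closes; without it, the step ``the joint probability factorizes in the limit'' is unsupported.
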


\begin{proof} WLOG assume $z=0.$ We assume that $k_N\gg (\log N)^2.$ The other case can be dealt with similarly but the calculations are a little more tedious. We first prove \eqref{eq::limiting_first_moment_of_indegree}. The proof of \eqref{eq::limiting_second_moment_of_indegree} is similar.\\

By the Palm Theory identity \eqref{eq::palm_theory_identity}, we get
\begin{align*}
\frac{E(d^\downarrow_N(0))}{k_N} &= \frac{N}{k_N}\int h(x)\cdot \prob\left((x,0)\in \Gscr_{k_N}(\Gcal_N^{x,0})\right)~dx
\end{align*}

%

The idea of the proof is to show that the probability in the above integral is almost $1$ if $\|x\|<\left(\frac{k_N}{N\cdot h(0)V_d}\right)^\frac{1}{d}$ and close to $0$ otherwise.\\

For any Borel set $A,$ let $h(A) := \int_{A}h(z)~dz$. Define $r_N(K) = \left(K\frac{\max((\log N)^2,k_N)}{N}\right)^\frac{1}{d}.$ Since we have assumed that $k_N\gg (\log N)^2,$ we can take $r_N(K) = \left(K\cdot \frac{k_N}{N}\right)^\frac{1}{d}.$\\

We can see that
$$\prob((z,0)\in \Gscr_{k_N}(\Gcal_N^{z,0})) = \prob(|\Gcal_N(z,\|z\|)|\leq k_N-1).$$

By \Cref{lemma::nearest_neighbor_distance_bound}, we can pick a $K$ such that the above probability is at most $N^{-4}$ for all $z$ such that $\|z\|\geq r_N(K).$ Hence, it suffices to find the limit of
\begin{align*}
\Ical_N &= \frac{N}{k_N}\int_{B(0,r_N(K))} h(x)\cdot \prob\left((x,0)\in \Gscr_{k_N}(\Gcal_N^{x,0})\right)~dx
\end{align*}

Note that $\Ical_N$ can be written as
\begin{align*}
\Ical_N &= \frac{N}{k_N}\int_{B(0,r_N(K))} h(x)\cdot \prob\left(\poi\left(N\cdot h(B(x,\|x\|))\right)\leq k_N-1\right)~dx.
\end{align*}

By continuity of $h,$ there exists a sequence $\epsilon_N\to 0$ such that for all $x\in B(0,2r_N(K)),$
$$(1-\epsilon_N)h(0)\leq h(x)\leq (1+\epsilon_N)h(0).$$

Let $\epsilon>0$ be given. Then, for all $x$ with $\|x\|\leq \left(\frac{k_N(1-\epsilon)}{N\cdot h(0)V_d}\right)^\frac{1}{d},$
\begin{align*}
N h(B(x,\|x\|)) &\leq N (1+\epsilon_N)h(0)\text{vol}(B(x,\|x\|))\\
&= N(1+\epsilon_N)h(0)V_d \frac{k_N(1-\epsilon)}{N\cdot h(0)V_d}\\
&= k_N(1-\epsilon)(1+\epsilon_N).
\end{align*}

Hence, for all large enough $N,$ 
$$N h(B(x,\|x\|)) \leq (1-\frac{\epsilon}{2})k_N,$$

for all $x$ with $\|x\|\leq \left(\frac{k_N(1-\epsilon)}{N\cdot h(0)V_d}\right)^\frac{1}{d}.$ Using \Cref{lemma::poisson_bound} we get
$$\prob\left(\poi\left(N\cdot h(B(x,\|x\|))\right)\leq k_N-1\right)\geq 1-k_N^{-2},$$

for all $x$ as above. Hence,
\begin{align*}
    &\liminf \frac{N}{k_N}\int_{B(0,r_N(K))} h(x)\cdot \prob\left(\poi\left(N h(B(x,\|x\|))\right)\leq k_N-1\right)~dx\\
    &\geq \liminf \frac{N}{k_N}\int_{B\left(0,\left(\frac{k_N(1-\epsilon)}{N\cdot h(0)V_d}\right)\right)^\frac{1}{d}} h(x)\cdot \prob\left(\poi\left(Nh(B(x,\|x\|))\right)\leq k_N-1\right)~dx\\
    &\geq \liminf \frac{N}{k_N}h(0)(1-\epsilon_N)(1-k_N^{-2})\text{vol}\left(B\left(0,\left(\frac{k_N(1-\epsilon)}{N\cdot h(0)V_d}\right)^\frac{1}{d}\right)\right)\\
    &= (1-\epsilon).
\end{align*}

To show an upper bound on the limsup, we can proceed as before and use \Cref{lemma::poisson_bound} to show that for all $x$ with $\left(\frac{k_N(1+\epsilon)}{N\cdot h(0)V_d}\right)\leq \|x\|\leq r_N(K),$
$$\prob\left(\poi\left(N\cdot h(B(x,\|x\|))\right)\leq k_N-1\right)\leq k_N^{-2}.$$

Bounding the above probability by $1$ for $\|x\|\leq \left(\frac{k_N(1+\epsilon)}{N\cdot h(0)V_d}\right)^\frac{1}{d}$ and by $k_N^{-2}$ otherwise and splitting the integral accordingly, we get
\begin{align*}
    &\limsup \frac{N}{k_N}\int_{B(0,r_N(K))} h(x)\cdot \prob\left(\poi\left(N \cdot h(B(x,\|x\|))\right)\leq k_N-1\right)~dx\\
    &\leq \limsup \left\{\frac{N}{k_N} h(0)(1+\epsilon_N)V_d \frac{k_N(1+\epsilon)}{N\cdot h(0)V_d} + k_N^{-2}\frac{N}{k_N}O\left(\frac{k_N}{N}\right)\right\}\\
    &= \limsup \left\{(1+\epsilon)(1+\epsilon_N) + O\left(k_N^{-2}\right)\right\}\\
    &= (1+\epsilon).
\end{align*}

Hence, we get that
\begin{align*}
    (1-\epsilon) &\leq \liminf \frac{N}{k_N}\int_{B(0,r_N(K))} h(x)\cdot \prob\left(\poi\left(N \int_{B(x,\|x\|)}h(z)~dz\right)\leq k_N-1\right)~dx\\
    &\leq \limsup \frac{N}{k_N}\int_{B(0,r_N(K))} h(x)\cdot \prob\left(\poi\left(N \int_{B(x,\|x\|)}h(z)~dz\right)\leq k_N-1\right)~dx\\
    &\leq (1+\epsilon).
\end{align*}

Since $\epsilon>0$ was arbitrary, we get the limit of the above integral as $1$ and as argued before, this gives us that
$$\lim \frac{\E(d^\downarrow_N(0))}{k_N}=1.$$

This completes the proof of \eqref{eq::limiting_first_moment_of_indegree}. Finding the limiting value of the second moment is similar so we only provide a brief sketch. Note that
$$d^\downarrow_N(0)^2 = \sum_{z\in \Gcal_N}\indicator\{(x,0)\in \Gscr_{k_N}(\Gcal_N^0)\} + \sum_{x,y\in \Gcal_N}\indicator\{(y,0),(z,0)\in \E(\Gscr_{k_N}(\Gcal_N^z))\}.$$

Using \Cref{lemma::biau_devroye_cones} and the discussion following it, we see that there exists a constant $C_d$ that depends only on $d$ such that
$$\sum_{x\in \Gcal_N}\indicator\{(x,0)\in \Gscr_{k_N}(\Gcal_N^0)\} \leq C_d k_N.$$

Hence, it is enough to find
$$\lim_{N\to\infty }\frac{1}{k_N^2}\E\left(\sum_{x,y\in \Gcal_N}\indicator\{(x,0),(y,0)\in \E(\Gscr_{k_N}(\Gcal_N^0))\}\right).$$

By the Palm Theory identity \eqref{eq::palm_theory_identity},
\begin{align*}
&\lim_{N\to\infty }\frac{1}{k_N^2}\E\left(\sum_{y,z\in \Gcal_N}\indicator\{(y,0),(z,0)\in \E(\Gscr_{k_N}(\Gcal_N^z))\}\right)\\
&= \frac{N^2}{k_N^2}\int h(x)h(y)\prob((x,0),(y,0)\in E(\Gscr_{k_N}(\Gcal_N^{x,y,0})))~dx~dy.
\end{align*}

Using the same arguments as before, we can restrict the integral to $x,y\in B(0,r_N(K)).$ For $x,y$ in this region, the probability in the above integral can be written as
$$\prob((x,0),(y,0)\in E(\Gscr_{k_N}(\Gcal_N^{x,y,0}))) = \prob(Z_1+Z_3,Z_2+Z_3\leq k_N-1)$$

where $Z_1,Z_2,Z_3$ are independent with
\begin{align*}
Z_1 &\sim \poi\left(N\cdot h(B(x,\|x\|)\setminus B(y,\|y\|))\right),\\
Z_2 &\sim \poi\left(N\cdot h(B(y,\|y\|)\setminus B(x,\|x\|))\right),\\ 
Z_3 &\sim \poi\left(N\cdot h(B(x,\|x\|)\cap B(y,\|y\|))\right)
\end{align*}
In particular, we see that
\begin{align*}
    Z_1+Z_3 &\sim \poi\left(N\cdot h(B(x,\|x\|))\right),\\
    Z_2 + Z_3 &\sim \poi\left(N\cdot h(B(y,\|y\|))\right)
\end{align*}

Using the same arguments as before, we get that $\prob(Z_1+Z_3\leq k_N-1) \geq 1-k_N^{-2}$ for $\|x\|\leq \left(\frac{k_N(1-\epsilon)}{N\cdot h(0)V_d}\right)^\frac{1}{d}$ and $\prob(Z_1+Z_3\leq k_N-1) \leq k_N^{-2}$ for $\|x\|\geq \left(\frac{k_N(1+\epsilon)}{N\cdot h(0)V_d}\right)^\frac{1}{d}$ and we have similar bounds on $\prob(Z_2+Z_3\leq k_N-1)$ in terms of $\|y\|.$ The rest of the proof is almost identical to the proof of the limiting first moment.

\end{proof}



\section{Consistency and asymptotic distributions}

\label{appendix::consistency_and_clts}

The main results of this appendix prove the consistency of the 2 sample test and that the test statistic has an asymptotically normal distribution under general alternatives. Specifically, we will prove \Cref{propn::weak_limit_of_stat}, and \Cref{thm::clt_conditional_stat,thm::clt_general_stat} in this section. For this, we recall some of the notation defined previously.

We work in the Poissonized setting where we sample the set of points $\Zcal_N:=\{Z_1,...,Z_{L_N}\}$ from a Poisson process $\Zcal_N$ with intensity function $N\phi_N(x)$ where $\phi_N(x):= \frac{N_1}{N}f(x) + \frac{N_2}{N}g(x)$ and $N_1+N_2 = N.$ We assume that $\frac{N_1}{N} - p = o(N^{-\frac{1}{2}}), \frac{N_2}{N}-q = o(N^{-\frac{1}{2}})$ and denote $\phi(x):=pf(x) + qg(x)$. The number of points in the process $\Zcal_N$ is denoted by $L_N$ which means $L_N\sim \poi(N).$ For each point $z\in \Zcal_N,$ we assign the label $c_z$ to $z$ with
\begin{equation}
    c_z =
    \begin{cases}
    1 \text{ with probability }\frac{N_1 f(x)}{N_1f(x) + N_2 g(x)},\\
    \\
    2 \text{ with probability } \frac{N_2 g(x)}{N_1f_n(x) + N_2g(x)}.
    \end{cases}
\end{equation}

The labels are assigned to all points in $\Zcal_N$ independent of all others. For a given $K,$ the test statistic is defined as
\begin{equation}
T(\Gscr_K(\Zcal_N)) = \sum_{x,y\in\Zcal_N}\psi(c_x,c_y)\indicator\{(x,y)\in E(\Gscr_K(\Zcal_N))\},
\end{equation}

\noindent where $\psi(c_x,c_y) = \indicator\{c_x=1,c_y=2\}.$ We will be considering the case where $K=k_N\to \infty.$ Hence, the statistic in our case will be denoted by $T(\Gscr_{k_N}(\Zcal_N)).$\\

For any function $h:\R^d\times \R^d \to [0,1]$ and $z\in \R^d$  we define 
\begin{align}
\kappa_N(h,z) &= \frac{1}{k_N}\sum_{w\in \Zcal_N}h(z,w)\indicator\{(z,w)\in \Gscr_{k_N}\},\label{eqn::kappa}
\end{align}

When $h$ is clear from context, we will simply denote this by $\kappa_N(z)$. Note that $\kappa_N$ is bounded by $1$ since a point can have at most $k_N$ out-neighbors in the $k_N$-NN graph.

Given a function $\omega : \R^d \times \R^d \times \R^d \to \R,$ we define $\tau^\uparrow_N(\omega,z), \tau^\downarrow(\omega,z)$ as
\begin{align}
    \tau_N^\uparrow(\omega,z) &= \frac{1}{2(k_N)^2}\sum_{w_1\neq w_2\in \Zcal_N} \omega(z,w_1,w_2)\indicator\{(z,w_1),(z,w_2)\in E(\Gscr_{k_N})\},\label{eqn::outneighbors_sum},\\
    \tau_N^\downarrow(\omega,z) &= \frac{1}{2(k_N)^2}\sum_{w_1\neq w_2\in \Zcal_N} \omega(z,w_1,w_2)\indicator\{(z,w_1),(z,w_2)\in E(\Gscr_{k_N})\}\label{eqn::inneighbors_sum},\\
    \tau_N^+(\omega,z) &= \frac{1}{(k_N)^2}\sum_{w_1\neq w_2\in \Zcal_N} \omega(z,w_1,w_2)\indicator\{(z,w_1),(w_2,z)\in E(\Gscr_{k_N})\}\label{eqn::oneinoneout_sum}.
\end{align}

Each of these sums refer to one of the `stars' that can be formed at a point $z.$ The first sum is over the outgoing stars, the second is over the incoming stars and the third is over the stars that have one incoming edge and one outgoing edge. We will see later that these terms come up in calculating the conditional variance of the statistic.

\subsection{Technical results}

We begin our results with some technical lemmas which will enable us to find the limiting value of certain objects easily. 

\begin{lemma}\label{lemma::convergence_to_limiting_value} Let $h:\R^d\times \R^d \to [0,1]$ be a uniformly continuous function. For any given $z,$
\begin{equation}\label{eq::pointwise_L1_convergence}
\kappa_N(h,z)\overset{L^1}{\to} h(z,z).
\end{equation}
Consequently,
\begin{equation}\label{eq::pointwise_convergence_in_mean}
\lim_{N\to \infty}\E(\kappa_N(h,z)) \to h(z,z).
\end{equation}

Furthermore, as $N\to \infty$
\begin{equation}\label{eq::L2_convergence_of_pointwise_sums}
\frac{1}{N}\sum_{z\in \Zcal_N}\kappa_N(h,z)\overset{L^2}{\rightarrow} \int_{\R^d} h(z,z)\phi(z)~dz.
\end{equation}
\end{lemma}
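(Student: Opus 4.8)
We must establish three convergence statements about the local averaging functional $\kappa_N(h,z) = \frac{1}{k_N}\sum_{w\in\Zcal_N} h(z,w)\indicator\{(z,w)\in\Gscr_{k_N}\}$, which counts the (normalized) weighted number of out-neighbors of $z$ in the $k_N$-NN graph. The statement \eqref{eq::pointwise_L1_convergence} says this average converges in $L^1$ to $h(z,z)$; \eqref{eq::pointwise_convergence_in_mean} is the immediate corollary for means; and \eqref{eq::L2_convergence_of_pointwise_sums} upgrades this to an $L^2$ convergence for the spatial sum $\frac1N\sum_{z}\kappa_N(h,z)$ against the limiting intensity $\phi$.

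**Proving the pointwise $L^1$ convergence.**

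The plan for \eqref{eq::pointwise_L1_convergence} is to exploit the key geometric fact established in Lemma~\ref{lemma::nearest_neighbor_distance_bound}: since $k_N = o(N)$, with overwhelming probability all $k_N$ out-neighbors of a fixed point $z$ lie in a ball $B(z,r_N)$ with $r_N\to 0$. On this high-probability event, every term $h(z,w)$ appearing in $\kappa_N(h,z)$ has $\|w-z\|\le r_N$, so by uniform continuity of $h$ we have $|h(z,w)-h(z,z)|\le \omega_h(r_N)$ for the modulus of continuity $\omega_h$, uniformly over the summands. Since the graph $\Gscr_{k_N}(\Zcal_N)$ is directed and $z$ has out-degree exactly $k_N$ (once $|\Zcal_N|>k_N$, also a high-probability event), the number of summands is exactly $k_N$, so $|\kappa_N(h,z) - h(z,z)\cdot\tfrac{1}{k_N}\cdot(\text{out-degree})| \le \omega_h(r_N) \to 0$ on this event. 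On the complementary (exponentially small probability) event, $\kappa_N(h,z)$ is bounded by $1$ and $h(z,z)\le 1$, so the contribution to the $L^1$ distance is at most $2\prob(\text{bad event})\to 0$. Combining gives $\E|\kappa_N(h,z)-h(z,z)|\to 0$. Statement \eqref{eq::pointwise_convergence_in_mean} is then immediate from $|\E\kappa_N(h,z)-h(z,z)|\le \E|\kappa_N(h,z)-h(z,z)|$.

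**Proving the $L^2$ convergence of the spatial sum.**

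For \eqref{eq::L2_convergence_of_pointwise_sums}, write $S_N := \frac1N\sum_{z\in\Zcal_N}\kappa_N(h,z)$. The strategy is to compute $\E S_N$ and $\Var(S_N)$ and show $\E S_N \to \int h(z,z)\phi(z)\,dz$ while $\Var(S_N)\to 0$, which gives $L^2$ convergence to the deterministic limit. For the mean, by the Palm theory identity (Lemma~\ref{lemma::palm theory identity}) applied with $s=1$, $\E\sum_{z\in\Zcal_N}\kappa_N(h,z) = N\int \E(\kappa_N(h,z)\mid z\in\Zcal_N)\phi_N(z)\,dz$; dividing by $N$, using \eqref{eq::pointwise_convergence_in_mean} applied to the Palm-version process $\Zcal_N\cup\{z\}$ together with $\phi_N\to\phi$ and dominated convergence (everything is bounded by $1$ and the support is compact under the standing assumptions, or one invokes the tail bounds), yields the claimed limit. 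For the variance, one expands $\E S_N^2$ again via the Palm identity with $s=1$ and $s=2$; the $s=2$ term involves pairs $z_1,z_2\in\Zcal_N$, and the key observation is that the correlation between $\kappa_N(h,z_1)$ and $\kappa_N(h,z_2)$ is negligible unless $z_1,z_2$ are within $O(r_N)$ of each other (by the localization from Lemma~\ref{lemma::nearest_neighbor_distance_bound}, a point cannot influence the $k_N$-NN structure far away), and since $\kappa_N$ is bounded by $1$, the diagonal-like contribution is $O(r_N^d)\to 0$ after normalization by $N^{-2}$ and the Palm factor $N^2$; more precisely, $\Var(S_N) = O(1/N) + O(r_N^d) \to 0$.

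**The main obstacle.**

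The routine parts are the pointwise estimate (a direct consequence of the already-proven Lemma~\ref{lemma::nearest_neighbor_distance_bound}) and the mean computation. The genuinely delicate step is the variance bound for \eqref{eq::L2_convergence_of_pointwise_sums}: one must carefully track, in the second-moment Palm expansion, that two local-averaging functionals $\kappa_N(h,z_1)$ and $\kappa_N(h,z_2)$ become asymptotically uncorrelated as $\|z_1-z_2\|$ exceeds the shrinking neighborhood scale. This requires a quantitative statement that inserting an extra point near $z_1$ does not perturb the $k_N$-nearest-neighbor graph in the vicinity of a far-away $z_2$ — which again follows from the fact that all relevant neighbors sit in balls of radius $r_N\to 0$, but the bookkeeping of the exceptional events and the cross terms is where the real work lies. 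Since the functional is uniformly bounded by $1$, all error contributions can ultimately be controlled by the exponentially small probabilities from Lemma~\ref{lemma::poisson_bound} plus a volume factor $r_N^d$, so no heavy machinery beyond what is already developed is needed.
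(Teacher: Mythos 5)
Your proposal is correct and follows essentially the same route as the paper: the $L^1$ statement via splitting out-neighbors into those inside a ball of radius $r_N(K)$ (uniform continuity there) and the exceptional event from Lemma \ref{lemma::nearest_neighbor_distance_bound} (controlled by boundedness of $h$), and the $L^2$ statement via Palm theory for the first and second moments, with the cross term $\E(\kappa_N(z_1)\kappa_N(z_2))$ shown to factorize asymptotically into $h(z_1,z_1)h(z_2,z_2)$ by the same localization argument and concluded by boundedness plus dominated convergence. The only cosmetic difference is that you phrase the second-moment step as a variance/decorrelation bound while the paper proves pointwise convergence of the product expectation and integrates; these are the same argument.
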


\begin{proof} For $K>0,$ define $r_N(K) = \left(K\dfrac{\max(k_N,(\log N)^2)}{N}\right)^\frac{1}{d}.$\\
\noindent We will prove \eqref{eq::pointwise_L1_convergence} first. We start by writing
\begin{align*}
    & ~ \Ebb \frac{1}{k_N}\sum_{w\in \Zcal_N}|h(z,w)-h(z,z)|\indicator\{(z,w)\in E(\Gscr_{k_N})\}\\
    &= \Ebb \frac{1}{k_N}\sum_{w\in \Zcal_N}|h(z,w)-h(z,z)|\indicator\{(z,w)\in E(\Gscr_{k_N}), w\in B(z,r_N(K))\}\\
    &+ \Ebb \frac{1}{k_N}\sum_{w\in\Zcal_N}|h(z,w)-h(z,z)|\indicator\{(z,w)\in E(\Gscr_{k_N}), w\not\in B(z,r_N(K))\}\\
    &=: E_1 + E_2.
\end{align*}

\noindent The above is true for any fixed $K.$ Consider $E_1$ first.
\begin{align*}
    E_1 &= \Ebb \frac{1}{k_N}\sum_{w\in \Zcal_N}|h(z,w)-h(z,z)|\indicator\{(z,w)\in E(\Gscr_{k_N}), w\in B(z,r_N(K))\}\\
    &\leq \Ebb \frac{1}{k_N}\sum_{w\in \Zcal_N}|h(z,w)-h(z,z)|\indicator\{w\in B(z,r_N(K))\}\\
    &= \frac{N}{k_N}\int_{B(z,(r_N(K))} |h(z,w)-h(z,z)|\phi_N(w) ~ dw \quad \ldots \quad \text{by \Cref{lemma::palm theory identity}}\\
    &\leq \frac{N}{k_N}\left(C\cdot r_N(K)\right)\cdot \text{vol}\left(B(z,r_N(K))\right) \quad \ldots \quad \textnormal{by uniform continuity of $h$}\\
    &\to 0.
\end{align*}

Now we come to $E_2$. We use the fact that $0\leq h\leq 1.$ Then, 
\begin{align*}
    E_2 &= \Ebb \frac{1}{k_N}\sum_{w\in\Zcal_N}|h(z,w)-h(z,z)|\indicator\{(z,w)\in E(\Gscr_{k_N}), w\not\in B(z,r_N(K))\}\\
    &\leq \Ebb \frac{1}{k_N}\sum_{w\in \Zcal_N}\indicator\{(z,w)\in E(\Gscr_{k_N}), w\not\in B(z,r_N(K))\}\\
    &\leq N\cdot \prob(B(z,(r_N(K)) \text{ contains less than $k_N$ points})\\
    &\to 0 \quad \ldots \quad \text{by \Cref{lemma::nearest_neighbor_distance_bound}, for all large enough $K$.}
\end{align*}

\noindent Since $E_1,E_2\to 0,$ we have
$$\Ebb \frac{1}{k_N}\sum_{w\in \Zcal_N}|h(z,w)-h(z,z)|\indicator\{(z,w)\in E(\Gscr_{k_N})\} \to 0.$$

\noindent Hence,
\begin{align*}
&\kappa_N(h,z) \overset{L^1}{\to} h(z,z),\\
&\E(\kappa_N(h,z))\to h(z,z).
\end{align*}

We now come to the proof of \eqref{eq::L2_convergence_of_pointwise_sums} i.e. the $L^2$ convergence. Using the Palm Theory identity in \Cref{lemma::palm theory identity}, and by \eqref{eq::pointwise_convergence_in_mean} and the DCT we see that
$$\E\left(\frac{1}{N}\sum_{z\in \Zcal_N}\kappa_N(z)\right) = \int \E(\kappa_N(Z)) \phi_N(z)~dz  \to \int h(z,z)\phi(z)~dz.$$

Hence, to prove $L^2$ convergence, we only need to show
$$\E\left(\frac{1}{N}\sum_{z\in \Zcal_N}\kappa_N(h,z)\right)^2 \to \left(\int h(z,z)\phi(z)~dz\right)^2.$$

Using the Palm Theory identity, we get that
\begin{align}
    & \E\left(\frac{1}{N}\sum_{z\in \Zcal_N}\kappa_N(h,z)\right)^2\\
    &= \frac{1}{N}\int \phi_N(z) \E\kappa^2_N(z) ~ dz + \int \phi_N(z_1)\phi_N(z_2)\E(\kappa_N(z_1)\kappa_N(z_2))~dz.
\end{align}

Since $h\in [0,1],$ we have that $\kappa_N(h,z)\leq 1$. Hence, the first term tends to $0$ almost surely. We want to prove that for any $z_1,z_2,$
$$\E(\kappa_N(z_1)\kappa_N(z_2)) \to  h(z_1,z_1)h(z_2,z_2).$$

\noindent By the DCT, this will show that
\begin{align*}
\int \phi_N(z_1)\phi_N(z_2)\E(\kappa_N(z_1)\kappa_N(z_2))~dz &\to \int \phi(z_1)\phi(z_2)h(z_1,z_1)h(z_2,z_2)~dz_1~dz_2\\
&= \left(\int_{\R^d} h(z,z)\phi(z) ~ dz\right)^2,
\end{align*}

which in turn will show $L^2$ convergence.

Define $A_K(z_1,z_2)$ to be the region
$$A_K(z_1,z_2) := B(z_1,(r_N(K))\times B(z_2,(r_N(K))$$

Then, we can split up the product $\kappa_N(z_1)\kappa_N(z_2)$ as
\begin{align*}
&\kappa_N(z_1)\kappa_N(z_2)\\
&= \frac{1}{(k_N)^2}\sum_{(w_1,w_2)\in \Zcal_N^2} h(z_1,w_1)h(z_2,w_2)\indicator\{(z_1,w_1),(z_2,w_2)\in E(\Gscr_{k_N})\}\\
&= \frac{1}{(k_N)^2}\sum_{(w_1,w_2)\in \Zcal_N^2} h(z_1,w_1)h(z_2,w_2)\indicator\{(w_1,w_2)\in A_K(z_1,z_2),(z_1,w_1),(z_2,w_2)\in E(\Gscr_{k_N})\}\\
&+ \frac{1}{(k_N)^2}\sum_{(w_1,w_2)\in \Zcal_N^2} h(z_1,w_1)h(z_2,w_2)\indicator\{(w_1,w_2)\in A_K(z_1,z_2)^c,(z_1,w_1),(z_2,w_2)\in E(\Gscr_{k_N})\}
\end{align*}

We will call the two terms $T_1$ and $T_2$ respectively. We will first prove that $\E(T_2)\to 0$ if we choose a large enough $K.$\\

\noindent Since $h\in [0,1]$ we have that
\begin{align*}
    T_2 &\leq \frac{d^{\uparrow}(z_1)d^\uparrow_K(z_2)}{(k_N)^2} + \frac{d^{\uparrow}(z_2)d^\uparrow_K(z_1)}{(k_N)^2}
\end{align*}

\noindent where $d^\uparrow(z)$ denote the total number of out-neighbors of $z$ and $d^\uparrow_K(z)$ denotes the number of out-neighbors of $z$ that lie outside the ball of radius $\left(\frac{r_N(K)}{N}\right)^{\frac{1}{d}}$ around $z$. Note that $d^\uparrow(z)\leq k_N$ since $\Gscr_{k_N}$ is the $k_N$-NN graph. Also, by \Cref{lemma::nearest_neighbor_distance_bound}, for any large enough $K,$
$$\E\left(\frac{d^\uparrow_K(z)}{k_N}\right)\to 0.$$

Hence, $\E(T_2)\to 0$ for a large enough $K.$ Now we need to find the limiting value of $\E(T_1).$ We will show that
$$\E\left(\frac{1}{(k_N)^2}\sum_{(w_1,w_2\in \Zcal_N^2)}\Delta_h \indicator\{(w_1,w_2)\in A_K(z_1,z_2),(z_1,w_1),(z_2,w_2)\in E(\Gscr_{k_N})\}\right)\to 0.$$

where $\Delta_h := |h(z_1,w_1)h(z_2,w_2)-h(z_1,z_1)h(z_2,z_2)|$. Note that quantity inside the expectation can be bounded as follows
\begin{align*}
    &\E\left(\frac{1}{(k_N)^2}\sum_{(w_1,w_2\in \Zcal_N^2)}\Delta_h \indicator\{(w_1,w_2)\in A_K(z_1,z_2),(z_1,w_1),(z_2,w_2)\in E(\Gscr_{k_N})\}\right)\\
    &\leq \E\left(\frac{1}{(k_N)^2}\sum_{(w_1,w_2)\in \Zcal_N^2}|h(z_1,w_1)h(z_2,w_2)-h(z_1,z_1)h(z_2,z_2)|\indicator\{(w_1,w_2)\in A_K(z_1,z_2)\}\right)\\
    &\leq \frac{N^2}{(k_N)^2}\int_{A_K(z_1,z_2)}|h(z_1,w_1)h(z_2,w_2)-h(z_1,z_1)h(z_2,z_2)| ~ dw_1 ~ dw_2 \quad \ldots \quad \text{by \Cref{lemma::palm theory identity}}\\
    &\leq C \frac{N^2}{k_N^2}r_N(K)(\text{vol}(B(0,r_N(K)))^2 \quad \ldots \quad \text{by uniform continuity of $h$}\\
    &\to 0.
\end{align*}

Hence,
\begin{align*}
&\lim \E(\kappa_N(z_1)\kappa_N(z_2))\\
&= \lim \frac{1}{k_N^2}\E\left(\sum_{(w_1,w_2)\in \Zcal_N^2} h(z_1,z_1)h(z_2,z_2)\indicator\{(w_1,w_2)\in A_K(z_1,z_2);(z_1,w_1),(z_2,w_2)\in \E(\Gscr_{k_N})\}\right).
\end{align*}

By the same argument as used to prove that $T_2\to 0,$ we get
$$\lim \frac{1}{k_N^2}\E\left(\sum_{(w_1,w_2)\in \Zcal_N^2} \indicator\{(w_1,w_2)\in A_K(z_1,z_2);(z_1,w_1),(z_2,w_2)\in \E(\Gscr_{k_N})\}\right) = 1$$
Hence,

\begin{align*}
\lim \E(\kappa_N(z_1)\kappa_N(z_2))&= h(z_1,z_1)h(z_2,z_2).
\end{align*}

This completes the proof.

\end{proof}

\begin{lemma}\label{lemma::convergence_of_cross_terms_in_conditional_variance}

Let $\omega:\R^d \times \R^d\times \R^d \to [0,1]$ be a uniformly continuous function. Then,
\begin{align*}
\sum_{z\in \Zcal_N}\tau_N^\uparrow(\omega,z) &\overset{L^2}{\rightarrow} \frac{1}{2}\int \omega(z,z,z)\phi(z)~dz,\\
\vspace{2mm}
\sum_{z\in\Zcal_N}\tau_N^\downarrow(\omega,z) &\overset{L^2}{\rightarrow} \frac{1}{2}\int \omega(z,z,z)\phi(z)~dz,\\
\vspace{2mm}
\sum_{z\in\Zcal_N}\tau_N^+(\omega,z) &\overset{L^2}{\rightarrow} \frac{1}{2}\int \omega(z,z,z)\phi(z)~dz.
\end{align*}
\end{lemma}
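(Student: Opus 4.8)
The plan is to run the proof of Lemma~\ref{lemma::convergence_to_limiting_value} essentially verbatim, with the single-neighbour sums $\kappa_N(h,z)$ replaced by the two-neighbour star sums $\tau_N^\uparrow,\tau_N^\downarrow,\tau_N^+$. I will carry out the argument in detail only for $\tau_N^\uparrow$; the cases $\tau_N^\downarrow$ and $\tau_N^+$ go through the same way, the only genuine difference being that the relevant degree is the \emph{random} in-degree rather than the deterministic out-degree. Throughout, fix $K>0$ and write $r_N=r_N(K)=\big(K\max(k_N,(\log N)^2)/N\big)^{1/d}$, so that $r_N\to0$.

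\textbf{Per-point estimate.} (Step 1: localization.) For a fixed $z$, split $\tau_N^\uparrow(\omega,z)$ according to whether both $w_1,w_2\in B(z,r_N)$. Since $0\le\omega\le1$, the contribution of pairs with some $w_i\notin B(z,r_N)$ is at most $k_N^{-2}\,d^\uparrow(z)\,d_K^\uparrow(z)\le k_N^{-1}d_K^\uparrow(z)$, where $d^\uparrow(z)=k_N$ is the out-degree and $d_K^\uparrow(z)$ counts the out-neighbours of $z$ lying outside $B(z,r_N)$; by Lemma~\ref{lemma::nearest_neighbor_distance_bound}, $\E\big(d_K^\uparrow(z)/k_N\big)\to0$ for $K$ large, so these ``long'' stars vanish in $L^1$, uniformly in $z$. (Step 2: replace $\omega$ by its diagonal value; Step 3: count the short stars.) On the localized part, uniform continuity of $\omega$ gives $\sup\{|\omega(z,w_1,w_2)-\omega(z,z,z)|:w_1,w_2\in B(z,r_N)\}\to0$, and bounding the number of ordered star pairs by $k_N^2$ shows that $\tau_N^\uparrow(\omega,z)$ and $\tfrac12\,\omega(z,z,z)\,N_N^\uparrow(z)/k_N^2$ differ by $o(1)$ in expectation, where $N_N^\uparrow(z)$ is the number of ordered pairs of distinct out-neighbours of $z$ both lying in $B(z,r_N)$. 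Since the total number of such ordered pairs is exactly $k_N(k_N-1)$ and, by Step~1, those with an endpoint outside $B(z,r_N)$ are negligible, $N_N^\uparrow(z)/k_N^2\to1$ in $L^1$. Hence $\E\big|\tau_N^\uparrow(\omega,z)-\tfrac12\omega(z,z,z)\big|\to0$ for every $z$.

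\textbf{From per-point estimates to $L^2$.} Now argue exactly as for \eqref{eq::L2_convergence_of_pointwise_sums}: by the Palm identity (Lemma~\ref{lemma::palm theory identity}) and dominated convergence, the first moment of the (normalized) sum $\tfrac1N\sum_{z\in\Zcal_N}\tau_N^\uparrow(\omega,z)$ converges to $\tfrac12\int\omega(z,z,z)\phi(z)\,dz$. For the second moment, expand the square and apply the Palm identity for pairs $(z_1,z_2)$: the diagonal term is $O(1/N)$ because $\tau_N^\uparrow\le1$, and for the off-diagonal term one shows $\E\big(\tau_N^\uparrow(\omega,z_1)\tau_N^\uparrow(\omega,z_2)\big)\to\tfrac14\omega(z_1,z_1,z_1)\omega(z_2,z_2,z_2)$ by the same localization-to-disjoint-balls argument used in Lemma~\ref{lemma::convergence_to_limiting_value} for $\E(\kappa_N(z_1)\kappa_N(z_2))$ (restrict both stars to the product of the two small balls around $z_1,z_2$, replace $\omega$ by its diagonal value there, and count). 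Matching of first and second moments gives the $L^2$ convergence.

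\textbf{Main obstacle.} The one place where the argument is not a pure transcription of Lemma~\ref{lemma::convergence_to_limiting_value} is Step~3 for $\tau_N^\downarrow$ and $\tau_N^+$: the in-degree $d^\downarrow(z)$ of a point in the $k_N$-NN graph is random, and is only bounded by $C_dk_N$ (via Lemma~\ref{lemma::biau_devroye_cones} and the remark following it) rather than equal to $k_N$. So the count of short in-stars (resp.\ mixed in/out stars) at $z$ is not deterministic, and to conclude that the corresponding normalized count converges to the constant appearing in the statement one must invoke Lemma~\ref{lemma::limiting_moment_of_indegree}, which gives $\E(d^\downarrow_N(z))/k_N\to1$ and $\E(d^\downarrow_N(z)^2)/k_N^2\to1$, i.e.\ $d^\downarrow_N(z)/k_N\to1$ in $L^2$; the $C_dk_N$ bound then supplies the uniform integrability needed to push this through the sums over $\Zcal_N$. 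Everything else carries over without change.
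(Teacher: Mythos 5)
Your proposal is correct and follows essentially the same route as the paper's proof: localize the star sums to balls of radius $r_N(K)$ via Lemma \ref{lemma::nearest_neighbor_distance_bound}, replace $\omega$ by its diagonal value using uniform continuity, use the in-degree moment limits of Lemma \ref{lemma::limiting_moment_of_indegree} together with the $C_dk_N$ bound from Lemma \ref{lemma::biau_devroye_cones} where the degree is random, and upgrade pointwise convergence in mean to $L^2$ by the Palm identity and a second-moment/product argument as in Lemma \ref{lemma::convergence_to_limiting_value}. The only cosmetic difference is that you write out the out-star case and defer the in-star cases, whereas the paper details the in-star case and defers the others; the ingredients are identical.
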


\begin{proof}

We will only prove the second statement. The other two can be shown in similar ways. The proof is very similar to that of \Cref{lemma::convergence_to_limiting_value} where we first show $L^1$ convergence of $\tau_N^\downarrow(z)$ for a fixed $z$ and then analyze the second moment of the sum to get $L^2$ convergence. We first show $L^1$ convergence. This entails showing that for every $z\in \R^d,$
$$\E\left(\left|\tau_N(\omega,z)-\frac{1}{2}\omega(z,z,z)\right|\right)\to 0.$$

Note that
\begin{align*}
\frac{\omega(z,z,z)}{2k_N^2}\E\left(\sum_{w_1\neq w_2}\indicator\{(w_1,z),(w_2,z)\in E(\Gscr_{k_N})\}\right)
&= \frac{\omega(z,z,z)}{2k_N^2}\E(d^\downarrow_N(z)(d^\downarrow_N(z)-1)),
\end{align*}

where $d_N^\downarrow(z)$ denotes the in-degree of $z$ in $\Gscr_{k_N}(\Zcal_N).$ Using the limits established in \Cref{lemma::limiting_moment_of_indegree} for the moments of the in-degree of a point, we get that
$$\frac{\omega(z,z,z)}{2}\E\left(\frac{d^\downarrow(z)(d^\downarrow(z)-1)}{k_N^2}\right) \to \frac{\omega(z,z,z)}{2}.$$

Hence, to show $L^1$ convergence, it suffices to show
$$\E \frac{1}{k_N^2}\sum_{w_1\neq w_2\in \Zcal_N}|\omega(z,w_1,w_2)-\omega(z,z,z)|\indicator\{(z,w_1),(z,w_2)\in E(\Gscr_{k_N})\} \to 0.$$

For $K>0,$ define $r_N(K) =\left( K\cdot \frac{\max((\log N)^2,k_N)}{N}\right)^\frac{1}{d}.$ Define 
$$B^2(z) = B(z,r_N(K))\times B(z,r_N(K)).$$

We start by writing
\begin{align*}
    &\E \frac{1}{k_N^2}\sum_{w_1\neq w_2\in \Zcal_N}|\omega(z,w_1,w_2)-\omega(z,z,z)|\indicator\{(w_1,z),(w_2,z)\in E(\Gscr_{k_N})\}\\
    =&\E \frac{1}{k_N^2}\sum_{w_1\neq w_2\in \Zcal_N}|\omega(z,w_1,w_2)-\omega(z,z,z)|\indicator\{(w_1,w_2)\in B^2(z), (w_1,z),(w_2,z)\in E(\Gscr_{k_N})\}\\
    +&\E \frac{1}{k_N^2}\sum_{w_1\neq w_2\in \Zcal_N}|\omega(z,w_1,w_2)-\omega(z,z,z)|\indicator\{(w_1,w_2)\in B^2(z)^c,(w_1,z),(w_2,z)\in E(\Gscr_{k_N})\}.
\end{align*}

Call the last two terms $T_1$ and $T_2$ respectively. We will show that both tend to $0.$ By \Cref{lemma::palm theory identity}
$$T_1 = \frac{N^2}{k_N^2}\int_{B^2(z)}|\omega(z,w_1,w_2)-\omega(z,z,z)|~\phi_N(w_1)\phi_N(w_2)~dw_1~dw_2$$

This can be bounded as follows.
\begin{align*}
    &\frac{N^2}{k_N^2}\int_{B^2(z)}|\omega(z,w_1,w_2)-\omega(z,z,z)|~\phi_N(w_1)\phi_N(w_2)~dw_1~dw_2\\
    &\leq C \cdot r_N(K) \cdot \frac{N^2}{k_N^2}\cdot (\text{vol}(B(z,r_N(K))))^2\quad \ldots \quad \text{by uniform continuity of $\omega$}\\
    &\to 0.
\end{align*}

Now we have to show that $T_2$ tends to $0.$\\

Let $d^\downarrow(z), d^\downarrow(K,z)$ denote the number of in-neighbors of $z$ and the number of in-neighbors of $z$ that lie outside the ball of radius $r_N(K)$ around $z$ respectively.

Since $\omega$ is bounded above by $1,$ we can bound $T_2$ as follows.
\begin{align*}
    T_2 &\leq \E \frac{1}{k_N^2}\sum_{w_1\neq w_2}\indicator\{(w_1,w_2)\in B^2(z)^c, (w_1,z),(w_2,z)\in E(\Gscr_{k_N})\}\\
    &\leq \E \frac{2L_N}{k_N} \sum_{w\in \Zcal_N}\indicator\{w\in B(z), (w,z)\in E(\Gscr_{k_N})\}\\
    &\leq \frac{N^2}{k_N^2}\int_{B(z)^c} \prob(\text{ $B(w,r_N(K))$ contains less than $k_N$ points.}) \phi(w)~dw.
\end{align*}

Since the densities are bounded below, by \Cref{lemma::nearest_neighbor_distance_bound} the point-wise probability decays faster than $N^{-2}$ uniformly in $w$, for some large enough $K.$ This shows that $T_2\to 0$ and hence, for any given $z$
\begin{align*}
&\tau_N^\downarrow(z) \overset{L^1}{\to} \frac{1}{2}\omega(z,z,z),\\
&\E(\tau_N^\downarrow(z)) \to \frac{1}{2}\omega(z,z,z).
\end{align*}

Having established the limits in expectation, the proof of $L^2$ convergence is similar to the $L^2$ convergence in \Cref{lemma::convergence_to_limiting_value}. We provide a brief sketch here.

We show that for $z_1, z_2$ being two distinct points in $\Zcal_N,$
$$\E\left(\tau_N^\downarrow(\omega,z_1) \tau_N^\downarrow(\omega,z_2)\right) \approx \E\left(\tau_N^\downarrow(\omega,z_1)\right)\cdot \E\left(\tau_N^\downarrow(\omega,z_2)\right)\approx \frac{\omega(z_1,z_1,z_1)\omega(z_2,z_2,z_2)}{4}.$$

This is done once again, as in the proof of \Cref{lemma::convergence_to_limiting_value}, by looking at the nearest neighbors of $z_1,z_2$ that are close and bounding the probability of having a nearest neighbor that is far away.

Additionally, the discussion following \Cref{lemma::biau_devroye_cones} shows that the in-degree in the $k_N$-NN graph is bounded by $C_dk_N.$ Hence, $\tau_N^\downarrow$ is bounded when $\omega$ is bounded. This combined with the point wise convergence and the DCT gives $L^2$ convergence.

\end{proof}

\subsection{Consistency and limiting variance}

Using the convergence theorems of the previous section, we can now show that $T(\Gscr_{k_N}(\Zcal_N))$ converges in probability to $\delta(f,g)$ where 
$$\delta(f,g) = pq\int_{\R^d} \frac{f(x)g(x)}{pf(x)+qg(x)}~dx.$$

Since this is a divergence between probability distributions, we will have that the test is consistent.

To analyze the variance of $T(\Gscr_{k_N})$, we condition on  $\Fcal_N,$ the sigma algebra generated by $\Zcal_N.$ We will then deal with the variance of the conditional expectation and the conditional variance separately. This is the aim of \Cref{subsection::limit_of_conditional_variance} and \Cref{section::limiting_variance_of_conditional_expectation}.

After conditioning on $\Fcal_N,$ which is the sigma algebra containing information on the location of all points of $\Zcal_N,$ the only randomness is in the labels $c_z$ for $z\in \Zcal_N$. Since the labels are assigned with probability proportional to the density, the conditional expectation can be written down comfortably. We define $h_N(x,y)$ as
\begin{equation}\label{eq::defn_of_h_N}
h_N(x,y) = \frac{N_1N_2}{N^2}\frac{f(x)g(x)}{(\frac{N_1}{N}f(x)+\frac{N_2}{N}g(x))(\frac{N_1}{N}f(y)+\frac{N_2}{N}g(y))}.
\end{equation}

Then the conditional expectation can be written as
$$\E(T(\Gscr_{k_N})|\Fcal_N) = \sum_{x,y\in \Zcal_N}h_N(x,y)\indicator\{(x,y)\in E(\Gscr_{k_N})\}.$$

We are now ready to prove Proposition \Cref{propn::weak_limit_of_stat}.

\subsection{Proof of \texorpdfstring{\Cref{propn::weak_limit_of_stat}}{Proposition \textcolor{blue}{2.1}} }

Note that $h_N\in [0,1]$ where $h_N$ is defined in \eqref{eq::defn_of_h_N}. We also know that $f,g$ are uniformly continuous and that $\frac{N_1}{N}-p = o(N^{-\frac{1}{2}}), \frac{N_2}{N}-q = o(N^{-\frac{1}{2}})$. Hence, we get that
$$\frac{1}{Nk_N}\E(T(\Gscr_{k_N})| \Fcal_N)= \frac{1}{Nk_N}\sum_{x,y\in \Zcal_N}h(x,y)\indicator\{(x,y)\in E(\Gscr_{k_N})\} + o(1),$$

where
\begin{equation}\label{eq::defn_of_h}
h(x,y) = pq\frac{f(x)g(y)}{(pf(x)+qg(y))(pf(y)+qg(y))}.
\end{equation}

With $\kappa_N(h,z)$ as defined in \eqref{eqn::kappa}, we get that
$$\frac{1}{Nk_N}\E(T(\Gscr_{k_N})| \Fcal_N) = \frac{1}{N}\sum_{z\in \Zcal_N}\kappa_N(h,z) + o(1).$$

By \Cref{lemma::convergence_to_limiting_value}, we know the $L^2$ limit, of $\frac{1}{N}\sum_{z\in \Zcal_N}\kappa_N(h,z).$ From the definition of $h$ in \eqref{eq::defn_of_h}, we get
\begin{equation}\label{eqn::l2_limit_of_conditional_expectation}
\frac{1}{Nk_N}\E(T(\Gscr_{k_N})|\Fcal_N)\overset{p}{\to} pq\int_{\R^d}h(x,x)\phi(x)~dx = \delta(f,g).
\end{equation}

With this, we only need to show that
$$\frac{1}{Nk_N}(T(\Gscr_{k_N})-\E(T(\Gscr_{k_N})|\Fcal_N))\overset{p}{\to} 0,$$
in order to show consistency. This follows from \eqref{eq::limiting_conditional_variance} in the next section which shows
$$\frac{1}{Nk_N^2}\Var(T(\Gscr_{k_N})|\Fcal_N) \overset{L^2}{\to} \sigma^2,$$

for some $\sigma^2>0$. This proves \Cref{propn::weak_limit_of_stat}.

\subsection{Limit of the conditional variance}\label{subsection::limit_of_conditional_variance}

For $x,y\in \Zcal_N,$ let
\begin{equation}\label{eq::defn_of_V_xy}
    V_{x,y} := \psi(c_x,c_y)\indicator\{(x,y)\in E(\Gscr_{k_N})\}.
\end{equation}
Note that conditional on $\Fcal_N,$ $V_{x,y} \sim \ber(h_N(x,y)).$ Hence, conditional on $\Fcal_N$ the statistic $T(\Gscr_{k_N})$ is the sum of the Bernoullis $\{V_{x,y}\}_{(x,y)\in E(\Gscr_{k_N})}.$ $V_{x,y}$ and $V_{w,z}$ are conditionally independent if and only if the two edges $(x,y), (w,z)$ do not share an endpoint.

Using this, we see that the conditional variance can be written as
\begin{align*}
    \Var(T(\Gscr_{k_N})|\Fcal_N) &= \sum_{(x,y)\in E(\Gscr_{k_N})} \Var(V_{x,y}|\Fcal_N)\\
    &+ \sum_{(x,y)\in E(\Gscr_{k_N})}\quad \sum_{z\neq y:(x,z)\in E(\Gscr_{k_N})} \Cov(V_{x,y},V_{x,z}|\Fcal_N)\\
    &+ \sum_{(y,x)\in E(\Gscr_{k_N})}\quad \sum_{z\neq y:(z,x)\in E(\Gscr_{k_N})} \Cov(V_{y,x},V_{z,x}|\Fcal_N)\\
    &+ 2\sum_{(y,x)\in E(\Gscr_{k_N})}\quad \sum_{z:(x,z)\in E(\Gscr_{k_N})} \Cov(V_{y,x},V_{x,z}|\Fcal_N)\\
    &+ \sum_{(x,y),(y,x)\in E(\Gscr_{k_N})} \Cov(V_{x,y},V_{y,x}|\Fcal_N).
\end{align*}

We will first deal with the sum of the variances. Note that for any $(x,y)\in E(\Gscr_{k_N}),$ 
$$\Var(V_{x,y}|\Fcal_N) = h_N(x,y)(1-h_N(x,y)) = h(x,y)(1-h(x,y)) + o(N^{-\frac{1}{2}}).$$
Hence, using the same ideas as in \Cref{lemma::convergence_to_limiting_value}, we get
$$\frac{1}{Nk_N}\sum_{(x,y)}\Var(V_{x,y}|\Fcal_N)\overset{L^2}{\to} \int h(x,x)(1-h(x,x))\phi(x)~dx.$$

This gives us that
$$\frac{1}{Nk_N^2}\sum_{(x,y)}\Var(V_{x,y}|\Fcal_N)\overset{L^2}{\to} 0.$$

We now come to each of the sums of the covariances. Define the functions $\omega_N^\uparrow, \omega_N^\downarrow, \omega^\uparrow$ and $\omega^\downarrow$ as
\begin{align*}
    \omega_N^\uparrow(x,y,z) &= \frac{N_1N_2^2}{N^3} \frac{f(x)g(y)g(z)}{(\frac{N_1}{N}f(x)+\frac{N_2}{N}g(x))(\frac{N_1}{N}f(y)+\frac{N_2}{N}g(y))(\frac{N_1}{N}f(z)+\frac{N_2}{N}g(z))}\\
    \vspace{5mm}\\
    \omega^\uparrow(x,y,z) &= \frac{pq^2f(x)g(y)g(z)}{(pf(x)+qg(x))(pf(y)+qg(y))(pf(z)+qg(z))}\\
    \vspace{5mm}\\
    \omega_N^\downarrow(x,y,z) &= \frac{N_1^2N_2}{N^3} \frac{g(x)f(y)f(z)}{(\frac{N_1}{N}f(x)+\frac{N_2}{N}g(x))(\frac{N_1}{N}f(y)+\frac{N_2}{N}g(y))(\frac{N_1}{N}f(z)+\frac{N_2}{N}g(z))}\\
    \vspace{5mm}\\
    \omega^\downarrow(x,y,z) &= \frac{p^2qg(x)f(y)f(z)}{(pf(x)+qg(x))(pf(y)+qg(y))(pf(z)+qg(z))}.
\end{align*}

For pairs of edges of the form $(x,y),(x,z)$ with $y\neq z$, we have
$$\Cov(V_{x,y},V_{x,z}|\Fcal_N) = \omega_N^{\uparrow}(x,y,z) - h_N(x,y)h_N(x,z).$$

By uniform continuity of the densities, we get
\begin{align*}
&\frac{1}{Nk_N^2}\sum_{(x,y)\in E(\Gscr_{k_N})}\sum_{z\neq y:(x,z)\in E(\Gscr_{k_N})} \Cov(V_{x,y},V_{x,z}|\Fcal_N)\\
&= \frac{2}{Nk_N^2}\left(\sum_{z\in \Zcal_N}\tau_N^\uparrow(\omega^\uparrow,z) - \sum_{z\in \Zcal_N} \tau_N^\uparrow(h^\uparrow,z)\right) + o(1),
\end{align*}

where $h^\uparrow(x,y,z) = h(x,y)h(x,z).$ By \Cref{lemma::convergence_of_cross_terms_in_conditional_variance}, we get 
$$\frac{1}{Nk_N^2}\sum_{(x,y),(x,z)}\Cov(V_{x,y},V_{x,z}|\Fcal_N) \overset{L^2}{\to} \int(\omega^\uparrow(x,x,x)-h^2(x,x))\phi(x)~dx.$$

Similarly, we can write 
\begin{align*}
&\frac{1}{Nk_N^2}\sum_{(y,x)\in \E(\Gscr_{k_N})} \quad \sum_{z\neq y, (z,x)\in E(\Gscr_{k_N})}\Cov(V_{z,x},V_{y,x})\\
&= \frac{1}{Nk_N^2}\sum_{(y,x),(z,x),y\neq z}(\omega^\downarrow(x,y,z)-h(y,x)h(z,x)) + o(1)\\
&= \frac{2}{k_N^2}\left(\sum_{z\in\Zcal_N}\tau_N^\downarrow(z,\omega^\downarrow) - \sum_{z\in\Zcal_N}\tau_N^\downarrow(h^\downarrow,z)\right)\\
&\overset{L^2}{\to} \int (\omega^\downarrow(x,x,x)-h^2(x,x))\phi(x)~dx \quad \ldots \quad \text{by \Cref{lemma::convergence_of_cross_terms_in_conditional_variance}},
\end{align*}

where $h^\downarrow(x,y,z) = h(y,x)h(z,x).$ Hence,
\begin{equation*}
   \frac{1}{Nk_N^2}\sum_{(y,x)\in \E(\Gscr_{k_N})} \quad \sum_{z\neq y, (z,x)\in E(\Gscr_{k_N})}\Cov(V_{z,x},V_{y,x}) \overset{L^2}{\to} \int (\omega^\downarrow(x,x,x) - h^2(x,x))~\phi(x)~dx.
\end{equation*}

The third sum of covariances can be written as
\begin{align*}
\frac{2}{Nk_N^2}\sum_{(y,x),(x,z)\in E(\Gscr_{k_N})}\Cov(V_{y,x},V_{x,z}|\Fcal_N) &= -\frac{2}{Nk_N^2}\sum_{(y,x),(x,z)\in \E(\Gscr_{k_N})} h_N(y,x)h_N(x,z)\\ 
&= -\frac{2}{Nk_N^2}\sum_{(y,x),(x,z)\in E(\Gscr_{k_N})}h(y,x)h(x,z) + o(1)\\
&= -\frac{2}{N}\sum_{z\in\Zcal_N}\tau_N^+(h^+,z) + o(1)\\
&\overset{L^2}{\to} -2\int h^2(x,x)\phi(x)~dx \quad \ldots \quad \text{by \Cref{lemma::convergence_of_cross_terms_in_conditional_variance}},
\end{align*}

where $h^+(x,y,z) = h(y,x)h(x,z).$ Hence,
$$\frac{2}{Nk_N^2}\sum_{(y,x),(x,z)}\Cov(V_{y,x},V_{x,z}|\Fcal_N) \overset{L^2}{\to} -2\int h^2(x,x)\phi(x)~dx$$

Finally, coming to the fourth sum we see that
\begin{equation*}
    \frac{1}{Nk_N^2}\sum_{(x,y),(y,x)\in E(\Gscr_{k_N})} \Cov(V_{x,y},V_{y,x}|\Fcal_N) \leq \frac{|\Zcal_N|k_N}{Nk_N^2}\overset{L^2}{\to} 0.
\end{equation*}

Put together, we get
\begin{equation}\label{eq::limiting_conditional_variance}
\begin{split}
    \frac{1}{Nk_N^2}\Var(T(\Gscr_{k_N})|\Fcal_N) &\overset{L^2}{\to} \int\left(\omega^\uparrow(x,x,x) + \omega^\downarrow(x,x,x) - 4h^2(x,x)\right)\phi(x)~dx\\
    &= pq\bigintsss \frac{f(x)g(x)(pf(x)-qg(x))^2}{\phi(x)^3}~dx =: \sigma^2_\cond.
\end{split}
\end{equation}

This gives us the $L^2$ limit of the conditional variance.


\subsection{Proof of \texorpdfstring{\Cref{thm::clt_conditional_stat}}{Theorem 3.1}}

Recall that the conditional statistic $\Rcal_\cond$ is defined as
$$\Rcal_\cond(\Gscr_{k_N}(\Zcal_N)) = \frac{1}{\sqrt{N}k_N}(T(\Gscr_{k_N}(\Zcal_N)) - \E_{H_1}(T(\Gscr_{k_N}(\Zcal_N)| \Fcal_N)).$$

From \eqref{eq::limiting_conditional_variance} we get the limiting variance of $\Rcal_\cond.$ Hence, to complete the proof of \Cref{thm::clt_conditional_stat} we only need to show asymptotic normality of $\Rcal_\cond.$\\

Recall the definition $V_{x,y}$ from \eqref{eq::defn_of_V_xy}. As in the previous section, we can take
$$T(\Gscr_{k_N}) = \sum_{x,y\in \Zcal_N}V_{x,y}.$$

As noted before, $V_{x,y}$ is a Bernoulli random variable conditional on $\Fcal_N.$ Let $G(\Zcal_N)$ denote the dependency graph of $\{V_{x,y}\}_{x,y}$ conditional on $\Fcal_N.$ $V_{x,y}$ and $V_{w,z}$ are conditionally independent if and only if the edges $(x,y)$ and $(w,z)$ do not share an endpoint in $\Gscr_{k_N}.$ Hence, using \Cref{lemma::biau_devroye_cones} and the discussion following it, we see that 
$$1 + \textnormal{deg}(G(\Zcal_N)) \leq M k_N$$
for some deterministic constant $M>0$ where $\textnormal{deg}(G(\Zcal_N))$ denotes the maximum among the degrees of the vertices of $G(\Zcal_N).$ We are now in a position to use the following theorem on Stein's method based on dependency graphs.

\begin{theorem}(\textnormal{\citet[Theorem 3.6]{rosssteinsmethod}})\label{thm::dependency_graph_clt} Let $G$ be a graph and let $\{X_i\}_{i\in V}$ be a collection of random variables indexed by the vertices of a graph $G.$ Suppose $\E(X_i)=0,$ $\sigma^2 := \Var\left(\sum X_i\right)$, $W:= \frac{\sum X_i}{\sigma}$ and $D:=1+\max(\textnormal{deg}(G))$. If $Z\sim N(0,1)$ then
\begin{equation*}
    \textnormal{Wass}(W,Z)\leq \frac{6}{\sqrt{\pi}\sigma^2}\sqrt{D^3\sum \E|X_i|^4} + \frac{D^2}{\sigma^3}\sum \E |X_i|^3, 
\end{equation*}

where $\textnormal{Wass}(W,Z)$ denotes the Wasserstein distance.
\end{theorem}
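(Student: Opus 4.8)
Since this is the standard dependency-graph Stein bound for the Wasserstein distance, the plan is to prove it by Stein's method. Write $W = \sum_{i\in V} Y_i$ with $Y_i := X_i/\sigma$, so $\E Y_i = 0$ and $\Var(W) = 1$. For each $i$ let $N_i := \{i\}\cup\{j : j \sim i \text{ in } G\}$ and $W_i := \sum_{j\in N_i} Y_j$; by the dependency-graph hypothesis $Y_i$ is independent of $W - W_i$, and $|N_i| \le D$. First I would recall that $\mathrm{Wass}(W,Z) = \sup\{|\E h(W) - \E h(Z)| : h \text{ is } 1\text{-Lipschitz}\}$ and that for each such $h$ the Stein equation $f'(w) - w f(w) = h(w) - \E h(Z)$ has a solution $f = f_h$ with $\|f_h'\|_\infty \le \sqrt{2/\pi}$ and $\|f_h''\|_\infty \le 2$; hence it suffices to bound $|\E[f'(W) - W f(W)]|$ uniformly over all $f$ obeying these two sup-norm bounds.

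Next I would perform the local decomposition. Because $\E Y_i = 0$ and $Y_i$ is independent of $W - W_i$, we have $\E[Y_i f(W - W_i)] = 0$, so $\E[W f(W)] = \sum_i \E[Y_i(f(W) - f(W - W_i))]$. A first-order Taylor expansion gives $f(W) - f(W - W_i) = W_i f'(W) + E_i$ with $|E_i| \le \|f''\|_\infty W_i^2$, and using $\E[\sum_i Y_i W_i] = \sum_{i,j}\E[Y_iY_j] = \E[W^2] = 1$ (the cross terms for non-adjacent pairs vanish by independence and mean zero) this yields the identity
\[
\E[f'(W) - W f(W)] = \E\Big[f'(W)\Big(1 - \sum_i Y_i W_i\Big)\Big] - \sum_i \E[Y_i E_i].
\]

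I would then bound the two pieces separately. For the remainder, $\big|\sum_i \E[Y_i E_i]\big| \le \|f''\|_\infty \sum_i \E[|Y_i| W_i^2]$; expanding $W_i^2 = \sum_{j,k\in N_i} Y_jY_k$ and applying $|Y_iY_jY_k| \le \frac13(|Y_i|^3 + |Y_j|^3 + |Y_k|^3)$, then counting how often each $\E|Y_m|^3$ occurs --- at most $O(D^2)$ times, since $G$ is undirected and every $N_i$ has size at most $D$ --- gives $\sum_i \E[|Y_i| W_i^2] \le D^2 \sum_m \E|Y_m|^3 = D^2 \sigma^{-3}\sum_m \E|X_m|^3$. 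For the main term, $\big|\E[f'(W)(1 - \sum_i Y_i W_i)]\big| \le \|f'\|_\infty \,\E\big|1 - \sum_i Y_i W_i\big| \le \|f'\|_\infty \sqrt{\Var(\sum_i Y_i W_i)}$, since $\sum_i Y_i W_i$ has mean $1$. Here $Y_i W_i$ depends only on $\{Y_j : j \in N_i\}$, so $\Cov(Y_i W_i, Y_j W_j) = 0$ unless $N_i$ and $N_j$ are joined by an edge of $G$ (equivalently, $j$ lies within graph-distance $2$ of $i$), and for each $i$ there are at most $O(D^3)$ such $j$; bounding each surviving covariance by fourth moments of the $Y$'s via Cauchy--Schwarz and AM--GM gives $\Var(\sum_i Y_i W_i) \le O(D^3)\sum_m \E Y_m^4 = O(D^3)\sigma^{-4}\sum_m \E X_m^4$. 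Substituting $\|f'\|_\infty \le \sqrt{2/\pi}$ and $\|f''\|_\infty \le 2$ and tracking the combinatorial constants then produces exactly the stated bound.

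The main obstacle is the combinatorial bookkeeping in the last step: pinning down that each third-moment term is counted at most $O(D^2)$ times and each fourth-moment (covariance) term at most $O(D^3)$ times, with the precise numerical constants needed to land on the coefficients $6/\sqrt{\pi}$ and $1$ in the statement. This relies crucially on the dependency graph being undirected, so that the relations $j \in N_i$ and $i \in N_j$ coincide, and on the uniform degree bound $D - 1$; the analytic inputs --- the Stein-solution estimates and the AM--GM / Cauchy--Schwarz bounds --- are routine.
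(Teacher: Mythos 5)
This theorem is imported verbatim (with a slightly enlarged constant) from the cited survey of Ross; the paper gives no proof of it, so there is no internal argument to compare against. Your sketch reproduces the standard Stein's-method proof from that source — the Stein equation with $\|f'\|_\infty\leq\sqrt{2/\pi}$ and $\|f''\|_\infty\leq 2$, the local decomposition $W_i=\sum_{j\in N_i}Y_j$ with $Y_i$ independent of $W-W_i$, the identity $\E[Wf(W)]=\sum_i\E[Y_i(f(W)-f(W-W_i))]$, and the separate third-moment and fourth-moment bounds — and the structure is correct. Two small points if you want the stated constants to actually come out. First, the Taylor remainder satisfies $|E_i|\leq\tfrac12\|f''\|_\infty W_i^2$; the factor $\tfrac12$ is what makes the third-moment coefficient equal to $1$ (rather than $2$) after inserting $\|f''\|_\infty\leq 2$ and the multiplicity count $D^2$. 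Second, $Y_iW_i$ and $Y_jW_j$ can fail to be independent whenever some vertex of $N_i$ is adjacent to (or equal to) some vertex of $N_j$, which only forces $d(i,j)\leq 3$, not $2$ as you wrote; fortunately the count of $O(D^3)$ relevant indices $j$ that you then use is the one appropriate to distance $3$, so the final bound is unaffected. With those repairs, and the observation that the cited constant $\sqrt{28}/\sqrt{\pi}$ is below the $6/\sqrt{\pi}$ stated here, your argument does establish the theorem as quoted.
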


Let
$$W_N := \frac{T(\Gscr_{k_N}) - \E(T(\Gscr_{k_N})|\Fcal_N)}{\Var(T(\Gscr_{k_N})|\Fcal_N)}$$

and $W_N|_{\Fcal_N}$ denote the distribution of $\Fcal_N$ conditional on $\Fcal_N.$ Using \Cref{thm::dependency_graph_clt} and the upper bound established on $1+\textnormal{deg}(G(\Zcal_N))$  we get
\begin{align*}
\textnormal{Wass}(W_N|_{\Fcal_N},Z) &\leq \frac{6M^\frac{3}{2}}{\sqrt{\pi}}\frac{k_N^\frac{3}{2}\sqrt{L_Nk_N}}{\Var(T(\Gscr_{k_N})|\Fcal_N)} + \frac{k_N^2L_Nk_N}{\Var(T(\Gscr_{k_N}))^\frac{3}{2}}\\
&\leq \frac{6M^\frac{3}{2}}{\sqrt{\pi}}\frac{\sqrt{L_N}}{N}\left(\frac{\Var(T(\Gscr_{k_N})|\Fcal_N)}{Nk_N^2}\right)^{-1} + \frac{L_N}{N^\frac{3}{2}}\left(\frac{\Var(T(\Gscr_{k_N}))^\frac{3}{2}}{Nk_N^2}\right)^{-\frac{3}{2}}
\end{align*}

where $L_N = |\Zcal_N|.$ Since $L_N\sim \poi(N),$ we get $\frac{L_N}{\sqrt{N}} = o(1)$. Furthermore, \eqref{eq::limiting_conditional_variance} gives
$$\frac{\Var(T(\Gscr_{k_N}))^\frac{3}{2}}{Nk_N^2}\overset{p}{\to} \sigma^2_\cond.$$

Hence, we can marginalize over $\Zcal_N$ to get asymptotic normality of $W_N$ and hence of $\Rcal_\cond.$ This completes the proof of \Cref{thm::clt_conditional_stat}.

\subsection{Limiting variance of the conditional expectation}\label{section::limiting_variance_of_conditional_expectation}

The conditional expectation is given by
$$\E(T(\Gscr_{k_N})|\Fcal_N) = \sum_{x,y\in \Zcal_N}h_N(x,y)\indicator\{(x,y)\in E(\Gscr_{k_N})\}.$$

By uniform continuity of $f,g$ and since $\frac{N_1}{N}-p = o(N^{-\frac{1}{2}}),$ this can be written as
$$\E(T(\Gscr_{k_N})|\Fcal_N) = \sum_{x,y\in \Zcal_N}h(x,y)\indicator\{(x,y)\in E(\Gscr_{k_N})\} + o_p(N^{\frac{1}{2}}k_N)$$

We will now find it's asymptotic variance. In order to do this, we require some notation.

For $x,y\in \Zcal_N,$ let $J_{x,y}:= h(x,y)\indicator\{(x,y)\in E(\Gscr_{k_N})\}.$ Let $w(x) := \frac{pf(x)}{pf(x)+qg(y)}$ and let $v(y)=\frac{qg(y)}{pf(y)+qg(y)}.$ For $x\in \R^d,$ $\Hcal\subset \R^d$ a finite set and $A\subset \R^d$ a Borel set, let 
$$\xi_\Hcal^x(A) := w(x)\sum_{y\in A\cap \Hcal^x}\indicator\{(x,y)\in E(\Gscr_{k_N}(\Hcal^x))\}$$

where $\Hcal^x$ denotes the set $\Hcal$ with the point $x$ added to it. $\xi_\Hcal^x$ defines a measure on $\R^d.$ Let $\mu_N$ denote the sum of these measures across the Poisson process. That is, 
$$\mu_N := \sum_{x\in \Zcal_N}\xi^x_{\Zcal_N}.$$

We can integrate the function $v$ with respect to the measures $\xi^x_{\Zcal_N}$ and $\mu_N$ to get the quantities of interest to us. Specifically, we have
\begin{align*}
    \langle v,\xi^x_{\Zcal_N}\rangle &= \sum_{y\in \Zcal_N^x} h(x,y)\indicator\{(x,y)\in E(\Gscr_{k_N}(\Zcal_N^x))\},\\
    \\
    \langle v,\mu_N \rangle &= \sum_{(x,y)\in \Zcal_N} h(x,y)\indicator\{(x,y)\in E(\Gscr_{k_N}(\Zcal_N))\}.
\end{align*}

Note that $\langle v,\mu_N \rangle$ gives us exactly the conditional expectation. Writing the conditional expectation in this form allows us to use \citet{penrose2007gaussianlimits}, Lemma 4.2 which gives that
$$\Var(\E(T(\Gscr_{k_N})|\Fcal_N)) = Na_N + N b_N,$$

where
\begin{align*}
    a_N &:= \int \E(\langle v,\xi^x_{\Zcal_N} \rangle^2)\phi_N(x)~dx,\\
    b_N &:= \int (\E(\langle v,\xi^x_{\Zcal_N^{x_N(z)}} \rangle \langle v,\xi^{x_N(z)}_{\Zcal_N^x} \rangle)\\
    &\quad \quad - \E(\langle v,\xi^x_{\Zcal_N^{x_N(z)}} \rangle)\E( \langle v,\xi^{x_N(z)}_{\Zcal_N^x} \rangle))\phi_N(x)\phi_N(x_N(z)) ~dx ~ dz,
\end{align*}

where $x_N(z) = x + N^{-\frac{1}{d}}z.$\\

From the $L^2$ convergence in \Cref{lemma::convergence_to_limiting_value} and the DCT, we get
$$\frac{a_N}{k_N^2} \to \int h(x,x)^2\phi(x)~dx.$$

We will now show that
$$\frac{b_N}{k_N^2}\to 0.$$

This will give the scale of the limiting variance of the conditional expectation. To show the second limit, notice that for any $x,z$
\begin{align*}
\langle v,\xi^{x_N(z)}_{\Zcal_N^x} \rangle &= \sum_{y\in \Zcal_N^x}h(x_N(z),y)\indicator\{(x_N(z),y)\in E(\Gscr_{k_N}(\Zcal^x_{N}))\}\\
&= \sum_{y\in \Zcal_N^x}h(x,y)\indicator\{(x_N(z),y)\in E(\Gscr_{k_N}(\Zcal^x_{N}))\} + o(k_NN^{-\frac{1}{d}}).
\end{align*}

After writing it in this form, we can use arguments similar to \Cref{lemma::convergence_to_limiting_value}, to get that
$$\frac{\langle v,\xi^{x_N(z)}_{\Zcal_N^x} \rangle}{k_N} \overset{L^1}{\to} h(x,x).$$

Similarly, we also have
$$\frac{\langle v,\xi^{x}_{\Zcal_N^{x_N(z)}} \rangle}{k_N} \overset{L^1}{\to} h(x,x).$$

Hence, we have that the expectations converge which in turn gives us that
$$\E(\langle v,\xi^x_{\Zcal_N^{x_N(z)}} \rangle)\E( \langle v,\xi^{x_N(z)}_{\Zcal_N^x} \rangle))\to h(x,x)^2.$$

We now come to the term $\E(\langle v,\xi^x_{\Zcal_N^{x_N(z)}} \rangle \langle v,\xi^{x_N(z)}_{\Zcal_N^x} \rangle).$ We wish to show that
$$\frac{\E(\langle v,\xi^x_{\Zcal_N^{x_N(z)}} \rangle \langle v,\xi^{x_N(z)}_{\Zcal_N^x} \rangle)}{k_N^2}\to h(x,x)^2.$$

Note that both inner products inside the expectation are bounded above by $k_N.$ This is because they are both sums of at most $k_N$ many summands each of which lies in $[0,1].$ Specifically, we have that
\begin{align*}
    0&\leq \frac{1}{k_N}\sum_{y\in \Zcal_N^{x_N(z)}}h(x,y)\indicator\{(x,y)\in E(\Gscr_{k_N}(\Zcal_N^{x_N(z)}))\} \leq 1,\\
    0&\leq \frac{1}{k_N}\sum_{y\in \Zcal_N^x} h(x_N(z),y)\indicator\{(x_N(z),y)\in E(\Gscr_{k_N}(\Zcal_N^{x}))\} \leq 1.
\end{align*}

As a result,
$$ 0\leq \frac{\langle v,\xi^x_{\Zcal_N^{x_N(z)}} \rangle}{k_N}, \frac{\langle v,\xi^{x_N(z)}_{\Zcal_N^x} \rangle}{k_N} \leq 1.$$

Using the individual $L^1$ convergence and the boundedness, we get that
$$\frac{\langle v,\xi^x_{\Zcal_N^{x_N(z)}} \rangle \langle v,\xi^{x_N(z)}_{\Zcal_N^x} \rangle}{k_N^2} \overset{L^1}{\to} h(x,x)^2.$$

This shows convergence to the same quantity point wise. Once again, using the boundedness and the DCT, we have that $\frac{b_N}{k_N^2}\to 0.$ Altogether, this gives us
\begin{equation}\label{eq::limiting_variance_of_conditional_expectation}
\begin{split}
    \frac{1}{Nk_N^2}\Var(\E(T(\Gscr_{k_N})|\Fcal_N)) &= \frac{a_N}{k_N^2} + \frac{b_N}{k_N^2} \\
    &\to \int h(x,x)^2\phi(x)~dx\\
    &= p^2q^2\bigintsss \frac{f(x)^2g(x)^2}{\phi(x)^3}~dx.
\end{split}
\end{equation}

\subsection{Proof of \texorpdfstring{\Cref{thm::clt_general_stat}}{Theorem 3.2}}



Recall that the statistic $\Rcal$ was defined as
$$\Rcal(\Gscr_{k_N}(\Zcal_N)) = \frac{1}{k_N\sqrt{N}}(T(\Gscr_{k_N}(\Zcal_N)) - \E_{H_1}(T(\Gscr_{k_N}(\Zcal_N)))).$$

From \eqref{eq::limiting_conditional_variance} and \eqref{eq::limiting_variance_of_conditional_expectation} we get the limiting variance of $\Rcal.$ To prove \Cref{thm::clt_general_stat} we need to show asymptotic normality. We will show asymptotic normality of a slightly truncated statistic $T'(\Gscr_{k_N})$ which we will now define.

Define $r_N(K) = \left(K \frac{\max((\log N)^2,k_N)}{N}\right)^\frac{1}{d}$. For a given $K,$ let $\{D(i,N,K)\}_{i=1}^{M(N,K)}$ be a partition of the support $S$ of $f,g$ into $M(N,K)$ boxes of side length $r_N(K).$ For $1\leq i\leq M(N,K),$ let $N(i)$ be the set of indices such that $\{D(m,N,K):m\in N(i)\}$ is the set of boxes that share a side with $D(i,N,K).$ For $1\leq i \leq M(N,K),$ define
$$X(i,N,K) = \sum_{x,y\in \Zcal_N}\psi(c_x,c_y)\indicator\{x\in D(i,N,K);(x,y)\in \Gscr_{k_N}; \|x-y\|\leq r_N(K) \}.$$

Thus, $X(i,N,K)$ is the number of edges in the graph $\Gscr_{k_N}$ such that the label of the tail is $1,$ the label of the head is $2,$ the tail lies in the box $D(i,N,K)$ and the head lies either in $D(i,N,K)$ or one of it's neighboring boxes. Define 
$$T'(\Gscr_{k_N}) = \sum_{i=1}^{M(N,K)} X(i,N,K).$$

We now bound $\|T(\Gscr_{k_N}) - T'(\Gscr_{k_N})\|_2.$ Note that
\begin{align*}
|T(\Gscr_{k_N}) - T'(\Gscr_{k_N})| &\leq k_N\sum_{x\in \Zcal_N} \indicator\{|\Zcal_N\cap B(x,r_N(K))|\leq k_N-1\}.
\end{align*}

From \Cref{lemma::nearest_neighbor_distance_bound}, and the above bound, we get by choosing a large enough $K,$
$$\|T(\Gscr_{k_N}) - T'(\Gscr_{k_N})\|_2\leq N^{-3},$$

and in particular,
$$\lim_{N\to \infty} \frac{\Var(T'(\Gscr_{k_N}))}{Nk_N^2} = \lim_{N\to \infty} \frac{\Var(T(\Gscr_{k_N}))}{Nk_N^2}.$$

Furthermore, to show asymptotic normality of $\Rcal(\Gscr_{k_N}(\Zcal_N))$, by Slutsky's theorem it suffices to show asymptotic normality of $T'(\Gscr_{k_N}).$ Note that for each $i,$ $|D(i,N,K)\cap \Zcal_N|$ is a Poisson random variable with some mean $d(i,N,K).$ Since $f,g$ are bounded above, we get that there exists some universal constant $C$ such that
$$\max_i d(i,N,K)\leq CK\max((\log N)^2,k_N).$$

Using this, we get that

\begin{equation}\label{eq::moment_bounds_for_truncated_statistic}
\begin{split}
\E|X(i,N,K)-\E(X(i,N,K))|^4 &\leq C(K)\cdot  k_N^4 (\max((\log N)^2,k_N))^4,\\
\E|X(i,N,K)-\E(X(i,N,K))|^3 &\leq C(K)\cdot  k_N^3 (\max((\log N)^2,k_N))^3,
\end{split}
\end{equation}

where $C(K)$ is some constant that depends only on $K.$ We are now in a position to use \Cref{thm::dependency_graph_clt}. If $G$ denotes the dependency graph of $\{X(i,N,K)\}_{i=1}^{M(N,K)}$ then, the max degree is bounded since the edge counts in two boxes that are not neighboring or do not share a common neighbor are independent. Hence,
$$D := 1+ \max_{v\in G}(\deg(v))\leq C_d$$

for some constant $C$ that depends on the dimension $d.$ Finally, we also see that
$$M(N,K)\leq C(K)\frac{N}{k_N}$$

for some constant $C(K)$ depending only on $K.$ Hence, the bound coming from \Cref{thm::dependency_graph_clt} gives
\begin{align*}
\textnormal{Wass}\left(\frac{T'(\Gscr_{k_N} - \E(T'(\Gscr_{k_N}))}{\sqrt{\Var(T'(\Gscr_{k_N}))}},Z\right) &\leq \dfrac{6C(K)\sqrt{C_d^3Nk_N^3(\max((\log N)^2,k_N)^4}}{\sqrt{\pi} Nk_N^2}\left(\frac{\Var(T'(\Gscr_{k_N}))}{Nk_N^2}\right)^{-1}\\
&+ \frac{C_d^2C(K)Nk_N^2(\max((\log N)^2,k_N))^3}{N^\frac{3}{2}k_N^3}\left(\frac{\Var(T'(\Gscr_{k_N}))}{Nk_N^2}\right)^{-\frac{3}{2}}.
\end{align*}

For $k_N = o(N^{\frac{1}{4}})$ the above bound goes to $0$ which proves asymptotic normality of $T'(\Gscr_{k_N}).$ As stated before, we get asymptotic normality of $\Rcal(\Gscr_{k_N}(\Zcal_N))$ which proves \Cref{thm::clt_general_stat}.


\section{Detection thresholds}

\label{appendix::detection_thresholds}

Recall that when considering local alternatives in a parametrized family $\{p_\theta\}_{\theta\in \Theta}$, the null hypothesis for the 2-sample test is given by
$$H_0 : f=g=p_\thetaone,$$

for some $\thetaone \in \Theta.$ The alternate hypothesis is given by
$$H_1 : f = p_\thetaone, g=p_{\theta_2},$$

where $\theta_2 = \thetaone + \epsilon_N$ for some $\epsilon_N \to 0.$\\

The CLT's proved in the previous section can be generalized to show that

$$\frac{N^{-\frac{1}{2}}}{k_N}(T(\Gscr_{k_N}) - \E_{H_1}(T(\Gscr_{k_N}))) \to N(0,\sigma^2_0),$$

when $H_1$ is as given above and $\sigma^2_0$ denotes the null variance. Hence, to find the limiting power, it suffices to analyze the difference of means i.e.
\begin{equation}\label{eq::diff_of_means}
\frac{N^{-\frac{1}{2}}}{k_N}(\E_{H_1}(T(\Gscr_{k_N}))-\E(T(\Gscr_{k_N}))),
\end{equation}

as $\epsilon_N\to 0.$ Broadly, we need to characterize the conditions under which limiting value of \ref{eq::diff_of_means} is $0,$ finite and infinity. This will give the limiting power of the test. This Appendix is dedicated to this purpose.

We first define the following notation.
\begin{align*}
    \phi_N^{\theta_1,\theta_2}(x) &= \frac{N_1}{N}p_{\theta_1}(x) + \frac{N_2}{N}p_{\theta_2}(x),\\
    h_N^{\thetaone,\theta_2}(x,y) &= \frac{N_1N_2p_{\theta_1}(x)p_{\theta_2}(y)}{(N_1p_{\theta_1}(x) + N_2p_{\theta_2}(x))(N_1p_{\theta_1}(y) + N_2p_{\theta_2}(y))},\\
    \rho_K^{\thetaone,\theta_2}(x,y) &= \prob((x,y)\in \E(\Gscr_K(\Pcal_N^{x,y}))).
\end{align*}

By the Palm Theory identity \ref{eq::palm_theory_identity}, we can write $\E_{H_1}(T(\Gscr_{k_N}(\Zcal_N)))$ as
\begin{align*}
\E_{H_1}(T({\Gscr_{k_N}})) &= N^2\int h_N^{\thetaone,\theta_N}(x,y)~\rho_{k_N}^{\thetaone,\theta_2}
(x,y)~\phi_N(x)\phi_N(y)~dx~dy\\
&= \frac{N_1N_2}{N^2} N^2 \int p_\thetaone(x)p_{\theta_2}(y)\rho_N^{\theta_1,\theta_2}(x,y)~dx~dy.
\end{align*}

Since $\Gscr_{k_N}$ is the $k_N$-NN graph, we can write $\rho_{k_N}^{\theta_1,\theta_2}(x,y)$ as
\begin{equation}\label{eq::expression_for_rho_kn}
\rho_{k_N}^{\thetaone,\theta_2}(x,y) = \prob(\poi(\lambda_N^{\thetaone,\theta_2}(x,y))\leq k_N-1) = \sum_{k=1}^{k_N-1} \frac{\lambda_N^{\thetaone,\theta_2}(x,y)^k}{k!}\exp\left(-\lambda_N^{\thetaone,\theta_2}(x,y)\right),
\end{equation}
where
\begin{equation}\label{eq::lambda_N_defn}
    \lambda_N^{\thetaone,\theta_2}(x,y) = N_1 \int_{B(x,\|x-y\|)} p_\thetaone(z)~dz + N_2\int_{B(x,\|x-y\|)} p_{\theta_2}(z)~dz.
\end{equation}

The expectation when the two densities are $p_\thetaone,p_{\theta_2}$ can be written as
\begin{align*}
\E_{H_1}(T_{\Gscr_{k_N}}) &= \frac{N_1N_2}{N^2} N^2\int p_\thetaone(x)p_{\theta_2}(y)~\rho_{k_N}^{\thetaone,\theta_2}
(x,y)~\phi_N(x)\phi_N(y)~dx~dy\\
&= Nk_N \frac{N_1N_2}{N^2} \mu_N(\theta_1,\theta_2),
\end{align*}
where 
\begin{equation}\label{eq::mu_N_defn}
\mu_N(\theta_1,\theta_2) = \frac{N}{k_N} \int_{\|x-y\|\leq r_N(K)}p_\thetaone(x)p_{\theta_2}(y) \rho_{k_N}^{\thetaone,\theta_2}(x,y)~dx~dy.
\end{equation}

In order to find the limit of \ref{eq::diff_of_means}, we can expand $\mu_N(\theta_1,\theta_2)$ for $\theta_2 = \theta_1 + \epsilon_N.$ Doing a Taylor expansion in the second variable gives us
\begin{equation}\label{eq::taylor_expansion_of_difference_of_means}
\begin{split}
&\frac{N^{-\frac{1}{2}}}{k_N}(\E_{H_1}(T(\Gscr_{k_N}))-\E(T(\Gscr_{k_N})))\\
&= \sqrt{N}\frac{N_1N_2}{N^2}\left(\mu_N(\theta_1,\theta_N)-\mu_N(\thetaone,\thetaone)\right)\\
&=\frac{N_1N_2}{N^2}\left(\sqrt{N} \epsilon_N^T \nabla_\thetaone \mu_N(\thetaone,\thetaone) + \frac{\sqrt{N}}{2} \epsilon_N^T (\Hnormal \mu_N(\thetaone,\thetaone)) \epsilon_N \right) + \Rcal_N.
\end{split}
\end{equation}

Here the gradient and Hessian are with respect to only the second argument of $\mu_N.$ The following lemmas give the limiting values of the gradient, Hessian and remainder term respectively. The remainder term can be written as
$$\Rcal_N = \frac{N_1N_2\sqrt{N}}{3!N^2}\sum_{1\leq i,j,k\leq p}(\epsilon_N)_{ijk}\frac{\partial^3\mu_N(\theta_1,\theta)}{\partial \theta_{ijk}}\bigmid_{\theta\in (\theta_1,\theta_2)}.$$

In the above expression, $(\epsilon_N)_{ijk}$ denotes the product of the $i,j,k$ components of $\epsilon_N.$ The same notation extends to the partial derivatives with respect to $\theta$ and $(\theta_1,\theta_2)$ denotes the segment in $\R^p$ joining $\theta_1$ and $\theta_2.$\\

The following three lemmas give the limiting values of the gradient and hessian terms as well as the required bounds on the remainder term.


\begin{lemma}\textnormal{(Limit of the gradient term)}\label{lemma::gradient_limit}
    For $\epsilon_N = hN^{-\frac{1}{2}} \left(\frac{N}{k_N}\right)^\frac{2}{d}$ and under the assumptions of \Cref{thm::both_tests_above_criticality_broad_regimes},
    $$\sqrt{N} \epsilon_N \nabla_\thetaone \mu_N(\thetaone,\thetaone) \to \frac{p}{2(d+2)V_d^{\frac{2}{d}}} \bigintssss h^T\nabla_{\theta_1}\left(\frac{\tr(\Hnormal_x p(x|\theta_1))}{p_\thetaone(x)}\right)p_\thetaone^{\frac{d-2}{d}}(x) ~ dx$$
\end{lemma}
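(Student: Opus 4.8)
\emph{Proof plan.} The second argument $\theta_2$ enters $\mu_N(\theta_1,\theta_2)$ in \eqref{eq::mu_N_defn} through the factor $p_{\theta_2}(y)$ and through $\rho_{k_N}^{\theta_1,\theta_2}(x,y)$, which depends on $\theta_2$ only via the summand $N_2\int_{B(x,\|x-y\|)}p_{\theta_2}$ inside the Poisson parameter \eqref{eq::lambda_N_defn}. Differentiating under the integral sign (permissible by the uniform bounds on $p_{\theta_1}$ and its $x$- and $\theta$-derivatives in Assumption \ref{assume::assumptions_on_parametrized_family}) and setting $\theta_2=\theta_1$, I would split $\nabla_{\theta_1}\mu_N(\theta_1,\theta_1)=\mathcal{A}_N+\mathcal{B}_N$, where in $\mathcal{A}_N$ the derivative hits $p_{\theta_2}(y)$ and in $\mathcal{B}_N$ it hits $\rho_{k_N}^{\theta_1,\theta_2}$. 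For the latter I would use the telescoping identity $\tfrac{d}{d\lambda}\prob(\poi(\lambda)\le k_N-1)=-\tfrac{\lambda^{k_N-1}}{(k_N-1)!}e^{-\lambda}$, so the inner weight in $\mathcal{B}_N$ is $-\tfrac{\lambda^{k_N-1}}{(k_N-1)!}e^{-\lambda}\cdot N_2\int_{B(x,\|x-y\|)}\nabla_{\theta_1}p_{\theta_1}(z)\,dz$ with $\lambda=N\int_{B(x,\|x-y\|)}p_{\theta_1}$. Since the relevant $\|x-y\|$ are $\asymp(k_N/N)^{1/d}\to0$, replacing the cutoff radius $r_N(K)$ by any larger radius alters $\mathcal{A}_N,\mathcal{B}_N$ only by amounts decaying faster than any power of $N$ (Poisson tails; Lemmas \ref{lemma::poisson_bound}--\ref{lemma::nearest_neighbor_distance_bound}).

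\emph{Reduction to one-dimensional integrals.} Every integrand now depends on $y$ only through $r=\|x-y\|$, so for each fixed $x$ I would pass to radial coordinates and then to $\lambda=\lambda(x,r)=N\int_{B(x,r)}p_{\theta_1}$. By Lemma \ref{lemma::ball_and_surface_integrals}, $\tfrac{d\lambda}{dr}=N\int_{\partial B(x,r)}p_{\theta_1}=N\big(dV_d\,p_{\theta_1}(x)r^{d-1}+\tfrac12V_d\tr(\Hnormal_x p_{\theta_1}(x))r^{d+1}+O(r^{d+2})\big)$, so the volume element is $dV_dr^{d-1}dr=\tfrac{1}{Np_{\theta_1}(x)}\big(1-\tfrac{\tr(\Hnormal_x p_{\theta_1}(x))}{2d\,p_{\theta_1}(x)}r(\lambda)^2+\cdots\big)d\lambda$ with $r(\lambda)^2=\big(\tfrac{\lambda}{Np_{\theta_1}(x)V_d}\big)^{2/d}(1+\cdots)$; simultaneously $p_{\theta_2}(y)$, $N_2\int_{B(x,r)}\nabla_{\theta_1}p_{\theta_1}$ and $N_2V_dr^d=q\lambda/p_{\theta_1}(x)\cdot(1+\cdots)$ are Taylor-expanded in $r$ about $x$ via Lemma \ref{lemma::ball_and_surface_integrals} and compact-support smoothness, odd powers of $(y-x)$ dropping by symmetry. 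After substitution, $\mathcal{A}_N$ and $\mathcal{B}_N$ become finite combinations, over $j\in\{0,1\}$, of $\tfrac{N^{-2j/d}}{k_N}\int_S g_{N,j}(x)\int_0^{\lambda_{\max}(x)}\lambda^{2j/d}w_{k_N}(\lambda)\,d\lambda\,dx$ with $g_{N,j}$ uniformly smooth and bounded on $S$ and $w_{k_N}(\lambda)\in\{\prob(\poi(\lambda)\le k_N-1),\ \tfrac{\lambda^{k_N}}{(k_N-1)!}e^{-\lambda}\}$. The inner integrals are $\int_0^\infty\lambda^{2j/d}\prob(\poi(\lambda)\le k_N-1)\,d\lambda=\sum_{i=0}^{k_N-1}\tfrac{\Gamma(i+1+2j/d)}{\Gamma(i+1)}$ and $\int_0^\infty\lambda^{2j/d}\tfrac{\lambda^{k_N}}{(k_N-1)!}e^{-\lambda}\,d\lambda=\tfrac{\Gamma(k_N+1+2j/d)}{\Gamma(k_N)}$ (the tails past $\lambda_{\max}(x)\asymp K\max(k_N,(\log N)^2)$ being negligible); they equal $k_N$ for $j=0$, and by the Gamma-function identity of Lemma \ref{lemma::gamma_function_identity} together with Stirling's approximation they are $(1+o(1))\tfrac{d}{d+2}k_N^{1+2/d}$ resp. $(1+o(1))k_N^{1+2/d}$ for $j=1$.

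\emph{Assembling the limit.} The $j=0$ contributions to both $\mathcal{A}_N$ and $\mathcal{B}_N$ reduce to multiples of $\int_S\nabla_{\theta_1}p_{\theta_1}(x)\,dx=\nabla_{\theta_1}\!\int_S p_{\theta_1}=0$, so $\mathcal{A}_N,\mathcal{B}_N$ are of exact order $(k_N/N)^{2/d}$. Collecting the $j=1$ terms — carrying through the cancellations between the Jacobian correction, the $N_2V_dr^d$ expansion, the Hessian term of $\int_{B(x,r)}\nabla_{\theta_1}p_{\theta_1}$, and the second-order Taylor term of $p_{\theta_2}(y)$ about $x$ in $\mathcal{B}_N$ — I would obtain
$$\mathcal{A}_N=\frac{(k_N/N)^{2/d}}{2(d+2)V_d^{2/d}}\int_S\Big(\frac{\tr(\Hnormal_x\nabla_{\theta_1}p_{\theta_1}(x))}{p_{\theta_1}(x)^{2/d}}-\frac{\tr(\Hnormal_x p_{\theta_1}(x))\,\nabla_{\theta_1}p_{\theta_1}(x)}{p_{\theta_1}(x)^{1+2/d}}\Big)dx+o\big((k_N/N)^{2/d}\big),$$
with $\mathcal{B}_N$ equal to $-q$ times the same expression, up to $o((k_N/N)^{2/d})$. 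Since $1-q=p$, adding them, recognising the bracket as $\nabla_{\theta_1}\big(\tfrac{\tr(\Hnormal_x p(x|\theta_1))}{p(x|\theta_1)}\big)\,p(x|\theta_1)^{(d-2)/d}$ by the quotient and product rules, then multiplying by $\sqrt{N}\epsilon_N=h\,(N/k_N)^{2/d}$ and letting $N\to\infty$, yields the claim; the dominated-convergence passage is justified by the uniform (in $x\in S$) remainder bounds afforded by the compact support and bounded derivatives in Assumption \ref{assume::assumptions_on_parametrized_family}.

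\emph{Main obstacle.} The crux is the bookkeeping: each of the four correction sources above is of the same order $(k_N/N)^{2/d}$, several of them cancel pairwise, and the surviving combination must be tracked with exact constants to recover both the coefficient $\tfrac{p}{2(d+2)V_d^{2/d}}$ and, in particular, the precise ``gradient of a quotient'' form. The secondary burden is making every $o((k_N/N)^{2/d})$ estimate uniform over $x\in S$ (and over the $\theta$ on the segment between $\theta_1$ and $\theta_N$), which is exactly where the smoothness and boundedness hypotheses of Assumption \ref{assume::assumptions_on_parametrized_family} are needed.
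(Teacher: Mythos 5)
Your proposal is correct and follows essentially the same route as the paper: the same split of $\nabla_{\theta_1}\mu_N(\theta_1,\theta_1)$ into the term where the derivative hits $p_{\theta_2}(y)$ and the term where it hits $\rho_{k_N}^{\theta_1,\theta_2}$ (the paper's $T_1$ and $-N_2T_2$ in \eqref{eq::T1_defn}--\eqref{eq::T2_defn}), the same radial change of variables to the Poisson parameter with the expansions of Lemma \ref{lemma::ball_and_surface_integrals}, the same reduction to Gamma-function ratios handled by Lemma \ref{lemma::gamma_function_identity} and Stirling, and the same observation that the leading-order contributions vanish because $\int\nabla_{\theta_1}p_{\theta_1}=0$, leaving the coefficient $1-q=p$. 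The only difference is that you carry out inline the pointwise expansions that the paper packages as Lemmas \ref{lemma::lemma_for_T1} and \ref{lemma::lemma_for_T2}, which is a presentational rather than a mathematical difference.
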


\begin{lemma}\textnormal{(Limit of the Hessian term)}\label{lemma::hessian_limit}
    For $\epsilon_N = hN^{-\frac{1}{4}}$ and under the assumptions of \Cref{thm::both_tests_above_criticality_broad_regimes},,
    $$ \sqrt{N} \epsilon_N^T \Hnormal \mu_N(\thetaone,\thetaone) \epsilon_N \to  -2pq \cdot \E\left[\frac{h^T\nabla_{\theta_1}p_\thetaone(x)}{p_\thetaone(x)}\right]^2$$
\end{lemma}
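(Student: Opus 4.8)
The plan is to identify $\lim_N h^{T}\Hnormal_{\theta_2}\mu_N(\theta_1,\theta_1)\,h$, since for $\epsilon_N=hN^{-1/4}$ one has $\sqrt N\,\epsilon_N^{T}\Hnormal\mu_N(\theta_1,\theta_1)\epsilon_N=h^{T}\Hnormal_{\theta_2}\mu_N(\theta_1,\theta_1)h$ (the scale $N^{-1/4}$ plays no role in this lemma); throughout, $\Hnormal$ acts on the second argument of $\mu_N$, and I write $G_N(\lambda):=\prob(\poi(\lambda)\le k_N-1)$, so that $\rho_{k_N}^{\theta_1,\theta_2}(x,y)=G_N(\lambda_N^{\theta_1,\theta_2}(x,y))$ with $\lambda_N$ as in \eqref{eq::lambda_N_defn}.

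The first step is a conservation identity that removes most of the work. In the $k_N$-NN graph every vertex has out-degree exactly $\min(k_N,L_N)$, so $\E[\textnormal{outdeg}(x)]=\E[\min(k_N,L_N)]$, which equals $k_N$ up to an error exponentially small in $N$, uniformly in $x$; applying the Palm identity \eqref{eq::palm_theory_identity} to the out-degree gives, uniformly in $x$, $$\frac{N}{k_N}\int \phi_N^{\theta_1,\theta_2}(y)\,\rho_{k_N}^{\theta_1,\theta_2}(x,y)\,dy=1+(\text{exp.\ small}).$$ Splitting $\phi_N^{\theta_1,\theta_2}=\frac{N_1}{N}p_{\theta_1}+\frac{N_2}{N}p_{\theta_2}$, solving for the $p_{\theta_2}$-weighted integral, and integrating against $p_{\theta_1}(x)\,dx$ yields $$\mu_N(\theta_1,\theta_2)=\frac{N}{N_2}-\frac{N_1}{N_2}\int p_{\theta_1}(x)\,\frac{N}{k_N}\!\int p_{\theta_1}(y)\,\rho_{k_N}^{\theta_1,\theta_2}(x,y)\,dy\,dx+(\text{exp.\ small}).$$ The point is that the whole $\theta_2$-dependence now sits inside $\rho_{k_N}^{\theta_1,\theta_2}=G_N(\lambda_N^{\theta_1,\theta_2})$, so the $\Hnormal_{\theta_2}p_{\theta_2}(y)$ term and the cross terms of a naive expansion disappear. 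Differentiating twice and using $G_N'(\lambda)=-\frac{\lambda^{k_N-1}}{(k_N-1)!}e^{-\lambda}$ and $G_N''(\lambda)=-\frac{\lambda^{k_N-2}}{(k_N-2)!}e^{-\lambda}+\frac{\lambda^{k_N-1}}{(k_N-1)!}e^{-\lambda}$, together with $\nabla_{\theta_2}\lambda_N^{\theta_1,\theta_2}\big|_{\theta_2=\theta_1}=N_2\int_{B(x,\|x-y\|)}\nabla_{\theta_1}p_{\theta_1}$ and $\Hnormal_{\theta_2}\lambda_N^{\theta_1,\theta_2}\big|_{\theta_2=\theta_1}=N_2\int_{B(x,\|x-y\|)}\Hnormal_{\theta_1}p_{\theta_1}$, one obtains $h^{T}\Hnormal_{\theta_2}\mu_N(\theta_1,\theta_1)h$ as $-\frac{N_1}{N_2}$ times the $(x,y)$-integral of a $G_N''$-piece carrying $\big(h^{T}\!\int_{B}\nabla_{\theta_1}p_{\theta_1}\big)^2$ plus a $G_N'$-piece carrying $h^{T}\!\big(\int_{B}\Hnormal_{\theta_1}p_{\theta_1}\big)h$.

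The second step evaluates these two integrals. By Lemma \ref{lemma::nearest_neighbor_distance_bound} the inner integral may be restricted to $\|x-y\|\le r_N(K)$ for $K$ large, with a super-polynomially small tail by Lemma \ref{lemma::poisson_bound}; on that range Lemma \ref{lemma::ball_and_surface_integrals} gives $\lambda_N^{\theta_1,\theta_1}(x,y)=N p_{\theta_1}(x)V_d\|x-y\|^{d}(1+O(\|x-y\|^{2}))$, $\int_{B(x,r)}\nabla_{\theta_1}p_{\theta_1}=\nabla_{\theta_1}p_{\theta_1}(x)V_d r^{d}(1+O(r^{2}))$, and similarly for the Hessian. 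The key device is the change of variables $y\mapsto\lambda=\lambda_N^{\theta_1,\theta_1}(x,\|x-y\|)$, under which $V_d\|x-y\|^{d}$ becomes $\lambda/(N p_{\theta_1}(x))$ and the radial volume element becomes $d\lambda/(N p_{\theta_1}(x))$, so each inner integral collapses to a one-dimensional integral, which one evaluates via the exact identities $\int_0^\infty\lambda\,G_N'(\lambda)\,d\lambda=-k_N$ and $\int_0^\infty\lambda^{2}G_N''(\lambda)\,d\lambda=2k_N$. The $G_N''$-piece then contributes $-2\frac{N_1N_2}{N^{2}}\int\frac{(h^{T}\nabla_{\theta_1}p_{\theta_1}(x))^{2}}{p_{\theta_1}(x)}\,dx\to-2pq\,\E\big[\tfrac{h^{T}\nabla_{\theta_1}p_{\theta_1}(X)}{p_{\theta_1}(X)}\big]^{2}$ (using $N_1N_2/N^2\to pq$), while the $G_N'$-piece contributes $\frac{N_1}{N}\,h^{T}\big(\int_S\Hnormal_{\theta_1}p_{\theta_1}(x)\,dx\big)h=\frac{N_1}{N}\,h^{T}\Hnormal_{\theta_1}\!\big(\!\int p_{\theta_1}\big)h=0$, the last step differentiating $\int p_\theta\equiv1$ twice under the integral sign, legitimate by the smoothness in Assumption \ref{assume::assumptions_on_parametrized_family}. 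Summing the two contributions gives the asserted limit $-2pq\,\E\big[\tfrac{h^{T}\nabla_{\theta_1}p_{\theta_1}(X)}{p_{\theta_1}(X)}\big]^{2}$.

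The main obstacle will be making the $O(\|x-y\|^{2})$ expansions and the change of variables rigorous \emph{uniformly over $x\in S$}, and in particular controlling the resulting errors without crude absolute-value bounds: since $G_N''$ is not absolutely integrable against $\lambda^{2}$ on the scale $k_N$ (it is only the signed integral that is $O(k_N)$, via cancellation near $\lambda=k_N$), one must use the representation $\int_0^\infty G_N''(\lambda)\psi(\lambda)\,d\lambda=\E[\psi'(\Gamma_{k_N})]$ for $\Gamma_{k_N}\sim\textnormal{Gamma}(k_N,1)$ (and the analogous one for $G_N'$), so that the error terms, which carry an extra factor $\|x-y\|^{2}\asymp(k_N/N)^{2/d}$, are $O((k_N/N)^{2/d})=o(1)$ after Stirling's formula. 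A secondary issue is the boundary layer of width $r_N(K)$ around $\partial S$, where $B(x,\|x-y\|)$ protrudes from $S$ and the proportionality $\lambda_N^{\theta_1,\theta_1}\asymp N p_{\theta_1}(x)V_d\|x-y\|^{d}$ fails; here one uses that this layer has Lebesgue measure $O(r_N(K))\to0$ (Assumption \ref{assume::assumptions_on_parametrized_family}) and that all integrands are bounded, so its contribution vanishes. One then passes the limit inside the outer $dx$-integral by dominated convergence. Control of the cubic Taylor remainder $\Rcal_N$ in \eqref{eq::taylor_expansion_of_difference_of_means} is a separate lemma and is not needed here.
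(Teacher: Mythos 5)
Your proposal is correct and arrives at the right limit, but by a genuinely different route from the paper. The paper differentiates $\mu_N(\theta_1,\cdot)$ directly from \eqref{eq::mu_N_defn}, which produces three terms $T_{21},T_{22},T_{23}$ (with $T_{23}$ split further into $T_{231},T_{232},T_{233}$), evaluated through Lemma \ref{lemma::lemma_for_T1}, Lemma \ref{lemma::lemma_for_T2} and the dedicated Lemma \ref{lemma::lemma_for_final_hessian_term}; the limit $-2pq\,\E[h^T\nabla_{\theta_1}p_{\theta_1}(X)/p_{\theta_1}(X)]^2$ arises there as $-2q+2q^2$ times the same expectation. You instead exploit the out-degree conservation identity (the added point has out-degree $\min(k_N,L_N)$, whose mean is $k_N$ up to an exponentially small, $\theta_2$-independent error) together with the Palm identity \eqref{eq::palm_theory_identity} to rewrite $\mu_N(\theta_1,\theta_2)$ so that the only $\theta_2$-dependence is through $\rho_{k_N}^{\theta_1,\theta_2}=G_N(\lambda_N^{\theta_1,\theta_2})$; two derivatives then leave just a $G_N''$-piece and a $G_N'$-piece, killed or evaluated by the exact moment identities $\int_0^\infty \lambda G_N'(\lambda)\,d\lambda=-k_N$ and $\int_0^\infty \lambda^2 G_N''(\lambda)\,d\lambda=2k_N$, the $G_N'$-piece vanishing because $\int_S h^T \Hnormal_{\theta_1}p_{\theta_1}\,h=0$. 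This buys a substantially shorter computation (the paper's $T_{21}$, $T_{22}$, $T_{231}$ bookkeeping disappears), and your integration-by-parts representation $\int_0^\infty G_N''(\lambda)\psi(\lambda)\,d\lambda=\E[\psi'(\Gamma_{k_N})]$ is a cleaner way to extract the cancellation in the signed kernel that the paper obtains in Lemma \ref{lemma::lemma_for_final_hessian_term} via the $\Gamma(k_N-2,1)$/Exponential increment argument; what the paper's route buys is that Lemmas \ref{lemma::lemma_for_T1} and \ref{lemma::lemma_for_T2} are shared with the gradient-term and remainder-term analyses, so no new machinery is introduced for this lemma alone.

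Two points to tighten when writing this up. First, $\mu_N$ in \eqref{eq::mu_N_defn} carries the restriction $\|x-y\|\le r_N(K)$, so your conservation identity is exact only for the unrestricted integral; you should note that the exponentially small defect $k_N-\E[\min(k_N,L_N)]$ is independent of $\theta_2$ (hence disappears upon differentiation), and that on $\|x-y\|>r_N(K)$ the quantities $G_N,G_N',G_N''$ are evaluated at $\lambda\ge cKk_N$, so the restricted/unrestricted discrepancy and its first two $\theta_2$-derivatives are super-polynomially small despite the polynomial-in-$N$ prefactors. Second, in the boundary layer the inner $y$-integral of the $G_N''$-piece is not bounded by crude absolute-value estimates (only $O(\sqrt{k_N})$, since $\int\lambda^2|G_N''|\asymp k_N^{3/2}$), so "all integrands are bounded" needs the same $\E[\psi'(\Gamma_{k_N})]$ device there as well; with it the inner integral is uniformly $O(1)$ in $x$, and the $O(r_N(K))$-measure layer is then genuinely negligible, which is the level of uniformity the paper's own dominated-convergence step also implicitly requires.
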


\begin{lemma}\textnormal{(Controlling the remainder term)}\label{lemma::controlling_remainder_term} For $\epsilon_N = h \cdot u_N$ for some $h\in \R^p\setminus\{0\}$ and $u_N\to 0,$ we have
$$\Rcal_N = O\left(N^\frac{1}{2}u_N^{3}\right)$$

under the assumptions of \Cref{thm::both_tests_above_criticality_broad_regimes}.

\end{lemma}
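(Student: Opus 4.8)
The plan is to reduce the bound on $\Rcal_N$ to a uniform estimate on the third $\theta$-derivative of $\mu_N(\theta_1,\cdot)$, and then to obtain that estimate by an explicit change of variables in which a telescoping cancellation among Poisson point masses carries the day. Since $\epsilon_N = h\,u_N$ with $h$ fixed, $(\epsilon_N)_{ijk} = h_ih_jh_k\,u_N^3$, and $N_1N_2/N^2\leq 1/4$, the displayed expression for $\Rcal_N$ gives
$$|\Rcal_N|\ \leq\ C_h\;N^{1/2}u_N^3\;\sup_{\theta\in(\theta_1,\theta_2)}\ \max_{i,j,k}\left|\frac{\partial^3\mu_N(\theta_1,\theta)}{\partial\theta_i\partial\theta_j\partial\theta_k}\right|,$$
where $C_h$ depends only on $h$ and $\dim\Theta$. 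So it suffices to prove that the third $\theta$-derivatives of $\mu_N(\theta_1,\cdot)$ are bounded by a constant independent of $N$, uniformly over $\theta$ in a fixed neighbourhood of $\theta_1$. By convexity of $\Theta$ the segment $(\theta_1,\theta_2)$ lies in $\Theta$, and since $\theta_2\to\theta_1$ it is eventually inside any such neighbourhood; the suprema of $p_\theta$ and its $\theta$-derivatives over $S$ and the uniform lower bound on $\phi_N^{\theta_1,\theta}$ are all controlled by Assumption \ref{assume::assumptions_on_parametrized_family}.

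First I would differentiate $\mu_N(\theta_1,\theta) = \tfrac{N}{k_N}\int_{\|x-y\|\leq r_N(K)}p_{\theta_1}(x)p_\theta(y)\,\rho_{k_N}^{\theta_1,\theta}(x,y)\,dx\,dy$ three times in $\theta$ under the integral sign (legitimate: the cutoff $r_N(K)$ does not depend on $\theta$, and the integrand together with its first three $\theta$-derivatives are bounded and compactly supported by Assumptions 1, 3 and 5). This produces a sum, of cardinality bounded independently of $N$, of terms $\tfrac{N}{k_N}\int p_{\theta_1}(x)\,\partial^b_\theta p_\theta(y)\,\partial^a_\theta\rho_{k_N}^{\theta_1,\theta}(x,y)\,dx\,dy$ with $a+b=3$ and $\partial^b_\theta p_\theta$ bounded. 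Writing $\rho_{k_N}^{\theta_1,\theta}(x,y)=R(\lambda)$ with $R(\lambda):=\prob(\poi(\lambda)\leq k_N-1)$ evaluated at $\lambda=\lambda_N^{\theta_1,\theta}(x,y)$, one has $R'=-P_{k_N-1}$, $R''=P_{k_N-1}-P_{k_N-2}$, $R'''=-P_{k_N-1}+2P_{k_N-2}-P_{k_N-3}$, where $P_m(\lambda):=\tfrac{\lambda^m}{m!}e^{-\lambda}$; that is, $R^{(i)}$ is, up to sign, the $(i-1)$-st finite difference of the Poisson masses. Applying the chain rule repeatedly, $\partial^a_\theta\rho_{k_N}$ is a sum of terms $R^{(i)}(\lambda)\prod_{\ell}\partial^{a_\ell}_\theta\lambda$ with $\sum_\ell a_\ell=a$ and $1\leq i\leq a$; and since $\partial^m_\theta\lambda_N^{\theta_1,\theta}(x,y)=N_2\int_{B(x,\|x-y\|)}\partial^m_\theta p_\theta(z)\,dz$ for $m\geq 1$, Lemma \ref{lemma::ball_and_surface_integrals} gives $\partial^m_\theta\lambda=\tfrac{N_2\,\partial^m_\theta p_\theta(x)}{N\phi_N^{\theta_1,\theta}(x)}\,\lambda\,\big(1+O(\|x-y\|^2)\big)$ with bounded leading coefficient. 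Hence each $\partial^a_\theta\rho_{k_N}$ is a bounded combination of the quantities $\lambda^iR^{(i)}(\lambda)$, $i\leq 3$, up to a relative error $O(r_N(K)^2)=o(1)$.

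The crux is the integral $\int_0^\infty\lambda^iR^{(i)}(\lambda)\,d\lambda$. Using $\int_0^\infty\lambda^iP_m(\lambda)\,d\lambda=(m+1)\cdots(m+i)$, a monic degree-$i$ polynomial in $m$, the $(i-1)$-st difference built into $R^{(i)}$ annihilates the top $i-1$ orders and leaves $\int_0^\infty\lambda^iR^{(i)}(\lambda)\,d\lambda=(-1)^i\,i!\,k_N$ for $i=0,1,2,3$ (for instance $\int_0^\infty R(\lambda)\,d\lambda=\sum_{j=0}^{k_N-1}1=k_N$ and $\int_0^\infty\lambda^3R'''(\lambda)\,d\lambda=-6k_N$); a naive termwise bound would only give $O(k_N^i)$, and that is exactly the gap this cancellation closes. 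Now, for fixed $x$ and $\theta$, $\lambda_N^{\theta_1,\theta}(x,y)$ depends on $y$ only through $s:=\|x-y\|$, and $\lambda=N\phi_N^{\theta_1,\theta}(x)V_d\,s^d\big(1+O(s^2)\big)$ by Lemma \ref{lemma::ball_and_surface_integrals}; the substitution $y\mapsto\lambda$ turns $\int(\,\cdot\,)\,dy$ into $\tfrac{1}{N\phi_N^{\theta_1,\theta}(x)}\int_0^{\Lambda_N(x)}(\,\cdot\,)\,d\lambda\,(1+o(1))$, where $\Lambda_N(x)$ is $\lambda_N^{\theta_1,\theta}(x,y)$ at $\|x-y\|=r_N(K)$ and satisfies $\Lambda_N(x)\geq c\,K\max(k_N,(\log N)^2)\gg k_N$ for $K$ large, so extending the $\lambda$-integral to $\infty$ costs only a super-polynomially small error (the integrands decay like Poisson tails beyond $k_N$), while replacing $\partial^b_\theta p_\theta(y)$ by $\partial^b_\theta p_\theta(x)$ costs $O(r_N(K))$. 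Collecting these, each term of $\partial^3_\theta\mu_N$ equals $\tfrac{N}{k_N}\int p_{\theta_1}(x)\cdot O(1)\cdot\tfrac{1}{N\phi_N^{\theta_1,\theta}(x)}\cdot O(k_N)\,dx\,(1+o(1))=O(1)$, uniformly in $\theta$ near $\theta_1$ and in $N$; summing the finitely many terms gives $\partial^3_\theta\mu_N(\theta_1,\theta)=O(1)$ uniformly, and the displayed inequality yields $\Rcal_N=O(N^{1/2}u_N^3)$.

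The hard part is not a single deep step but the uniform bookkeeping of the error terms — making the $O(\|x-y\|^2)$ Taylor remainders from Lemma \ref{lemma::ball_and_surface_integrals}, the truncation of the $\lambda$-integral at $\Lambda_N(x)$, and the replacement of $p_\theta(y)$ by $p_\theta(x)$ all quantitative and uniform in $x\in S$ and in $\theta\in(\theta_1,\theta_2)$. The one genuinely essential observation is the telescoping identity $\int_0^\infty\lambda^iR^{(i)}(\lambda)\,d\lambda=(-1)^i\,i!\,k_N$: it is this exact collapse of the $O(k_N^i)$ behaviour, set against the $1/k_N$ prefactor in $\mu_N$, that keeps $\partial^3_\theta\mu_N$ bounded rather than growing polynomially in $k_N$ — without it one would only get $\partial^3_\theta\mu_N=O(k_N^2)$ and hence the weaker $\Rcal_N=O(N^{1/2}u_N^3 k_N^2)$.
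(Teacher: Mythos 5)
Your opening reduction of $\Rcal_N$ to a uniform $O(1)$ bound on the third $\theta$-derivative of $\mu_N(\theta_1,\cdot)$ is exactly the paper's starting point, and your identity $\int_0^\infty\lambda^iR^{(i)}(\lambda)\,d\lambda=(-1)^i\,i!\,k_N$ is correct and does capture the essential cancellation (in the paper it appears as the third finite difference $-P_{k_N-1}+2P_{k_N-2}-P_{k_N-3}$ acting on an almost-linear function of a $\Gamma(k_N-3,1)$ variable). The genuine gap is in the error step you yourself flag as the hard part: asserting that $\partial^a_\theta\rho_{k_N}$ is a combination of $\lambda^iR^{(i)}(\lambda)$ ``up to a relative error $O(r_N(K)^2)=o(1)$'' does not yield a relative error on the \emph{integral}, because the integral is small only through cancellation. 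The Taylor remainders from Lemma \ref{lemma::ball_and_surface_integrals} are known only through absolute bounds ($|\delta_1(x,r)|\le C r^{d+3}$, and no higher-order smooth term is available since the densities are only assumed three times differentiable), so they can only be integrated against $\lambda^3|R'''(\lambda)|$, whose total mass is of order $k_N^3$, not $k_N$. Even after extracting the smooth $s^{d+2}$ correction (to which your Gamma identity can again be applied), the leftover contribution to $\partial^3_\theta\mu_N$ is of order $k_N^{2+3/d}N^{-3/d}$ (and of order $k_N^2\,r_N(K)^2$ if that correction is not extracted), which is $o(1)$ only when $k_N\ll N^{3/(2d+3)}$ -- a strictly stronger restriction than $k_N=o(N^{1/4})$ as soon as $d\ge 5$ (for instance $d=10$, $k_N=N^{0.24}$ makes this bound diverge like $N^{0.252}$). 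So the argument as written does not prove the lemma on the full range claimed.

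The paper closes exactly this hole by never Taylor-expanding the cubic term: after the same change of variables it writes that term as $\frac{1}{k_N}\E\bigl[(G_N^3(X+Y_1+Y_2)+G_N^3(X)-2G_N^3(X+Y_1))\indicator_{A_N}\bigr]$ with $X\sim\Gamma(k_N-3,1)$ and $Y_1,Y_2\sim\Expo(1)$ independent, reduces the cube expansion to the leading increment $G_N^2(X)(\delta_2-\delta_1)$, symmetrizes by replacing $\delta_2$ with $\tilde{\delta}_2=G_N(X+Y_2)-G_N(X)$ (which has the same conditional law as $\delta_1$, so that term has expectation zero), and then proves the pathwise bound $|\delta_2-\tilde{\delta}_2|\indicator_{A_N}\le u(Y_1,Y_2)/k_N$ using Lipschitz-type control of $g_N'$ and $G_N$. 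In other words, the $1/k_N$ gain is extracted from the exact quantities, including precisely the part that your scheme treats as an absolute-value remainder; to repair your route you would need an analogous device (a coupling, or a summation-by-parts transferring derivatives off $R'''$ onto the remainder, which is differentiable in the radius), and that is the step currently missing.
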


Before proving the above results, we show how \Cref{thm::power_below_criticality,thm::both_tests_above_criticality_broad_regimes,thm::both_tests_lower_upper_thresholds} follow from them.



\subsection{Proof of \texorpdfstring{\Cref{thm::power_below_criticality,thm::both_tests_above_criticality_broad_regimes,thm::both_tests_lower_upper_thresholds}}{Theorems 4.1 to 4.3}}

From the discussions at the start of this Appendix, we see that the limiting power of 1- and 2-sided tests can be found simply by looking at the Taylor expansion in \eqref{eq::taylor_expansion_of_difference_of_means}. Recall that the 1-sided test rejects when the standardized statistic lies below $z_\alpha.$ Hence, the limiting power of the 1-sided test can be described as follows.
\begin{equation*}\label{eq::1-sided_test_cases}
\lim_{N\to \infty} \sqrt{N}(\mu_N(\theta_1,\theta_2)-\mu_N(\theta_1,\theta_1)) =
\begin{cases*}
&$-\infty$, \text{ the limiting power is 1},\\
&$\gamma \in \R$, \text{ the limiting power is } $\Phi\left(z_\alpha - \gamma\frac{pq}{\sigma_0}\right)$,\\
& $\infty$, \text{ the limiting power is 0}.
\end{cases*}
\end{equation*}

where $\Phi$ is the standard normal CDF. The 2-sided test rejects the null hypothesis when the absolute value of the standardized statistic is at least $z_{1-\alpha/2}.$ Hence, the limiting power of the 2-sided test can be described as follows.
\begin{align}\label{2-sided_test_cases}
&\lim_{N\to \infty} \left|\sqrt{N}\left(\mu_N(\theta_1,\theta_2) - \mu_N(\theta_1,\theta_1)\right)\right|\nonumber \\
&=
\begin{cases*}
& $-\infty$, \text{ the limiting power is 1},\\
& $\gamma \in \R,$ \text{ the limiting power is $\Phi\left(z_{\alpha/2} + \gamma\frac{pq}{\sigma_0}\right) +   \Phi\left(z_{\alpha/2} - \gamma \frac{pq}{\sigma_0}\right)$},\\
& $\infty$, \text{ the limiting power is 1}.
\end{cases*}
\end{align}

With the above, we can now use \Cref{lemma::gradient_limit,lemma::hessian_limit,lemma::controlling_remainder_term} to prove the statements of \Cref{thm::power_below_criticality,thm::both_tests_above_criticality_broad_regimes,thm::both_tests_lower_upper_thresholds} by looking at various cases. For ease of notation, we will denote $\Delta_N := \sqrt{N}(\mu_N(\thetaone,\theta_2)-\mu_N(\thetaone,\thetaone))$

\begin{enumerate}
    \item Suppose $d$ is such that $N^{-\frac{1}{4}}\ll N^{-\frac{1}{2}}\left(\frac{N}{k_N}\right)^\frac{2}{d}$. There are three cases depending on the rate at which $\|\epsilon_N\|$ converges to 0.
    \begin{enumerate}
        \item Suppose $\|\epsilon_N\|\ll N^{-\frac{1}{4}}.$ Then by \Cref{lemma::gradient_limit,lemma::hessian_limit}
        \begin{align*}
            &\sqrt{N}\epsilon_N^T \nabla_\thetaone\mu_N(\theta_1,\theta_1) \to 0,\\
            &\sqrt{N}\epsilon_N^T\Hnormal \mu_N(\thetaone,\thetaone) \epsilon_N \to 0.
        \end{align*}
        Furthermore, from \Cref{lemma::controlling_remainder_term} we get $\Rcal_N\to 0.$ Hence, $\Delta_N \to 0$ and from \eqref{eq::1-sided_test_cases} and \eqref{2-sided_test_cases} we get that the limiting power of both tests is $\alpha$.
        
        \item Suppose $\epsilon_N = h N^{-\frac{1}{4}}$ for some $h\in \R^p\setminus\{0\}.$ Then using \Cref{lemma::gradient_limit,lemma::hessian_limit,lemma::controlling_remainder_term} we have
        \begin{align*}
            \frac{N_1N_2}{N^2}\frac{\Delta_N}{\sigma_0} \to -a(h,\theta_1),
        \end{align*}
        where $a(h,\theta_1)$ is as defined in \eqref{eq::Hessian_term}. Hence, the limiting power of the 1- and 2-sided test is $\Phi(z_\alpha + a(h,\thetaone))$ and $\Phi(z_{\alpha/2}+a(h,\thetaone)) + \Phi(z_{\alpha/2}-a(h,\thetaone))$ respectively.
        \item If $\|\epsilon_N\|\gg N^{-\frac{1}{4}},$ then using \Cref{lemma::gradient_limit,lemma::hessian_limit,lemma::controlling_remainder_term} along with the fact that $N^{-\frac{1}{4}}\ll N^{-\frac{1}{2}}\left(\frac{N}{k_N}\right)^\frac{2}{d}$ gives
        \begin{align*}
        &\sqrt{N}\epsilon_N^T\Hnormal \mu_N(\thetaone,\thetaone) \epsilon_N \to -\infty,\\
        &\left|\sqrt{N}\epsilon_N^T\Hnormal \mu_N(\thetaone,\thetaone) \epsilon_N\right| \gg \left|\sqrt{N}\epsilon_N^T \nabla_\thetaone\mu_N(\theta_1,\theta_1)\right|, |\Rcal_N|.
        \end{align*}
        Hence, in this case $\Delta_N \to \infty$ and from \eqref{eq::1-sided_test_cases} and \eqref{2-sided_test_cases} we get that the limiting power for both tests is 1.
    \end{enumerate}
    \item Now we consider the case $N^{-\frac{1}{4}}\left(\frac{N}{k_N}\right)^\frac{2}{d} \to \beta$ for some $\beta>0.$ This case is almost identical to the first with a few minor differences.
    \begin{enumerate}
        \item If $\epsilon_N \ll N^{-\frac{1}{4}}$ then as in the previous case, we have by \Cref{lemma::gradient_limit,lemma::hessian_limit}
        \begin{align*}
            &\sqrt{N}\epsilon_N^T \nabla_\thetaone\mu_N(\theta_1,\theta_1) \to 0,\\
            &\sqrt{N}\epsilon_N^T\Hnormal \mu_N(\thetaone,\thetaone) \epsilon_N \to 0.
        \end{align*}
        We also have $\Rcal_N \to 0.$ Hence, the limiting power of both tests is $\alpha.$
        \item If $\epsilon_N = hN^{-\frac{1}{4}}$ for some non-zero $h\in \R^p,$ then as before, we have
        $$\frac{N_1N_2}{N^2}\frac{\Delta_N}{\sigma_0} \to a(h,\theta_1) + \beta\cdot b(h,\thetaone)=: \nu,$$
        where $a(h,\theta_1)$ is as defined in \eqref{eq::Hessian_term}. This gives the limiting power of the 1- and 2-sided tests as $\Phi(z_\alpha + \nu)$ and $\Phi(z_{\alpha/2}+\nu) + \Phi(z_{\alpha/2}-\nu)$ respectively.
        \item Finally, if $\|\epsilon_N\|\gg N^{-\frac{1}{4}}$ then as before we can show that the limiting power of both tests is 1.
    \end{enumerate}
    \item We now consider the case where $N^{-\frac{1}{2}}\left(\frac{N}{k_N}\right)^\frac{2}{d}\ll N^{-\frac{1}{4}}.$ This is the most involved case and will require us to resort to quite a few cases.
    \begin{enumerate}
        \item If $\|\epsilon_N\|\ll N^{-\frac{1}{2}}\left(\frac{N}{k_N}\right)^\frac{2}{d}$ then \Cref{lemma::gradient_limit,lemma::hessian_limit,lemma::controlling_remainder_term} give that
        \begin{align*}
            &\sqrt{N}\epsilon_N^T \nabla_\thetaone\mu_N(\theta_1,\theta_1) \to 0,\\
            &\sqrt{N}\epsilon_N^T\Hnormal \mu_N(\thetaone,\thetaone) \epsilon_N \to 0,\\
            & \Rcal_N \to 0.
        \end{align*}
        Hence, $\Delta_N\to 0$ and the limiting power of both tests is equal to $\alpha.$
        \item Suppose $\epsilon_N = hN^{-\frac{1}{2}}\left(\frac{N}{k_N}\right)^\frac{2}{d}.$ Then we have
        \begin{align*}
            &\frac{N_1N_2}{N^2\sigma_0}\sqrt{N}\epsilon_N^T \nabla_\thetaone\mu_N(\theta_1,\theta_1) \to b(h,\theta_1),\\
            &\Rcal_N , \sqrt{N}\epsilon_N^T\Hnormal \mu_N(\thetaone,\thetaone) \epsilon_N \to 0.
        \end{align*}
        which gives
        $$\dfrac{N_1N_2}{N^2}\dfrac{\Delta_N}{\sigma_0}\to b(h,\thetaone).$$
        Hence, the limiting power of the 1- and 2-sided test is $\Phi(z_\alpha + b(h,\thetaone))$ and $\Phi(z_{\alpha/2}+b(h,\thetaone)) + \Phi(z_{\alpha/2}-b(h,\thetaone))$ respectively.
        \item If $N^{-\frac{1}{2}}\left(\frac{N}{k_N}\right)^\frac{2}{d}\ll \|\epsilon_N\|\ll \left(\frac{N}{k_N}\right)^{-\frac{2}{d}}$ then \Cref{lemma::gradient_limit,lemma::hessian_limit,lemma::controlling_remainder_term} give us
        \begin{align*}
        &\sqrt{N}\epsilon_N^T \nabla_\thetaone\mu_N(\theta_1,\theta_1) \to
        \begin{cases*}
         \infty &\text{ if $b(h,\thetaone)>0,$}\\
         -\infty &\text{ if $b(h,\thetaone)<0$},
        \end{cases*}\\
        & \left|\sqrt{N}\epsilon_N^T \nabla_\thetaone\mu_N(\theta_1,\theta_1)\right| \gg \left|\sqrt{N}\epsilon_N^T\Hnormal \mu_N(\thetaone,\thetaone) \epsilon_N\right|, |\Rcal_N|.
        \end{align*}
        As a result,
        \begin{equation*}
        \Delta_N \to
        \begin{cases*}
        \infty & \text{ if $b(h,\thetaone)>0$},\\
        -\infty & \text{ if $b(h,\thetaone)<0$}.
        \end{cases*}
        \end{equation*}
        From the above and \eqref{eq::1-sided_test_cases} we get that the limiting power of the 1-sided test is $0$ if $b(h,\thetaone)>0$ and $1$ if $b(h,\thetaone)<0.$
        \item If $\epsilon_N = h\left(\frac{N}{k_N}\right)^{-\frac{2}{d}}$ then \Cref{lemma::gradient_limit,lemma::hessian_limit,lemma::controlling_remainder_term} gives us
        $$N^{-\frac{1}{2}}\left(\frac{N}{k_N}\right)^\frac{4}{d}\frac{N_1N_2}{N^2}\frac{\Delta_N}{\sigma_0} \to -a(h,\thetaone)+b(h,\thetaone).$$
        In particular,
        \begin{equation*}
        \Delta_N \to
        \begin{cases*}
        \infty & \text{ if $a(h,\thetaone)-b(h,\thetaone)<0$},\\
        -\infty & \text{ if $a(h,\thetaone)-b(h,\thetaone)>0$}.
        \end{cases*}
        \end{equation*}
        Hence, \eqref{eq::1-sided_test_cases} gives that the limiting power of the 1-sided test is 0 and 1 if $a(h,\thetaone)-b(h,\thetaone)$ is negative or positive respectively. On the other hand, since $|\Delta_N|\to \infty,$ \eqref{2-sided_test_cases} gives us that the limiting power of the 2-sided test is 1.
        \item Finally, if $\|\epsilon_N\|\gg \left(\frac{N}{k_N}\right)^{-\frac{2}{d}}$ then we get
        \begin{align*}
         &\sqrt{N}\epsilon_N^T\Hnormal \mu_N(\thetaone,\thetaone) \epsilon_N \to -\infty,\\
        &\left|\sqrt{N}\epsilon_N^T\Hnormal \mu_N(\thetaone,\thetaone) \epsilon_N\right| \gg \left|\sqrt{N}\epsilon_N^T \nabla_\thetaone\mu_N(\theta_1,\theta_1)\right|, |\Rcal_N|.   
        \end{align*}
        Hence, $\Delta_N \to -\infty$ and hence, the limiting power of both tests is 1.
    \end{enumerate}
\end{enumerate}

This proves \Cref{thm::power_below_criticality,thm::both_tests_above_criticality_broad_regimes,thm::both_tests_lower_upper_thresholds}. Only the proofs of \Cref{lemma::gradient_limit,lemma::hessian_limit,lemma::controlling_remainder_term} remain. The remainder of this appendix is dedicated to their proofs.


\subsection{Technical results}

In order to prove the results on the limiting values of the gradients and hessians, we will need some technical results.

\begin{lemma}\label{lemma::gamma_function_identity}
For any $K\in \N,$
\begin{align*}
    \sum_{k=0}^{K-1}\frac{\Gamma(k+\frac{2}{d}+1)}{\Gamma(k+1)} = \frac{d}{d+2}\frac{\Gamma(K + \frac{2}{d}+1)}{\Gamma(K)}. 
\end{align*}
\end{lemma}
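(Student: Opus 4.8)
The plan is to prove this by a telescoping argument; an equivalent route by induction on $K$ also works and I will keep it in reserve. Write $\alpha := 2/d$, so that $\frac{d}{d+2} = \frac{1}{\alpha+1}$ and the claim becomes $\sum_{k=0}^{K-1}\frac{\Gamma(k+\alpha+1)}{\Gamma(k+1)} = \frac{1}{\alpha+1}\frac{\Gamma(K+\alpha+1)}{\Gamma(K)}$.

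The key step is to recognize the summand as a forward difference. Set $a_k := \frac{\Gamma(k+\alpha+1)}{\Gamma(k)}$ for $k \geq 1$. Using the functional equation $\Gamma(x+1) = x\Gamma(x)$, one computes $a_{k+1} - a_k = \frac{\Gamma(k+\alpha+1)}{\Gamma(k)}\left(\frac{k+\alpha+1}{k} - 1\right) = (\alpha+1)\frac{\Gamma(k+\alpha+1)}{\Gamma(k+1)}$. Hence $\frac{\Gamma(k+\alpha+1)}{\Gamma(k+1)} = \frac{1}{\alpha+1}(a_{k+1}-a_k)$ for every $k \geq 1$, and summing from $k=1$ to $K-1$ collapses the telescope to $\frac{1}{\alpha+1}(a_K - a_1) = \frac{1}{\alpha+1}\left(\frac{\Gamma(K+\alpha+1)}{\Gamma(K)} - \Gamma(\alpha+2)\right)$.

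It then remains to add back the $k=0$ term, which is $\frac{\Gamma(\alpha+1)}{\Gamma(1)} = \Gamma(\alpha+1)$, and to observe that $\Gamma(\alpha+2) = (\alpha+1)\Gamma(\alpha+1)$, so that $\Gamma(\alpha+1) - \frac{\Gamma(\alpha+2)}{\alpha+1} = 0$. The two stray terms cancel, leaving exactly $\frac{1}{\alpha+1}\frac{\Gamma(K+\alpha+1)}{\Gamma(K)}$, which is the desired identity after substituting back $\alpha = 2/d$.

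There is no real obstacle here: the argument is elementary, and the only points needing care are the bookkeeping of the $k=0$ boundary term (which the telescope does not cover) and consistent use of the recursion $\Gamma(x+1)=x\Gamma(x)$. If one prefers, the whole thing can instead be done by induction on $K$, where the base case $K=1$ reduces to $\frac{d}{d+2}\bigl(1+\tfrac{2}{d}\bigr)=1$ and the inductive step uses $\frac{\Gamma(K+\alpha+1)}{\Gamma(K)} = K\,\frac{\Gamma(K+\alpha+1)}{\Gamma(K+1)}$ together with the algebraic identity $dK + d + 2 = d\,(K+\alpha+1)$.
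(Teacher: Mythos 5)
Your proof is correct. It differs from the paper's only in packaging: the paper verifies the closed form directly by induction on $K$ (base case $K=1$, then using the recursion $\Gamma(x+1)=x\Gamma(x)$ in the inductive step), whereas you exhibit an explicit antidifference $a_k=\Gamma(k+\alpha+1)/\Gamma(k)$ with $a_{k+1}-a_k=(\alpha+1)\,\Gamma(k+\alpha+1)/\Gamma(k+1)$ and telescope, handling the $k=0$ term separately via $\Gamma(\alpha+2)=(\alpha+1)\Gamma(\alpha+1)$. The two arguments rest on exactly the same use of the Gamma recursion, so they are essentially equivalent; the telescoping version has the mild advantage of being constructive (it derives the closed form rather than merely confirming a guessed one), while the induction in the paper is marginally shorter since it needs no boundary bookkeeping at $k=0$. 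Your computation of the difference $a_{k+1}-a_k$, the cancellation of the stray terms, and the reserve induction (including the base-case identity $\frac{d}{d+2}\bigl(1+\tfrac{2}{d}\bigr)=1$ and the step identity $dK+d+2=d\bigl(K+\tfrac{2}{d}+1\bigr)$) all check out.
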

\begin{proof}
We can prove this by induction. For $K=1,$
\begin{align*}
\sum_{k=0}^{K-1}\frac{\Gamma(k+\frac{2}{d}+1)}{\Gamma(k+1)} &= \frac{\Gamma(1+\frac{2}{d})}{\Gamma(1)}\\
&= \frac{d}{d+2}\frac{\Gamma(2+\frac{2}{d})}{\Gamma(1)}.
\end{align*}

The identity holds for $K=1.$ Suppose it holds for some $K\in \N.$ To show that it holds for $K+1$ we consider the summation for $K+1.$ By the induction hypothesis and the recurrence of the Gamma function,
\begin{align*}
\sum_{k=0}^K \frac{\Gamma(k+\frac{2}{d}+1)}{\Gamma(k+1)} &= \sum_{k=0}^{K-1} \frac{\Gamma(k+\frac{2}{d}+1)}{\Gamma(k+1)} + \frac{\Gamma(K+1+\frac{2}{d})}{\Gamma(K+1)}\\
&= \left(\frac{d}{d+2}+\frac{1}{K}\right) \frac{\Gamma(K+1+\frac{2}{d})}{\Gamma(K)}\\
&= \frac{d}{d+2} \frac{K+1+\frac{2}{d}}{K}\frac{\Gamma(K+1+\frac{2}{d})}{\Gamma(K)}\\
&= \frac{d}{d+2}\frac{\Gamma(K+2+\frac{2}{d})}{\Gamma(K+1)}.
\end{align*}

This shows the identity for any $K\in \N.$
\end{proof}


Recall the definition of $\lambda_N^{\thetaone,\theta_2}(x,y)$ and $\rho_{k_N}^{\thetaone,\theta_2}(x,y)$ from \eqref{eq::lambda_N_defn} and \eqref{eq::expression_for_rho_kn}. Note that if we fix $x,$ then these are functions of $\|x-y\|.$ For the remainder of this appendix, we will deal with the case $\thetaone=
\theta_2.$ Hence, for ease of notation, we will refer to these functions as $\lambda_N(x,y)$ and $\rho_N(x,y).$ More formally, for $y$ such that $\|x-y\|=r,$ we define
\begin{align}
    \lambda_N(x,r) := \lambda_N^{\thetaone,\thetaone}(x,y), \label{eq::simplified_lambda_N}\\
    \rho_N(x,r) := \rho_{k_N}^{\thetaone,\thetaone}(x,y)\label{eq::simplified_rho_N}.
\end{align}

Going ahead, we will often take $x$ to be fixed. In such cases, we will denote the functions above as $\lambda_N(r), \rho_N(r)$, ignoring the dependence on $x.$\\

We begin with some technical results that give more details on the relationship between the distance $r$ from $x$ and change in the value of the function $\lambda_N(x,r).$\\

Recall from previous sections the definition of $r_N(K):=\left(K \cdot \dfrac{\max\left((\log N)^2,k_N\right)}{N}\right)^\frac{1}{d}.$\\

Recall also from \Cref{assume::assumptions_on_parametrized_family} the parametrized family $\{p_\theta\}_\theta$ to be supported over a compact set $S$ and uniformly bounded above and below.

\begin{lemma}\label{lemma::integral_of_density_over_small_balls}

Let $x$ be in the support of $p_\thetaone.$ Define $g_N(u) = \lambda_N\left(x,\left(\frac{u}{N}\right)^\frac{1}{d}\right).$ Then, there exists a non-negative sequence $\epsilon_N\to 0$ such that
$$(1-\epsilon_N)p_\thetaone(x)V_d\leq g_N'(u)\leq (1+\epsilon_N)p_\thetaone(x)V_d,$$

for all $0\leq u\leq Nr_N(K)^d.$ Consequently, for all $u,v>0$ such that $u+v\leq Nr_N(K)^d,$
$$v(1-\epsilon_N)p_\thetaone(x)V_d\leq g_N(u+v)-g_N(u)\leq v(1+\epsilon_N)p_\thetaone(x)V_d.$$

The sequence $\epsilon_N$ does not depend on $x.$

\end{lemma}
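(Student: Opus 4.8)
The plan is to compute $g_N'(u)$ in closed form and show it lies within a vanishing multiplicative error of the constant $p_{\thetaone}(x)V_d$, uniformly over $x$ and over $0 \le u \le Nr_N(K)^d$. First, since we are in the case $\thetaone = \theta_2$ and $N_1 + N_2 = N$, definition \eqref{eq::lambda_N_defn} reduces to $\lambda_N(x,r) = N\int_{B(x,r)}p_{\thetaone}(z)\,dz$. Writing this in polar form as $\int_{B(x,r)}p_{\thetaone}(z)\,dz = \int_0^r\big(\int_{\partial B(x,s)}p_{\thetaone}(z)\,dz\big)\,ds$ and substituting $r = (u/N)^{1/d}$, for which $\tfrac{dr}{du} = \tfrac{1}{dN}r^{1-d}$, the chain rule gives
$$g_N'(u) = \frac{r^{1-d}}{d}\int_{\partial B(x,r)}p_{\thetaone}(z)\,dz.$$
Then I would insert the surface-integral expansion \eqref{eq::surface_integral_asymptotics} of Lemma \ref{lemma::ball_and_surface_integrals}: the leading term $p_{\thetaone}(x)\,dV_d\,r^{d-1}$ is cancelled exactly by the prefactor $r^{1-d}/d$, leaving $p_{\thetaone}(x)V_d$; the second-order term contributes $\tfrac{V_d}{2d}\tr(\Hnormal_x p_{\thetaone}(x))\,r^2$, and the remainder $\delta_2(x,r)$, bounded by $C_2 r^{d+2}$, contributes a term of size $O(r^3)$. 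Hence $|g_N'(u) - p_{\thetaone}(x)V_d| \le C r^2$ for all $x,r$ with $B(x,r)\subseteq S$.

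Next I would make the error uniform in $x$. By Assumption \ref{assume::assumptions_on_parametrized_family} the densities $p_\theta$ are supported on a common compact set $S$ and have uniformly bounded third partial derivatives, so the constant $C_2$ of Lemma \ref{lemma::ball_and_surface_integrals} and the bound on $\tr(\Hnormal_x p_{\thetaone}(x))$ may be chosen independent of $x \in S$, giving a constant $C$ not depending on $x$. Since $0 \le u \le Nr_N(K)^d$ forces $r = (u/N)^{1/d} \le r_N(K)$, and $r_N(K) = \big(K\max((\log N)^2,k_N)/N\big)^{1/d} \to 0$ because $k_N = o(N)$, this becomes $|g_N'(u) - p_{\thetaone}(x)V_d| \le C\,r_N(K)^2$. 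Dividing by $p_{\thetaone}(x)V_d$ and using the uniform lower bound $p_{\thetaone} \ge c > 0$ on $S$ (again Assumption \ref{assume::assumptions_on_parametrized_family}) gives $\big|g_N'(u)/(p_{\thetaone}(x)V_d) - 1\big| \le \epsilon_N$ with $\epsilon_N := C\,r_N(K)^2/(cV_d) \to 0$, depending only on the family and not on $x$; this is exactly the claimed two-sided bound on $g_N'$. The ``consequently'' statement then follows immediately by integrating this pointwise inequality for $g_N'$ over the interval $[u,u+v] \subseteq [0, Nr_N(K)^d]$.

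The computation is short and there is no serious obstacle: the only point needing care is the bookkeeping of the uniformity in $x$ of every error constant, which is precisely what the compactness of $S$ and the uniform derivative bounds in Assumption \ref{assume::assumptions_on_parametrized_family} are designed to supply. One minor technical caveat is that the expansion \eqref{eq::surface_integral_asymptotics} is stated for $B(x,r)\subseteq S$; since $r_N(K)\to 0$ this holds for all $x$ in the interior of $S$, and as $\partial S$ has Lebesgue measure zero the boundary points are irrelevant to every subsequent application of this lemma.
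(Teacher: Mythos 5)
Your proposal is correct and follows essentially the same route as the paper: compute $g_N'(u)$ via the chain rule as $\frac{r^{1-d}}{d}\int_{\partial B(x,r)}p_{\thetaone}(z)\,dz$, invoke the surface-integral expansion of Lemma \ref{lemma::ball_and_surface_integrals}, use the uniform lower bound on $p_{\thetaone}$ and the uniform derivative bounds together with $r_N(K)\to 0$ to get the multiplicative error $\epsilon_N$ independent of $x$, and obtain the increment bound by integrating $g_N'$ over $[u,u+v]$. The only differences are cosmetic (you track the $1/(2d)$ coefficient of the Hessian term explicitly and flag the boundary caveat), so nothing further is needed.
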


\begin{proof}
We will first prove the bounds on $g_N'.$ The other bound follows from using the Fundamental Theorem of Calculus for $g_N.$\\

WLOG we assume $x=0.$ Using the chain rule and the definition of $g_N,$ we have
\begin{align*}
    g_N'(u) &= \frac{1}{d}\frac{u^{\frac{1}{d}-1}}{N^\frac{1}{d}} \lambda_N'\left(\left(\frac{u}{N}\right)^\frac{1}{d}\right)\\
    &= \frac{1}{d}\frac{u^{\frac{1}{d}-1}}{N^\frac{1}{d}} N \int_{\partial B\left(0,\left(\frac{u}{N}\right)^\frac{1}{d}\right)} p_\thetaone(z)~dz\\
    &= p_\thetaone(0)V_d + \frac{V_d\tr(\Hnormal_x p_\thetaone(0))}{2} \left(\frac{u}{N}\right)^\frac{2}{d} + O\left(\left(\frac{u}{N}\right)^\frac{3}{d}\right) \quad \ldots \quad \textnormal{by \Cref{lemma::ball_and_surface_integrals}.}
\end{align*}

The proof of the bounds on $g_N'$ is completed by noticing that $\frac{u}{N}\leq r_N(K)^d \to 0$ and due to the fact $p_\thetaone$ is bounded below with uniformly bounded second and third derivatives.

\end{proof}

The next lemma gives an expansion of the inverse of $g_N$ as defined above.

\begin{lemma}\label{lemma::change_of_variables}

Let $x \in \interior(S)$ For $u\leq N r_N(K)^d,$ let $g_N(u):= \lambda_N\left(x,\frac{u^\frac{1}{d}}{N^\frac{1}{d}}\right).$ Then for $v\leq g_N((Nr_N(K))^d)$ the following hold.
\begin{align}
    &g_N^{-1}(v) = \frac{v}{p_\thetaone(x)V_d} - \frac{ \tr(\Hnormal_x p_\thetaone(x))}{2(d+2)V_d^{1+\frac{2}{d}}p_\thetaone(x)^{2+\frac{2}{d}}} \frac{v^{1+\frac{2}{d}}}{N^\frac{2}{d}} + v\delta_N^{(1)}(v) \label{eq::gN_inverse_expansion}\\
    \vspace{2mm}
    &\frac{g_N^{-1}(v)^\frac{2}{d}}{N^\frac{2}{d}} = \frac{1}{p_\thetaone(0)^\frac{2}{d}V_d^\frac{2}{d}}\frac{v^\frac{2}{d}}{N^\frac{2}{d}} + \delta_N^{(2)}(v)\label{eq::g_N_inverse_power_expansion}\\
    \vspace{2mm}
    &\frac{1}{g_N'(g_N^{-1}(v))} = \frac{1} {p_\thetaone(0)V_d} \left(1- \frac{\tr(\Hnormal_x p_\thetaone(0))}{2dp_\thetaone(0)^{1+\frac{2}{d}}V_d^\frac{2}{d}} \frac{v^\frac{2}{d}}{N^\frac{2}{d}} + \delta_N^{(3)}(v)\right) \label{eq::g_N_derivative_reciprocal}
\end{align}
where $\delta_N^{(i)}(v)\leq C r_N(K)^3$ for for $i=1,2,3,$ and some constant $C$ that depends on $p_\thetaone$ but does not depend on $x.$
\end{lemma}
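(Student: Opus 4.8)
The plan is to reduce all three displays to a single asymptotic expansion of $g_N$ in powers of $u/N$, which we then invert by a two‑step bootstrap. We may take $x=0$. Since we are in the case $\theta_1=\theta_2$ and $N_1+N_2=N$, we have $g_N(u)=N\int_{B(0,(u/N)^{1/d})}p_{\theta_1}(z)\,dz$, so the ball‑integral asymptotics of Lemma~\ref{lemma::ball_and_surface_integrals} (applicable because $r_N(K)\to 0$, so $B(0,(u/N)^{1/d})\subset S$ for $u\le Nr_N(K)^d$ once $N$ is large) give
\begin{equation*}
g_N(u)=A\,u+B\,\frac{u^{1+\frac2d}}{N^{\frac2d}}+O\big(u^{1+\frac3d}N^{-\frac3d}\big),\qquad A:=p_{\theta_1}(0)V_d,\quad B:=\frac{V_d\tr(\Hnormal_x p_{\theta_1}(0))}{2(d+2)},
\end{equation*}
where the uniform upper and lower bounds on $p_{\theta_1}$ and the uniform bound on its third derivatives in Assumption~\ref{assume::assumptions_on_parametrized_family} make the $O$‑constant and the lower bound $A\ge c>0$ independent of $x$. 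On $u\le Nr_N(K)^d$ one has $(u/N)^{1/d}=O(r_N(K))$, so the error is $O(r_N(K)^3u)$, which is exactly the accuracy being claimed.

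For \eqref{eq::gN_inverse_expansion}: by Lemma~\ref{lemma::integral_of_density_over_small_balls} the map $g_N$ is strictly increasing on $[0,Nr_N(K)^d]$ with $g_N'\ge(1-\epsilon_N)A$, so $u^*:=g_N^{-1}(v)$ is well defined on the stated range of $v$. Feeding the crude guess $u_0:=v/A$ into the expansion gives $g_N(u_0)-v=O(r_N(K)^2v)$, and the mean value theorem together with $g_N'\ge(1-\epsilon_N)A$ yields $u^*=v/A+O(r_N(K)^2v)$. Then feeding the refined guess
\begin{equation*}
u_1:=\frac{v}{A}-\frac{B}{A^{2+\frac2d}}\,\frac{v^{1+\frac2d}}{N^{\frac2d}}
\end{equation*}
into the expansion, the linear part contributes $v-\tfrac{B}{A^{1+2/d}}v^{1+2/d}N^{-2/d}$, while the quadratic part, using $u_1=(v/A)(1+O(r_N(K)^2))$, contributes $\tfrac{B}{A^{1+2/d}}v^{1+2/d}N^{-2/d}+O(r_N(K)^4v)$; the two $O(r_N(K)^2v)$ terms cancel — this is why the coefficient in $u_1$ was chosen as it is — leaving $g_N(u_1)-v=O(r_N(K)^3v)$. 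One more application of the mean value theorem gives $u^*-u_1=O(r_N(K)^3v)$, i.e.\ \eqref{eq::gN_inverse_expansion} with $v\,\delta_N^{(1)}(v):=u^*-u_1$, after the simplification $B/A^{2+2/d}=\tr(\Hnormal_x p_{\theta_1}(0))/(2(d+2)V_d^{1+2/d}p_{\theta_1}(0)^{2+2/d})$.

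Displays \eqref{eq::g_N_inverse_power_expansion} and \eqref{eq::g_N_derivative_reciprocal} then follow by substitution. Writing $g_N^{-1}(v)=(v/A)(1+\eta_N)$ with $\eta_N=O(r_N(K)^2)$ from \eqref{eq::gN_inverse_expansion}, raising to the power $2/d$, and dividing by $N^{2/d}$ gives $g_N^{-1}(v)^{2/d}/N^{2/d}=\frac{1}{p_{\theta_1}(0)^{2/d}V_d^{2/d}}\,\frac{v^{2/d}}{N^{2/d}}+O(r_N(K)^3)$, using $v^{2/d}/N^{2/d}=O(r_N(K)^2)$; this is \eqref{eq::g_N_inverse_power_expansion}. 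For \eqref{eq::g_N_derivative_reciprocal}, differentiating the expansion of $g_N$ gives $g_N'(u)=A+\tfrac{V_d\tr(\Hnormal_x p_{\theta_1}(0))}{2d}(u/N)^{2/d}+O((u/N)^{3/d})$; taking reciprocals via $1/(1+t)=1-t+O(t^2)$ and substituting $u=g_N^{-1}(v)$ through \eqref{eq::g_N_inverse_power_expansion} produces the stated formula with $\delta_N^{(3)}(v)=O(r_N(K)^3)$. In each case the constants inherit their $x$‑uniformity from that of the $O$‑constant in the expansion of $g_N$.

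The main obstacle is the bookkeeping in \eqref{eq::gN_inverse_expansion}: a naive inversion only gives an $O(r_N(K)^2v)$ remainder, and the content of the lemma is that after \emph{explicitly} removing the term $-\tfrac{B}{A^{2+2/d}}v^{1+2/d}N^{-2/d}$ the remainder drops a full order to $O(r_N(K)^3v)$; this relies on the exact cancellation between the linear contribution of that correction and the leading quadratic contribution, so the bootstrap genuinely must be carried to two steps and the cancellation checked. A secondary point is the uniformity in $x$ asserted in the statement: it holds once one restricts to $x$ with $B(x,r_N(K))\subset S$ (all of $\interior(S)$ for $N$ large), which is harmless because wherever the lemma is applied $x$ ranges over $S$ and the boundary layer of width $r_N(K)$ has vanishing measure.
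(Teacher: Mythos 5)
Your proposal is correct and follows essentially the same route as the paper: expand $g_N$ via Lemma \ref{lemma::ball_and_surface_integrals}, plug in the explicit candidate inverse $u_1$, use the cancellation to reduce the discrepancy $g_N(u_1)-v$ to order $r_N(K)^3v$, and convert this into a bound on $g_N^{-1}(v)-u_1$ through the derivative lower bound of Lemma \ref{lemma::integral_of_density_over_small_balls} (the paper brackets $v$ between $g_N\bigl(u_1\pm 2Cv^{1+\frac{3}{d}}N^{-\frac{3}{d}}\bigr)$ using monotonicity where you invoke the mean value theorem, which is equivalent), with \eqref{eq::g_N_inverse_power_expansion} and \eqref{eq::g_N_derivative_reciprocal} obtained by the same substitutions. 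The only cosmetic point is that the expansion of $g_N'$ should be justified directly from the surface-integral asymptotics \eqref{eq::surface_integral_asymptotics} applied to $g_N'(u)=\frac{1}{d}u^{\frac{1}{d}-1}N^{1-\frac{1}{d}}\int_{\partial B(0,(u/N)^{1/d})}p_\thetaone(z)\,dz$ rather than by formally differentiating an $O$-bound; the coefficient $\frac{V_d\tr(\Hnormal_x p_\thetaone(0))}{2d}$ you state is indeed what this yields, as in the paper.
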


\begin{proof}
WLOG, we assume $x=0.$ We prove \eqref{eq::gN_inverse_expansion} first. The other two results follow as easy consequences. From \Cref{lemma::ball_and_surface_integrals}, we know 
\begin{align*}
    g_N(u) &= N \lambda_N\left(\left(\frac{u}{N}\right)^\frac{1}{d}\right)\\
    &= p_\thetaone(0)V_d u + \frac{\tr(\Hnormal_x p_\thetaone(0))V_d}{2(d+2)} \frac{u^{1+\frac{2}{d}}}{N^\frac{2}{d}} + O\left(\frac{u^{1+\frac{3}{d}}}{N^\frac{3}{d}}\right)\\
    &=: \alpha u + \beta \frac{u^{1+\frac{2}{d}}}{N^\frac{2}{d}} + O\left(\frac{u^{1+\frac{3}{d}}}{N^\frac{3}{d}}\right).
\end{align*}

Denote 

$$\Tilde{v} := \frac{v}{\alpha} - \frac{\beta}{\alpha^{2+\frac{2}{d}}}\frac{v^{1+\frac{2}{d}}}{N^\frac{2}{d}}.$$

Then
\begin{align*}
g_N(\Tilde{v}) &= v - \frac{\beta}{\alpha^{1+\frac{2}{d}}}\frac{v^{1+\frac{2}{d}}}{N^\frac{2}{d}} + \frac{\beta}{\alpha^{1+\frac{2}{d}}} \frac{v^{1+\frac{2}{d}}}{N^\frac{2}{d}}\left(1-\frac{\beta}{\alpha^{1+\frac{2}{d}}}\left(\frac{v}{N}\right)^\frac{2}{d}\right)^{1+\frac{2}{d}} + O\left(\frac{\Tilde{v}^{1+\frac{3}{d}}}{N^\frac{3}{d}}\right). 
\end{align*}

For $v\leq g_N(Nr_N(K)^d),$ we know that $\left(\frac{v}{N}\right)^\frac{2}{d}\leq r_N(K)^2 \to 0.$ Hence, the big-O term above can be replaced with $O\left(\frac{v^{1+\frac{3}{d}}}{N^\frac{3}{d}}\right).$ Furthermore, we also get
$$\left(1- \frac{\beta}{\alpha^{2+\frac{2}{d}}}\left(\frac{v}{N}\right)^\frac{2}{d}\right)^\frac{2}{d} = 1+ O\left(\left(\frac{v}{N}\right)^\frac{2}{d}\right).$$

Putting these together, we get that
$$g_N(\Tilde{v}) = v + O\left(\frac{v^{1+\frac{3}{d}}}{N^\frac{3}{d}}\right).$$

Let $C>0$ be a constant such that for all $v,$
$$|g_N(\Tilde{v})-v| \leq C\frac{v^{1+\frac{3}{d}}}{N^\frac{3}{d}}.$$

Finally, by \Cref{lemma::integral_of_density_over_small_balls}, we get that for large all $N$, the inequalities
$$g_N\left(\Tilde{v} - 2C\frac{v^{1+\frac{3}{d}}}{N^\frac{3}{d}}\right)<v<g_N\left(\Tilde{v} + 2C\frac{v^{1+\frac{3}{d}
}}{N^\frac{3}{d}}\right).$$

Since $g_N$ is monotonically increasing, this shows that
$$g_N^{-1}(v) = \Tilde{v} + v\cdot O\left(\left(\frac{v}{N}\right)^\frac{3}{d}\right).$$

Noticing that $\frac{v}{N}\leq r_N(K)^d$ completes the proof of \eqref{eq::gN_inverse_expansion}. To prove \eqref{eq::g_N_inverse_power_expansion}, we can simply use the fact that for $y\approx 0,$
$$(1+y)^\frac{2}{d} = 1 + O(y)$$

combined with \eqref{eq::gN_inverse_expansion}.

From \Cref{lemma::ball_and_surface_integrals} and \eqref{eq::g_N_inverse_power_expansion}, we get
\begin{align*}
g_N'(g_N^{-1}(v)) &= V_dp_\thetaone(0) + \frac{V_d\tr(\Hnormal_x p_\thetaone(0))}{2d} \left(\frac{g_N^{-1}(v)}{N}\right)^\frac{2}{d} + O\left(\left(\frac{g_N^{-1}(v)}{N}\right)^\frac{3}{d}\right)\\
&= V_d p_\thetaone(0)\left( 1 + \frac{\tr(\Hnormal_x p_\thetaone(0))}{2dp_\thetaone(0)^{1+\frac{2}{d}}V_d^\frac{2}{d}}\frac{v^\frac{2}{d}}{N^\frac{2}{d}} + O\left(\frac{v^\frac{3}{d}}{N^\frac{3}{d}}\right)\right)
\end{align*}

Using the above expression along with that the fact that for $y\approx 0$
$$\frac{1}{(1+y)} = 1- y + O(y^2)$$
we get \eqref{eq::g_N_derivative_reciprocal}. The completes the proof.

\end{proof}

To find the limiting values of the gradient and hessian of $\mu_N(\thetaone,\thetaone),$ we will differentiate under the integral sign to write the derivatives as double integrals. Using the DCT, one can show that it is enough to find the point-wise limit of the single integrals. The next three lemmas enable us to find these point-wise limits.

\begin{lemma}\label{lemma::lemma_for_T1}

Let $f$ defined on $S$ be three times differentiable and bounded on $S$. Then, for any given $x\in \interior(S),$ we have
\begin{align*}
\frac{N}{k_N}\bigintsss f(y) \rho_{k_N}^{\thetaone,\thetaone}(x,y)~dy &= \frac{f(x)}{p_{\theta_1}(x)}\\
&+ \left(\frac{k_N}{N}\right)^{\frac{2}{d}}\left(\frac{\tr(H_x f(x))}{2dV_d^{\frac{2}{d}}(p_{\theta_1}(x))^{1+\frac{2}{d}}}\right)\left(\frac{1}{k_N^{1+\frac{2}{d}}}\sum_{k=0}^{k_N-1}\frac{\Gamma(k+\frac{2}{d}+1)}{\Gamma(k+1)}\right)\\
&-\left(\frac{k_N}{N}\right)^{\frac{2}{d}}\left(\frac{f(x)\tr(H_x p_{\theta_1}(x))}{2dV_d^\frac{2}{d}p_{\theta_1}(x)^{2+\frac{2}{d}}}\right)\left(\frac{1}{k_N^{1+\frac{2}{d}}}\sum_{k=0}^{k_N-1}\frac{\Gamma(k+1+\frac{2}{d})}{\Gamma(k+1)}\right)\\
&+ O\left(r_N(K)^3\right)
\end{align*}
\end{lemma}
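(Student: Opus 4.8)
The plan is to reduce the $d$-dimensional integral to a one-dimensional integral in the radial variable $r = \|x-y\|$, then perform a change of variables to the Poisson parameter $\lambda_N(x,r)$, and finally expand everything using the technical results already established. First I would fix $x\in\interior(S)$, pass to polar coordinates, and use Lemma \ref{lemma::ball_and_surface_integrals} (the surface-integral asymptotics \eqref{eq::surface_integral_asymptotics}) to write
\[
\frac{N}{k_N}\int f(y)\,\rho_{k_N}^{\theta_1,\theta_1}(x,y)\,dy
= \frac{N}{k_N}\int_0^{\infty}\left(f(x)dV_d r^{d-1} + \tfrac{V_d\tr(H_xf(x))}{2}r^{d+1} + \delta_2(x,r)\right)\rho_N(x,r)\,dr + (\text{tail}),
\]
where the tail beyond $r_N(K)$ is negligible because $\rho_N(x,r)=\prob(\poi(\lambda_N(x,r))\le k_N-1)$ decays faster than any polynomial there, by Lemma \ref{lemma::nearest_neighbor_distance_bound}. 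Next I would substitute $\lambda = g_N(u)$ with $u = Nr^d$, i.e. work with $g_N(u) = \lambda_N(x,(u/N)^{1/d})$, turning the radial integral into an integral over the Poisson-mean scale $\lambda$; the Jacobian factors become $du = N d r^{d-1} dr$ and then $d\lambda = g_N'(u)\,du$, so $r$-powers get expressed as powers of $g_N^{-1}(\lambda)/N$ divided by $g_N'(g_N^{-1}(\lambda))$. Here is exactly where Lemma \ref{lemma::change_of_variables}, equations \eqref{eq::gN_inverse_expansion}–\eqref{eq::g_N_derivative_reciprocal}, is used to expand $g_N^{-1}$, its $2/d$-power, and the reciprocal of $g_N'$ up to the $r_N(K)^3$ error.

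After the change of variables the integral becomes $\frac{1}{k_N}\int_0^{\Lambda_N}(\cdots)\sum_{k=0}^{k_N-1}\frac{\lambda^k}{k!}e^{-\lambda}\,d\lambda$, and the key computation is the family of integrals $\int_0^\infty \lambda^{m + 2/d}\,\frac{\lambda^k}{k!}e^{-\lambda}\,d\lambda = \frac{\Gamma(k+m+2/d+1)}{k!}$ for $m=0,1$ (extending the upper limit from $\Lambda_N$ to $\infty$ at the cost of another super-polynomially small error, again via Poisson tail bounds). Collecting the leading term gives $\frac{1}{k_N}\sum_{k=0}^{k_N-1}1 \cdot \frac{f(x)}{p_{\theta_1}(x)} = \frac{f(x)}{p_{\theta_1}(x)}$ (the $k_N$ cancels), and the two $O((k_N/N)^{2/d})$ correction terms — one coming from the Hessian of $f$ in the surface expansion, one from the Hessian of $p_{\theta_1}$ through the expansion of $g_N^{-1}$ and $1/g_N'$ — produce precisely the two sums $\frac{1}{k_N^{1+2/d}}\sum_{k=0}^{k_N-1}\frac{\Gamma(k+2/d+1)}{\Gamma(k+1)}$ with the stated coefficients $\frac{\tr(H_xf(x))}{2dV_d^{2/d}p_{\theta_1}(x)^{1+2/d}}$ and $-\frac{f(x)\tr(H_xp_{\theta_1}(x))}{2dV_d^{2/d}p_{\theta_1}(x)^{2+2/d}}$. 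I would keep careful track that the various $\delta_N^{(i)}$ and $\delta_2$ remainders, the truncation errors, and the gap between $\Lambda_N$ and $\infty$ all contribute only $O(r_N(K)^3)$; this bookkeeping is routine given the uniform bounds on $p_{\theta_1}$ (bounded above and below, with uniformly bounded derivatives) supplied by Assumption \ref{assume::assumptions_on_parametrized_family}.

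The main obstacle I anticipate is not any single estimate but the organization of the error terms: one has to verify that each expansion is applied only in the regime $\lambda \lesssim k_N$ (equivalently $r\lesssim r_N(K)$) where $g_N^{-1}$ and its expansions are valid, show the contribution from $\lambda \gtrsim k_N$ is negligible uniformly, and confirm that after multiplying expansions of $g_N^{-1}(\lambda)^{2/d}$, $1/g_N'(g_N^{-1}(\lambda))$, and the surface-integral expansion, the cross terms are genuinely of order $r_N(K)^3$ and not merely $r_N(K)^2$ times something unbounded in $k_N$. In particular the sums $\frac{1}{k_N^{1+2/d}}\sum_{k=0}^{k_N-1}\frac{\Gamma(k+2/d+1)}{\Gamma(k+1)}$ are $O(1)$ by Lemma \ref{lemma::gamma_function_identity} (they equal $\frac{d}{d+2}\frac{\Gamma(k_N+2/d+1)}{k_N^{1+2/d}\Gamma(k_N)} \to \frac{d}{d+2}$ by Stirling), so the stated correction terms are $O((k_N/N)^{2/d})$, and one must make sure the neglected pieces are genuinely smaller than this — i.e. $O((k_N/N)^{3/d})$ which is absorbed into $O(r_N(K)^3)$. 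Handling the non-integer exponent $2/d$ in $\Gamma$-function manipulations (rather than closing everything in elementary terms) is the one place I would be most careful.
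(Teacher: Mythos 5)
Your plan matches the paper's own proof essentially step for step: truncation to $\|y\|\le r_N(K)$ via Lemma \ref{lemma::nearest_neighbor_distance_bound}, polar coordinates with the surface expansion of Lemma \ref{lemma::ball_and_surface_integrals}, the substitutions $u=Nr^d$ and $v=g_N(u)$ with the expansions of Lemma \ref{lemma::change_of_variables}, extension of the resulting Gamma integrals to infinity by the concentration bound, and collection of the two $\left(k_N/N\right)^{2/d}$ correction terms with the remainders absorbed into $O(r_N(K)^3)$. This is correct and is the same argument as the paper's.
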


\begin{proof}
WLOG we assume that $x=0.$ From \Cref{lemma::nearest_neighbor_distance_bound}, and the definition of $\rho_{K_N}^{\thetaone,\thetaone}(0,y),$ it is enough to show the convergence when the integral is over $y$ with $\|y\|\leq r_N(K)$ for a large enough $K.$\\

Recall the definition of $\lambda_N,\rho_N$ from \eqref{eq::simplified_lambda_N} and \eqref{eq::simplified_rho_N}. Since $x=0$ is fixed, for this proof we will denote them by $\lambda_N(r),\rho_N(r)$. Changing to spherical coordinates and denoting the radius as $r,$ we get that
$$\frac{N}{k_N}\bigintsss_{\|y\|\leq r_N(K)} f(y) \rho_{k_N}^{\thetaone,\thetaone}(x,y)~dy = \frac{N}{k_N} \int_0^{r_N(K)} \left(\int_{\partial B(0,r)} f(z)~dz\right) \rho_N(r) ~ dr.$$

We now make the change of variables $r = \left(\frac{u}{N}\right)^\frac{1}{d}.$ With this, we get that
\begin{align*}
&\frac{N}{k_N}\bigintsss_{\|x-y\|\leq r_N(K)} f(y) \rho_{k_N}^{\thetaone,\thetaone}(x,y)~dy\\
&= \frac{N^{1-\frac{1}{d}}}{k_N d}\int_0^{(Nr_N(K))^d} u^{\frac{1}{d}-1}\left(\int_{\partial B(0,\left(\frac{u}{N}\right)^\frac{1}{d})} f(z)~dz\right) \rho_N\left(\left(\frac{u}{N}\right)^\frac{1}{d}\right) ~ du.
\end{align*}

From \Cref{lemma::ball_and_surface_integrals}, we know that
$$N^{1-\frac{1}{d}}u^{\frac{1}{d}-1}\int_{\partial B\left(0,\left(\frac{u}{N}\right)^\frac{1}{d}\right)} f(z)~dz = d V_d f(0) + \frac{V_d \tr(\Hnormal f(0))}{2} \frac{u^\frac{2}{d}}{N^\frac{2}{d}} + \delta_N (u)$$ 
such that
$$ \delta_N(u) = O\left(\frac{u^\frac{3}{d}}{N^\frac{3}{d}}\right).$$

Since $\rho_N$ is a probability, it is non-negative and bounded above by 1 which gives
$$\frac{1}{k_N d} \int_0^{Nr_N(K)^d} \delta_N(u) \rho_N\left(\frac{u^\frac{1}{d}}{N^\frac{1}{d}}\right) ~ du \leq \frac{1}{k_Nd} \int_0^{Nr_N(K)^d} \delta_N(u) ~ du = O\left(r_N(K)^3\right).$$

We now deal with the other terms. Let
$$g_N(u) = \lambda_N\left(\frac{u^\frac{1}{d}}{N^\frac{1}{d}}\right)$$

for $u\leq N r_N(K)^d.$ Note that $g_N$ is a strictly increasing function for the given range of $u$ since it is the integral of a density. If we now make the change of variables $v = g_N(u),$ then we get that
\begin{align*}
    &\frac{1}{k_N d}\int_0^{(Nr_N(K))^d} \left(dV_d f(0) + \frac{V_d \tr(\Hnormal_xf(0))}{2}\frac{u^\frac{2}{d}}{N^\frac{2}{d}}\right)\rho_N\left(\frac{u^\frac{1}{d}}{N^\frac{1}{d}}\right)~du\\
    &= \frac{1}{k_N d}\int_0^{g_N((Nr_N(K))^d)} \frac{1}{g_N'(g_N^{-1}(v))} \left(dV_d f(0) + \frac{V_d \tr(\Hnormal_xf(0))}{2}\frac{(g_N^{-1}(v))^\frac{2}{d}}{N^\frac{2}{d}}\right) \sum_{k=0}^{k_N-1} \frac{v^k}{k!}e^{-v} ~ dv.
\end{align*}

Using \Cref{lemma::change_of_variables}, we know that for $0\leq v\leq g_N((Nr_N(K))^d),$
\begin{align*}
\frac{g_N^{-1}(v)^\frac{2}{d}}{N^\frac{2}{d}} &= \frac{1}{p_\thetaone(0)^\frac{2}{d}V_d^\frac{2}{d}}\frac{v^\frac{2}{d}}{N^\frac{2}{d}} + O\left(\frac{k_N^\frac{4}{d}}{N^\frac{4}{d}}\right),\\
\frac{1}{g_N'(g_N^{-1}(v))} &= \frac{1} {p_\thetaone(0)V_d} \left(1- \frac{\tr(\Hnormal_x p_\thetaone(0))}{2dp_\thetaone(0)^{1+\frac{2}{d}}V_d^\frac{2}{d}} \frac{v^\frac{2}{d}}{N^\frac{2}{d}} + O\left(\frac{k_N^\frac{3}{d}}{N^\frac{3}{d}}\right)\right)
\end{align*}

Finally note that by taking $K$ large enough, we can show that
\begin{equation*}
\prob(\Gamma(k,1)\geq g_N(Nr_N(K))^d),\prob(\Gamma(k+2/d,1)\geq g_N(Nr_N(K))^d)\leq N^{-M},
\end{equation*}

for any given $M>0$ and all $1\leq k\leq k_N.$ Hence, we can rewrite the integral as

\begin{align*}
    &\frac{1}{k_N d}\int_0^{g_N((Nr_N(K))^d)} \frac{1}{g_N'(g_N^{-1}(v))} \left(dV_d f(0) + \frac{V_d \tr(\Hnormal_xf(0))}{2}\frac{(g_N^{-1}(v))^\frac{2}{d}}{N^\frac{2}{d}}\right) \sum_{k=0}^{k_N-1} \frac{v^k}{k!}e^{-v} ~ dv\\
    &= \frac{1}{k_N} \int_0^\infty \left(\frac{f(0)}{p_\thetaone(0)} - \frac{f(0)\tr(\Hnormal_x p_\thetaone(0))}{2dV_d^\frac{2}{d}p_\thetaone(0)^{2+\frac{2}{d}}} \frac{v^\frac{2}{d}}{N^\frac{2}{d}} + \frac{\tr(\Hnormal_x f(0))}{2dV_d^\frac{2}{d}p_\thetaone(0)^{1+\frac{2}{d}}} \frac{v^\frac{2}{d}}{N^\frac{2}{d}}\right)\sum_{k=0}^{k_N-1}\frac{v^k}{k!}e^{-v} ~ dv + O(r_N(K)^3)\\
    &= \frac{f(0)}{p_\thetaone(0)} + \frac{k_N^\frac{2}{d}}{N^\frac{2}{d}} \left(\frac{\tr(\Hnormal_x f(0))}{2dV_d^\frac{2}{d}p_\thetaone(0)^{1+\frac{2}{d}}} - \frac{f(0)\tr(\Hnormal_x p_\thetaone(0))}{2dV_dp_\thetaone(0)^{2+\frac{2}{d}}}\right)\frac{1}{k_N^{1+\frac{2}{d}}}\sum_{k=0}^{k_N-1}\frac{\Gamma(k+1+\frac{2}{d})}{\Gamma(k+1)} + O(r_N(K)^3).
\end{align*}

This completes the proof.

\end{proof}


The next two lemmas are also proven in a very similar way.

\begin{lemma}\label{lemma::lemma_for_T2}

Let $f,g$ be bounded functions defined on $S,$ a bounded open set. Then for any $x\in \interior(S)$, 

\begin{align*}
    &\frac{N^2}{k_N}\bigintsss f(y)\left(\int_{B(x,\|x-y\|)} g(z) ~ dz\right) \frac{\left(\lambda_N^{\thetaone,\thetaone}(x,y)\right)}{(k_N-1)!}^{k_N-1}\exp(-\lambda_N^{\thetaone,\thetaone}(x,y)) ~dy\\
    &= \frac{f(x)g(x)}{p_\thetaone(x)^2}\\
    &+ \left(\frac{k_N}{N}\right)^\frac{2}{d}\frac{\Gamma(k_N+1+\frac{2}{d})}{k_N^\frac{2}{d}\Gamma(k_N+1)}\left(\frac{\tr(\Hnormal_xf(x))g(x)}{2dV_d^\frac{2}{d}p_\thetaone(x)^{2+\frac{2}{d}}} + \frac{f(x)(\tr(\Hnormal_xg(x)))}{2(d+2)V_d^\frac{2}{d}p_\thetaone(x)^{2+\frac{2}{d}}}\right)\\
    &-\left(\frac{k_N}{N}\right)^\frac{2}{d}\frac{\Gamma(k_N+1+\frac{2}{d})}{k_N^\frac{2}{d}\Gamma(k_N+1)}\frac{d+1}{d(d+2)}\frac{f(x)g(x)\tr(\Hnormal_x p_\thetaone(x))}{V_d^\frac{2}{d}p_\thetaone(x)^{3+\frac{2}{d}}}\\
    &+O\left(r_N(K)^3\right).
\end{align*}
\end{lemma}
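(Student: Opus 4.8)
The argument will parallel the proof of Lemma~\ref{lemma::lemma_for_T1}, the two new features being the inner ball integral $\int_{B(x,\|x-y\|)}g$, which must also be Taylor-expanded, and the single Poisson mass $\frac{\lambda^{k_N-1}}{(k_N-1)!}e^{-\lambda}$ in place of the truncated sum $\rho_{k_N}$. Assume without loss of generality that $x=0$. First I would truncate to a small ball: since $0\in\interior(S)$ and the densities in the family are bounded below, $\lambda_N^{\thetaone,\thetaone}(0,y)\ge c\,N\|y\|^d$ for a constant $c>0$ when $\|y\|$ is small, so for $\|y\|\ge r_N(K)$ the mass $\frac{(\lambda_N^{\thetaone,\thetaone}(0,y))^{k_N-1}}{(k_N-1)!}e^{-\lambda_N^{\thetaone,\thetaone}(0,y)}=\prob(\poi(\lambda_N^{\thetaone,\thetaone}(0,y))=k_N-1)$ is at most $\prob(\poi(\lambda_N^{\thetaone,\thetaone}(0,y))\le k_N-1)$, which by Lemma~\ref{lemma::poisson_bound} (as in Lemma~\ref{lemma::nearest_neighbor_distance_bound}) is $O(N^{-M})$ for any fixed $M$ once $K$ is large enough. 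Since $f$ and $\int_{B(0,\|y\|)}g$ are bounded on the compact support, the contribution of $\{\|y\|\ge r_N(K)\}$ is $O(N^{2-M})=o(r_N(K)^3)$, and it suffices to integrate over $\|y\|\le r_N(K)$.

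Next I would change variables exactly as in Lemma~\ref{lemma::lemma_for_T1}: pass to spherical coordinates with radius $r$, substitute $u=Nr^d$, and finally $v=g_N(u)$ with $g_N(u):=\lambda_N^{\thetaone,\thetaone}(0,(u/N)^{1/d})$. I would then replace $\int_{\partial B(0,r)}f$ and $\int_{B(0,r)}g$ by their second-order expansions \eqref{eq::surface_integral_asymptotics} and \eqref{eq::ball_integral_asymptotics} from Lemma~\ref{lemma::ball_and_surface_integrals}, and $g_N^{-1}(v)$, $g_N^{-1}(v)^{2/d}/N^{2/d}$, $1/g_N'(g_N^{-1}(v))$ by their expansions from Lemma~\ref{lemma::change_of_variables}. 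Multiplying out, the integrand becomes a linear combination of $\frac{v^{k_N}}{(k_N-1)!}e^{-v}$ and $\frac{v^{k_N+2/d}}{(k_N-1)!}e^{-v}$, with coefficients built from $f(0),g(0),p_\thetaone(0),\tr(\Hnormal_x f(0)),\tr(\Hnormal_x g(0)),\tr(\Hnormal_x p_\thetaone(0))$, plus a remainder made $O(r_N(K)^3)$ uniformly in $x$ by the derivative bounds in Assumption~\ref{assume::assumptions_on_parametrized_family} (the mixed second-order cross terms being of even higher order). Using the Gamma-tail bound, as in Lemma~\ref{lemma::lemma_for_T1}, to extend the $v$-integral to $[0,\infty)$ at an $O(N^{-M})$ cost, and evaluating $\int_0^\infty\frac{v^{k_N}}{(k_N-1)!}e^{-v}\,dv=k_N$ and $\int_0^\infty\frac{v^{k_N+2/d}}{(k_N-1)!}e^{-v}\,dv=k_N\frac{\Gamma(k_N+1+2/d)}{\Gamma(k_N+1)}$, the leading term yields $\frac{f(0)g(0)}{p_\thetaone(0)^2}$, and the $v^{2/d}$-terms, after dividing by $k_N$ and grouping the prefactor into $\left(\frac{k_N}{N}\right)^{2/d}\frac{\Gamma(k_N+1+2/d)}{k_N^{2/d}\Gamma(k_N+1)}=N^{-2/d}\frac{\Gamma(k_N+1+2/d)}{\Gamma(k_N+1)}$, produce the three stated second-order terms.

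The delicate point, and the place I expect the real work, is the coefficient bookkeeping in this last step. The $\tr(\Hnormal_x f)$ term comes only from the second-order part of $\int_{\partial B(0,r)}f$, weighted by $\tfrac{1}{2d}$ relative to its constant part; the $\tr(\Hnormal_x g)$ term only from $\int_{B(0,r)}g$, weighted by $\tfrac{1}{2(d+2)}$; while the $\tr(\Hnormal_x p_\thetaone)$ term collects two contributions — one from the $v^{1+2/d}$ correction in $g_N^{-1}(v)$ in \eqref{eq::gN_inverse_expansion}, weighted by $\tfrac{1}{2(d+2)}$, and one from the correction to $1/g_N'(g_N^{-1}(v))$ in \eqref{eq::g_N_derivative_reciprocal}, weighted by $\tfrac{1}{2d}$ — whose sum $\tfrac{1}{2(d+2)}+\tfrac{1}{2d}=\tfrac{d+1}{d(d+2)}$ gives the constant in the third term. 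Once these four sources of second-order corrections are tracked carefully and combined, the claimed identity follows; no idea beyond those already used in Lemma~\ref{lemma::lemma_for_T1} is required.
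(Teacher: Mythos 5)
Your proposal is correct and follows essentially the same route as the paper's proof: truncation to $\|y\|\leq r_N(K)$ via Lemma \ref{lemma::nearest_neighbor_distance_bound}, spherical coordinates, the substitutions $u=Nr^d$ and $v=g_N(u)$, the expansions of Lemmas \ref{lemma::ball_and_surface_integrals} and \ref{lemma::change_of_variables}, the Gamma tail bound to extend the $v$-integral, and then collecting Gamma-function moments. Your coefficient bookkeeping, including the two sources of the $\tr(\Hnormal_x p_{\theta_1})$ contribution summing to $\tfrac{1}{2(d+2)}+\tfrac{1}{2d}=\tfrac{d+1}{d(d+2)}$, matches the paper's computation exactly.
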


\begin{proof}

Once again, WLOG we assume that $x=0.$ As before, we use \Cref{lemma::nearest_neighbor_distance_bound} to see that it is enough to show the convergence when the integral is over $y$ with $\|y\|\leq r_N(K)$ for a large enough $K.$\\

As before, $\lambda_N(r), \rho_N(r)$ denote the functions defined in \eqref{eq::simplified_lambda_N} and \eqref{eq::simplified_rho_N} with the dependence on $x$ dropped. We switch to polar coordinates with radius being denoted by $r$ to get
\begin{align*}
&\frac{N^2}{k_N}\bigintsss_{\|x-y\|\leq (r_N(K)N^{-1})^{\frac{1}{d}}} f(y)\left(\int_{B(x,\|x-y\|)} g(z) ~ dz\right) \frac{\left(\lambda_N^{\thetaone,\thetaone}(x,y)\right)}{(k_N-1)!}^{k_N-1}\exp(-\lambda_N^{\thetaone,\thetaone}(x,y))\\
&= \frac{N^2}{k_N}\int_0^{r_N(K)}\left(\int_{\partial B(0,r)}f(z)~dz\right)\left(\int_{B(0,r)}g(z)~dz\right)\frac{\lambda_N(r)^{k_N-1}}{(k_N-1)!}e^{-\lambda_N(r)}~dr.
\end{align*}

As before, we make the change of variables $r = \left(\frac{u}{N}\right)^\frac{1}{d}$ to get
\begin{align*}
&\frac{N^2}{k_N}\int_0^{r_N(K)}\left(\int_{\partial B(0,r)}f(z)~dz\right)\left(\int_{B(0,r)}g(z)~dz\right)\frac{\lambda_N(r)^{k_N-1}}{(k_N-1)!}e^{-\lambda_N(r)}~dr\\
&= \frac{N^2}{k_Nd}\int_0^{Nr_N(K)^d} \frac{u^{\frac{1}{d}-1}}{N^\frac{1}{d}} \left(\int_{\partial B\left(0,\frac{u^\frac{1}{d}}{N^\frac{1}{d}}\right)} f(z)~dz\right)\\
& \qquad \qquad \qquad \qquad \times \left(\int_{B\left(0,\frac{u^\frac{1}{d}}{N^\frac{1}{d}}\right)}g(z)~dz\right) \frac{\lambda_N\left(\frac{u^\frac{1}{d}}{N^\frac{1}{d}}\right)^{k_N-1}}{(k_N-1)!}e^{-\lambda_N\left(\frac{u^\frac{1}{d}}{N^\frac{1}{d}}\right)}~du.
\end{align*}

Using \Cref{lemma::ball_and_surface_integrals} we have
\begin{align*}
    \frac{N u^{\frac{1}{d}-1}}{N^\frac{1}{d}}\int_{\partial B\left(0,\frac{u^\frac{1}{d}}{N^\frac{1}{d}}\right)} f(z)~dz &= dV_df(0) + \frac{V_d \tr(\Hnormal_x f(0))}{2} \frac{u^\frac{2}{d}}{N^\frac{2}{d}} + O\left(\frac{u^\frac{3}{d}}{N^\frac{3}{d}}\right),\\
    \vspace{2mm}
    N \int_{B\left(0,\frac{u^\frac{1}{d}}{N^\frac{1}{d}}\right)}g(z)~dz &= g(0)V_d u + \frac{V_d \tr(\Hnormal_x g(0))}{2(d+2)} \frac{u^{1+\frac{2}{d}}}{N^\frac{2}{d}} + O\left(\frac{u^{1+\frac{3}{d}}}{N^\frac{3}{d}}\right).
\end{align*}

Let $g_N(u)$ be as defined in \Cref{lemma::change_of_variables}. Making the change of variables $v = g_N(u)$, or equivalently $u = g_N^{-1}(v),$ and using \Cref{lemma::change_of_variables} gives
\begin{align*}
&\frac{1}{g_N'(g_N^{-1}(v))} = \frac{1}{p_\thetaone(0)V_d}\left(1-\frac{\tr(\Hnormal_x p_\thetaone(0))}{2p_\thetaone(0)^{1+\frac{2}{d}} V_d^\frac{2}{d}}\frac{v^\frac{2}{d}}{N^\frac{2}{d}} + O(r_N(K)^3)\right),\\
\vspace{2mm}
&dV_df(0) + \frac{V_d \tr(\Hnormal_x f(0))}{2} \frac{u^\frac{2}{d}}{N^\frac{2}{d}} + O\left(\frac{u^\frac{3}{d}}{N^\frac{3}{d}}\right)\\
&= dV_df(0) + \frac{V_d \tr(\Hnormal_xf(0))}{2p_\thetaone(0)^\frac{2}{d}V_d^\frac{2}{d}}\frac{v^\frac{2}{d}}{N^\frac{2}{d}} + O\left(\frac{v^\frac{3}{d}}{N^\frac{3}{d}}\right),\\
\vspace{2mm}
&g(0)V_d u + \frac{V_d \tr(\Hnormal_x g(0))}{2(d+2)} \frac{u^{1+\frac{2}{d}}}{N^\frac{2}{d}} + O\left(\frac{u^{1+\frac{3}{d}}}{N^\frac{3}{d}}\right)\\
&= \frac{g(0)}{p_\thetaone(0)}v +\left(\frac{p_\thetaone(0)\tr(\Hnormal_x g(0)) - g(0)\tr(\Hnormal_x p_\thetaone(0))}{2(d+2)V_d^\frac{2}{d}p_\thetaone(0)^{2+\frac{2}{d}}}\right) \frac{v^{1+\frac{2}{d}}}{N^\frac{2}{d}} + O\left(\frac{v^{1+\frac{3}{d}}}{N^\frac{3}{d}}\right).
\end{align*}

Using the above expansion, and making the change of variables $v = g_N(u)$ in the integral, we now get
\begin{align*}
    &\frac{N^2}{k_Nd}\int_0^{Nr_N(K)^d} \frac{u^{\frac{1}{d}-1}}{N^\frac{1}{d}} \left(\int_{\partial B\left(0,\frac{u^\frac{1}{d}}{N^\frac{1}{d}}\right)} f(z)~dz\right)\left(\int_{B\left(0,\frac{u^\frac{1}{d}}{N^\frac{1}{d}}\right)}g(z)~dz\right) \frac{\lambda_N\left(\frac{u^\frac{1}{d}}{N^\frac{1}{d}}\right)^{k_N-1}}{(k_N-1)!}e^{-\lambda_N\left(\frac{u^\frac{1}{d}}{N^\frac{1}{d}}\right)}~du\\
    \vspace{2mm}
    &=\frac{f(0)g(0)}{p_\thetaone(0)^2}\int_0^\infty \frac{v^{k_N}}{k_N!}e^{-v}~dv\\
    \vspace{2mm}
    &- \frac{1}{N^\frac{2}{d}}\frac{f(0)g(0)\tr(\Hnormal_x p_\thetaone(0))}{2dp_\thetaone(0)^{3+\frac{2}{d}}V_d^\frac{2}{d}} \int_0^\infty \frac{v^{k_N + \frac{2}{d}}}{k_N!}e^{-v}~dv + \frac{1}{N^\frac{2}{d}}\frac{g(0)\tr(\Hnormal_x f(0))}{2dp_\thetaone(0)^{2+\frac{2}{d}}V_d^\frac{2}{d}}\int_0^\infty \frac{v^{k_N+\frac{2}{d}}}{k_N!}e^{-v}~dv\\
    \vspace{2mm}
    &+ \frac{1}{N^\frac{2}{d}} \frac{f(0)(p_\thetaone(0)\tr(\Hnormal_x g(0)) - g(0)\tr(\Hnormal_x p_\thetaone(0)))}{2(d+2)V_d^\frac{2}{d}p_\thetaone(0)^{3+\frac{2}{d}}}\int_0^\infty \frac{v^{k_N+\frac{2}{d}}}{k_N!}e^{-v}~ dv + O(r_N(K)^3)\\
    &= \frac{k_N^\frac{2}{d}}{N^\frac{2}{d}}\frac{\Gamma(k_N+1+\frac{2}{d})}{k_N^\frac{2}{d} \Gamma(k_N+1)}\left(\frac{p_\thetaone(0)g(0)\tr(\Hnormal_x f(0)) - f(0)g(0)\tr(\Hnormal_x p_\thetaone(0))}{2dp_\thetaone(0)^{3+\frac{2}{d}}V_d^\frac{2}{d}}\right)\\
    &+\frac{k_N^\frac{2}{d}}{N^\frac{2}{d}}\frac{\Gamma(k_N+1+\frac{2}{d})}{k_N^\frac{2}{d}\Gamma(k_N+1)}\left(\frac{f(0)(p_\thetaone(0)\tr(\Hnormal_x g(0)) - g(0)\tr(\Hnormal_x p_\thetaone(0)))}{2(d+2)V_d^\frac{2}{d}p_\thetaone(0)^{3+\frac{2}{d}}}\right) + O(r_N(K)^3).
\end{align*}

In the first equality, we have used the concentration inequality for Gamma random variables in \Cref{corollary::gamma_concentration} in order to change the integral from finite to infinite. Rewriting the last equality completes the proof.
\end{proof}


\begin{lemma}\label{lemma::lemma_for_final_hessian_term}
    Let $g$ be three times differentiable, non-negative and bounded above and below on $S.$ Then, for any fixed $x,$
    \begin{align*}
        &\frac{N^3}{k_N}\bigintsss p_\thetaone(y)\left(\bigintsss_{B(x,\|x-y\|)} g(z)~dz\right)^2\left(\frac{\lambda_N^{\thetaone,\thetaone}(x,y)}{(k_N-1)!}^{k_N-1}-\frac{\lambda_N^{\thetaone,\thetaone}(x,y)^{k_N-2}}{(k_N-2)!}\right)\exp(-\lambda_N^{\thetaone,\thetaone}(x,y))~dy\\
        &\to \frac{2g(x)^2}{p_\thetaone(x)^2}
    \end{align*}
\end{lemma}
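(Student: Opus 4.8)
The plan is to follow the argument of Lemma \ref{lemma::lemma_for_T2} essentially verbatim, with the simplification that here only the leading-order term is needed. Take $x=0$ without loss of generality. By Lemma \ref{lemma::nearest_neighbor_distance_bound} together with the expression \eqref{eq::expression_for_rho_kn} for $\rho_{k_N}$ in terms of $\lambda_N$, the contribution of $\|y\|\geq r_N(K)$ to the integral is $O(N^{-M})$ for any fixed $M$ once $K$ is large, so it suffices to integrate over $\|y\|\leq r_N(K)$. Passing to polar coordinates (radius $r$) and writing $\lambda_N(r):=\lambda_N^{\thetaone,\thetaone}(x,y)$ for $\|y\|=r$, the quantity becomes
$$\frac{N^3}{k_N}\int_0^{r_N(K)}\left(\int_{\partial B(0,r)}p_\thetaone(z)\,dz\right)\left(\int_{B(0,r)}g(z)\,dz\right)^2\left(\frac{\lambda_N(r)^{k_N-1}}{(k_N-1)!}-\frac{\lambda_N(r)^{k_N-2}}{(k_N-2)!}\right)e^{-\lambda_N(r)}\,dr.$$

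Next I would make the substitutions $r=(u/N)^{1/d}$ and then $v=g_N(u)$, where $g_N(u)=\lambda_N((u/N)^{1/d})$ is strictly increasing, exactly as in the proofs of Lemmas \ref{lemma::lemma_for_T1} and \ref{lemma::lemma_for_T2}. Since we are in the case $\thetaone=\theta_2$ we have $\lambda_N(r)=N\int_{B(0,r)}p_\thetaone$, which gives the clean identity $\bigl(\int_{\partial B(0,r)}p_\thetaone(z)\,dz\bigr)\,dr=\tfrac1N g_N'(u)\,du=\tfrac1N\,dv$; by Lemma \ref{lemma::ball_and_surface_integrals} and Lemma \ref{lemma::change_of_variables}, $\int_{B(0,r)}g(z)\,dz=\tfrac{g(0)}{p_\thetaone(0)}\tfrac vN+O\bigl(v^{1+2/d}N^{-1-2/d}\bigr)$, and $\lambda_N(r)=g_N(u)=v$. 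Hence after the substitutions the integrand equals $\tfrac{g(0)^2}{p_\thetaone(0)^2}\tfrac{v^2}{N^2}\bigl(\tfrac{v^{k_N-1}}{(k_N-1)!}-\tfrac{v^{k_N-2}}{(k_N-2)!}\bigr)e^{-v}$ up to a relative error of order $r_N(K)^2$, so the expression reduces to
$$\frac{g(0)^2}{k_N\,p_\thetaone(0)^2}\int_0^{g_N(Nr_N(K)^d)}\left(\frac{v^{k_N+1}}{(k_N-1)!}-\frac{v^{k_N}}{(k_N-2)!}\right)e^{-v}\,dv+o(1).$$

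Using Corollary \ref{corollary::gamma_concentration} to bound $\prob(\Gamma(k_N+2,1)\geq g_N(Nr_N(K)^d))$ and $\prob(\Gamma(k_N+1,1)\geq g_N(Nr_N(K)^d))$ by $N^{-M}$ for $K$ large, the two integrals may be extended to $[0,\infty)$, giving $\Gamma(k_N+2)=(k_N+1)!$ and $\Gamma(k_N+1)=k_N!$. Therefore the limit is
$$\frac{g(0)^2}{k_N\,p_\thetaone(0)^2}\left(\frac{(k_N+1)!}{(k_N-1)!}-\frac{k_N!}{(k_N-2)!}\right)=\frac{g(0)^2}{k_N\,p_\thetaone(0)^2}\bigl(k_N(k_N+1)-k_N(k_N-1)\bigr)=\frac{2g(0)^2}{p_\thetaone(0)^2},$$
which is the claim after undoing the reduction $x=0$.

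The only genuine subtlety is that $\tfrac{v^{k_N-1}}{(k_N-1)!}-\tfrac{v^{k_N-2}}{(k_N-2)!}$ changes sign (it is negative for $v<k_N-1$), so one cannot dominate the error terms against a single nonnegative integrand as in Lemma \ref{lemma::lemma_for_T2}; I would instead carry the two summands separately — each a nonnegative Gamma-type density times a bounded factor — through all the estimates and recombine only at the very end. Apart from this bookkeeping, every step is a strictly easier version of the corresponding step in Lemmas \ref{lemma::lemma_for_T1} and \ref{lemma::lemma_for_T2}, since no $(k_N/N)^{2/d}$-order correction terms need to be tracked.
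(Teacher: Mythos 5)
Your reduction and the computation of the leading term are fine (including the identity $\bigl(\int_{\partial B(0,r)}p_{\thetaone}(z)\,dz\bigr)\,dr=\tfrac1N\,dv$, which the paper does not even exploit), but the error control is a genuine gap, not bookkeeping. After the $1/k_N$ normalization, the two Gamma-type integrals you propose to carry separately and recombine only at the end are each of size $\asymp k_N$ (namely $\tfrac1{k_N}\tfrac{\Gamma(k_N+2)}{\Gamma(k_N)}\approx k_N$ and $\tfrac1{k_N}\tfrac{\Gamma(k_N+1)}{\Gamma(k_N-1)}\approx k_N$), and the limit $2g(x)^2/p_{\thetaone}(x)^2$ comes from a cancellation leaving only an $O(1)$ remainder. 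A relative error of order $r_N(K)^2$ in the approximation $G_N^2(v)\approx \tfrac{g(0)^2}{p_{\thetaone}(0)^2}v^2$, applied to each summand separately (which is exactly what your treatment of the sign change forces), yields an absolute error of order $k_N r_N(K)^2\asymp k_N(k_N/N)^{2/d}$, which is not $o(1)$ throughout the regime where the lemma is used: for $k_N=N^{\gamma}$ this diverges whenever $\gamma>2/(d+2)$, e.g.\ any $d\geq 7$ with $\gamma$ close to $1/4$, and the lemma should hold for all $k_N=o(N)$. Tracking the explicit $v^{2+2/d}N^{-2/d}$ correction term (which does cancel between the two Gamma moments) only shifts the problem to the Taylor remainder $O(v^{2+3/d}N^{-3/d})$, which carries no sign structure and contributes $O\bigl(k_N(k_N/N)^{3/d}\bigr)$; making that small would require either restricting $k_N$ further or more smoothness than the hypotheses give. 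So the step "up to a relative error of order $r_N(K)^2$, hence $+\,o(1)$" cannot be completed as stated.

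This is precisely why the paper's proof never expands $G_N^2$. It rewrites the truncated integral exactly as $\tfrac1{k_N}\E\bigl[(G_N^2(X+Y)-G_N^2(X))\indicator\{X,X+Y\leq g_N(Nr_N(K)^d)\}\bigr]$ with $X\sim\Gamma(k_N-2,1)$ and $Y\sim\Expo(1)$ independent, so the cancellation is preserved structurally as an increment of a single function at two coupled points; it then uses the derivative and inverse bounds of Lemmas \ref{lemma::integral_of_density_over_small_balls} and \ref{lemma::change_of_variables} to show $G_N^2(v+y)-G_N^2(v)=(1+O(\epsilon))(2v+y)y\,g(0)^2/p_{\thetaone}(0)^2$ uniformly on the relevant range, and concludes from $\tfrac1{k_N}\E[(2X+Y)Y]\to 2$ together with Gamma concentration. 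There the multiplicative error hits the increment itself, which is already of the right $O(k_N)$ size before normalization, so no cancellation between error terms is ever needed and the argument works for all $k_N=o(N)$. To repair your proposal you would need to replace the term-by-term expansion with an increment (or coupling) estimate of this kind.
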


\begin{proof}

WLOG we assume $x=0.$ As before, we can bound the integral to $\|y\|\leq r_N(K).$ The proof proceeds in multiple steps. \\

\textbf{Step 1}: In the first step, we will write the integral in a much simpler form involving certain expectation of Gamma and Exponential random variables.

Denote the integral in question as $\Ical_N.$ 
WLOG we assume $x=0.$ Changing to spherical coordinates with the radius being denoted by $r$ we can rewrite the integral as
\begin{align*}
\Ical_N &=\frac{N^3}{k_N}\bigintsss_{\|y\|\leq r_N(K)} p_\thetaone(y)\left(\bigintsss_{B(0,\|y\|)} g(z)~dz\right)^2\\
& \qquad \qquad \qquad \qquad \times \left(\frac{\lambda_N^{\thetaone,\thetaone}(0,y)}{(k_N-1)!}^{k_N-1}-\frac{\lambda_N^{\thetaone,\thetaone}(0,y)^{k_N-2}}{(k_N-2)!}\right)\exp(-\lambda_N^{\thetaone,\thetaone}(0,y))~dy\\
&= \frac{N^3}{k_N}\bigintsss_0^{r_N(K)} \left(\bigintsss_{\partial B(0,r)} p_\thetaone(z)~dz\right)\left(\bigintsss_{B(0,r)} g(z)~dz \right)^2\left(\frac{\lambda_N(0,r)^{k_N-1}}{(k_N-1)~} - \frac{\lambda_N(0,r)^{k_N-2}}{(k_N-2)!}\right)~dr
\end{align*}

where $\lambda_N$ is as defined in \eqref{eq::simplified_lambda_N}. Going forth in this proof, we will drop the first coordinate and simply denote it by $\lambda_N(r).$ We now make the change of variable $r = \frac{u^\frac{1}{d}}{N^\frac{1}{d}}$ to rewrite the integral $\Ical_N$ as
\begin{align*}
&\frac{N^2}{k_N}\bigintsss_0^{Nr_N(K)^d} \frac{Nu^{\frac{1}{d}-1}}{dN^\frac{1}{d}}\left(\bigintsss_{\partial B\left(0,\frac{u^\frac{1}{d}}{N^\frac{1}{d}}\right)} p_\thetaone(z)~dz\right)\left(\bigintsss_{B\left(0,\frac{u^\frac{1}{d}}{N^\frac{1}{d}}\right)} g(z)~dz\right)^2\\
& \qquad \qquad \qquad \qquad \times \left(\frac{\lambda_N\left(\frac{u^\frac{1}{d}}{N^\frac{1}{d}}\right)^{k_N-1}}{(k_N-1)!} - \frac{\lambda_N\left(\frac{u^\frac{1}{d}}{N^\frac{1}{d}}\right)^{k_N-2}}{(k_N-2)!}\right) ~ du.
\end{align*}

If we now define $g_N(u) = \lambda_N\left(\frac{u^\frac{1}{d}}{N^\frac{1}{d}}\right)$ as in \Cref{lemma::change_of_variables}, then
$$g_N'(u) = \frac{Nu^{\frac{1}{d}-1}}{dN^\frac{1}{d}}\left(\bigintsss_{\partial B\left(0,\frac{u^\frac{1}{d}}{N^\frac{1}{d}}\right)} p_\thetaone(z)~dz\right).$$

Hence, making the change of variables $v=g_N(u),$ we get that
\begin{equation*}
    \Ical_N = \frac{1}{k_N}\bigintsss_0^{g_{_N}(Nr_N(K)^d)} \left(N\bigintsss_{B\left(0,\left(\frac{(g_N^{-1}(v)}{N}\right)^\frac{1}{d}\right)} g(z)~dz\right)^2\left(\frac{v^{k_N-1}}{(k_N-1)!} - \frac{v^{k_N-2}}{(k_N-2)!}\right)e^{-v}~dv.
\end{equation*}

Let $G_N(v):= \left(N\bigintsss_{B\left(0,\left(\frac{(g_N^{-1}(v)}{N}\right)^\frac{1}{d}\right)} g(z)~dz\right).$ Then, the integral $\Ical_N$ can be rewritten as
\begin{equation}\label{eq::rewriting_I}
\Ical_N = \frac{1}{k_N}\E\left((G_N^2(X+Y)-G_N^2(X))\indicator\{X,X+Y\leq g_N(Nr_N(K)^d)\}\right),
\end{equation}

where $X,Y$ are independent $\Gamma(k_N-2,1)$ and $\Expo(1)$ random variables respectively. This concludes Step 1.

\textbf{Step 2}: In this step, we will show that the difference $G_N^2(v+y)-G_N^2(v)$ can be approximated by $2\frac{g(0)^2}{p_\thetaone(0)^2}vy$ when $v,v+y\leq g_N(Nr_N(K)^d)$.

We start by bounding the difference between $g_N^{-1}(v+y)$ and $g_N^{-1}(v)$ when $v,y$ are as above. Let $\epsilon>0$ and let $v,y$ be such that $0\leq v,v+y\leq g_N(Nr_N(K)^d).$ From \Cref{lemma::integral_of_density_over_small_balls} we have that
\begin{align*}
    g_N\left(g_N^{-1}(v) + \frac{(1-\epsilon)y}{p_\thetaone(0)V_d}\right) &= \int_0^{g_N^{-1}(v)} g_N'(t)~dt + \int_{g_N^{-1}(v)}^{g_N^{-1}(v)+\frac{(1-\epsilon)y}{p_\thetaone(0)V_d}} g_N'(t)~dt\\
    &\leq v + (1+\epsilon_N) \int_{g_N^{-1}(v)}^{g_N^{-1}(v) + \frac{(1-\epsilon)y}{p_\thetaone(0)V_d}} p_\thetaone(0)V_d dt\\
    &= v + y(1-\epsilon)(1+\epsilon_N)\\
    &\leq v + y\left(1-\frac{\epsilon}{2}\right).
\end{align*}
for all large $N$ and all $v,y$ as above where $\epsilon_N \to 0$ is as in \Cref{lemma::integral_of_density_over_small_balls}.

Hence, given any fixed $\epsilon>0,$ for all large $N$ we have
$$g_N\left(g_N^{-1}(v)+\frac{(1-\epsilon)y}{p_\thetaone(0)V_d}\right) \leq v+\left(1-\frac{\epsilon}{2}\right)y$$

for all $v,y$ as above. In a similar manner, we can show that for all large $N$ we have
$$v+\left(1+\frac{\epsilon}{2}\right)y \leq g_N\left(g_N^{-1}(v) + \frac{(1+\epsilon)y}{p_\thetaone(0)V_d}\right)$$

for all $v,y$ as above. Hence, since $g_N$ is monotonically increasing,
\begin{equation}\label{eq::gap_between_solutions_of_g_N}
\frac{(1-\epsilon)y}{p_\thetaone(0)V_d}\leq g_N^{-1}(v+y) - g_N^{-1}(v)\leq \frac{(1+\epsilon)y}{p_\thetaone(0)V_d}.
\end{equation}

We now use this to bound $G_N(v+y) - G_N(y).$ We can write $G_N$ as
$$G_N(v) = \Tilde{G}_N(g_N^{-1}(v))$$
where 
$$\Tilde{G}_N(u) = N\bigintsss_{B\left(0,\left(\frac{u}{N}\right)^\frac{1}{d}\right)} g(z)~dz.$$

Since $g$ is bounded below, in the same way that we proved \Cref{lemma::integral_of_density_over_small_balls}, we can show that there exists a sequence $\delta_N\to 0$ such that 
\begin{equation}\label{eq::bound_on_derivative_of_G_N}
(1-\delta_N)g(0)V_d\leq \Tilde{G}_N'(t)\leq (1+\delta_N)g(0)V_d
\end{equation}

for all $t\leq Nr_N(K)^d.$ Since 
$$\Tilde{G}_N(g_N^{-1}(v+y)) - \Tilde{G}_N(g_N^{-1}(v) = \int_{g_N^{-1}(v)}^{g_N^{-1}(v+y)} \Tilde{G}'_N(t)dt,$$
we have using \eqref{eq::gap_between_solutions_of_g_N} and \eqref{eq::bound_on_derivative_of_G_N} that
\begin{equation}\label{eq::bound_on_gap_of_G_N}
(1-2\epsilon) \frac{g(0)}{p_\thetaone(0)} y \leq G_N(v+y) - G_N(v)\leq (1+2\epsilon)\frac{g(0)}{p_\thetaone(0)}y.
\end{equation}

for all $v,y$ as above. Finally, using \Cref{lemma::change_of_variables} we see that there exists a sequence $\gamma_N\to 0$ such that for all $v,v+y\leq g_N(Nr_N(K)^d),$
\begin{align*}
    &(1-\gamma_N)\frac{v}{p_\thetaone(0)V_d} \leq g_N^{-1}(v) \leq (1+\gamma_N)\frac{v}{p_\thetaone(0)V_d},\\
    &(1-\gamma_N)\frac{v+y}{p_\thetaone(0)V_d} \leq g_N^{-1}(v+y) \leq (1+\gamma_N)\frac{v+y}{p_\thetaone(0)V_d}.
\end{align*}

Using the above along with \eqref{eq::bound_on_derivative_of_G_N} we get for all $v,y$ as above,
\begin{equation}\label{eq::bound_on_sum_of_G_N}
    (1-2\epsilon)(2v+y)\frac{g(0)}{p_\thetaone(0)}\leq G_N(v+y) + G_N(v)\leq (1+2\epsilon)(2v+y)\frac{g(0)}{p_\thetaone(0)}.
\end{equation}

The bounds in \eqref{eq::bound_on_gap_of_G_N} and \eqref{eq::bound_on_sum_of_G_N} gives
\begin{equation}\label{eq::bound_on_gap_of_GN_squared}
(1-2\epsilon)^2(2v+y)y\frac{g(0)^2}{p_\thetaone(0)^2}\leq G_N^2(v+y) -  G_N^2(v)\leq (1+2\epsilon)^2(2v+y)y\frac{g(0)^2}{p_\thetaone(0)^2}. 
\end{equation}

\textbf{Step 3}: The proof is almost complete. By taking $K$ large enough, we have $3k_N\leq g_N(Nr_N(K)^d)$ Recall that $X,Y$ are independent with $X\sim \Gamma(k_N-2,1)$ and $Y\sim \Expo(1).$ Using the Gamma concentration bound in \Cref{corollary::gamma_concentration} we have
$$\lim_{N\to \infty}\frac{1}{k_N}\E((2X+Y)Y\mid\{X,X+Y\leq g_N(Nr_N(K)^d)\})\to 2 = \lim_{N\to \infty} \frac{1}{k_N}\E((2X+Y)Y) = 2.$$

Using the above along with \eqref{eq::rewriting_I} and \eqref{eq::bound_on_gap_of_GN_squared} we have
$$2(1-2\epsilon)^2\frac{g(0)^2}{p_\thetaone(0)^2} \leq \liminf \Ical_N \leq \limsup \Ical_N \leq 2(1+2\epsilon)^2\frac{g(0)^2}{p_\thetaone(0)^2}.$$

Since $\epsilon>0$ was arbitrary, the result follows.

\end{proof}

\subsection{Limit of the gradient term}

This section will be dedicated to proving \Cref{lemma::gradient_limit}. Using the expression of $\mu_N(\theta_1,\theta_2)$ given in \eqref{eq::mu_N_defn}, we differentiate under the integral sign to get
$$\nabla_\thetaone \mu_N(\theta_1,\theta_1) = \int p_\thetaone(x) \nabla_\thetaone (p_\thetaone(y)\rho^{\thetaone,\thetaone}_{k_N}(x,y)) ~ dx~ dy. $$

Notice that $\rho_{k_N}^{\thetaone,\theta_2}(x,y)$ can be written as
\begin{align*}
\rho_{k_N}^{\thetaone,\theta_2}(x,y) &= \prob(\poi(\lambda_N^{\thetaone,\theta_2}(x,y))\leq k_N-1)\\ &= \sum_{k=0}^{k_N-1} \frac{(\lambda_N^{\thetaone,\theta_2}(x,y))^k}{k!}\exp\left(-\lambda_N^{\thetaone,\theta_2}(x,y)\right).
\end{align*}

Using the product rule and the particular form of $\rho_{k_N}^{\thetaone,\thetaone}$ above,
$$\epsilon_N^T \nabla_\thetaone \mu_N(\thetaone,\thetaone) = T_1 - N_2 T_2$$
where
\begin{align}
    T_1 &:= \frac{N}{k_N}\int p_\thetaone(x) \epsilon_N^T\nabla_{\theta_1}p_\thetaone(y) \rho_{k_N}^{\theta_1,\theta_1}(x,y) ~ dx ~ dy, \label{eq::T1_defn}\\
    T_2 &:= \frac{N}{k_N}\int p_\thetaone(x)p_\thetaone(y) \left(\int_{B(x,||x-y||)}\epsilon_N^T \nabla_{\theta_1}f(z\mid \theta_1) ~ dz\right)\left\{\rho_{k_N}^{\theta_1,\theta_1}(x,y)-\rho_{k_N-1}^{\theta_1,\theta_1}(x,y)\right\}~ dx ~ dy. \label{eq::T2_defn}
\end{align}

We can write $T_1, T_2$ as
\begin{align*}
    T_1 = \int p_\thetaone(x)~ T_1(x) ~ dx,\\
    T_2 = \int p_\thetaone(x)~ T_2(x) ~ dx.
\end{align*}

Taking $\epsilon_N = h N^{-\frac{1}{2}} \left(\frac{N}{k_N}\right)\frac{2}{d}$ and applying \Cref{lemma::lemma_for_T1} for $f = h^T \nabla_\thetaone p_\thetaone,$ we get the point wise limit of $T_1(x)$. Since $S$ is compact, the convergence is uniform and by the DCT we get the limit of $T_1.$ Specifically, we get
\begin{align*}
    \sqrt{N}T_1 &= \left(\frac{N}{k_N}\right)^{1+\frac{2}{d}} \int h^Tp_\thetaone(x)~dx\\
    &+ \bigintsss p_\thetaone(x) \left(\frac{\tr(H_x (h^T \nabla_\thetaone p_\thetaone)(x))}{2dV_d^{\frac{2}{d}}(p_{\theta_1}(x))^{1+\frac{2}{d}}}\right)\left(\frac{1}{k_N^{1+\frac{2}{d}}}\sum_{k=0}^{k_N-1}\frac{\Gamma(k+\frac{2}{d}+1)}{\Gamma(k+1)}\right) ~ dx\\
    &- \bigintsss p_\thetaone(x) \left(\frac{h^T p_\thetaone(x)\tr(H_x p_{\theta_1}(x))}{2dV_d^\frac{2}{d}p_{\theta_1}(x)^{2+\frac{2}{d}}}\right)\left(\frac{1}{k_N^{1+\frac{2}{d}}}\sum_{k=0}^{k_N-1}\frac{\Gamma(k+\frac{2}{d}+1)}{\Gamma(k+1)}\right)\\
    &+ \left(\frac{N}{k_N}\right)^\frac{2}{d} \cdot O\left(r_N(K)^3\right).
\end{align*}

Since $p_\thetaone$ is a density, the first term in the above expansion is equal to $0$ for any $N.$ Furthermore, due to the definition $r_N(K),$ the last term tends to $0.$ Hence, it suffices to find the limiting values of the second and third terms. By using the identity in \Cref{lemma::gamma_function_identity}, and using Stirling's approximation we get that as $k_N\to \infty,$
$$\frac{1}{k_N^{1+\frac{2}{d}}}\sum_{k=0}^{k_N-1}\frac{\Gamma(k+1+\frac{2}{d})}{\Gamma(k+1)} \to \frac{d}{d+2},$$

for $\epsilon_N = h N^{-\frac{1}{2}}\left(\frac{N}{k_N}\right)^\frac{2}{d}$ we get that
\begin{align}
\begin{split}\label{eq::limit_of_T1}
\sqrt{N}T_1 \to &  \bigintsss p_\thetaone(x)\left( \frac{ p_\thetaone(x)\tr(H_x (h^T \nabla_\thetaone p_\thetaone)(x))}{2(d+2)V_d^{\frac{2}{d}}(p_{\theta_1}(x))^{2+\frac{2}{d}}} - \frac{h^T p_\thetaone(x)\tr(H_x p_{\theta_1}(x))}{2(d+2)V_d^\frac{2}{d}p_{\theta_1}(x)^{2+\frac{2}{d}}}\right)~dx\\
&= \frac{1}{2(d+2)V_d^\frac{2}{d}} \bigintsss h^T \nabla_\thetaone \left(\frac{\tr(\Hnormal_x p_\thetaone(x))}{p_\thetaone(x)}\right)p_\thetaone^{1-\frac{2}{d}}(x) ~ dx. 
\end{split}
\end{align}

This gives the limiting value of $\sqrt{N}T_1$

Similarly, we can take $f=p_\thetaone$ and $g=h^T p_\thetaone$ in \Cref{lemma::lemma_for_T2} to get that for $\epsilon_N = h N^{-\frac{1}{2}}\left(\frac{N}{k_N}\right)^\frac{2}{d},$
\begin{align*}
&\sqrt{N}N_2 T_2\\
&= \frac{N_2}{N}\left(\frac{N}{k_N}\right)^\frac{2}{d} \int \nabla_\thetaone p_\thetaone(x)~dx\\  
&+\frac{N_2}{N} \frac{\Gamma(k_N+1+\frac{2}{d})}{k_N^\frac{2}{d}\Gamma(k_N+1)} \bigintsss p_\thetaone(x) \left(\frac{\tr(\Hnormal_x (p_\thetaone)(x)) h^T p_\thetaone(x)}{2dV_d^\frac{2}{d}p_\thetaone(x)^{2+\frac{2}{d}}} + \frac{p_\thetaone(x)(\tr(\Hnormal_x (h^T \nabla_\thetaone p_\thetaone)(x)))}{2(d+2)V_d^\frac{2}{d}p_\thetaone(x)^{2+\frac{2}{d}}}\right) ~ dx\\
&- \frac{N_2}{N} \frac{\Gamma(k_N+1+\frac{2}{d})}{k_N^\frac{2}{d}\Gamma(k_N+1)} \frac{d+1}{d(d+2)} \bigintsss p_\thetaone(x)~
\frac{p_\thetaone(x) h^T \nabla_\thetaone p_\thetaone(x)\tr(\Hnormal_x p_\thetaone(x))}{V_d^\frac{2}{d}p_\thetaone(x)^{3+\frac{2}{d}}} ~ dx\\
&+ \frac{N_2}{N} \left(\frac{N}{k_N}\right)^\frac{2}{d} O\left(r_N(K)^3\right).
\end{align*}

The integral in the first term equals 0 and last term tends to 0 by definition of $r_N(K).$ Hence, only the second and third terms feature in the limit. After some rewriting and applying Stirling's approximation to find the limiting value of the ratio of Gamma functions, we get that
\begin{align}\label{eq::limit_of_T2}
    \sqrt{N}N_2 T_2 \to \frac{q}{2(d+2)V_d^\frac{2}{d}} \bigintsss h^T \nabla_\thetaone \left(\frac{\tr(\Hnormal p_\thetaone(x))}{p_\thetaone(x)}\right) p_\thetaone^{1-\frac{2}{d}}(x) ~ dx.
\end{align}

The limit of gradient term in \Cref{lemma::gradient_limit} is gotten by combining \eqref{eq::limit_of_T1} and \eqref{eq::limit_of_T2}.

\subsection{Limit of the Hessian term}

This section will be dedicated to proving \Cref{lemma::hessian_limit}. If $\epsilon_N = N^{-\frac{1}{4}}h$ for some $h$, then to find the limit $\sqrt{N}\epsilon_N^T (H_\thetaone \mu_N(\thetaone,\thetaone)) \epsilon_N$ it suffices to find the limit $h^T(H_\thetaone \mu_N(\thetaone,\thetaone)) h$ for any given $h.$\\

We can differentiate under the integral sign in the expression for $\mu_N(\theta_1,\theta_2)$ given in \eqref{eq::mu_N_defn} to get
\begin{align}
\begin{split}\label{eq::Hessian_expression}
    h^T (\Hnormal_\thetaone \mu_N(\theta_1,\theta_1)) h &= \int p_\thetaone(x) (h^T \Hnormal_\thetaone p_\thetaone(y) h) \rho_{k_N}^{\thetaone,\thetaone}(x,y)~dx~dy\\
    &+ 2\int p_\thetaone(x) (h^T \nabla_\thetaone p_\thetaone(y)) (h^T \nabla_\thetaone \rho_{k_N}^{\thetaone,\thetaone}(x,y))~dx~dy\\
    &+ \int p_\thetaone(x) p_\thetaone(y) (h^T \Hnormal_\thetaone \rho_{k_N}^{\thetaone,\thetaone}  h)(x,y)~dx~dy.
\end{split}
\end{align}

We will call the expressions in \eqref{eq::Hessian_expression} as $T_{21}, T_{22}, T_{23}$ respectively.

We take $f(y)=h^T (\Hnormal_{\thetaone} p_\thetaone(y)) h$ and apply \Cref{lemma::lemma_for_T1} to get
\begin{align*}
    T_{21} = \int h^T \Hnormal_\thetaone p_\thetaone(x) h ~dx + O\left(\left(\frac{k_N}{N}\right)^\frac{2}{3}\right) + O(r_N(K)^3).
\end{align*}

Since the integral in the above expression is equal to $0,$ we get
\begin{equation}\label{eq::limit_of_T_21}
    T_{21} \to 0.
\end{equation}

We now come to $T_{22}.$ As we did in the previous section, we can use the exact form of $\rho_{k_N}^{\thetaone,\thetaone}(x,y)$ given in \eqref{eq::expression_for_rho_kn} to get that
$$\nabla_\thetaone \rho_{k_N}^{\thetaone,\theta_2}(x,y) = -N_2 \left(\int_{B(x,\|x-y\|)} \nabla_\thetaone p_\thetaone(z)~dz\right) \frac{(\lambda_N^{\thetaone,\thetaone}(x,y))}{(k_N-1)!}^{k_N-1}\exp(-\lambda_N^{\thetaone,\thetaone}(x,y)).$$

Using \Cref{lemma::lemma_for_T2} with $f = g = h^T \nabla_\thetaone p_\thetaone$ to get
$$T_{22} = -\frac{2N_2}{N}\bigintsss p_\thetaone(x) \left(\frac{h^T \nabla_\thetaone p_\thetaone(x)}{p_\thetaone(x)}\right)^2~dx + \frac{N_2}{N} O\left(\left(\frac{k_N}{N}\right)^\frac{3}{d}\right) + \frac
{N_2}{N}O(r_N(K)^3).$$

Hence, as $N\to \infty$
\begin{equation}\label{eq::limit_of_T_22}
    T_{22} \to -2q\E \left(\frac{h^T \nabla_\thetaone p_\thetaone(X)}{p_\thetaone(X)}\right)^2.
\end{equation}

To rewrite $T_{23}$ in a form which allows us to use previous results, we use the explicit summation form of $\rho_{k_N}^{\thetaone,\thetaone}(x,y)$ given in \eqref{eq::expression_for_rho_kn}. Using this, we can write $T_{23}$ as
\begin{align*}
T_{23} &= -N_2~\frac{N}{k_N}\bigintsss p_\thetaone(x) p_\thetaone(y)\left(\bigintsss_{B(x,\|x-y\|)} h^T ~ \Hnormal_\thetaone p_\thetaone(z) ~ h~dz\right)\\
& \qquad \qquad \qquad \qquad \times \frac{(\lambda_N^{\thetaone,\thetaone}(x,y))}{(k_N-1)!}^{k_N-1}\exp\left(-\lambda_N^{\thetaone,\thetaone}(x,y)\right)~dx~dy\\
&+ N_2^2 \frac{N}{k_N}\bigintsss p_\thetaone(x)p_\thetaone(y) \left(\bigintsss _{B(x,\|x-y\|)} h^T\nabla_\thetaone p_\thetaone(z)~dz\right)^2\\
& \qquad \qquad \qquad \qquad \times \frac{(\lambda_N^{\thetaone,\thetaone}(x,y))}{(k_N-1)!}^{k_N-1}\exp(-\lambda_N^{\thetaone,\thetaone}(x,y))~dy~dx\\
&- N_2^2 \frac{N}{k_N}\bigintsss p_\thetaone(x)p_\thetaone(y) \left(\bigintsss _{B(x,\|x-y\|)} h^T\nabla_\thetaone p_\thetaone(z)~dz\right)^2\\
& \qquad \qquad \qquad \qquad \times \frac{(\lambda_N^{\thetaone,\thetaone}(x,y))}{(k_N-2)!}^{k_N-2}\exp(-\lambda_N^{\thetaone,\thetaone}(x,y))~dy~dx.
\end{align*}

Call the three terms $T_{231},T_{232}$ and $T_{233}$ respectively. By applying \Cref{lemma::lemma_for_T2} with $f=p_\thetaone$ and $g = h^T (\Hnormal_\thetaone p_\thetaone) h$, we get that
$$T_{231} = - \frac{N_2}{N} \int h^T (\Hnormal_\thetaone p_\thetaone (x))h ~ dx + O\left(\left(\frac{k_N}{N}\right)^\frac{2}{d}\right).$$

Since the integral above is $0,$ we get that
\begin{equation*}
T_{231}\to 0. 
\end{equation*}

To evaluate $T_{232}+T_{233}$, we can use \Cref{lemma::lemma_for_final_hessian_term} with $g=h^T \nabla_\thetaone p_\thetaone$ to get
$$T_{232}+T_{233} \to -2q^2 \int p_\thetaone(x) \frac{(h^T\nabla_\thetaone p_\thetaone(x))}{p_\thetaone(x)^2} = 2q^2\E\left(\frac{h^T\nabla_\thetaone p_\thetaone(X)}{p_\thetaone(X)}\right)^2.$$

Since $T_{231}\to 0,$ we get that
$$T_{23}\to 2q\E\left(\frac{h^T\nabla_\thetaone p_\thetaone(X)}{p_\thetaone(X)}\right)^2.$$

The three limits of $T_{21},T_{22}$ and $T_{23}$ together give us
\begin{equation}\label{eq::limit_T_23}
    h^T \Hnormal_\thetaone \mu_N(\thetaone,\thetaone)~h \to -2pq \cdot \E\left(\frac{h^T \nabla_\thetaone p_\thetaone(X)}{p_\thetaone(X)}\right)^2.
\end{equation}

This proves \Cref{lemma::hessian_limit}.

\subsection{Controlling the remainder term}\label{appendix::controlling_remainder_term}

This section will prove \Cref{lemma::controlling_remainder_term}. Just as with the Hessian, the third derivative can also be written as a sum of multiple terms. \Cref{lemma::lemma_for_T1,lemma::lemma_for_T2} are sufficient to control most of the terms arising from this. The trickiest one to control is the term that comes from taking the third derivative of $\rho_N(\thetaone,\theta)$ with respect to $\theta.$ This term $T$ can be written as
\begin{align*}
T &= N_2^3\frac{N}{k_N}\bigintsss p_\thetaone(x)p_{\theta}(y)\left(\bigintsss_{B(x,\|x-y\|)}h^T\nabla_{\theta}p_{\theta}(z)~dz\right)^3\\
& \qquad \qquad \qquad \qquad \left(\frac{2(\lambda_N^{\theta_1,\theta}(x,y))^{k_N-2}}{(k_N-2)!}\right) \exp\left(-\lambda_N^{\theta_1,\theta}(x,y)\right)~dy~dx\\
& - N_2^3\frac{N}{k_N}\bigintsss p_\thetaone(x)p_{\theta}(y)\left(\bigintsss_{B(x,\|x-y\|)}h^T\nabla_{\theta}p_{\theta}(z)~dz\right)^3\\ & \qquad \qquad \qquad \qquad \frac{(\lambda_N^{\theta_1,\theta}(x,y))^{k_N-1}}{(k_N-1)!} \exp\left(-\lambda_N(\theta_1,\theta)(x,y)\right)~dy~dx\\
& - N_2^3\frac{N}{k_N}\bigintsss p_\thetaone(x)p_{\theta}(y)\left(\bigintsss_{B(x,\|x-y\|)}h^T\nabla_{\theta}p_{\theta}(z)~dz\right)^3\\
& \qquad \qquad \qquad \qquad \frac{(\lambda_N^{\theta_1,\theta}(x,y))^{k_N-3}}{(k_N-3)!} \exp\left(-\lambda_N(\theta_1,\theta)(x,y)\right)~dy~dx,
\end{align*}

for some $\theta$ on the segment joining $\theta_1$ and $\theta_2.$ We need to show that the above term is bounded. For convenience, we will show that it  is bounded for $\theta=\theta_1.$ The general case is similar but the proof is more tedious.

The idea is similar to the one used in the proof of \Cref{lemma::lemma_for_final_hessian_term}. The similarity between the two terms is evident. The only difference is that we have a cubic term above rather than a square term as in \Cref{lemma::lemma_for_final_hessian_term}.

Just as in \Cref{lemma::lemma_for_final_hessian_term}, we will show point-wise boundedness of the inner integral over $y$ for every $x.$ As before, compactness of the support will give a uniform bound over $x\in S$ and hence we will have shown that $T$ is bounded.

For a given $x$ and a function $g$ satisfying the assumptions in \Cref{lemma::lemma_for_final_hessian_term}, define
\begin{align*}
\Ical_N(x) &= N_2^3\frac{N}{k_N}\bigintsss p_\theta(y)\left(\bigintsss_{B(x,\|x-y\|)}h^T\nabla_\theta p_\theta(z)~dz\right)^3\\
& \qquad \qquad \qquad \qquad \qquad \times \left(\frac{2(\lambda_N^{\theta,\theta}(x,y))^{k_N-2}}{(k_N-2)!}\right) \exp\left(-\lambda_N^{\theta,\theta}(x,y)\right)~dy~dx\\
& - N_2^3\frac{N}{k_N}\bigintsss p_\theta(x)p_{\theta}(y)\left(\bigintsss_{B(x,\|x-y\|)}h^T\nabla_{\theta}p_{\theta}(z)~dz\right)^3\\
& \qquad \qquad \qquad \qquad \times \frac{(\lambda_N^{\theta,\theta}(x,y))^{k_N-1}}{(k_N-1)!} \exp\left(-\lambda_N^{\theta,\theta}(x,y)\right)~dy~dx\\
& - N_2^3\frac{N}{k_N}\bigintsss p_\theta (x) p_\theta (y)\left(\bigintsss_{B(x,\|x-y\|)}h^T\nabla_\theta p_\theta (z)~dz\right)^3\\
& \qquad \qquad \qquad \qquad \times \frac{(\lambda_N^{\theta,\theta}(x,y))^{k_N-3}}{(k_N-3)!} \exp\left(-\lambda_N^{\theta,\theta}(x,y)\right)~dy~dx.
\end{align*}

WLOG we assume $x = 0.$ As before, it is enough to restrict the integral to $\|y\|\leq r_N(K).$ For simplicity, we will simply denote the integral by $\Ical_N.$ The proof now proceeds in multiple steps as in the proof of \Cref{lemma::lemma_for_final_hessian_term}. At numerous times in the following steps, we will use the same notation to refer to different constants that do not depend on $N$ or $x\in S$.

\textbf{Step 1} : As in the first step of the proof of \Cref{lemma::lemma_for_final_hessian_term}, we will change the integral to an expectation over Gamma random variables. We achieve this by making the same sequence of variable changes.

We first make the change to spherical coordinates and denote the radius by $r.$ We then make the change of variables $r = \left(\frac{u}{N}\right)^\frac{1}{d}.$ Finally, taking $v = g_N(u)$ where $g_N(u) = \lambda_N^{\theta,\theta}\left(\left(\frac{u}{N}\right)^\frac{1}{d}\right)$ to obtain
\begin{align*}
\Ical_N &= \frac{1}{k_N}\bigintsss_0^{g_N(Nr_N(K)^d)}G_N^3(v)\left(\frac{v^{k_N-1}}{(k_N-1)!} + \frac{v^{k_N-3}}{(k_N-3)!} - 2\frac{v^{k_N-2}}{(k_N-2)!}\right)e^{-v} ~ dv,
\end{align*}

where
$$G_N(v) = \left(N\bigintsss_{B\left(0,\left(\frac{g_N^{-1}(v)}{N}\right)^\frac{1}{d}\right)} g(z)~dz \right).$$

Hence, $\Ical_N$ can be written as
$$\Ical_N = \frac{1}{k_N}\E\left((G_N^3(X+Y_1+Y_2) + G_N^3(X) - 2 G_N^3(X+Y_1))\cdot \indicator_{A_N}\right)$$

where $X,Y_1,Y_2$ are independent random variables with $X\sim \Gamma(k_N-3,1)$ and $Y_1,Y_2 \sim \Expo(1)$ and
$$A_N := \{X,X + Y_1, X+Y_1+Y_2\leq g_N(Nr_N(K)^d)\}.$$
This concludes Step 1.

\textbf{Step 2}: In the second step, we reduce the problem further from one of controlling cubic terms to one of controlling some linear terms. We will also restrict the expectation from the event $A_N$ to a smaller event.

Let
\begin{align*}
\delta_1 &= G_N(X + Y_1) - G_N(X),\\
\delta_2 &= G_N(X+Y_1+Y_2) - G_N(X+Y_1).
\end{align*}

Then,
\begin{equation}\label{eq::difference_of_cubes}
\begin{split}
    (G_N^3(X+Y_1+Y_2) + G_N^3(X) - 2G_N^3(X+Y_1) &= 3G_N^2(X)(\delta_2-\delta_1)\\
    &+ 3G_N(X)((\delta_1+\delta_2)^2 - 2\delta_1^2)\\
    &+ (\delta_1 + \delta_2)^3 - 2\delta_1^3.
\end{split}
\end{equation}

Since $g$ is bounded, using \Cref{lemma::integral_of_density_over_small_balls}, we get
\begin{align*}
g_N(Nr_N(K)^d) &\leq C\cdot K \cdot \max\{(\log N)^2, k_N\},\\
|G_N(v)| &\leq Cv,\\
|\delta_1| &\leq C Y_1,\\
|\delta_2| &\leq C Y_2,
\end{align*}

for some constant $C$ that depends only on $\|g\|_\infty$ and $\|p_\theta\|_\infty.$ Since $Y_1,Y_2$ have finite first and second moments and $\E(X)=k_N-3,$ we see that the second and third expressions in \eqref{eq::difference_of_cubes} is bounded in mean. Showing that $\Ical_N$ is bounded now comes down to showing
$$\left|\frac{1}{k_N}\E\left(G_N^2(X)(\delta_2-\delta_1)\cdot \indicator_{A_N}\right)\right|$$

is bounded. For convenience, going forward we will take 
$$\Ical_N = \frac{1}{k_N}\E\left(G_N^2(X)(\delta_2-\delta_1)\cdot \indicator_{A_N}\right).$$

Let $B,B_1,B_2$ denote the events
\begin{align*}
    B &= \{X \in [0.5 k_N,2k_N]\},\\
    B_1 &= \{Y_1 \leq 5\log k_N\},\\
    B_2 &= \{Y_2\leq 5\log k_N\}.
\end{align*}

By using \Cref{corollary::gamma_concentration} and tail bounds for Exponential random variables, we get that 
$$\prob(B^c),\prob(B_1^c),\prob(B_2^c)\leq k_N^{-5}.$$
Furthermore, using the bounds on $\delta_1,\delta_2$ and $G_N$ established above, we also have that
$$|G_N^2(X)(\delta_2-\delta_1)|\leq C k_N^2(Y_1+Y_2).$$

Hence,
\begin{align*}
    \left|\frac{1}{k_N}\E\left(G_N^2(X)(\delta_2-\delta_1)\cdot \indicator_{A_N\cap S^c}\right)\right| \to 0
\end{align*}

for $S = B,B_1,B_2.$ By taking $K$ large enough, we have that $A_N\cap B \cap B_1\cap B_2 = B \cap B_1\cap B_2.$ Hence, going forward we will denote by $A_N$ the event $B\cap B_1\cap B_2.$ Specifically, $A_N$ is now defined as
$$A_N := \{X\in [0.5k_N, 2k_N]~;~ Y_1,Y_2\leq 5\log k_N\}.$$

Define
$$\Tilde{\delta}_2 = G_N(X+Y_2) - G_N(X).$$

Note that $\tilde{\delta}_2$ and $\delta_1$ have the same distribution and conditioned on $X,$ are functions of $Y_1$ and $Y_2$ respectively.Using the independence of $Y_1,Y_2$ and the new definition of $A_N,$ we get that
$$\E(G_N^2(X)(\deltatilde_2-\delta_1)\cdot \indicator_{A_N}) = 0.$$

Hence, we can now take
$$\Ical_N = \frac{1}{k_N}\E\left(G_N^2(X)(\delta_2-\tilde{\delta}_2)\cdot \indicator_{A_N}\right).$$

This concludes Step 2.

\textbf{Step 3}: This is the longest and final step. In this step, we will control $\delta_2-\Tilde{\delta}_2$ and complete the proof of boundedness of $\Ical_N.$\\

If we show
$$|\delta_2-\Tilde{\delta}_2|\indicator_{A_N}\leq u(Y_1,Y_2) k_N^{-1}$$

for some polynomial $u,$ then we will have that
$$|\Ical_N|\leq \frac{C}{k_N^2}\E(G_N^2(X)\cdot \indicator_{A_N}).$$

since $Y_1,Y_2$ have moments of all orders. From \Cref{lemma::ball_and_surface_integrals,lemma::integral_of_density_over_small_balls} we get that
$$G_N^2(X)\indicator_{A_N}\leq C k_N^2,$$

for some constant $C.$ Putting these together, we will have shown that $\Ical_N$ is bounded. Hence, we only need to prove
$$|\delta_2-\Tilde{\delta}_2|\indicator_{A_N}\leq u(Y_1,Y_2) k_N^{-1}$$

for some polynomial $u.$ For this, we now define the following quantities.
\begin{align*}
    l &:= \left(\frac{g_N^{-1}(X+Y_1+Y_2)}{N}\right)^\frac{1}{d} - \left(\frac{g_N^{-1}(X+Y_1)}{N}\right)^\frac{1}{d},\\
    \Tilde{l} &:= \left(\frac{g_N^{-1}(X+Y_2)}{N}\right)^\frac{1}{d} - \left(\frac{g_N^{-1}(X)}{N}\right)^\frac{1}{d},\\
    p &:= \left(\frac{g_N^{-1}(X+Y_1)}{N}\right)^\frac{1}{d},\\
    \Tilde{p} &:= \left(\frac{g_N^{-1}(X)}{N}\right)^\frac{1}{d}.
\end{align*}

Define $\delta_3$ to be
$$\delta_3 := N\bigintsss_{B(0,\Tilde{p}+l)} g(z)~dz - N \bigintsss_{B(0,\Tilde{p})} g(z)~dz.$$

We know by their definitions that
\begin{align*}
    \delta_2 &= N\bigintsss_{B(0,p+l)} g(z)~dz - N \bigintsss_{B(0,p)} g(z)~dz,\\
    \Tilde{\delta}_2 &= N\bigintsss_{B(0,\Tilde{p}+\Tilde{l})} g(z)~dz - N \bigintsss_{B(0,\Tilde{p})} g(z)~dz.
\end{align*}

We will show that
\begin{equation}\label{eq::bound_needed_on_delta_differences}
\begin{split}
|\deltatilde_2-\delta_2|\cdot \indicator_{A_N}\leq \frac{u_1(Y_1,Y_2)}{k_N} ,\\
|\deltatilde_2-\delta_3|\cdot \indicator_{A_N} \leq \frac{u_2(Y_1,Y_2)}{k_N} ,
\end{split}
\end{equation}

for some polynomials $u_1,u_2.$ These together will give the required bound on $|\delta_2-\deltatilde_2|.$\\

We start with $\Tilde{\delta}_2 - \delta_3.$
\begin{align*}
    |\Tilde{\delta_2} - \delta_3| &= N \left|\bigintsss_{B(0,\Tilde{p}+\Tilde{l})} g(z)~dz - \bigintsss_{B(0,\Tilde{p}+l)} g(z)~dz\right|\\
    &= N\left|\int_{\Tilde{p}+l}^{\Tilde{p}+\Tilde{l}} r^{d-1}\int_{\partial B(0,1)} g(r\cdot u)~du~dr\right|\\
    &\leq N dV_d \|g\|_\infty  \int_{\Tilde{p}+l}^{\Tilde{p}+\Tilde{l}} r^{d-1}~dr\\
    &= N dV_d \|g\|_\infty \Tilde{p}^d\left|\left(1+\frac{l}{\Tilde{p}}\right)^d - \left(1+\frac{\Tilde{l}}{\Tilde{p}}\right)^d\right|.
\end{align*}

Over the event $A_N,$ $l/\Tilde{p}$ and $\Tilde{l}/\Tilde{p}$ are non-negative and bounded above. Hence, we have the bound
\begin{equation}\label{eq::delta_2_tilde-delta_3}
|\Tilde{\delta}_2 - \delta_3| \leq C N \Tilde{p}^{d-1}|l-\Tilde{l}|
\end{equation}

over the event $A_n.$ We now bound $\delta_2-\delta_3.$
\begin{equation}\label{eq::delta_2-delta_3}
\begin{split}
|\delta_2-\delta_3| &= N \left |\int_{\Tilde{p}}^{\Tilde{p}+l} r^{d-1}\int_{\partial B(0,1)}g(r\cdot u)du - \int_{p}^{p+l} r^{d-1}\int_{\partial B(0,1)}g(r\cdot u)du\right |\\
&\leq N\left |\int_{\Tilde{p}}^{\Tilde{p}+l} (r^{d-1} - (r+p-\Tilde{p})^{d-1})\int_{\partial B(0,1)}g(r\cdot u) ~ du ~dr \right |\\
&+ N \left | \int_{\Tilde{p}}^{\Tilde{p}+l} r^{d-1}\int_{\partial B(0,1)}(g(r\cdot u) - g((r+p-\Tilde{p})\cdot u))~du~dr \right |\\
&\leq N \cdot d V_d \left(\|g\|_\infty \left|1-\left(1+\frac{p-\Tilde{p}}{\Tilde{p}}\right)^{d-1}\right| + C \cdot |p-\Tilde{p}|\right)\int_{\Tilde{p}}^{\Tilde{p}+l}r^{d-1}~dr\\
&= N \cdot V_d \left(\|g\|_\infty \left|1-\left(1+\frac{p-\Tilde{p}}{\Tilde{p}}\right)^{d-1}\right| + C \cdot |p-\Tilde{p}|\right)\left((\Tilde{p}+l)^d - \Tilde{p}^d\right),
\end{split}
\end{equation}

where $C$ is some constant that depends on $\|\nabla g\|_\infty.$ From \eqref{eq::delta_2_tilde-delta_3} and \eqref{eq::delta_2-delta_3} we see that the problem has been reduced to showing suitable bounds on $l-\Tilde{l}$ and $p-\Tilde{p}$ over the event $A_N.$ We divide this into two sub-steps.

\textbf{Step 3a:} In this step we will prove the required bound on $|\delta_2-\delta_3|.$\\

Define $\Delta$ and $\Tilde{\Delta}$ as
\begin{align*}
    \Delta_1 &= g_N^{-1}(X+Y_1) - g_N^{-1}(X),\\
    \Delta_2 &= g_N^{-1}(X+Y_1+Y_2) - g_N^{-1}(X+Y_1),\\
    \Tilde{\Delta}_2 &= g_N^{-1}(X+Y_2) - g_N^{-1}(X).
\end{align*}

From \Cref{lemma::integral_of_density_over_small_balls} we see that there exists a constant $C$ depending only on $\|p_\theta\|_\infty$ such that
\begin{align*}
|\Delta_1|\leq C Y_1,\\
|\Delta_2| \leq C Y_2.
\end{align*}

Hence, using the Taylor expansion of the function $x^{1/d}$ near $1,$ we get that over the event $A_N,$
\begin{align*}
|p-\Tilde{p}| &= \Tilde{p}\left(\left(1+\frac{\Delta_1}{N \Tilde{p}^d}\right)^\frac{1}{d} - 1\right)\\
&\leq C \ptilde \frac{Y_1}{N\ptilde^d}.
\end{align*}

for some global constant $C.$ Similarly, we get on the event $A_N,$
\begin{align*}
|l| &= p\left|\left(1+\frac{\Delta_2}{Np^d}\right)^\frac{1}{d} -1 \right|\\
&\leq C p \frac{Y_2}{Np^d},
\end{align*}

The above expressions give us bounds on $|p-\Tilde{p}| ,\quad |p-\Tilde{p}|/\Tilde{p}$ and $|l|/p.$ By \Cref{lemma::integral_of_density_over_small_balls} we see that $p/\ptilde$ is non-negative and bounded above on the event $A_N.$ Hence, substituting the above bounds in \eqref{eq::delta_2-delta_3} gives that over the event $A_N,$
\begin{align*}
|\delta_2-\delta_3| &\leq C N p^d \frac{|l|}{p}\frac{1}{N\ptilde^d}\left(Y_1 + pY_2\right)\\
&\leq C \frac{Y_2}{Np^d}(Y_1 + pY_2).
\end{align*}

From \Cref{lemma::integral_of_density_over_small_balls} we get that $Np_d \geq C k_N$ for some constant $C$ depending only on $\|p_\theta\|_\infty.$ Also, $p\to 0$ uniformly on the event $A_N.$ Hence, we get 
\begin{equation}\label{eq::bound_on_delta_2-delta_3}
|\delta_2-\delta_3|\leq \frac{Y_2(1+2Y_2)}{k_N}
\end{equation}

which provides the bound on $\delta_2-\delta_3$ required in \eqref{eq::bound_needed_on_delta_differences}.

\textbf{Step 3b:} We now show the bound on $|\deltatilde_2-\delta_3|.$\\

From \eqref{eq::delta_2_tilde-delta_3} we see that it is enough to show for some polynomial $u_2,$
\begin{equation}\label{eq::bound_needed_on_l-l_tilde}
|l-\ltilde|\leq \frac{u_2(Y_1,Y_2)}{N^2 p^{2d-1}}.
\end{equation}

Note that
\begin{align*}
l = p \left(\left(1+\frac{\Delta_2}{Np^d}\right)^\frac{1}{d} -1\right).
\end{align*}

As noted before, $p^d\geq Ck_N$ for some global constant $C.$ Hence, by the second order Taylor expansion of the function $x^{1/d}$ around $1,$ we get
$$l = p\left(\frac{\Delta_2}{dNp^d} + O\left(\frac{Y_2^2}{k_N^2}\right)\right).$$

Similarly,
$$\ltilde = \ptilde\left(\frac{\Deltatilde_2}{dN\ptilde^d} + O\left(\frac{Y_2^2}{k_N^2}\right)\right).$$

Note that the second order terms are smaller than the bound desired in \eqref{eq::bound_needed_on_l-l_tilde}. Hence, we only need to bound the difference of the first order terms of $l$ and $\ltilde.$ We will bound the following two terms
\begin{align*}
    \frac{p^{1-d}}{dN}(\Delta_2 - \Deltatilde_2),\\
    \frac{\Deltatilde_2}{dN}(p^{1-d} - \ptilde^{1-d}).
\end{align*}

By triangle inequality, we will have the required bound on $l-\ltilde.$\\

The second term is easier to handle and using the same method as we used to bound $p-\ptilde,$ we can show that
\begin{equation}\label{eq::bound_on_second_term_of_l-ltilde}
\left|\frac{\Deltatilde_2}{dN}(p^{1-d} - \ptilde^{1-d})\right|\leq C\frac{Y_2Y_1}{dN^2}p^{1-2d}
\end{equation}

We now bound the first term. Let $u,v\in [g_N^{-1}(X), g_N^{-1}(X+Y_1+Y_2)].$ We first bound $g_N'(u)-g_N^{-1}(v).$ Using the definition of $g_N,$ we can write out the expression for $g_N'$ and get the difference as
\begin{align*}
    |g_N'(u)-g_N'(v)| &\leq \frac{1}{d}\int_{\partial B(0,1)} \left|p_\theta\left(\left(\frac{u}{N}\right)^\frac{1}{d}z\right) - p_\theta\left(\left(\frac{v}{N}\right)^\frac{1}{d}z\right)\right|~dz\\
    &\leq \frac{C}{d}\left(\frac{u}{N}\right)^\frac{1}{d}\left|1-\left(1+\frac{v-u}{u}\right)^\frac{1}{d}\right|
\end{align*}

On the event $A_N,$ we know that $C k_N\leq u\leq C'k_N,$ $|v-u|\leq C(Y_1+Y_2)\leq 10C\log k_N$ for some global constants $C,C'.$ Hence, Taylor expanding the function $x^{1/d}$ around $1$ in the above bound gives
$$|g_N'(u)-g_N'(v)|\leq C\left(\frac{k_N}{N}\right)^\frac{1}{d}\frac{(Y_1+Y_2)}{N\ptilde^d}$$

In particular, this shows that
\begin{align*}
&|g_N(g_N^{-1}(X+Y_1)+\Deltatilde_2) - (X+Y_1+Y_2)|\\
&\leq \left|\bigintsss_{g_N^{-1}(X+Y_1)}^{g_N^{-1}(X+Y_1)+\Deltatilde_2} g_N'(z)~dz - Y_2\right|\\
&= \left|\bigintsss_{g_N^{-1}(X+Y_1)}^{g_N^{-1}(X+Y_1)+\Deltatilde_2} g_N'(z)~dz - \bigintsss_{g_N^{-1}(X)}^{g_N^{-1}(X)+\Deltatilde_2} g_N'(z)~dz \right|\\
&\leq C \left(\frac{k_N}{N}\right)^\frac{1}{d}\frac{(Y_1+Y_2)}{N\ptilde^d}\Deltatilde_2\\
&\leq C \left(\frac{k_N}{N}\right)^\frac{1}{d}\frac{(Y_1+Y_2)Y_2}{N\ptilde^d},
\end{align*}

where the last step follows from the fact that $\Deltatilde_2\leq CY_2$ for some global constant $C.$ Note that since $g_N$ is increasing and $g_N'$ is positive and bounded below, the above bound shows that
$$|g_N^{-1}(X+Y_1)+\Deltatilde_2 - g_N^{-1}(X+Y_1+Y_2)|\leq C' \left(\frac{k_N}{N}\right)^\frac{1}{d}\frac{(Y_1+Y_2)Y_2}{N\ptilde^d}$$

for some other global constant $C'.$ In particular this shows that
\begin{equation}\label{eq::bound_on_first_term_of_l-ltilde}
|\Delta_2-\Deltatilde_2|\leq  C' \left(\frac{k_N}{N}\right)^\frac{1}{d}\frac{(Y_1+Y_2)Y_2}{N\ptilde^d}.
\end{equation}

Using \eqref{eq::bound_on_first_term_of_l-ltilde} and \eqref{eq::bound_on_second_term_of_l-ltilde}, we get the required bound on $|l-\ltilde|$ and hence the bound needed on $|\deltatilde_2-\delta_3|$ in \eqref{eq::bound_needed_on_delta_differences}. As noted before, this completes the proof of showing $\Ical_N$ is bounded.


\section{Additional material on simulations}

\label{appendix::additional_simulations}

\subsection{Calculations for normal scale perturbations}

In this section, we complete the full calculation of $b(h,\theta_1)$ for the normal scale perturbations in \Cref{subsection::power_comparison_simulation}.

Recall the untruncated density is

$$p(x\mid \theta) = \frac{1}{(2\pi)^{\frac{d}{2}} (\theta_1...\theta_m)^\frac{1}{2}} \exp\left(-\sum_{i=1}^m\frac{x_i^2}{2\theta_i} + \sum_{j>m}\frac{x_i^2}{2}\right),$$

while the truncated density is
$$p_L(x\mid \theta) = \frac{1}{Z_L(\theta)} \exp\left(-\sum_{i=1}^m\frac{x_i^2}{2\theta_i} + \sum_{j>m}\frac{x_i^2}{2}\right)\indicator\{x\in [-L,L]^d\}.$$
for $x\in [-L,L]^d$, where $Z_L(\theta)$ is some normalizing constant. We want to find the sign of the integral
$$\bigintssss_{[-L,L]^d} h^T\nabla_{\theta_1}\left(\frac{\tr(\Hnormal_x p_L(x|\theta_1))}{p_L(x|\theta_1)}\right)p_L^{\frac{d-2}{d}}(x|\theta_1) ~ dx,$$

To calculate the trace of the Hessian, we first note that
$$\nabla_x p_L(x\mid \theta) = -p_L(x\mid \theta)\left(\sum_{i=1}^m \frac{x_i}{\theta_i}e_i + \sum_{i>m}x_ie_i\right).$$

Hence,
\begin{equation*}
\nabla^2_{x_i}p_L(x\mid \theta) = \begin{cases*}
-p_L(x\mid \theta) \left(\frac{1}{\theta_i} - \frac{x_i^2}{\theta_i^2}\right) \text{ if $i\leq m$,}\\
-p_L(x\mid \theta)\left(1-x_i^2\right) \text{ otherwise.}
\end{cases*}
\end{equation*}

This gives
$$\frac{\tr(\Hnormal_x p(x_L\mid\theta))}{p_L(x\mid \theta)} \implies h^T \nabla_x \left(\frac{\tr(\Hnormal_x p_L(x\mid \theta))}{p_L(x\mid \theta)}\right) = \sum_{i=1}^m h_i(1-2x_i^2).$$

As a result we have for $\theta_1 = \indicator_m,$
\begin{align*}
\bigintssss_S h^T\nabla_{\theta_1}\left(\frac{\text{tr}(\text{H}_x p_L(x|\theta_1))}{p(x|\theta_1)}\right)p_L^{\frac{d-2}{d}}(x|\theta_1) ~ dx &= \bigintssss_{[-L,L]^d} 
\left(\sum_{i=1}^m h_i(1-2x_i^2)\right) p_L^{\frac{d-2}{d}}(x|\theta_1)\\
&\propto \sum_{i=1}^m h_i(1-2\E(X_i^2)),
\end{align*}
where $X_i \sim N\left(0,\frac{d}{d-2}\right)\mid_{[-L,L]}$. Thus, the sign of $b(h,\theta_1)$ is determined by the sign of $\sum h_i$ for $\theta_1 = \indicator_m.$

In calculating the gradients and Hessians, we never used the boundedness of $L$. The truncation only entered at the final step when actually evaluating the integral. Hence, in the untruncated case we have
\begin{align*}
\bigintssss h^T\nabla_{\theta_1}\left(\frac{\text{tr}(\text{H}_x p_L(x|\theta_1))}{p(x|\theta_1)}\right)p_L^{\frac{d-2}{d}}(x|\theta_1) ~ dx &= \bigintssss 
\left(\sum_{i=1}^m h_i(1-2x_i^2)\right) p_L^{\frac{d-2}{d}}(x|\theta_1)\\
&\propto \sum_{i=1}^m h_i(1-2\E(X_i^2)),
\end{align*}
where $X_i \sim N\left(0,\frac{d}{d-2}\right)$.

\subsection{Power in log-normal family}

We will consider the standard log-normal location family, where the density is given by
$$p(x\mid \theta) = \frac{1}{(2\pi)^\frac{d}{2}x_1\dots x_d}\exp\left(-\frac{\|\log x - \theta\|^2}{2}\right).$$

As in the previous power simulations, we will take $\theta_1$ to be the null and $\theta_N = \theta_1 + hN^{b}$ to be the local alterative, for some negative exponent $b$ and some direction $h\neq 0.$ We will consider the standard Lognormal to be the null, giving $\theta_1 = 0.$ To determine the effect of the direction, we require $b(h,\theta_1)$, defined in \Cref{eq::Gradient_term}. For this, we do the following calculation. We see that
$$\log p(x\mid \theta) = -\frac{\|\log x - \theta\|^2}{2} - \sum_{i=1}^d \log x_i + C,$$

where $C$ is some constant that does not depend on $x$ or $\theta.$ Hence,
$$\nabla_{x_i} p(x\mid \theta) = -p(x\mid \theta)\left(\frac{(\log x_i - \theta_i)}{x_i} + \frac{1}{x_i}\right).$$

This in turn gives
\begin{align*}
\nabla^2_{x_i}p(x\mid \theta) &= p(x\mid \theta)\left(\left(\frac{\log x_i - \theta_i}{x_i} + \frac{1}{x_i}\right)^2 + \frac{(\log x_i - \theta_1)}{x_i^2}\right).
\end{align*}

To find the sign of $b(h,\theta_1)$ it is enough to find the sign of the following integral
\begin{align*}
\bigintssss_S h^T\nabla_{\theta_1}\left(\frac{\text{tr}(\text{H}_x p(x|\theta_1))}{p(x|\theta_1)}\right)p^{\frac{d-2}{d}}(x|\theta_1) ~ dx.
\end{align*}
\begin{figure}
    \centering
    \captionsetup{width = 0.9\linewidth}
    \includegraphics[width=0.9\linewidth]{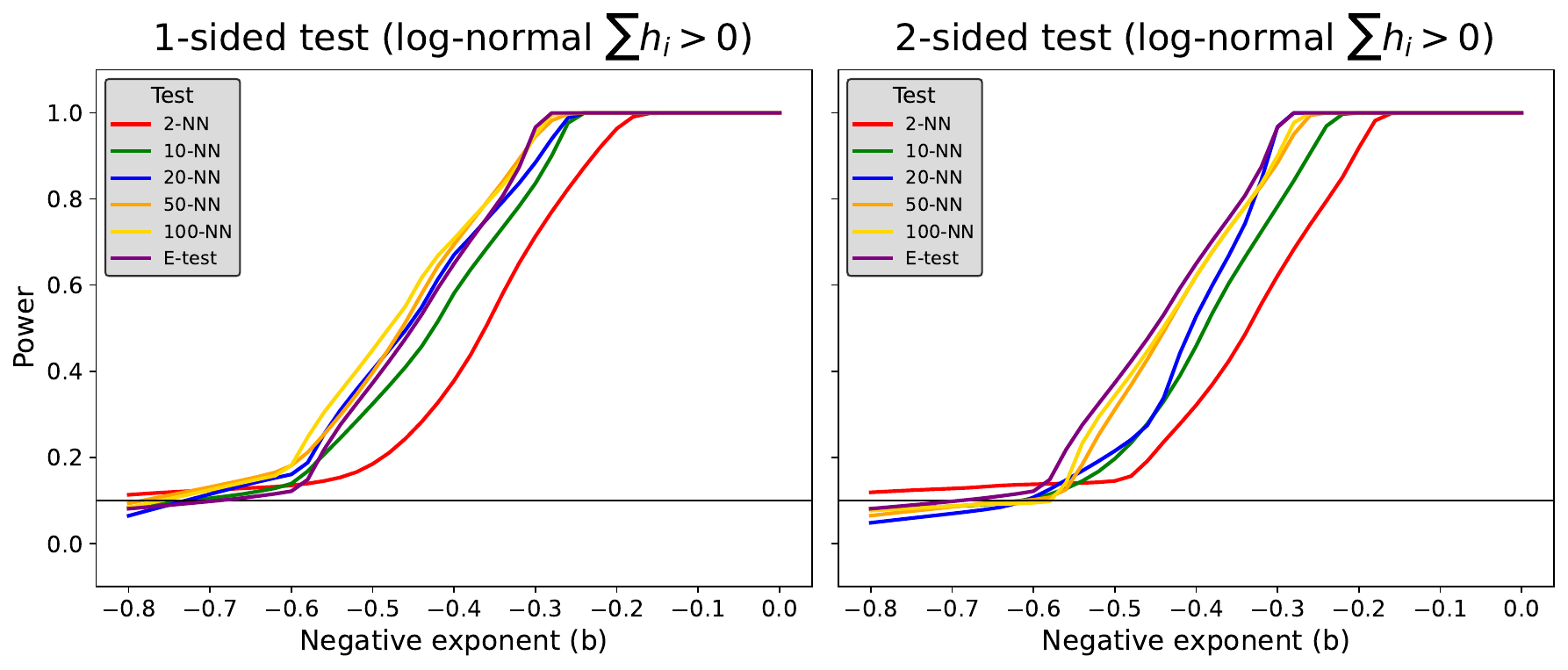}
    \caption{Power comparison in log normal family with $\sum h_i > 0$.}
    \label{fig::log_normal_family_pos_sum_lineplots}
\end{figure}

Hence, for $\theta_1=0,$ using the above expression for the second derivatives we have
\begin{align*}
&h^T\nabla_{\theta_1}\left(\frac{\text{tr}(\text{H}_x p(x|\theta_1))}{p(x|\theta_1)}\right)p^{\frac{d-2}{d}}(x|\theta_1) ~ dx\\
&= - \left(\sum_{i=1}^d h_i\right) \bigintssss \frac{1}{x^\frac{d-2}{d}(2\pi)^\frac{(d-2)}{2d}}\left(\frac{3}{x^2} + \frac{\log x}{x^2}\right)\exp\left(-\frac{(d-2)(\log x)^2}{2d}\right)~dx\\
&= - C \left(\sum_i h_i\right)\bigintssss \frac{1}{(2\pi)^\frac{1}{2}}\left(\frac{d-2}{d}\right)^\frac{1}{2} e^{\frac{2u}{d}}\left(3e^{-2u} + ue^{-2u}\right)\exp\left(-\frac{(d-2)u^2}{2d}\right) ~ du\\
&= -C \left(\sum_i h_i\right)\E\left(e^{\frac{2U}{d}}\left(3e^{-2U} + Ue^{-2U}\right)\right),
\end{align*}

where $C$ is some positive constant and $U\sim N\left(0,\frac{d}{d-2}\right)$. Note that the last expectation can be computed quite easily, and we get that it is negative. Hence, for the log
$$b(h,\theta_1)\propto -\sum h_i.$$

This shows that $b(h,\theta_1)<0$ if $\sum h_i>0$ and $b(h,\theta_1)>0$ if $\sum h_i<0$. Using this, we can describe the expected behavior of the 1- and 2-sided tests. From\Cref{thm::both_tests_above_criticality_broad_regimes}, the power of the 1- and 2-sided tests can be described as follows:

\textbf{The case of $\sum h_i>0$:} In this case, $b(h,\theta_1)<0.$ Hence, we can use \Cref{thm::both_tests_above_criticality_broad_regimes} to derive the limiting power. Recall that we take $k_N = N^\gamma$ for some $\gamma<1$.
\begin{itemize}
    \item When $\delta < -\frac{1}{2} + \frac{2(1-\gamma)}{d}$, both tests have limiting power $\alpha.$
    \item When $-\frac{1}{2} + \frac{2(1-\gamma)}{d}<\delta < -\frac{2(1-\gamma)}{d},$ the limiting power of both tests is $1.$
    \item When $\delta>-\frac{2(1-\gamma)}{d},$ the limiting power of both tests is $1$
\end{itemize}
From the above, we see that both tests should have high power for $\delta>-\frac{1}{2}+\frac{2(1-\gamma)}{d}.$ For $d=25,$ this lower bound is extremely close to $-\frac{1}{2}$ for any value of $\gamma$, and we should see both tests transition from low to high power close to $\delta = -\frac{1}{2}.$ This is given in \Cref{fig::log_normal_family_pos_sum_lineplots}.

\textbf{The case of $h>0$:} In this case, $b(h,\theta_1)<0.$ Hence, we can use \Cref{thm::both_tests_above_criticality_broad_regimes} to derive the limiting power. Recall that we take $k_N = N^\gamma$ for some $\gamma<1$.
\begin{itemize}
    \item When $\delta < -\frac{1}{2} + \frac{2(1-\gamma)}{d}$, both tests have limiting power $\alpha.$
    \item When $-\frac{1}{2} + \frac{2(1-\gamma)}{d}<\delta < -\frac{2(1-\gamma)}{d},$ the limiting power of both tests is $1.$
    \item When $\delta>-\frac{2(1-\gamma)}{d},$ the limiting power of both tests is $1$
\end{itemize}
From the above, we see that both tests should have high power for $\delta>-\frac{1}{2}+\frac{2(1-\gamma)}{d}.$ For $d=25,$ this lower bound is extremely close to $-\frac{1}{2}$ for any value of $\gamma$, and we should see both tests transition from low to high power close to $\delta = -\frac{1}{2}.$ The power comparison is given in \Cref{fig::log_normal_family_neg_sum_lineplots}.

\begin{figure}
    \centering
    \captionsetup{width = 0.9\linewidth}
    \includegraphics[width=0.9\linewidth]{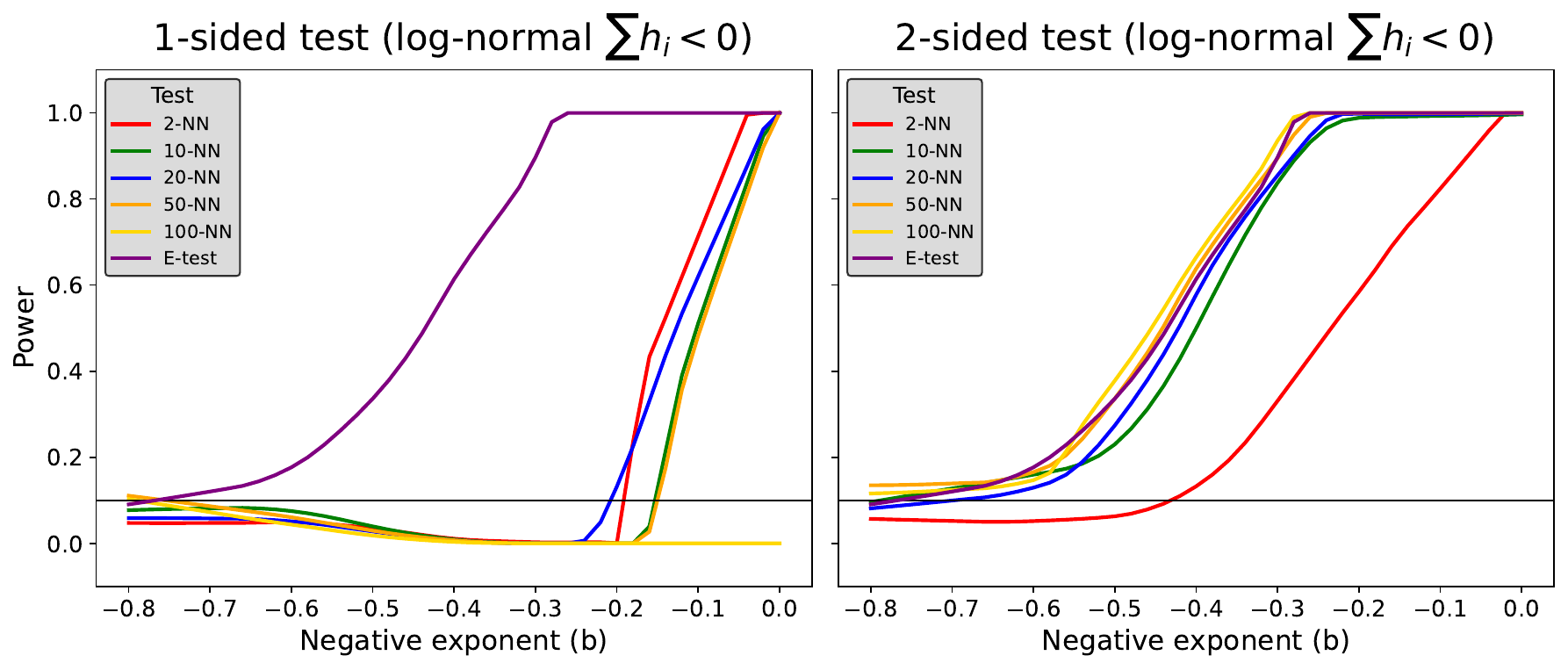}
    \caption{Power comparison in log normal family with $\sum h_i < 0$.}
    \label{fig::log_normal_family_neg_sum_lineplots}
\end{figure}

\subsection{Effect of dimension}

In this section, we will present some simulations to demonstrate the difference between the behavior of the tests when the ambient dimension $d$ is below criticality versus when it is above criticality. For this purpose, we consider the following 1-dimensional parametric family in $d$ dimensions. For $\theta>0,$
$$p(x\mid \theta) := \frac{1}{(2\pi \theta)^{\frac{d}{2}}} \exp\left(-\frac{1}{2\theta}\sum_{i=1}^d x_i^2 \right).$$

Note that the above family is a subset of the normal scale family described in \Cref{section::simulations}. Following the same example, we take the null to correspond to some fixed $\theta_1>0$ and the local alternative is taken to be $\theta_N = \theta_1 + hN^{b}$ for sample size $N$ and some negative exponent $b$. We will study the differences between the 1- and 2-sided tests for the cases of $h>0$ and $h<0$ for $d=2,4,6,10$.

The case of $h>0$ is shown in \Cref{fig::spherical_normal_across_dimension_positive_deviation}. In this case, for any given dimension the 1- and 2-sided tests do not display any significant difference in behavior for any value of $k$. This aligns with the results of \Cref{thm::power_below_criticality,thm::both_tests_above_criticality_broad_regimes}, and the specific calculations for the given normal scale family given in \Cref{section::simulations} which show that the detection thresholds of both tests are equal in all dimensions.

The case of $h<0$ is shown in \Cref{fig::spherical_normal_across_dimension_negative_deviation}. In this case, the differences between the 1- and 2-sided tests are more visible. As the dimension increases, the 1-sided tests for the highest values of $k$ get consistently worse, while the 2-sided tests for the same values of $k$ consistently improve. This is due to the exponent gap that arises when the ambient dimension crosses the critical dimension $d_c$.

The critical dimension $d_c(\gamma)$ for $k_N = N^\gamma$ is a function of the growth rate of the number of neighbors $k_N$ given \Cref{eq::critical_dimension_recalled}. The same equation shows that $d_c(\gamma)$ is non-increasing in $\gamma$. In particular, for a greater value of $\gamma$ the phase transition in the detection threshold occurs at lower dimensions, and the 1- and 2-sided tests start to differ at lower dimensions. This is demonstrated in \Cref{fig::spherical_normal_across_dimension_negative_deviation} where the 1-sided tests for higher values of $k$ lose power while the 2-sided tests increase in power, particularly for higher values of $k$.  

\begin{figure}[p]
\centering
\captionsetup{width = 0.8\linewidth}
\includegraphics[width = 0.7\linewidth]{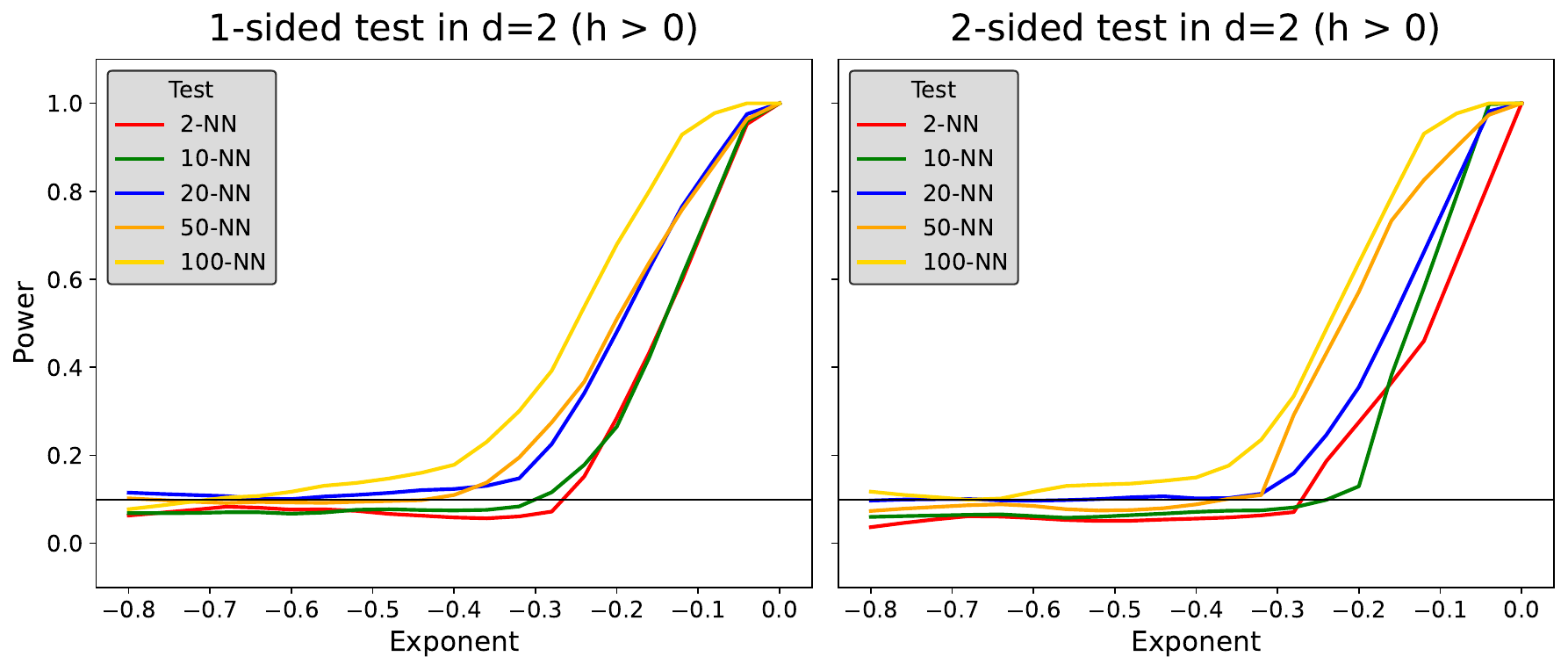}
\vfill
\includegraphics[width = 0.7\linewidth]{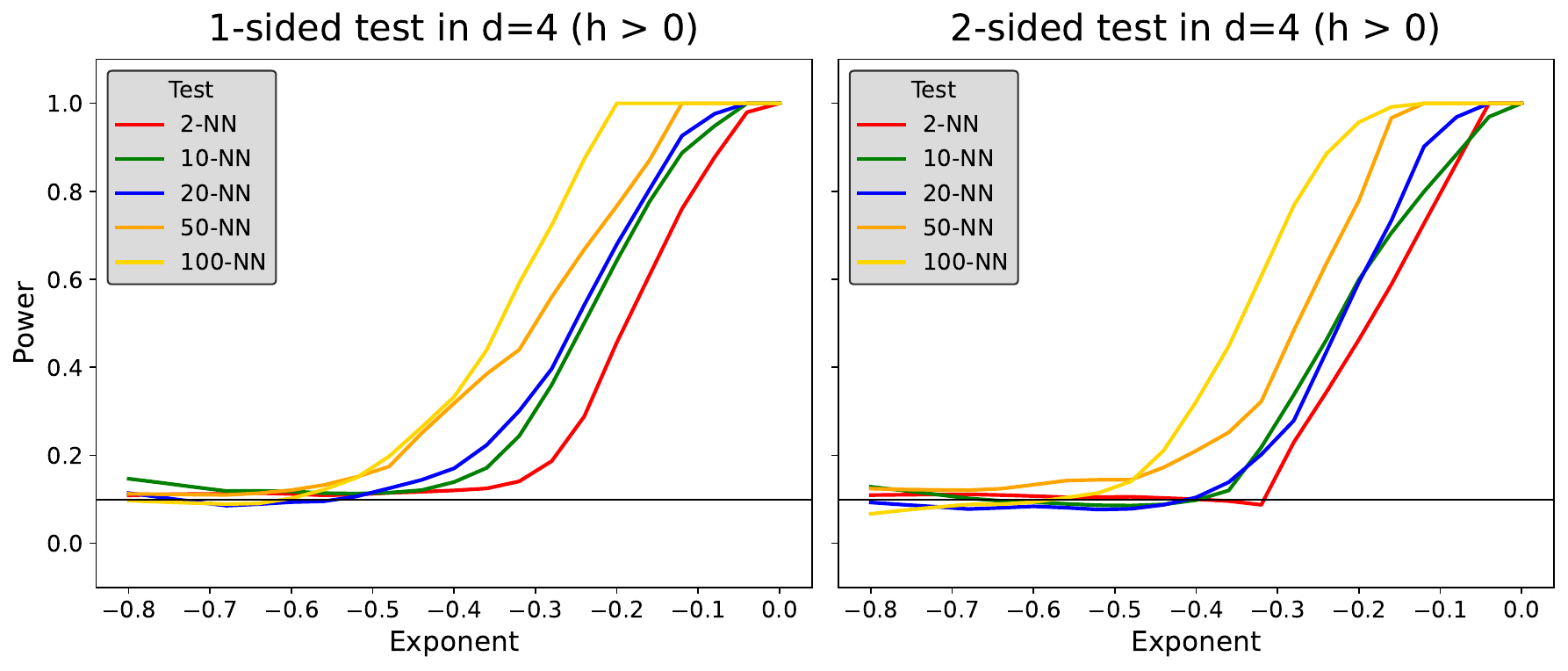}
\vfill
\includegraphics[width = 0.7\linewidth]{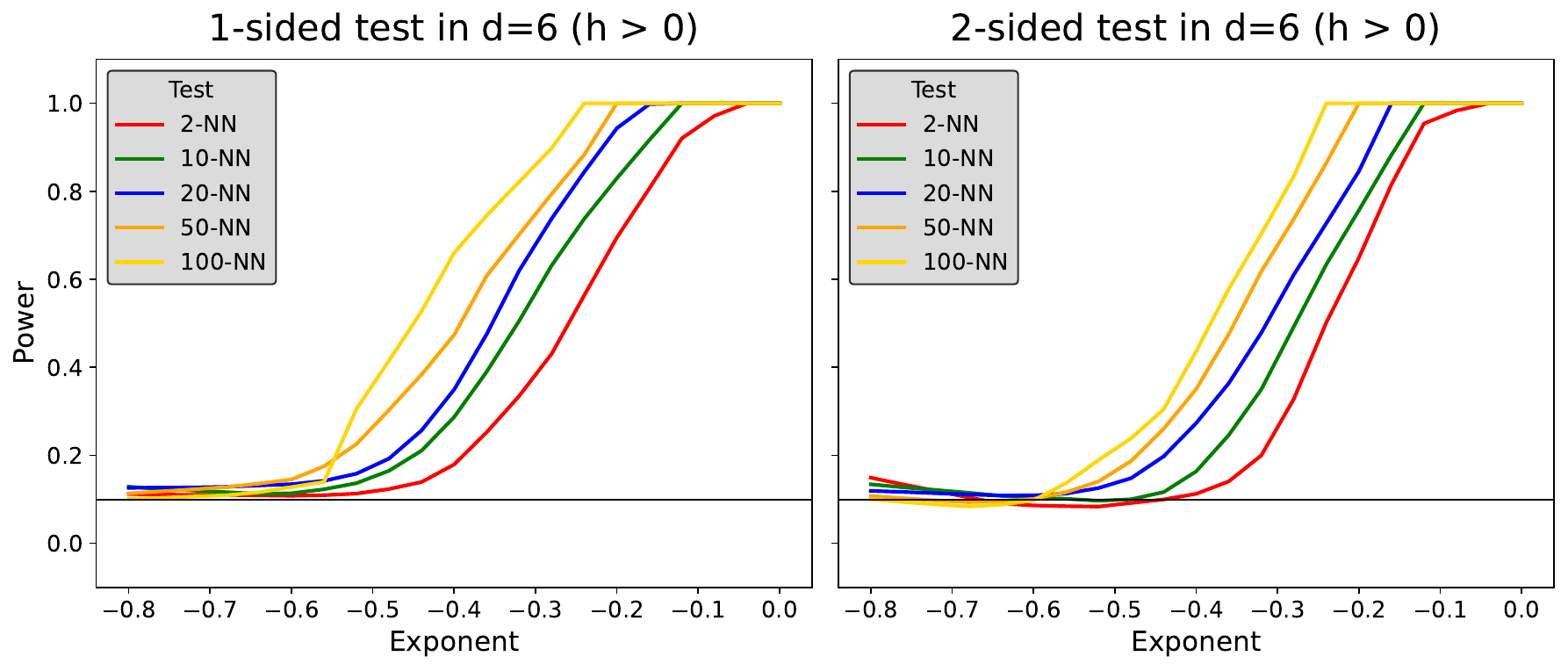}
\vfill
\includegraphics[width = 0.7\linewidth]{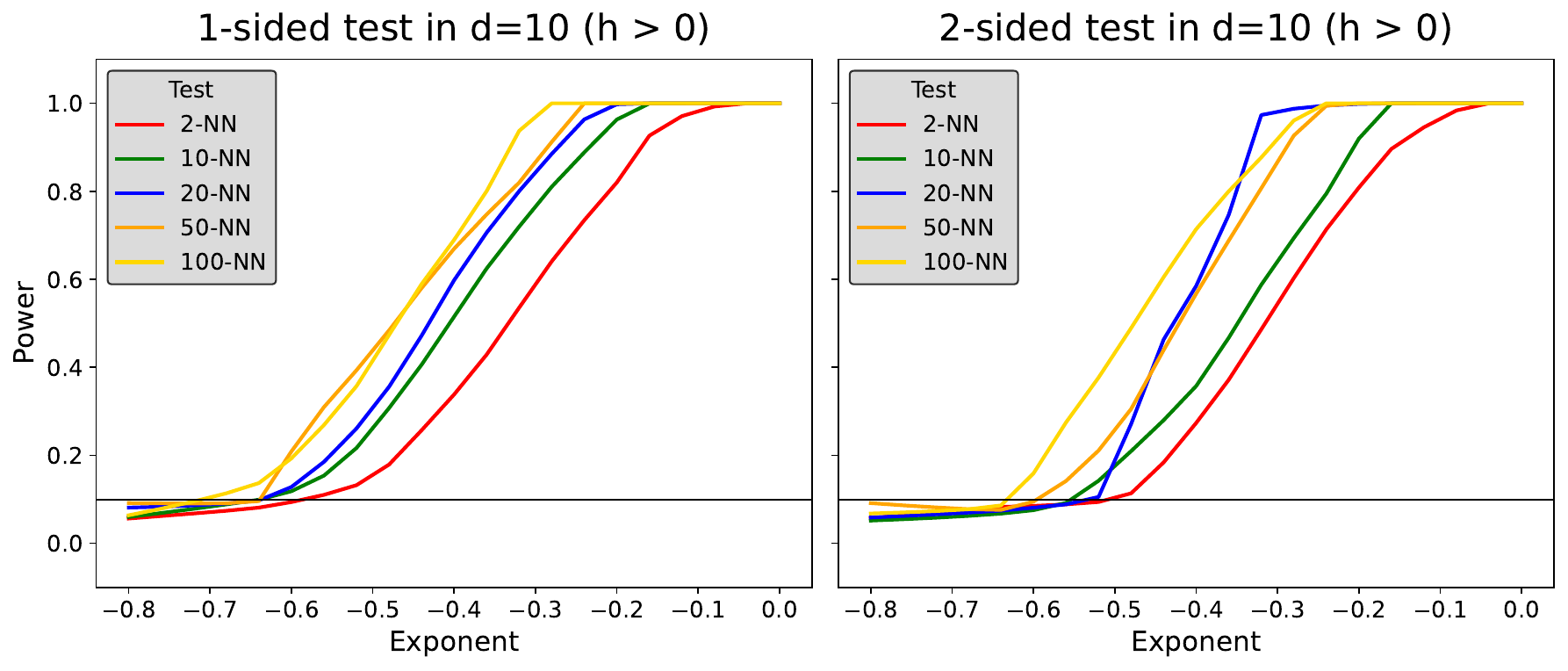}
\caption{}
\label{fig::spherical_normal_across_dimension_positive_deviation}
\end{figure}

\FloatBarrier

\begin{figure}[p]
\centering
\captionsetup{width = 0.7\linewidth}
\includegraphics[width = 0.7\linewidth]{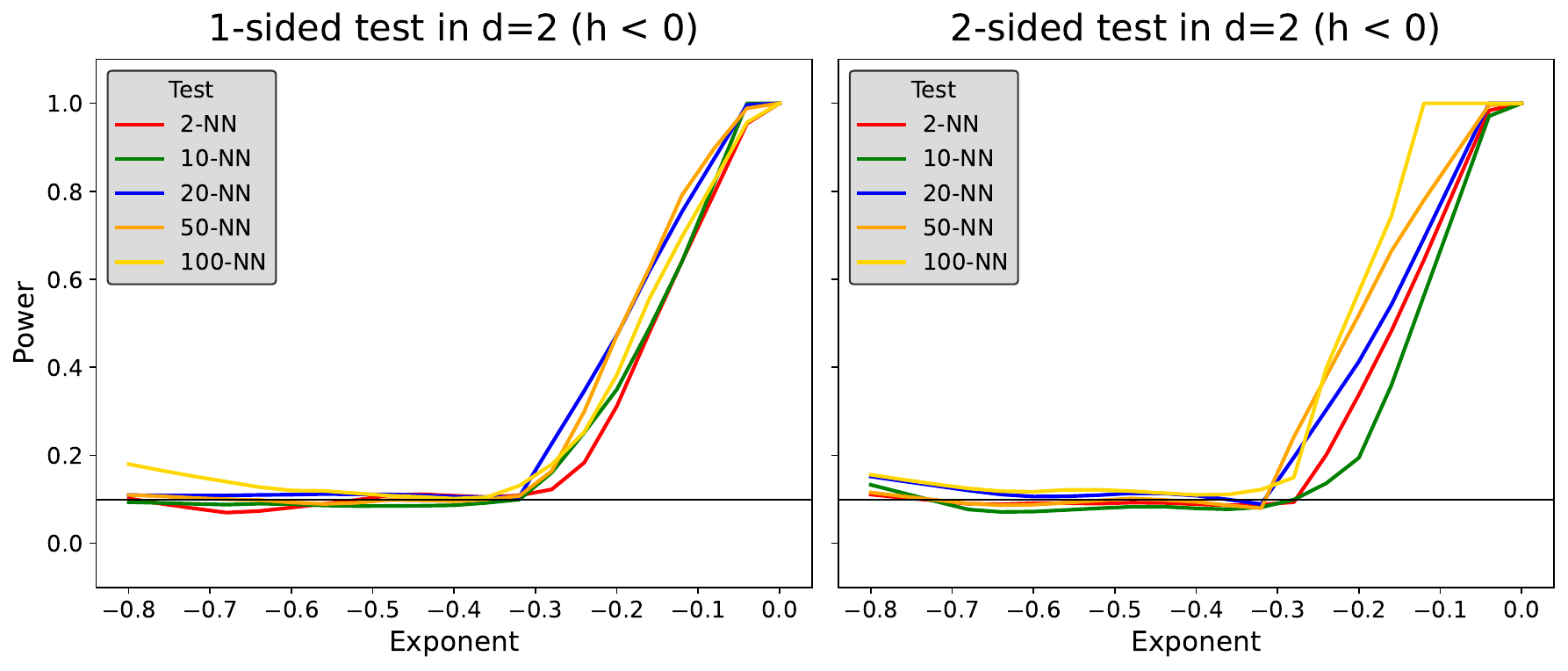}
\vfill
\includegraphics[width = 0.7\linewidth]{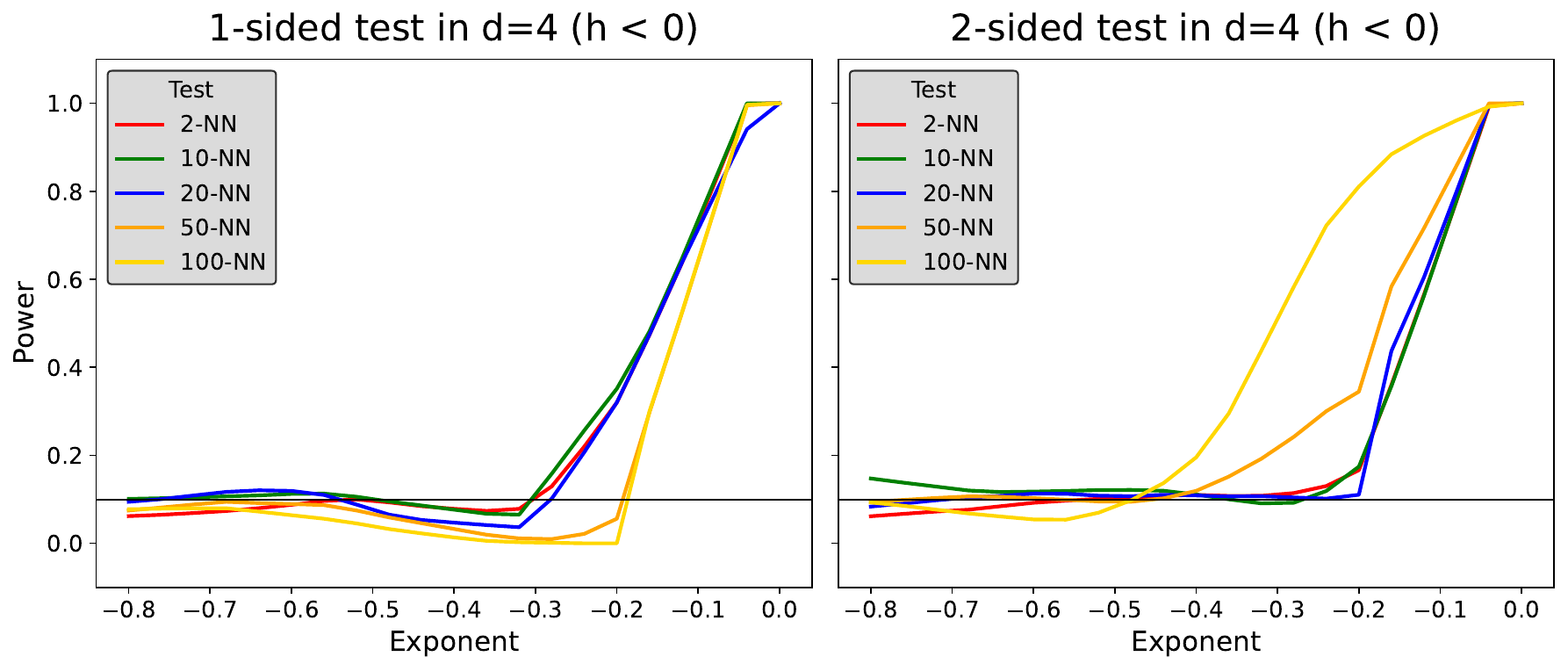}
\vfill
\includegraphics[width = 0.7\linewidth]{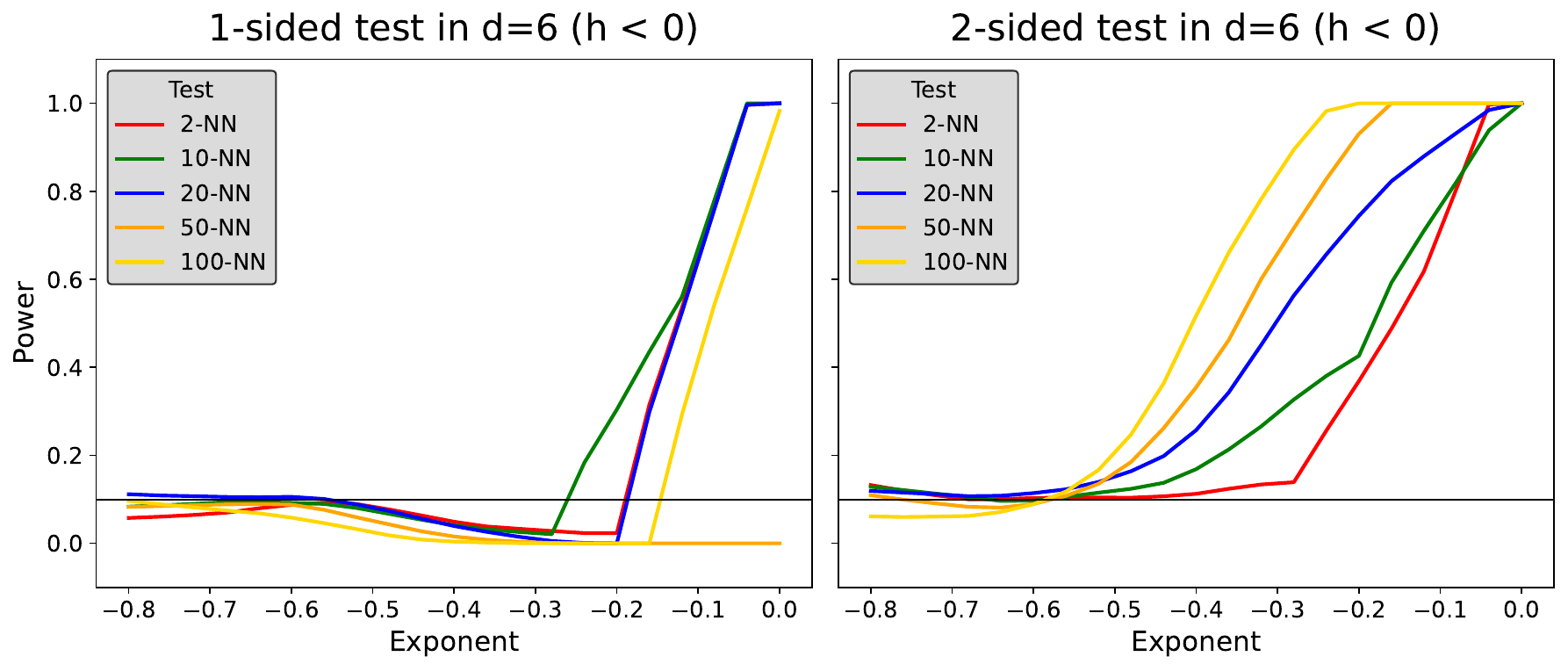}
\vfill
\includegraphics[width = 0.7\linewidth]{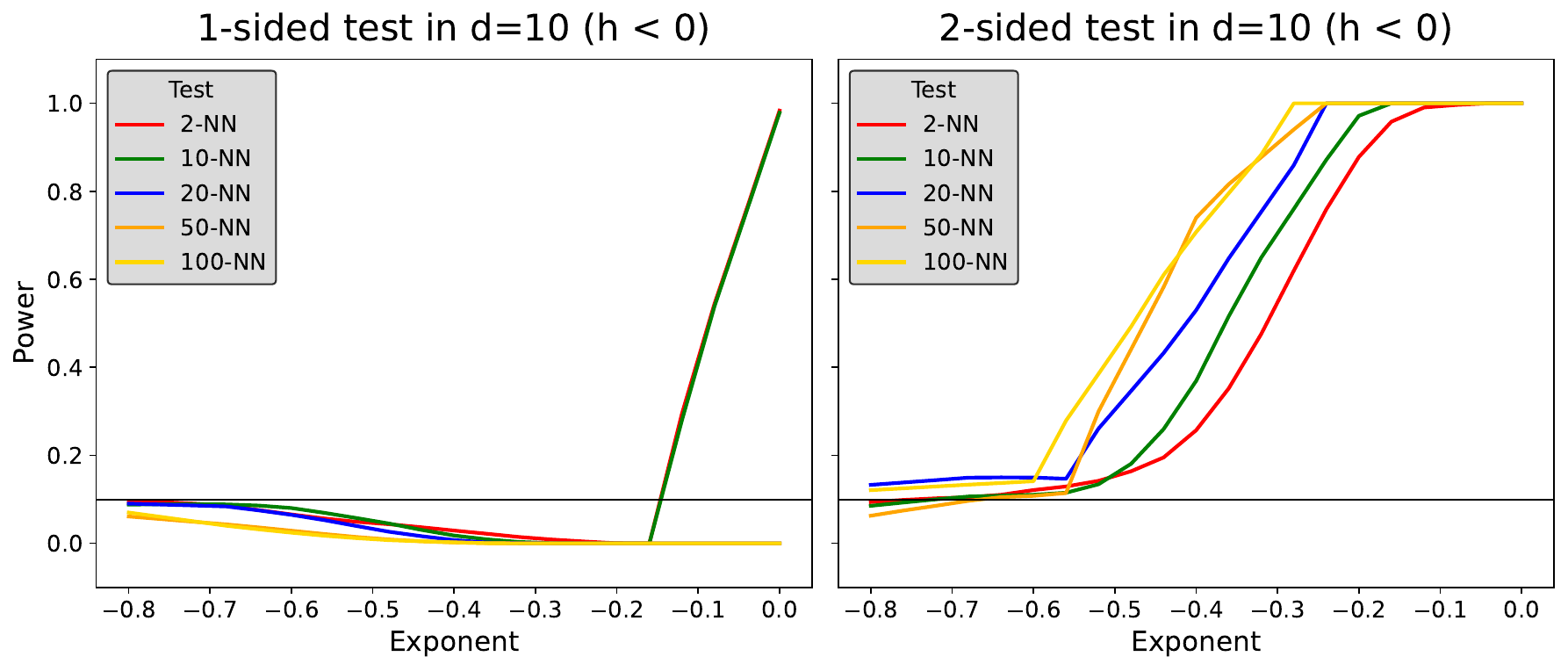}
\caption{}
\label{fig::spherical_normal_across_dimension_negative_deviation}

\end{figure}

\FloatBarrier

\subsection{Finite sample normal approximation}

In this section, we wish to assess the quality of the normal approximation for $T(\Gscr_{k_N})$ under the null. Specifically, we will be checking if the conditional statistic given in \Cref{thm::clt_conditional_stat} is close to the standard normal under the null. For calculating the conditional variance, we will use the approach outlined in \Cref{subsection::runtime_analysis}. In \Cref{fig::QQ_plots}, we have considered an i.i.d sample of size 2000 from the 10-dimensional standard Gaussian distribution. For a range of values of $K,$ we consider 500 trials which gives 500 i.i.d copies of standardized test statistic. \Cref{fig::QQ_plots} gives the QQ-plots of the 500 samples obtained for each value of $K$. We see that the quantile comparison is quite close to the reference line for $K$ up to 50. For $K=75, 100$ it is a little worse. However, overall the quantiles match very closely with the reference line, showing that the conditional statistic is indeed close a standard normal for a large enough sample size.

\begin{figure}[h]
\centering
\includegraphics[width=0.32\linewidth]{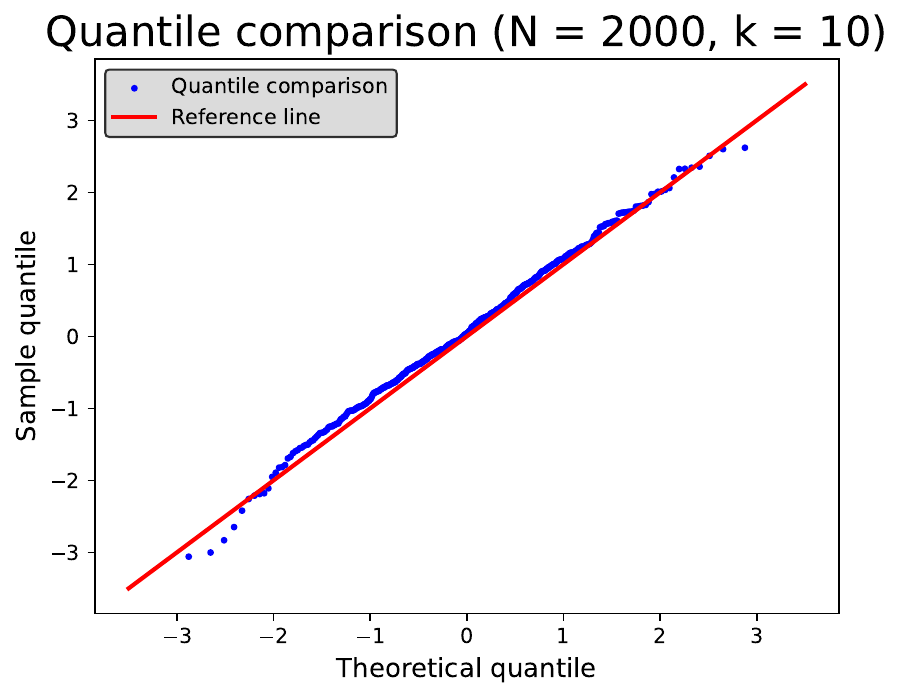}
\hfill
\includegraphics[width=0.32\linewidth]{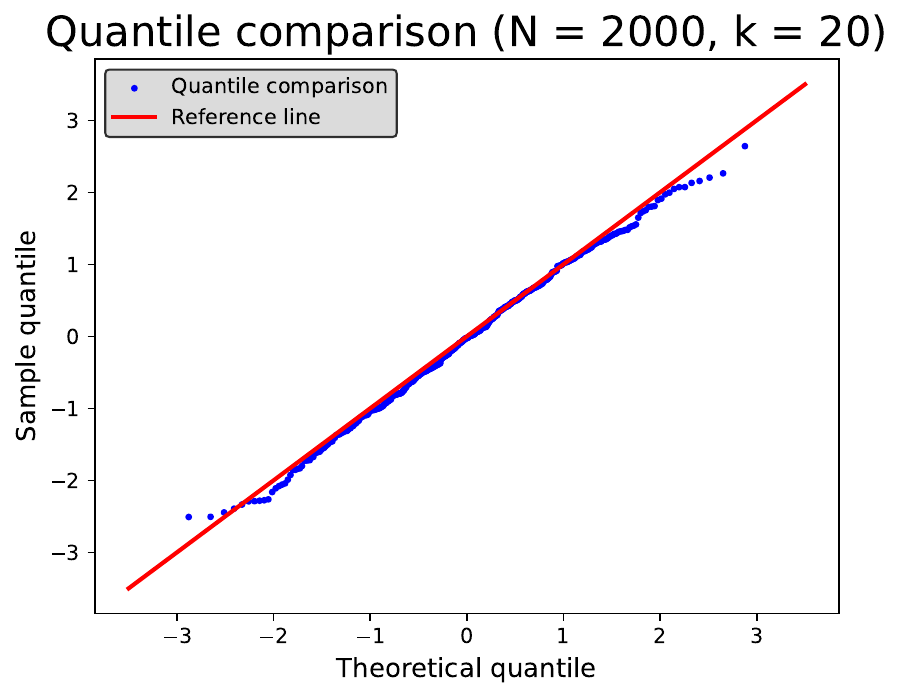}
\hfill
\includegraphics[width=0.32\linewidth]{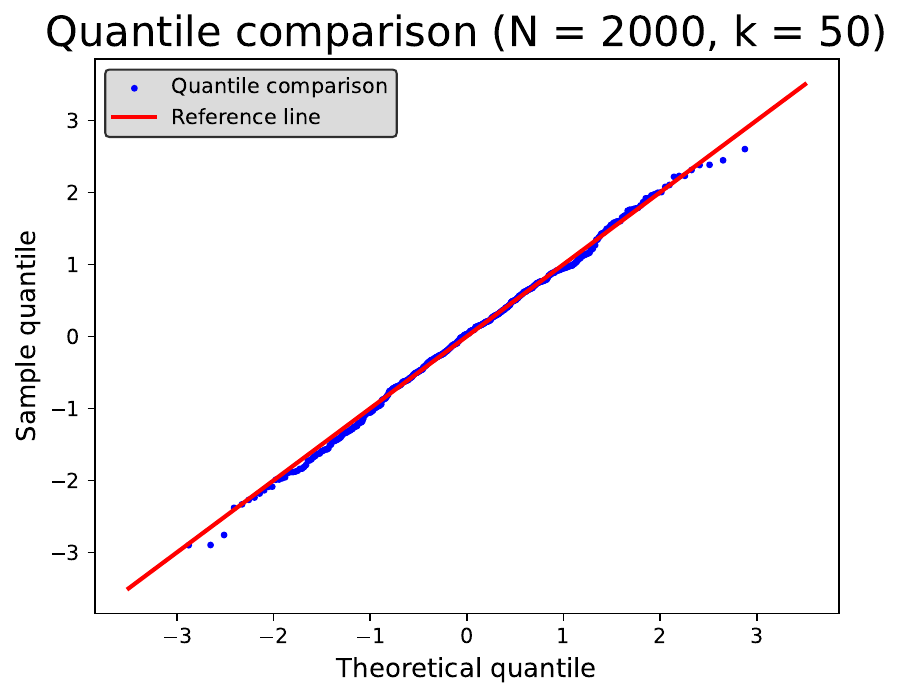}
\vspace{1cm}
\includegraphics[width=0.32\linewidth]{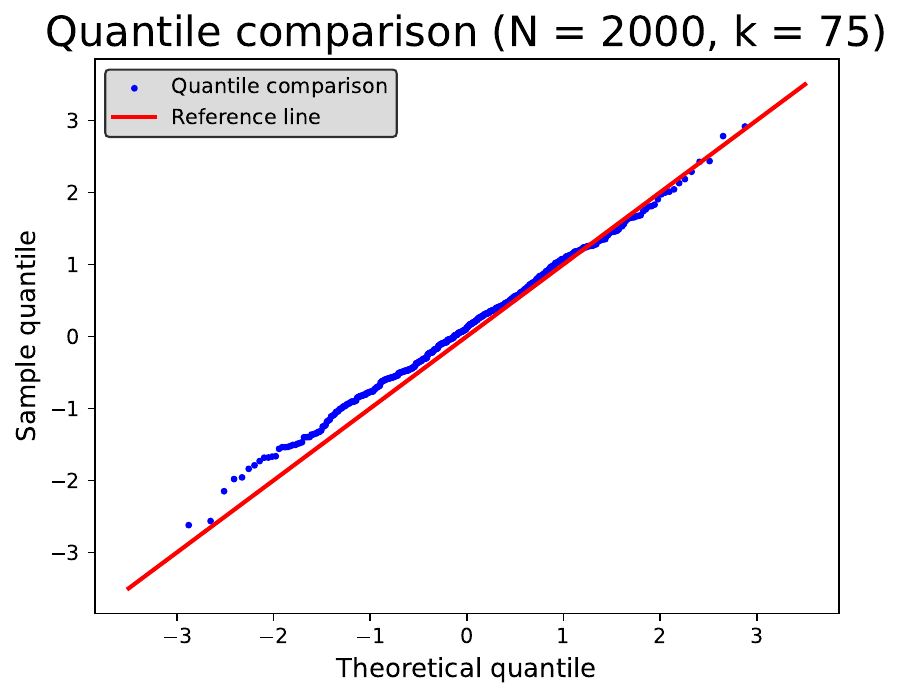}
\hfill
\includegraphics[width=0.32\linewidth]{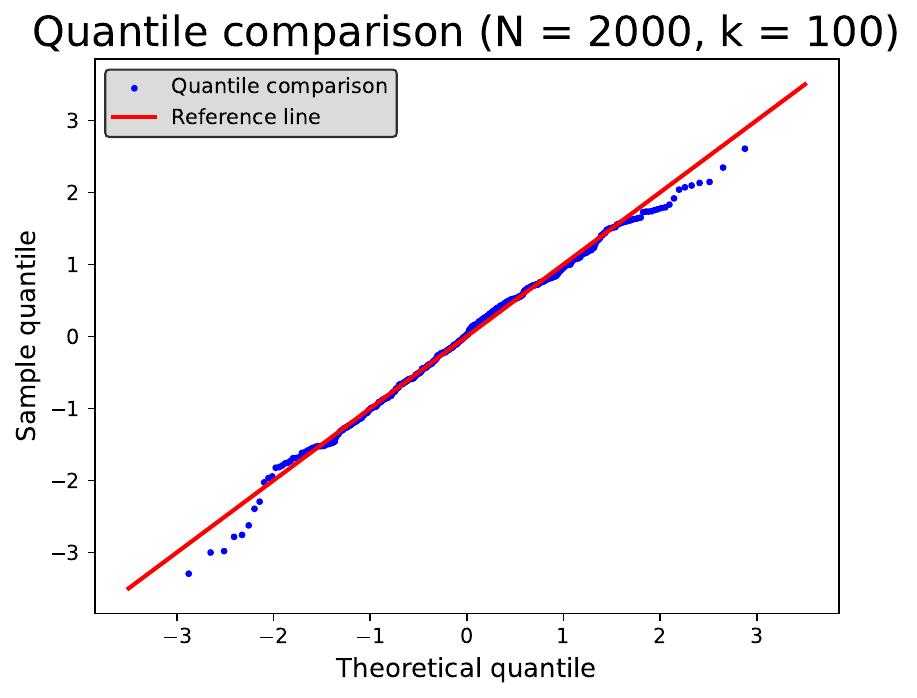}
\hfill
\includegraphics[width=0.32\linewidth]{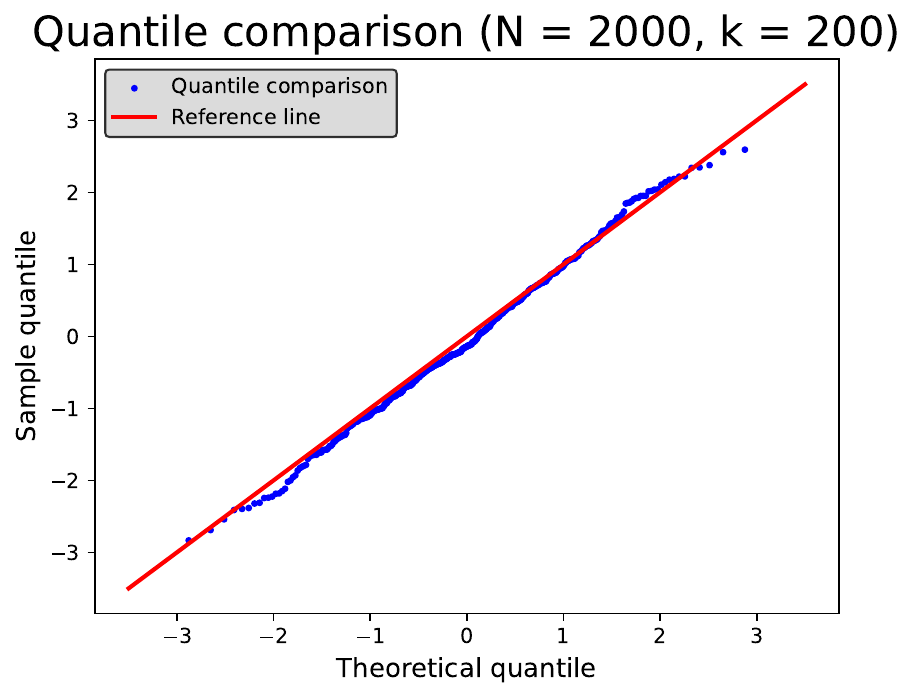}
\label{fig::QQ_plots}
\end{figure}

\subsection{Performance of test based on MST}

In this section, we consider the power of another graph based test in the same settings as before in order to compare with the $k$-NN test. A popular graph-based test is the one based on the minimal spanning tree (MST), \citet{friedmanrafskytest}. Just as increasing the number of neighbors provides denser versions of the nearest neighbors graphs, one can also obtain denser versions of the MST, known as $k$-MST's for $k\in \N$ which can be defined inductively.

The 1-MST is the usual MST. Given a finite set $S\subset \R^d,$ a spanning tree $\Tcal$ of $S$ is a connected, undirected graph with vertex set $S$ and no cycles. The length of a spanning tree is the sum of the lengths of all edges in the tree. A tree $\Tcal$ is called the MST of $S$ if its length is at most the length of any other spanning tree $\Tcal'$ of $S.$

Let $\Tcal_1(S)$ denote the MST of $S$. To construct the 2-MST, we consider all spanning trees of $S$ which use no edges from the 1-MST. The spanning tree $\Tcal_2(S)$ with minimum length among all such trees is known as the 2-MST. Similarly, the $k$-MST $\Tcal_k(S)$ is the minimum length spanning tree among all spanning trees of $S$ which use no edge belonging to $\Tcal_1(S) \cup \dots \cup \Tcal_{k-1}(S)$.

Using a $k$-MST as the underlying graph gives the test statistic to be the number of edges going across groups. As before, one can use either the 1-sided version which rejects when the statistic is too small, or the 2-sided version which rejects when the observed value of the statistic deviates significantly from the bulk of the distribution. To calibrate and implement the test, we use the permutation test based on resampling the labels. This is the same procedure as described for the $k$-NN test in \Cref{subsection::runtime_analysis} with the only change being that the underlying graph is now the $k$-MST.

\begin{figure}[ht]
\centering
\captionsetup{width = 0.8\linewidth}
\includegraphics[width=0.8\linewidth]{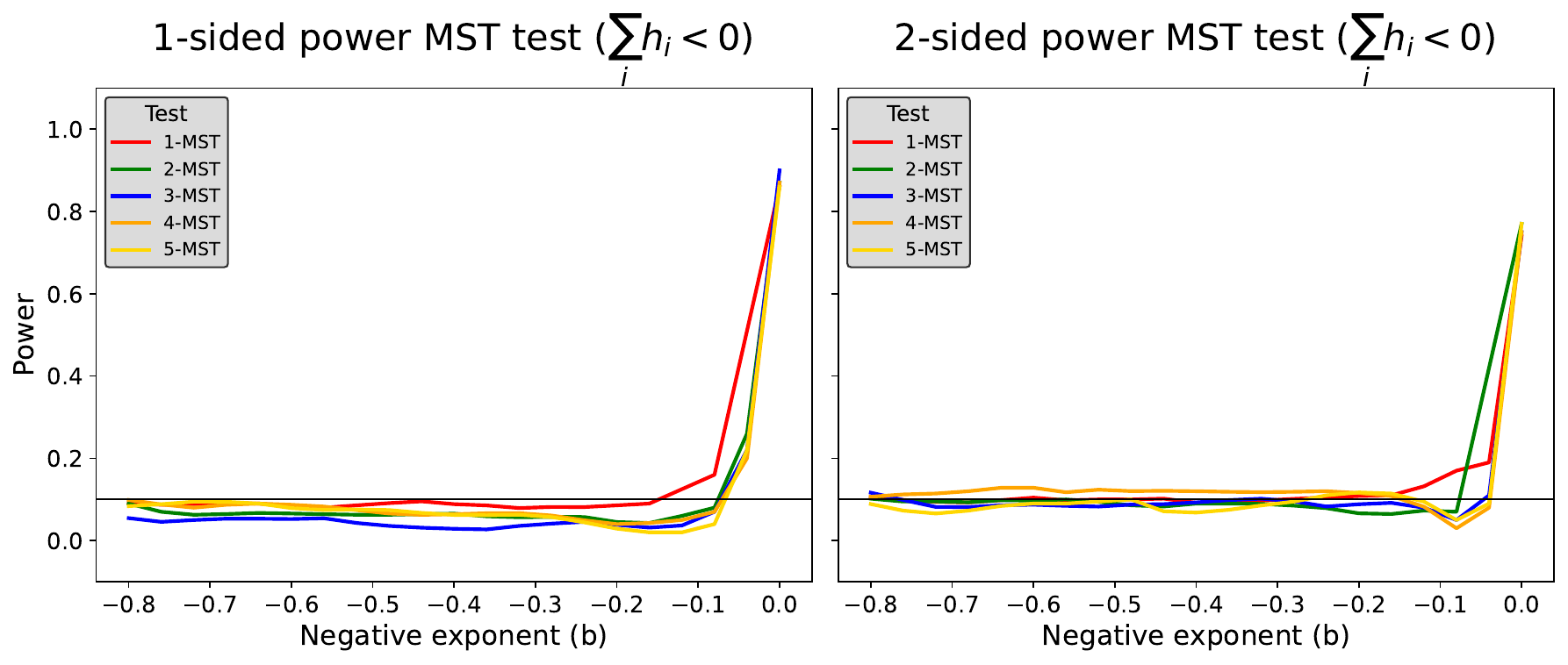}
\caption{Power comparison of MST-based test in normal scale family with $\sum h_i < 0$.}
\label{fig::MST_test_spiked_covariance_neg_sum_lineplots}
\end{figure}
\begin{figure}[ht]
\centering
\captionsetup{width=0.8\linewidth}
\includegraphics[width = 0.8\linewidth]{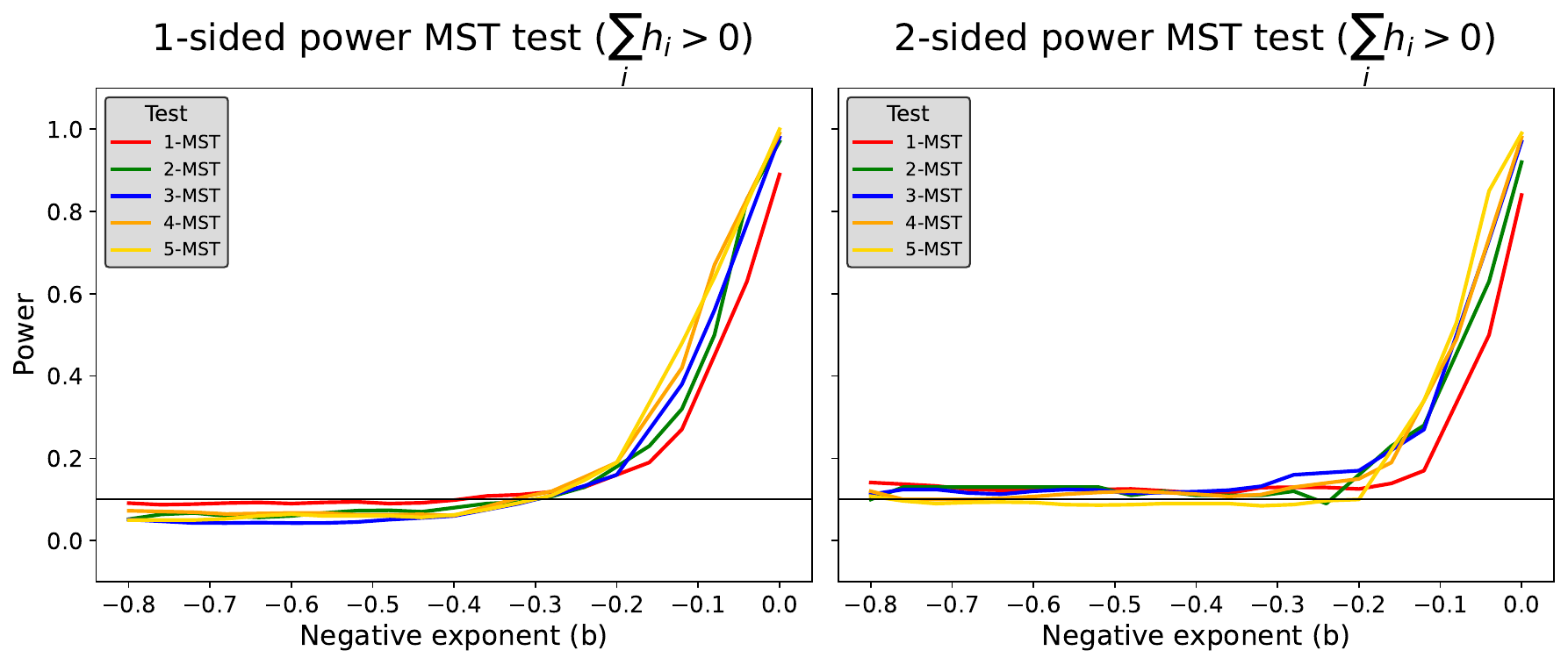}
\caption{Power comparison of 1- and 2-sided MST-based tests in normal scale family with $\sum h_i > 0$.}
\label{fig::MST_test_spiked_covariance_pos_sum_lineplots}
\end{figure}

We first consider the power of the 1- and 2-sided test in the normal scale family described in \Cref{section::simulations}. The simulation set up is exactly the same, with the null being $\theta_1 = \indicator_{10}$ and the local alternative given by $\theta_N = \theta_1 + hN^b$ for some $h\in \R^{10}$, some negative exponent $b$ and where $N$ denotes the sample size. The ambient dimension is once again $d=25$. However, for computational reasons we have taken the group sample sizes to be $N_1 = 900, N_2 = 600$ with a combined sample size of $N=1500.$

\Cref{fig::MST_test_spiked_covariance_neg_sum_lineplots,fig::MST_test_spiked_covariance_pos_sum_lineplots} plot the power of the 1- and 2-sided tests in the settings where $\sum h_i<0$ and $\sum h_i >0$ respectively. We see that there is no significant difference between the behavior of the 1- and 2-sided tests in this case. However, both tests seem to do marginally better in the case where $\sum h_i > 0.$ 

A similar sort of trend seems to appear in the case of the lognormal family as shown in \Cref{fig::MST_test_log_normal_neg_sum_lineplots,fig::MST_test_log_normal_pos_sum_lineplots}. Both test seems to be similarly in any given setting. However, in the setting where the direction of the alternate is such that $\sum h_i < 0$, the tests seem to increase in power when the negative exponent $b\approx 0.3,$ while for $\sum h_i>0$ the increase in power seems to occur around $b\approx 0.5$.

Thus, there is some indication that the effect of the direction on the power might be a phenomenon that affects other graph based tests apart from the one based on nearest neighbors. As a final point of interest, the loss of power experienced by the traditional, 1-sided implementation of the MST-based test was noted in \citet{chen2017new}.

\begin{figure}[ht]
\centering
\captionsetup{width = 0.8\linewidth}
\includegraphics[width=0.8\linewidth]{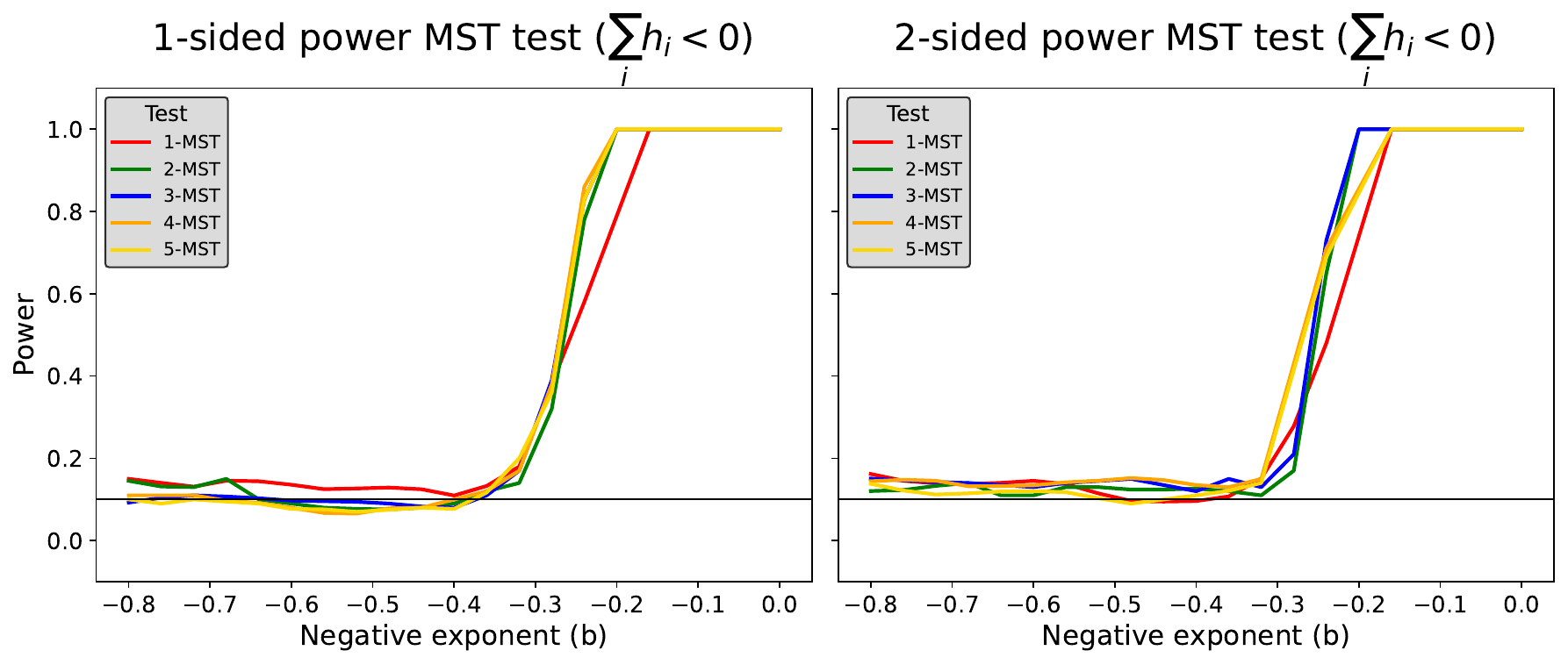}
\caption{Power comparison of MST-based test in lognormal location family with $\sum h_i < 0$.}
\label{fig::MST_test_log_normal_neg_sum_lineplots}
\end{figure}
\begin{figure}[ht]
\centering
\captionsetup{width=0.8\linewidth}
\includegraphics[width = 0.8\linewidth]{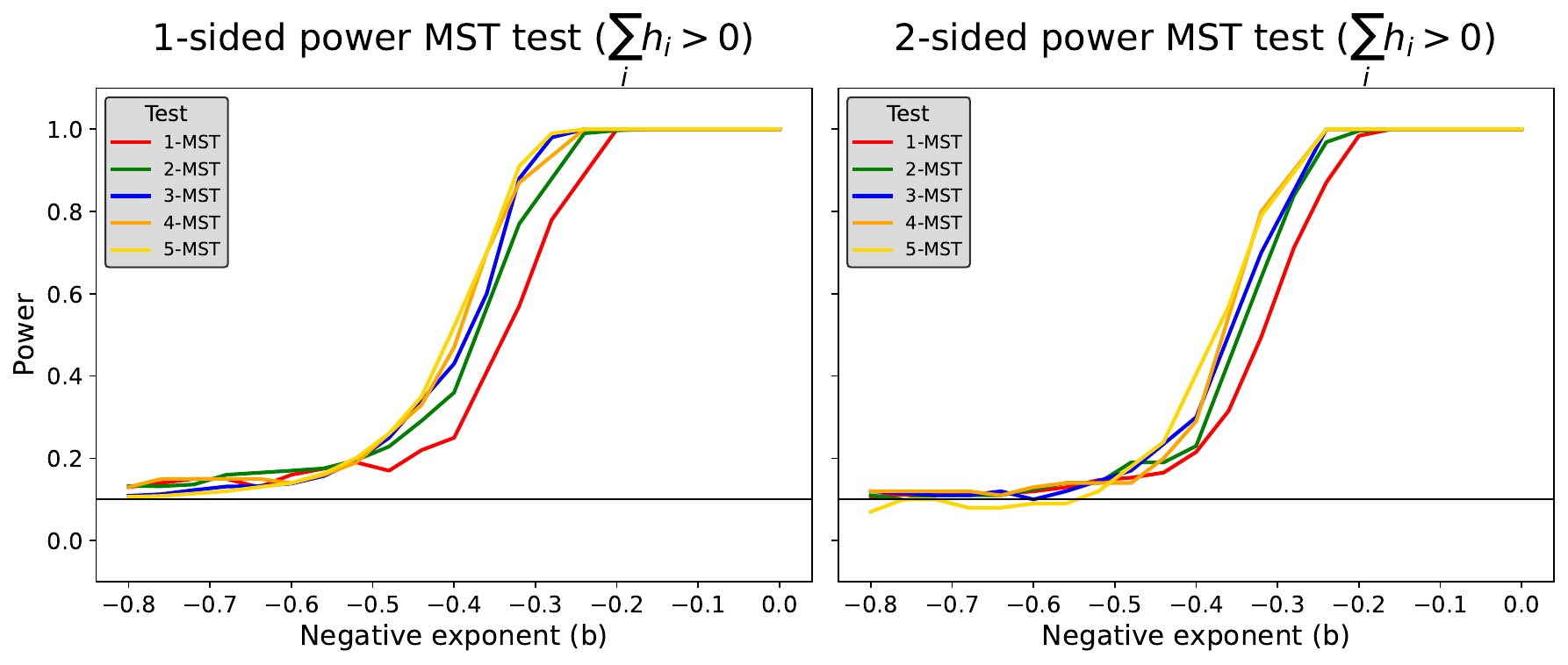}
\caption{Power comparison of 1- and 2-sided MST-based tests in lognormal location family with $\sum h_i > 0$.}
\label{fig::MST_test_log_normal_pos_sum_lineplots}
\end{figure}

\end{document}